 \theoremstyle{plain}
\newtheorem{thm}{Theorem}[subsection]
\theoremstyle{plain}
  \newtheorem{prop}[thm]{Proposition}
\theoremstyle{plain}
 \newtheorem{lemma}[thm]{Lemma}
\theoremstyle{plain}
\theoremstyle{plain}
\theoremstyle{plain}
\newtheorem{cor}[thm]{Corollary}
\theoremstyle{definition}
  \newtheorem{defn}[thm]{Definition}
\theoremstyle{definition}
 \theoremstyle{definition}
\theoremstyle{remark}
\newtheorem{rmk}[thm]{Remark}
\numberwithin{equation}{section}
\tikzset{ closed/.style = {decoration = {markings, mark = at position 0.5 with { \node[transform shape, xscale = .8, yscale=.4] {/}; } }, postaction = {decorate} },
open/.style = {decoration = {markings, mark = at position .5 with { \node[transform shape, scale =1.2] {$\circ$}; } }, postaction = {decorate} }
}
\newcommand{\Z}{\mathbb{Z}}
\newcommand{\Q}{\mathbb{Q}}
\newcommand{\Qp}{\mathbb{Q}_p}
\newcommand{\R}{\mathbb{R}}
\newcommand{\bG}{\mathbb{G}}
\newcommand{\F}{\mathbb{F}}
\newcommand{\N}{\mathbb{N}}
\newcommand{\A}{\mathbb{A}}
\newcommand{\fM}{\mathfrak{M}}
\newcommand{\fP}{\mathfrak{P}}
\newcommand{\fp}{\mathfrak{p}}
\newcommand{\fS}{\mathfrak{S}}
\newcommand{\fm}{\mathfrak{m}}
\newcommand{\bF}{\mathbb{F}}
\newcommand{\bN}{\mathbb{N}}
\newcommand{\bT}{\mathbb{T}}
\newcommand{\cA}{\mathcal{A}}
\newcommand{\cC}{\mathcal{C}}
\newcommand{\cE}{\mathcal{E}}
\newcommand{\cG}{\mathcal{G}}
\newcommand{\cH}{\mathcal{H}}
\newcommand{\cI}{\mathcal{I}}
\newcommand{\cJ}{\mathcal{J}}
\newcommand{\cO}{\mathcal{O}}
\newcommand{\cP}{\mathcal{P}}
\newcommand{\cS}{\mathcal{S}}
\newcommand{\cX}{\mathcal{X}}
\newcommand{\cZ}{\mathcal{Z}}
\newcommand{\eps}{\varepsilon}
\newcommand{\phz}{\varphi}
\newcommand{\La}{\Lambda}
\newcommand{\Zp}{\mathbb{Z}_p}
\newcommand{\Gal}{\mathrm{Gal}}
\newcommand{\Hom}{\mathrm{Hom}}
\newcommand{\Res}{\mathrm{Res}}
\newcommand{\ind}{\mathrm{ind}}
\newcommand{\GL}{\mathrm{GL}}
\newcommand{\ad}{\mathrm{ad}}
\newcommand{\Spec}{\mathrm{Spec}\ }
\newcommand{\id}{\mathrm{id}}
\newcommand{\isom}{\cong}
\newcommand{\Adm}{\mathrm{Adm}}
\newcommand{\speci}{\mathrm{sp}}
\newcommand{\semis}{\mathrm{ss}}
\newcommand{\Fp}{\F_p}
\newcommand{\un}[1]{\underline{#1}}
\renewcommand{\bf}[1]{\mathbf{#1}}
\newcommand{\Rep}{\mathrm{Rep}}
\newcommand{\tld}[1]{\widetilde{#1}}
\newcommand{\JH}{\mathrm{JH}}
\newcommand{\supp}{\mathrm{Supp}}
\newcommand{\rbar}{\overline{r}}
\newcommand{\rhobar}{\overline{\rho}}
\newcommand{\taubar}{\overline{\tau}}
\newcommand{\Spf}{\mathrm{Spf}}
\newcommand{\der}{\mathrm{der}}
\newcommand{\orient}{\mathrm{or}}
\newcommand{\Trns}{\mathfrak{Tr}}
\newcommand{\defeq}{\stackrel{\textrm{\tiny{def}}}{=}}
\newcommand{\ovl}[1]{\overline{#1}}
\newcommand{\obv}{\mathrm{obv}}
\newif\iffinalrun
  \newcommand{\mar}[1]{}
  \newcommand{\mar}[1]{\marginpar{\raggedright\tiny #1}}
\DeclareMathOperator{\Mod}{Mod}
\DeclareMathOperator{\Coh}{Mod}
\DeclareMathOperator{\Ad}{Ad}
\DeclareMathOperator{\Mat}{Mat}
\DeclareMathOperator{\Ann}{Ann}
\DeclareMathOperator{\Gr}{Gr}
\DeclareMathOperator{\Fl}{Fl}
\DeclareMathOperator{\Iw}{\cI}
\DeclareMathOperator{\leqeta}{\text{$\setlength{\thickmuskip}{0mu}\leq\eta$}}
\newcommand{\ra}{\rightarrow}
\newcommand{\iarrow}{\hookrightarrow}
\newcommand{\into}{\hookrightarrow}
\newcommand{\surj}{\twoheadrightarrow}
\newcommand{\onto}{\twoheadrightarrow}
\title{Serre weights for three-dimensional wildly ramified Galois representations}
\author{Daniel Le}
\address{Department of Mathematics,
Purdue University,
150 N. University Street, 
West Lafayette, IN 47907-2067}
\email{ledt@purdue.edu}
\author{Bao V.~Le Hung}
\address{Department of Mathematics,
Northwestern University, 
2033 Sheridan Road\\
Evanston, IL 60208, USA}
\email{lhvietbao@googlemail.com}
\author{Brandon Levin}
\address{Department of Mathematics,
Rice University, 
P.O. Box 1892,
Houston, TX 77005-1892}
\email{bwlevin@rice.edu}
\author{Stefano Morra}
\address{LAGA, UMR 7539, CNRS, Universit\'e Paris 13 - Sorbonne Paris Cit\'e, 
Universit\'e de Paris 8,
99 avenue Jean Baptiste Cl\'ement,
93430 Villetaneuse,
France }
\email{morra@math.univ-paris13.fr}
\begin{document}

\begin{abstract}
We formulate and prove the weight part of Serre's conjecture for three-dimensional mod $p$ Galois representations under a genericity condition when the field is unramified at $p$.  This removes the assumption in \cite{LLLM, LLLM2} that the representation be tamely ramified at $p$.    We also prove a version of Breuil's lattice conjecture and a mod $p$ multiplicity one result for the cohomology of $U(3)$-arithmetic manifolds.  The key input is a study of the geometry of the Emerton--Gee stacks \cite{EGstack} using the local models introduced in \cite{MLM}.  
\end{abstract}

\maketitle
\tableofcontents

\section{Introduction}

The goal of this paper is to prove a generalization of the weight part of Serre's conjecture for three-dimensional mod $p$ Galois representations which are \emph{generic} at $p$. We also prove a generalization of Breuil's lattice conjecture for these representations and the Breuil--M\'ezard conjecture for generic tamely potentially crystalline deformation rings of parallel weight $(2,1,0)$. 
For a detailed discussion of these conjectures, see \cite{LLLM2} where we establish the tame case of these conjectures. 

\subsection{Results}\label{sec:global:prelim}
\subsubsection{The weight part of Serre's conjecture}
 
Let $p$ be a prime, and let $F/F^+$ be a CM extension of a totally real field $F^+\neq \Q$.  Assume that all places in $F^+$ above $p$ split in $F/F^+$. 
Let $G$ be a definite unitary group over $F^+$ split over $F$ which is isomorphic to $U(3)$ at each infinite place and split at each place above $p$. 
A \emph{(global) Serre weight} is an irreducible $\overline{\F}_p$-representation $V$ of $G(\cO_{F^+, p})$.
These are all of the form $\otimes_{v \mid p} V_v$ with $V_v$ an irreducible $\overline{\F}_p$-representation of $G(k_v)$, where $k_v$ is the residue field of $F^+$ at $v$. 
For a mod $p$ Galois representation $\rbar:G_F \rightarrow \GL_3(\overline{\F}_p)$, let $W(\rbar)$ denote the collection of modular Serre weights for $\rbar$. 
That is, $V \in W(\rbar)$ if the Hecke eigensystem attached to $\rbar$ appears in a space of mod $p$ automorphic forms on $G$ of weight $V$ for some prime to $p$ level. 

For each place $v \mid p$, fix a place $\tld{v}$ of $F$ dividing $v$ which identifies $G(k_v)$ with $\GL_3(k_v)$. 
Define $\rhobar_v := \rbar|_{\Gal(\overline{F}_{\tld{v}}/F_{\tld{v}})}$. 
We can now state the main theorem:

\begin{thm}[Theorem \ref{thm:SWC}]
\label{thm:5:prelim}  Assume that $p$ is unramified in $F$ and that $\rhobar_v$ is $8$-generic for all $v \mid p$.   Assume that $\rbar$ is modular (i.e.~$W(\rbar)$ is nonempty) and satisfies Taylor-Wiles hypotheses. 

Then 
\[
\otimes_{v \mid p} V_v \in W(\rbar) \Longleftrightarrow V_v \in  W^g(\rhobar_v) \text{ for all } v \mid p
\]
where $W^g(\rhobar_v)$ is an explicit set of irreducible $\ovl{\F}_p$-representations of $\GL_3(k_v)$ attached to $\rhobar_v$ $($see Definition \ref{def:intro:geoweight} below$)$. 
\end{thm}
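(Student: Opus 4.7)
The plan is to reduce Theorem \ref{thm:5:prelim} to local statements about the Emerton--Gee stacks at each place $v \mid p$, and then resolve those by combining the local models of \cite{MLM} with the tame case treated in \cite{LLLM, LLLM2}. The Taylor--Wiles--Kisin patching method (using the Taylor--Wiles hypotheses on $\rbar$) yields a patched module $M_\infty$ over the completed tensor product of framed tamely potentially crystalline deformation rings of parallel Hodge--Tate weight $(2,1,0)$ for each $\rhobar_v$. Modularity of $V = \otimes_v V_v$ is equivalent to non-vanishing of an appropriate fibre of $M_\infty$, and more flexibly to non-vanishing of $M_\infty$ over a tame crystalline stratum of type $\tau_v$ whose reduction $\ovl{\sigma(\tau_v)}$ contains $V_v$ as a Jordan--Hölder constituent. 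Thus the theorem reduces to the local statement that the support cycle of $M_\infty$ on the special fibre of the Emerton--Gee stack at $\rhobar_v$ intersects the tame crystalline strata in precisely the subset indexed by $W^g(\rhobar_v)$.

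For weight elimination ($\Rightarrow$), I would work one place at a time using the local model of \cite{MLM}, which describes a neighborhood of $\rhobar_v$ in the Emerton--Gee stack via explicit affine charts on a local model for $\GL_3$, with irreducible components indexed by Serre weights. A direct computation on the local model identifies the components that meet the closed stratum of a given tame type $\tau_v$; one shows that only those indexed by weights in $W^g(\rhobar_v)$ have this property, which yields elimination via the patching criterion. For weight existence ($\Leftarrow$), I would propagate modularity from the tame case: given $V_v \in W^g(\rhobar_v)$, choose a tame type $\tau_v$ with $V_v \in \JH(\ovl{\sigma(\tau_v)})$; the corresponding component of the Emerton--Gee stack passing through $\rhobar_v$ also contains tame points $\rhobar_v^{\mathrm{t}}$ to which the results of \cite{LLLM, LLLM2} apply. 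Since $M_\infty$ is maximally Cohen--Macaulay and supported on a union of irreducible components, non-vanishing at $\rhobar_v^{\mathrm{t}}$ forces non-vanishing at $\rhobar_v$, proving modularity.

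The main obstacle is the local geometric analysis underpinning both directions: computing the irreducible components of the Emerton--Gee stack through a wildly ramified $\rhobar_v$ and their intersection pattern with the tame crystalline strata. In the tame case of \cite{LLLM, LLLM2} these computations were performed by explicit Kisin module calculations that are far less tractable when $\rhobar_v$ is not semisimple. The local models of \cite{MLM} are designed precisely to replace these calculations by a uniform geometric framework that works for all sufficiently generic $\rhobar_v$; I expect the bulk of the proof will be devoted to harvesting the necessary cycle-theoretic and intersection-theoretic information from these local models and matching it combinatorially with the definition of $W^g(\rhobar_v)$.
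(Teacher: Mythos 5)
Your overall framework (Taylor--Wiles patching, reduction to support statements for $M_\infty$ over the potentially crystalline loci, and use of the local models of \cite{MLM} together with the normality/reducedness results to control that support) matches the paper's. But both halves of your argument have genuine gaps where the paper needs its hardest new inputs.

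For weight existence, your mechanism of ``propagating non-vanishing from tame points $\rhobar_v^{\mathrm{t}}$ on the same component'' does not exist: $M_\infty$ lives over $\Spec R_\infty$, a versal deformation space of the \emph{fixed} wildly ramified $\rhobar$, whose only closed point is $\rhobar$ itself; the tame results of \cite{LLLM,LLLM2} concern different residual representations with their own patched modules, and there is no way to transfer support between them. The correct propagation (Propositions \ref{prop:supportcyclebound}, \ref{prop:minimalcycle}) is internal to $\rhobar$: since $R^{\eta,\tau}_{\rhobar}$ is a domain (Corollary \ref{thm:local_model_main}), $M_\infty(\ovl{\sigma}^\circ(\tau))\neq 0$ forces full support, and one walks between weights of $\rhobar$ via carefully chosen types using the classification of $W_{\obv}(\rhobar)$ by specializations (Theorem \ref{thm:Wg}). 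Even then this only propagates modularity \emph{from} a modular obvious weight; establishing that $W_{\obv}(\rhobar)\cap W(\rhobar)\neq\emptyset$ in the first place is the new wrinkle of the wild case, and the paper resolves it with a patched weight-cycling argument (\S\ref{subsec:AM}): the principal series lattices $\theta(\chi)$, their images $I(\chi,s)$, and the operators $U^{\tau_1}_\tau$ whose image in the local model lands in the ideals cutting out the non-outer components (Lemmas \ref{lemma:cosoc}--\ref{lemma:supportupperbound}). Your proposal contains no substitute for this step.

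For weight elimination, the argument you sketch (lift of type $\tau$ forces $\rhobar_v$ onto some $\cC_\sigma$ with $\sigma\in\JH(\ovl{\sigma}(\tau))$) only yields $W(\rbar)\subset\bigcap_{\rhobar_1\in S(\rhobar)}W^?(\rhobar_1)$ (Proposition \ref{prop:WE}), and this intersection is in general strictly larger than $W^g(\rhobar)$ (compare the primed and unprimed lists in Theorem \ref{thm:Wg}, e.g.\ case (1) versus (1$'$)). Closing that gap is again done by the same $U^{\tau_1}_\tau$-operator computation (Lemma \ref{lemma:outer} plus Lemma \ref{lemma:supportupperbound}), which pins $\supp_{R_{\rhobar}}M_\infty(\sigma)$ down to exactly $\cC_\sigma(\rhobar)$. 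So the two directions are not independent local-model computations as you suggest; both hinge on the arithmetic-module axioms and the Jacobi-sum-type operators, which are absent from your plan.
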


\noindent
In particular, this affirms the expectation from local-global compatibility that $W(\rbar)$ depends only on the restrictions of $\rbar$ to places above $p$. 

\begin{rmk}
This is the first complete description of $W(\rbar)$ in dimension greater than two for representations $\rbar$ that are wildly ramified above $p$. 
Some lower bounds were previously obtained in \cite{gee-geraghty,MP,HLM,LMP,LLLM}.
\end{rmk}

The first obstacle we overcome is the lack of a conjecture. 
One basic problem is that while tame representations (when restricted to inertia) depend only on discrete data, wildly ramified representations vary in nontrivial moduli.
Buzzard--Diamond--Jarvis defined a recipe in terms of crystalline lifts in dimension two. 
However, after \cite{LLLM}, it was clear that the crystalline lifts perspective is  insufficient in higher dimension.  %
In higher dimensions, Herzig defined a combinatorial/representation theoretic recipe for a collection of weights $W^?(\rhobar_v)$ when $\rhobar_v$ is tame. 
For possibly wildly ramified $\rhobar$, \cite{GHS} makes a conjectural conjecture: they define a conjectural set \emph{conditional} on a version of the Breuil--M\'ezard conjecture. 
Our first step is to prove a version of the Breuil--M\'ezard conjecture (Theorem \ref{thm:1:prelim} and Remark \ref{rmk:BM:prelim}) when $n=3$. 

Having established a version of the Breuil--M\'ezard conjecture when $n=3$, the weight set from \cite{GHS} turns out (in generic cases) to have a simple geometric interpretation. 
Let $\cX_3$ be the moduli stack of $(\varphi,\Gamma)$-modules recently constructed by Emerton--Gee \cite{EGstack}. %
The irreducible components of $\cX_3$ are labelled by irreducible mod $p$ representations of $\GL_3(k_v)$ and $W^g(\rhobar_v)$ is defined so that $V_v \in W^g(\rhobar_v)$ if and only if $\rhobar_v$ lies on $\cC_{V_v}$. 
However, this definition of $W^g(\rhobar_v)$ gives very little insight into its structure. 
We study $W^g(\rhobar_v)$ using the local models developed in \cite{MLM} combined with the explicit calculations of tamely potentially crystalline deformation rings in \cite{LLLM, LLLM2}.   We ultimately arrive at an explicit description of all possible weight sets $W^g(\rhobar_v)$ which allows us to then employ the Taylor--Wiles patching method to prove Theorem \ref{thm:5:prelim}. 

\subsubsection{Breuil's lattice conjecture and mod $p$ multiplicity one}

The weight part of Serre's conjecture can be viewed as a local-global compatibility result in the mod $p$ Langlands program.
In this section, we mention two further local-global compatibility results---one mod $p$ and one $p$-adic.
We direct the reader to the introduction of \cite{LLLM2} for further context for the following two results. 

In the global setup above assume further that $F/F^+$ is unramified at all finite places and $G$ is quasi-split at all finite places.
Let $r:G_F \ra \GL_3(\overline{\Q}_p)$ be a modular Galois representation which is tamely potentially crystalline with Hodge--Tate weights $(2,1,0)$ at each place above $p$ and unramified outside $p$ (though our results hold true when $r$ is minimally split ramified, cf.~\S \ref{subsec:global}).
Write $\lambda$ for the Hecke eigensystem corresponding to $r$. %
We fix places $\tld{v}|v|p$ of $F$ and $F^+$ respectively.
We let $\tld{H}$ be the integral $p$-adically completed cohomology with infinite level at $v$, hyperspecial level outside $v$, and constant coefficients.
Set $\rho\defeq r|_{G_{F_{\tld{v}}}}$ and let $\sigma(\tau)$ be the tame type corresponding to the Weil--Deligne representation associated to $\rho$ under the inertial local Langlands correspondence (so that $\tld{H}[\lambda][1/p]$ contains $\sigma(\tau)$ with multiplicity one). 
Let $\rbar$ and $\rhobar$ be the reductions of $r$ and $\rho$, respectively. 

\begin{thm}[Theorem \ref{thm:lattice}]
\label{mainthm1} 
Assume that $p$ is unramified in $F^+$, $r$ is unramified outside $p$, $\rhobar$ is $11$-generic, and $\rbar$ satisfies Taylor--Wiles hypotheses.
Then, the lattice 
\[
\sigma(\tau) \cap \tld{H}[\lambda]
\]
depends only on $\rho$.
\end{thm}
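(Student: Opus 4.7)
The plan is to adapt the Taylor--Wiles patching strategy from \cite{LLLM2} to the wildly ramified setting, using the new geometric inputs developed in this paper. First I would set up a patched module $M_\infty$ over a patched deformation ring $R_\infty$, where $R_\infty$ is formally smooth over the completed tensor product of local framed deformation rings $\widehat{\otimes}_{v\mid p}\, R_{\rhobar_v}^\square$, and $M_\infty$ carries a compatible action of $\prod_{v\mid p}\GL_3(\cO_{F^+_v})$. The formalism gives that $\tld H[\lambda]$ is recovered from $M_\infty$ modulo a global maximal ideal $\fm\subset R_\infty$, and consequently the lattice $\sigma(\tau)\cap \tld H[\lambda]$ is determined up to homothety by the $\GL_3(\cO_{F^+_v})$-stable lattice $M_\infty(\sigma(\tau)^\circ)/\fm$ (for any choice of $\cO$-lattice $\sigma(\tau)^\circ\subset \sigma(\tau)$). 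Hence it suffices to show that $M_\infty(\sigma(\tau))$, as an $R_\infty$-module, depends only on the local datum $\rho$.

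The key step is to prove that $M_\infty(\sigma(\tau)^\circ)$ is a free module of rank one over the potentially crystalline deformation ring $R_\infty(\tau) = R_\infty\widehat{\otimes}_{R_{\rhobar_{\tld v}}^\square} R_{\rhobar_{\tld v}}^{\square,\tau}$. Given this, freeness combined with the fact that $R_\infty(\tau)$ is Cohen--Macaulay (a consequence of the local model calculations of \cite{MLM}, upgraded by the generic-freeness results for tame types) and that the generic-fibre point corresponding to $r$ pins down the image of $\sigma(\tau)\cap \tld H[\lambda]$ inside $M_\infty(\sigma(\tau))/\fm$, forces the lattice to depend only on $\rho$. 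Thus the lattice conjecture reduces to a patched freeness / mod $p$ multiplicity one statement.

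The main obstacle is to prove this freeness when $\rhobar$ is wildly ramified, where the combinatorics of the Jordan--H\"older filtration of $\overline{\sigma(\tau)}$ and the structure of the deformation ring are considerably more delicate than in the tame case of \cite{LLLM2}. My approach would be to argue by induction on the sublattice structure of $\sigma(\tau)^\circ$: it suffices to show that for each Serre weight $\sigma^0\in \JH(\overline{\sigma(\tau)})\cap W^g(\rhobar_{\tld v})$, the patched module $M_\infty(\sigma^0)$ has cycle in $\Spec R_\infty$ equal to the irreducible component of $\Spec R_{\infty}(\tau)$ labelled by $\sigma^0$, with multiplicity exactly one. This is a refined Breuil--M\'ezard statement, which I would establish by combining Theorem \ref{thm:5:prelim} and the explicit description of $W^g(\rhobar_{\tld v})$ in terms of irreducible components of the Emerton--Gee stack $\cX_3$ with the intersection-theoretic analysis of these components afforded by the local models of \cite{MLM}; the $11$-genericity assumption ensures that all relevant components are smooth and meet transversely at $\rhobar_{\tld v}$, so that the Breuil--M\'ezard multiplicities are all forced to be $1$. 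With this refined multiplicity one in hand, freeness of $M_\infty(\sigma(\tau)^\circ)$ over $R_\infty(\tau)$ follows by a standard Nakayama plus support-size argument, completing the proof.
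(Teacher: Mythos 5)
Your high-level skeleton (patch, reduce to a structural statement about $M_\infty(\sigma(\tau)^\circ)$, with cyclicity/multiplicity one as the key local input) matches the paper's strategy, but there are two genuine gaps. First, freeness of $M_\infty(\sigma(\tau)^\circ)$ over $R_\infty(\tau)$ for a single lattice does not determine the lattice $\sigma(\tau)\cap\tld{H}[\lambda]$: that lattice is pinned down by its relative position with respect to \emph{all} the lattices $\sigma(\tau)^\kappa$ with irreducible cosocle $\kappa\in\JH(\ovl{\sigma}(\tau))$, and one must compute the image of each $M_\infty(\sigma(\tau)^\kappa)\into M_\infty(\sigma(\tau)^\sigma)$ in terms of purely local data. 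This is the content of the gauge theorem (Theorem \ref{thm:gauge}), which identifies that image as $j_*j^*\big(\prod_\theta \fp(\theta)^{m(\theta)}R_\infty(\tau)\big)M_\infty(\sigma(\tau)^\sigma)$; your appeal to Cohen--Macaulayness and "the generic-fibre point pins down the image" does not substitute for this computation. Note also that the paper only proves freeness for lattices $\sigma(\tau)^{F(\lambda)}$ whose cosocle satisfies the upper-alcove condition \eqref{up:each:emb} at embeddings where the shape has length $\leq 1$; the gauge theorem is precisely what lets one get at the remaining lattices.

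Second, your route to freeness is not sound. The refined Breuil--M\'ezard statement (each $Z(M_\infty(\sigma^0))$ is the component $\fp(\sigma^0)$ with multiplicity one, which is Theorem \ref{thm:axiomaticSWC} here) does \emph{not} imply that $M_\infty(\sigma(\tau)^\circ)$ is cyclic: a module supported on a reducible special fibre can have the correct multiplicity-one cycle on each component without being generated by one element, because the components intersect. The claim that $11$-genericity forces the components to be smooth and to meet transversely is false -- the special fibres of the local models in Tables \ref{Table:intsct} have nontrivial, non-transverse intersections, and controlling them is exactly what the ideal relations of Lemmas \ref{lem:ideal:relation:1}--\ref{lem:ideal:relation:4} and the lattice-theoretic induction of \S\ref{sec:cyc} (Lemmas \ref{lemma:broom}--\ref{lemma:9weight}, adapting \cite[\S 5.1]{LLLM2} and using Proposition \ref{prop:lattice:10-gen} on radical filtrations of $\ovl{R}^\sigma$) are for. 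Nakayama does reduce cyclicity to mod-$p$ multiplicity one of $M_\infty(\ovl{\sigma}(\tau)^\sigma)/\fm$, but that multiplicity one statement is the hard part and is not a formal consequence of the cycle computation.
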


We now let $\overline{H}$ be the mod $p$ reduction of $\tld{H}$.
Thus, $\overline{H}$ is the mod $p$ cohomology with infinite level at $v$ (and hyperspecial level at places outside $v$ with constant coefficients) of a $U(3)$-arithmetic manifold.

\begin{thm}[Theorem \ref{thm:modpmultone}]
\label{thm:6:prelim}

Let $\sigma(\tau)^\circ$ be an $\cO$-lattice in $\sigma(\tau)$ with irreducible ``upper alcove'' cosocle.
Under the assumptions of Theorem \ref{mainthm1}, $\Hom_{\GL_3(\cO_{F_{\tld{v}}})}(\sigma(\tau)^\circ,\ovl{H}[\lambda])$ is a one-dimensional $\ovl{\F}_p$-vector space.
\end{thm}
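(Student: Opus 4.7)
The plan is to apply the Taylor--Wiles--Kisin patching machinery, translate the multiplicity statement into a cyclicity assertion for a patched module, and then exploit both the local geometry of the deformation ring and Theorem \ref{mainthm1}. Construct a patched module $M_\infty$ over a patched ring $R_\infty$, which (up to formally smooth factors) is a completed tensor product of local deformation rings over the places above $p$; then standard duality yields
\begin{equation*}
\Hom_{\GL_3(\cO_{F_{\tld v}})}(\sigma(\tau)^\circ,\ovl{H}[\lambda])^{\vee}\;\cong\; M_\infty(\sigma(\tau)^\circ)\big/\fm_\infty M_\infty(\sigma(\tau)^\circ),
\end{equation*}
so it suffices to prove that $M_\infty(\sigma(\tau)^\circ)$ is cyclic over $R_\infty$.

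By the standard patching formalism, $M_\infty(\sigma(\tau))$ is maximal Cohen--Macaulay, supported on the quotient $R_\infty^\tau$ cut out by the tame type $\tau$ and parallel Hodge--Tate weights $(2,1,0)$. Using the local models of \cite{MLM} together with the explicit description of the tamely potentially crystalline deformation rings at wildly ramified $\rhobar$ developed in the body of the paper, identify the irreducible components of $\Spec R_\infty^\tau/\varpi$ with a subset of $\JH(\ovl{\sigma(\tau)}) \cap W^g(\rhobar)$. The upper-alcove cosocle condition on $\sigma(\tau)^\circ$ then picks out a distinguished component $\cC_F$, labelled by $F = \cosoc(\ovl{\sigma(\tau)^\circ})$; the Breuil--M\'ezard result of the paper (Theorem \ref{thm:1:prelim}) determines the cycle of $M_\infty(\sigma(\tau)^\circ)/\varpi$ as $[\cC_F]$ with multiplicity one.

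Next invoke Theorem \ref{mainthm1}: since $\sigma(\tau)\cap\tld{H}[\lambda]$ depends only on $\rho$, the entire lattice structure of $M_\infty(\sigma(\tau))$ is controlled by the local deformation ring $R_\rho^\tau$. Combined with the Cohen--Macaulay property and the cycle computation just made, one concludes that $M_\infty(\sigma(\tau)^\circ)$ is free of rank one over $\cC_F$, and in particular cyclic over $R_\infty$. This gives the desired one-dimensionality of the Hom space.

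The main obstacle lies in the component identification in the wildly ramified setting. In the tame case treated in \cite{LLLM2}, the upper-alcove cosocle is matched to its irreducible component of $R_\infty^\tau/\varpi$ by a combinatorial manipulation with Deligne--Lusztig characters and extremal weights; once $\rhobar$ is permitted to be wildly ramified this combinatorial shortcut is no longer available. One has to use the local models of \cite{MLM} and the explicit component analysis of the Emerton--Gee stack $\cX_3$ established earlier in the paper to transport the tame identification across the specialization map to the wildly ramified locus, keeping careful track of how the $\GL_3(\cO_{F_{\tld v}})$-stable lattices inside $\sigma(\tau)$ interact with each component $\cC_{F'}$ under the patching construction.
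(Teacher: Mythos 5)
There is a genuine gap, and it sits exactly where the real work of the theorem lies. Your cycle computation is wrong: for a minimal patching functor the Breuil--M\'ezard identity (Theorem \ref{thm:1:prelim} together with Proposition \ref{prop:minimalcycle}) gives $Z(M_\infty(\ovl{\sigma}(\tau)^\circ)) = \sum_{\sigma' \in \JH(\ovl{\sigma}(\tau))\cap W^g(\rhobar)} [\cC_{\sigma'}]$, i.e.\ the \emph{full} special-fiber cycle of $R_\infty(\tau)$, not the single component $[\cC_F]$ indexed by the cosocle. The entire difficulty of mod $p$ multiplicity one is that $M_\infty(\sigma(\tau)^\circ)$ is supported on \emph{many} intersecting components of $\ovl{X}_\infty(\tau)$ and must nonetheless be shown to be cyclic; if its cycle really were a single reduced component the statement would be nearly trivial. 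Your subsequent appeal to Theorem \ref{mainthm1} is also circular: in the paper both Breuil's lattice conjecture (Theorem \ref{thm:lattice}, via Theorem \ref{thm:gauge}) and the multiplicity one statement (Theorem \ref{thm:modpmultone}) are \emph{consequences} of the cyclicity result Theorem \ref{thm:cyclic}, so it cannot be used as an input; and even granted, the statement that the lattice $\sigma(\tau)\cap\tld{H}[\lambda]$ depends only on $\rho$ does not imply that the patched module of a fixed lattice is cyclic over $R_\infty(\tau)$.

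What is actually needed, and what the paper supplies in \S\ref{sec:cyc}, is an induction over quotients $V$ of $\ovl{\sigma}(\tau)^\sigma$ parametrized by subsets $\Sigma_{V,j}$ of the extension graph (Lemmas \ref{lem:4:wgt} through \ref{lemma:9weight}), whose inputs are: the submodule structure of the reduction $\ovl{\sigma}(\tau)^\sigma$ (Proposition \ref{prop:lattice:10-gen}, the radical filtration being predicted by the extension graph), the explicit equations for $\tld{U}(\tld{z},\eta,\nabla_{\tau,\infty})_{\F}$ in Table \ref{Table:intsct}, the ideal relations of Lemmas \ref{lem:ideal:relation:1}--\ref{lem:ideal:relation:4} in those special fibers, and the support computation $\supp_{R_{\rhobar}}M_\infty(\sigma')=\cC_{\sigma'}(\rhobar)$ from Theorem \ref{thm:axiomaticSWC}. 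This is also where the ``upper alcove cosocle'' hypothesis is used: it is not a device for ``picking out a distinguished component'' but the condition \eqref{up:each:emb} ensuring that in each embedding $j$ with $\ell(\tld{z}_j)\leq 1$ (where the local geometry is most degenerate) the inductive gluing of cyclicity across components can be carried out. None of this appears in your outline, so the proposal as written does not constitute a proof.
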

\noindent (See \S \ref{subsec:notations}, \ref{subsec:param:SW}, for the notion of upper and lower alcove for Serre weights for $\GL_3$.)
The statement of Theorem \ref{thm:6:prelim} is also true when the cosocle is not necessarily upper alcove if one imposes a condition on the shape of $\rhobar$ with respect to $\tau$; see Theorem \ref{thm:modpmultone}.

\subsection{Methods}

\subsubsection{Local methods: Geometry of the Emerton--Gee stack and local models}

We begin by recalling the set $W^g(\rhobar)$ that appears in Theorem \ref{thm:5:prelim}.  
Let $K$ be a finite unramified extension of $\Qp$ of degree $f$, with ring of integers $\cO_K$ and residue field $k$. 
Let $\cX_{K,n}$ be the Noetherian formal algebraic stack over $\Spf\,\Zp$ defined in \cite[Definition 3.2.1]{EGstack}.
It has the property that for any complete local Noetherian $\Zp$-algebra $R$, the groupoid $\cX_{K,n}(R)$ is equivalent to the groupoid of rank $n$ projective $R$-modules equipped with a continuous $G_K$-action, see \cite[\S 3.6.1]{EGstack}.
In particular, $\cX_{K,n}(\ovl{\F}_p)$ is the groupoid of continuous Galois representations $\rhobar:G_K\rightarrow \GL_n(\ovl{\F}_p)$. 
As explained in \cite[\S 7.4]{MLM}, there is a bijection $\sigma\mapsto \cC_\sigma$  between irreducible $\overline{\F}_p$-representations of $\GL_n(k)$ and the irreducible components of the reduced special fiber of $\cX_{K,n}$.
(This is a relabeling of the bijection of \cite[Theorem 6.5.1]{EGstack}.)

\begin{defn} \label{def:intro:geoweight}  Let $\rhobar \in \cX_{K, n} (\overline{\F}_p)$.   Define the set of \emph{geometric weights} of $\rhobar$ to be 
\[
W^g(\rhobar) = \{ \sigma \mid \rhobar \in \cC_{\sigma}(\overline{\F}_p) \}.  
\]
\end{defn}

\noindent While Definition \ref{def:intro:geoweight} is simple, it does not appear to be an easy task to determine the possible sets $W^g(\rhobar)$. 
The irreducible components of $\cX_{K, n}$ are described in terms of closures of substacks, but we expect the closure relations and component intersections in $\cX_{K, n}$ to be rather complicated.%
We now specialize to the case $n=3$. 
A key tool in the analysis of the sets $W^g(\rhobar)$ in this setting is the description of certain potentially crystalline substacks. 
For a tame inertial type $\tau$, let $\cX^{\eta,\tau} \subset \cX_{K, 3}$ be the substack parametrizing potentially crystalline representations of type $\tau$ and parallel weight $(2,1, 0)$.   
Recall that $\sigma(\tau)$ denotes the representation of $\GL_3(\cO_K)$ obtained by applying the inertial local Langlands correspondence to $\tau$ (it is the inflation of a Deligne--Lusztig representation; see \S \ref{subsub:ILLC}).
The following is an application of the theory of local models of \cite{MLM}:

\begin{thm}[Corollary \ref{thm:local_model_main}]
\label{thm:1:prelim}
If $\tau$ is a $4$-generic tame inertial type, then $\cX^{\eta,\tau}$ is normal and Cohen--Macaulay and its special fiber $\cX^{\eta,\tau}_{\F}$ is reduced. Moreover, $\cX^{\eta,\tau}_{\F}$ is the scheme-theoretic union
\[
\bigcup_{\sigma \in \JH(\ovl{\sigma}(\tau))}\cC_\sigma.
\]
\end{thm}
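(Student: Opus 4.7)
The plan is to deduce the theorem from the local model theorem for $\cX^{\eta,\tau}$ established earlier in the paper via the framework of \cite{MLM}. More precisely, I would first invoke a local model diagram showing that $\cX^{\eta,\tau}$ is smoothly equivalent, after passing to suitable torsor covers, to an explicit scheme $M(\tau)$ built from affine Schubert varieties inside the affine flag variety of (a Weil restriction of) $\mathrm{GL}_3$, cut out by the appropriate shape/monodromy conditions for weight $(2,1,0)$. Since normality, Cohen--Macaulayness, and reducedness of the special fiber are smooth-local on source and target, it suffices to establish these properties for $M(\tau)$ and then match the irreducible components of $M(\tau)_\F$ with the $\cC_\sigma$ under the labeling of \cite[\S7.4]{MLM}.

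For the geometric properties, under the $4$-genericity assumption on $\tau$, I expect $M(\tau)$ to be described as the scheme-theoretic union of affine Schubert varieties $\overline{S(\tld{w})}$ indexed by the $\eta$-admissible set in an extended affine Weyl group (with $\tau$ determining the orientation through its shape). Each $\overline{S(\tld{w})}$ is normal, Cohen--Macaulay, and flat over $\Spf\,\Zp$ with reduced special fiber by the general theory of Faltings and Pappas--Zhu applied to the cocharacter $\eta$. The key input is that, thanks to $4$-genericity, the intersections of distinct $\overline{S(\tld{w})}$ are again well-behaved unions of Schubert varieties (transverse in the strong sense relevant for Cohen--Macaulayness), so the union $M(\tau)_\F$ inherits reducedness and Cohen--Macaulayness. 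Normality of $\cX^{\eta,\tau}$ itself then follows from Serre's criterion: flatness over $\Zp$ combined with reducedness of the special fiber and smoothness of the generic fiber (which is clear since $\tau$ is tame and generic) gives $(R_1) + (S_2)$.

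Next, I would identify the irreducible components of $\cX^{\eta,\tau}_\F$. By the local model description, these components are in bijection with the $\eta$-admissible set attached to $\tau$, which via the inertial local Langlands correspondence and Deligne--Lusztig theory is in bijection with $\JH(\ovl{\sigma}(\tau))$. To match these combinatorial labels with the geometric labels $\sigma \mapsto \cC_\sigma$ from \cite[\S7.4]{MLM}, I would use the explicit computation of tamely potentially crystalline deformation rings from \cite{LLLM, LLLM2}: for a sufficiently generic $\rhobar$ lying on $\cC_\sigma$, the existence of a potentially crystalline lift of type $\tau'$ and weight $\eta$ is controlled by whether $\sigma \in \JH(\ovl{\sigma}(\tau'))$, which pins down the bijection. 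Once the components match, the \emph{scheme-theoretic} union statement follows automatically from reducedness of $\cX^{\eta,\tau}_\F$.

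The main obstacle will be the transversality step: ensuring that the union $\bigcup_{\tld{w}} \overline{S(\tld{w})}$ in $M(\tau)$ is Cohen--Macaulay and reduced as a scheme, not just as a topological union of components. This is where the $4$-genericity hypothesis does real work, since it controls both the shape of each Schubert piece and the combinatorics of how they meet along the $\eta$-admissible stratification; the delicate part is uniformity of this picture across all $4$-generic $\tau$, which will require a careful case analysis of the possible shapes of $\tld{w}$ using the combinatorial machinery developed in \cite{MLM} alongside the explicit intersection calculations of \cite{LLLM, LLLM2}.
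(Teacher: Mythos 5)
Your high-level skeleton (pass to a local model diagram, reduce to explicit charts, prove the special fiber is reduced, deduce normality from flatness plus reducedness of the special fiber) does match the paper, but the central step --- reducedness --- is where your argument breaks down. The object you must control is not a union of affine Schubert varieties: after the local model diagram of Theorem \ref{prop:loc:mod:diag:2}, the relevant charts are $\tld{U}(\tld{z},\eta,\nabla_{\tau,\infty})$, i.e.\ the naive charts cut down by the monodromy condition, and their special fibers live inside $(\Fl^{\nabla_0})^{\cJ}$; the components $\cC_\sigma$ are closures of intersections of Schubert cells with $\Fl^{\nabla_0}$, not Schubert varieties. The Faltings/Pappas--Zhu coherence and reducedness theorems apply to $M_{\cJ}(\leq\eta)$ and its Schubert strata, not to their intersections with the monodromy locus, so "each piece is normal, Cohen--Macaulay, with reduced special fiber by general theory" is not available. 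More decisively, reducedness here is a genuinely exceptional phenomenon: as the paper points out, for Hodge--Tate weights $\lambda+\eta$ with $\lambda\neq 0$ dominant, or for $n>3$, the Breuil--M\'ezard conjecture predicts the analogous special fibers are \emph{never} reduced. Any transversality or general-position argument of the kind you propose would apply equally in those cases, so it cannot work.

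The actual mechanism is a two-sided squeeze that your proposal is missing. On charts with $\ell(\tld{z}_j)\geq 2$ for all $j$, the explicit equations of \cite[\S 5.3]{LLLM} (Table \ref{Table:intsct}) exhibit $\tld{U}(\tld{z},\eta,\nabla_{\tau,\infty})_\F$ as a visibly reduced scheme. On the remaining charts, the computations of \cite[\S 8]{LLLM} only give a closed immersion of the chart into an explicit reduced scheme with $n_{\tld{z}}$ irreducible components (an upper bound); equality is then forced by counting: using the a priori identification of the underlying reduced special fiber with $\bigcup_{\sigma}\cC_\sigma$, one shows that at least $n_{\tld{z}}$ components $\cC_{\sigma'}$ meet the chart, namely those with $\tld{z}s^*t_{\mu^*+\eta^*}\in\tld{C}^{\zeta}_{\sigma'}$, which matches the combinatorics of $W^?\cap\JH(\ovl{R_s(\mu+\eta)})$. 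Note also that in this squeeze the component identification is an \emph{input} to the reducedness proof, not a consequence of it as in your ordering. Finally, Cohen--Macaulayness is not obtained from transversality of the pieces but from inspection of the explicit presentations in Table \ref{Table:intsct} or from cyclicity of patched modules (Remark \ref{rmk:CM}). Your derivation of normality from reducedness of the special fiber via Serre's criterion is the one point that agrees with the paper.
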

\begin{rmk}\label{rmk:BM:prelim}
This shows that the choice of cycles $\cZ_\sigma=\cC_\sigma$ solves the Breuil--M\'ezard equations for the above $\cX^{\eta,\tau}$  (cf.~\cite[Conjecture 8.2.2]{EGstack} and \cite[Conjecture 8.1.1]{MLM}).
\end{rmk}

The equality of the underlying reduced $\cX^{\eta, \tau}_{\F, \mathrm{red}}$ and the scheme-theoretic union $\bigcup_{\sigma \in \JH(\ovl{\sigma}(\tau))}\cC_\sigma$ is proved in Theorem 1.3.5 in \cite{MLM} though we reprove it here with a weaker genericity condition (see Remark \ref{rmk:genericity:prelim}).    The key point is to prove that the special fiber of  $\cX^{\eta,\tau}$ is in fact reduced.  (If we replace $\eta$ by $\lambda+\eta$ with $\lambda$ dominant and nonzero or $n=3$ by $n>3$, the Breuil--M\'ezard conjecture predicts that the analogous stacks never have reduced special fiber.)  The special fiber of $\cX^{\eta, \tau}$ has an open cover with open sets labeled by $f$-tuples of $(2,1,0)$-admissible elements $(\tld{w}_j)$ in the extended affine Weyl group of $\GL_3$.   The complexity of the geometry of the open sets increases as the lengths of the $\tld{w}_j$ decrease.
When the length of $\tld{w}_j$ is greater than 1 for all $j$, the reducedness immediately follows from the calculations in \cite[\S 5.3]{LLLM}.   
Otherwise, the calculations of \cite[\S 8]{LLLM}  give an explicit upper bound on the special fiber which when combined with  $\cX^{\eta, \tau}_{\F,\mathrm{red}} = \bigcup_{\sigma \in \JH(\ovl{\sigma}(\tau))}\cC_\sigma$ must be an equality, and the reducedness follows.  

\begin{rmk}\label{rmk:genericity:prelim} 
An inexplicit genericity condition appears in the main theorems of \cite{MLM} (see \S 1.2.1 of \emph{loc. cit.}).  While we use the models constructed in \emph{loc. cit.}, we reprove some of its main theorems in \S \ref{sub:Mon:Cond}, \ref{subsub:SpFi} with the inexplicit condition replaced by the more typical genericity condition on the gaps between the exponents of the inertial characters in $\tau$. 
This is possible because of the computations in 
\cite{LLLM, LLLM2}.
\end{rmk} 

Finally, we analyze $W^g(\rhobar)$ using local models. 
The special fibers of the local models embed inside the affine flag variety where irreducible components appear as subvarieties of translated affine Schubert varieties. 
In \S \ref{sec:GSW}, we introduce a subset $W_{\mathrm{obv}}(\rhobar)\subset W^g(\rhobar)$ of \emph{obvious weights} for (possibly) wildly ramified $\rhobar$, which has a simple interpretation in terms of the affine flag variety. 
Obvious weights generalize the notion of ordinary weights that appear in \cite{gee-geraghty} and the additional weights appearing in the exceptional cases of \cite{MP,HLM,LMP}. 
The set $W_{\mathrm{obv}}(\rhobar)$ gives upper and lower bounds for $W^g(\rhobar)$. 
We finally show that, in almost all cases, one can determine $W^g(\rhobar)$ from $W_{\mathrm{obv}}(\rhobar)$ (Theorem \ref{thm:Wg}). 
This last part uses a curious piece of numerology from the calculations of \cite{LLLM}---points in the special fibers of the local models never lie on exactly three components.

\subsubsection{Global methods: Patching} 

To prove Theorems \ref{mainthm1} and \ref{thm:6:prelim} we combine the explicit description of the weight sets $W(\rbar)$, coming from Theorems \ref{thm:5:prelim} and \ref{thm:Wg}, with the Kisin--Taylor--Wiles methods developed in \cite{EGS} and employed in \cite[\S 5]{LLLM2}. 
A crucial ingredient is the analysis of certain intersections of cycles in the special fiber of deformation rings.
The local models introduced in \cite{MLM} allow us to algebraize the computations made for the tame case in \cite[\S 3.6]{LLLM2}. 

We now turn to Theorem \ref{thm:5:prelim}. 
The key input into its proof beyond the Kisin--Taylor--Wiles method is the fact that the local Galois deformation rings of type $(\eta,\tau)$ are domains when $\tau$ is $4$-generic. 
This is guaranteed by the fact that the stacks $\cX^{\eta,\tau}$ are normal (Theorem \ref{thm:1:prelim}). 
Then the supports of the patched modules of type $\tau$ are either empty or the entire potentially crystalline deformation rings of type $(\eta,\tau)$. 
The proof is then similar to the tame case in \cite{LLLM2}---one propagates modularity between obvious weights and then to shadow weights using carefully chosen types---except for one new wrinkle. 
From the axioms of a weak patching functor, one cannot deduce the modularity of an obvious weight to get started! 
Indeed one cannot rule out that $\rhobar_v$ lies on a unique component $\cC_{\sigma_v}$ and $W(\rbar)$ contains exactly one Serre weight $\sigma'\neq \sigma\defeq \otimes_{v|p}\sigma_v$ with the property that for any tame inertial type $\tau$, if $\JH(\ovl{\sigma}(\tau))$ contains $\sigma'$, then it also contains $\sigma$. 
We use a patched version of the weight cycling technique introduced in \cite{EGH} to rule out this pathology. 
In fact, we axiomatize our setup to make clear the ingredients that our method requires. 

\subsection{Overview}

\S \ref{sec:background} covers background on tame inertial $L$-parameters, representation theory (\S \ref{subsub:TIT:DL}), and Breuil--Kisin modules with tame descent data (\S \ref{subsub:BKM}), following \cite{MLM}. 
\S \ref{subsec:param:SW} gives a comparison between parametrizations of Serre weights in \cite{MLM} and \cite{LLLM2}.  

\S \ref{sec:LM:EG} establishes the main results about the geometry of local deformation rings. We specialize the theory of local models in \cite{MLM} to dimension three.
The main results are Theorem \ref{thm:local_model_cmpt} and Corollary \ref{thm:local_model_main} which establish the geometric properties that we need, some of which are specific to dimension three.

In \S \ref{sec:GSW}, we analyze possible sets of geometric weights using the affine flag variety.  
Theorem \ref{thm:Wg} gives a complete explicit description when $\rhobar$ is sufficiently generic. 

\S \ref{sec:PF:global_applications} contains our global applications.  
In \S \ref{subsec:WMPF}, we introduce the axioms of patching functors following \cite[\S 6]{MLM} and prove the weight part of Serre's conjecture \emph{assuming} the modularity of at least one obvious weight (Proposition \ref{prop:minimalcycle}).
The latter condition is then removed in \S \ref{subsec:AM} using modules with an arithmetic action (Theorem \ref{thm:axiomaticSWC}).
In \S \ref{sec:cyc}, we prove results on mod $p$ multiplicity one and Breuil's lattice conjectures for patched modules (Theorems \ref{thm:cyclic}, \ref{thm:gauge}), generalizing analogous results in \cite{LLLM2} to the wildly ramified case.
Finally, \S \ref{subsec:global} proves our main global theorems.

\subsection{Acknowledgements}
The main ideas of this article date back to the summer of 2016, but the formulation of the results were rather awkward due to the lack of the Emerton--Gee stack and its local model theory at that time.
For this reason we decided to write up this paper only after the release of \cite{MLM}.
We apologize for the long delay.
Part of the work has been carried out during visits at the Institut Henri Poinar\'e (2016), the Institute for Advanced Study (2017), Mathematisches Forschungsinstitut Oberwolfach (2019), University of Arizona, and Northwestern University.
We would like to heartily thank these institutions for the outstanding research conditions they provided, and for their support.

Finally, D.L. was supported by the National Science Foundation under agreements Nos.~DMS-1128155 and DMS-1703182 and an AMS-Simons travel grant. B.LH. acknowledges support from the National Science Foundation under grant Nos.~DMS-1128155, DMS-1802037 and the Alfred P. Sloan Foundation. B.L. was partially supported by Simons Foundation/SFARI (No.~585753), National Science Foundation grant DMS-1952556, and the Alfred P. Sloan Foundation. S.M. was supported by the Institut Universitaire de France and the ANR-18-CE40-0026 (CLap CLap).

\subsection{Notation}
\label{subsec:notations}

For any field $K$ we fix once and for all a separable closure $\ovl{K}$ and let $G_K \defeq \Gal(\ovl{K}/K)$. 
If $K$ is a nonarchimedean local field, we let $I_K \subset G_K$ denote the inertial subgroup.
We fix a prime $p\in\Z_{>0}$.
Let $E \subset \ovl{\Q}_p$ be a subfield which is finite-dimensional over $\Q_p$.
We write $\cO$ to denote its ring of integers, fix an uniformizer $\varpi\in \cO$ and let $\F$ denote the residue field of $E$.
We will assume throughout that $E$ is sufficiently large.

We consider the group $G\defeq\GL_3$ (defined over $\Z$).
We write $B$ for the subgroup of upper triangular matrices, $T \subset B$ for the split torus of diagonal matrices and $Z \subset T$ for the center of $G$.  
Let $\Phi^{+} \subset \Phi$ (resp. $\Phi^{\vee, +} \subset \Phi^{\vee}$) denote the subset of positive roots (resp.~positive coroots) in the set of roots (resp.~coroots) for $(G, B, T)$. 
Let $\Delta$ (resp.~$\Delta^{\vee}$) be the set of simple roots (resp.~coroots).
Let $X^*(T)$ be the group of characters of $T$ which we identify with $\Z^3$ by letting the standard $i$-th basis element $\eps_i=(0,\ldots, 1,\ldots, 0)$ (with the $1$ in the $i$-th position)  correspond to extracting the $i$-th diagonal entry of a diagonal matrix. 
In particular, we let $\eps_1'$ and $\eps'_2$ be $(1,0,0)$ and $(0,0,-1)$ respectively.

We write $W$ (resp.~$W_a$, resp.~$\tld{W}$) for the Weyl group (resp.~the affine Weyl group, resp. the extended affine Weyl group) of $G$.
If $\Lambda_R \subset X^*(T)$ denotes the root lattice for $G$ we then have
\[
W_a = \Lambda_R \rtimes W, \quad \tld{W} = X^*(T) \rtimes W
\]
and use the notation $t_{\nu} \in \tld{W}$ to denote the image of $\nu \in X^*(T)$. 
The Weyl groups $W$, $\tld{W}$, and $W_a$ act naturally on $X^*(T)$ and on $X^*(T)\otimes_{\Z} A$ for any ring $A$ by extension of scalars.

Let $\langle \ ,\,\rangle$ denote the duality pairing on $X^*(T)\times X_*(T)$, which extends to a pairing on $(X^*(T)\otimes_{\Z}A)\times (X_*(T)\otimes_{\Z}A)$ for any ring $A$.
We say that a weight $\lambda\in X^*(T)$ is \emph{dominant} if $0\leq \langle\lambda,\alpha^\vee\rangle$  for all $\alpha\in \Delta$.
Set $X^0(T)$ to be the subgroup consisting of characters $\lambda\in X^*(T)$ such that $\langle\lambda,\alpha^\vee\rangle=0$ for all $\alpha\in \Delta$, and $X_1(T)$ to be the subset consisting of characters $\lambda\in X^*(T)$ such that $0\leq \langle\lambda,\alpha^\vee\rangle< p$ for all $\alpha\in \Delta$

We fix an element $\eta\in X^*(T)$ such that $\langle \eta,\alpha^\vee\rangle = 1$ for all positive simple roots $\alpha$.
We define the \emph{$p$-dot action} as $t_\lambda w \cdot \mu = t_{p\lambda} w (\mu+\eta) - \eta$.
By letting $w_0$ denote the longest element in $W$ define $\tld{w}_h\defeq w_0 t_{-\eta}$.

Recall that for $(\alpha,n)\in \Phi^+\times \Z$, we have the $p$-root hyperplane $H_{\alpha,n}\defeq \{\lambda:\ \langle\lambda+\eta,\alpha^\vee\rangle=np\}$. A $p$-alcove is a connected component of  
the complement $X^*(T)\otimes_{\Z}\R\ \setminus\ \big(\bigcup_{(\alpha,n)}H_{\alpha,n}\big)$.
We say that a $p$-alcove $C$ is $p$-restricted (resp.~dominant) if $0<\langle\lambda+\eta,\alpha^\vee\rangle<p$ (resp.~$0<\langle\lambda+\eta,\alpha^\vee\rangle$) for all simple roots $\alpha\in \Delta$ and $\lambda\in C$.
If $C_0 \subset X^*(\un{T})\otimes_{\Z}\R$ denotes the dominant base alcove (i.e.~the alcove defined by the condition $0<\langle\lambda+\eta,\alpha^\vee\rangle<p$ for all positive roots $\alpha\in \Phi^+$, we let 
\[\tld{W}^+\defeq\{\tld{w}\in \tld{W}:\tld{w}\cdot C_0 \textrm{ is dominant}\}.\]
and
\[\tld{W}^+_1\defeq\{\tld{w}\in \tld{{W}}^+:\tld{w}\cdot {C}_0 \textrm{ is } p\textrm{-restricted}\}.\]
We sometimes refer to $C_0$ as the \emph{lower alcove} and $C_1\defeq \tld{w}_h\cdot C_0$ as the \emph{upper alcove}.

Let now $\cO_p$ be a finite \'etale $\Z_p$-algebra.
We have an isomorphism $\cO_p\cong\prod\limits_{v\in S_p} \cO_{v}$ where $S_p$ is a finite set and $\cO_{v}$ is the ring of integers of a finite unramified extension $F^+_{v}$ of $\Q_p$.
Let $G_0 \defeq \Res_{\cO_p/\Z_p} G_{/\cO_p}$ with Borel subgroup $B_0 \defeq  \Res_{\cO_p/\Z_p} B_{/\cO_p}$, maximal torus $T_0 \defeq \Res_{\cO_p/\Z_p} T_{/\cO_p}$, and $Z_0 \defeq \Res_{\cO_p/\Z_p} Z_{/\cO_p}$. 
We assume that $\cO$ contains the image of any ring homomorphism $\cO_p \ra \ovl{\Z}_p$ and write $\cJ\defeq \Hom_{\Zp}(\cO_p,\cO)$.
We can and do fix an identification of $\un{G} \defeq (G_0)_{/\cO}$ with the split reductive group $G_{/\cO}^{\cJ}$. 
We similarly define $\un{B}, \un{T},$ and $\un{Z}$.
Corresponding to $(\un{G}, \un{B}, \un{T})$, we have the set of positive roots $\un{\Phi}^+ \subset \un{\Phi}$ and the set of positive coroots $\un{\Phi}^{\vee, +}\subset \un{\Phi}^{\vee}$.
The notations $\un{\Lambda}_R$, $\un{W}$, $\un{W}_a$, $\tld{\un{W}}$, $\tld{\un{W}}^+$, $\tld{\un{W}}^+_1$ should be clear as should the natural isomorphisms $X^*(\un{T}) = X^*(T)^{\cJ}$ and the like.  
Given an element $j\in\cJ$, we use a subscript notation to denote $j$-components obtained from the isomorphism $\un{G}\cong G_{/\cO}^{\cJ}$ (so that, for instance, given an element $\tld{w}\in \tld{\un{W}}$ we write $\tld{w}_j$ to denote its $j$-th component via the induced identification $\tld{\un{W}}\cong \tld{W}^{\cJ}$).
For sake of readability, we abuse notation and still write $w_0$ to denote the longest element in $\un{W}$, and fix a choice of an element $\eta\in X^*(\un{T})$ such that $\langle \eta,\alpha^\vee\rangle = 1$ for all $\alpha\in\un{\Delta}$.
The meaning of $w_0$, $\eta$ and $\tld{w}_h\defeq w_0t_{-\eta}$ should be clear from the context.

The absolute Frobenius automorphism on $\cO_p/p$ lifts canonically to an automorphism $\varphi$ of $\cO_p$. We define an automorphism $\pi$ of the identified groups $X^*(\un{T})$ and $X_*(\un{T}^\vee)$ by the formula $\pi(\lambda)_\sigma = \lambda_{\sigma \circ \varphi^{-1}}$ for all $\lambda\in X^*(\un{T})$ and $\sigma: \cO_p \ra \cO$.
We assume that, in this case, the element $\eta\in X^*(\un{T})$ we fixed is $\pi$-invariant.
We similarly define an automorphism $\pi$ of $\un{W}$ and $\tld{\un{W}}$.

Let $F^+_p$ be $\cO_p[1/p]$ so that $F^+_p$ is isomorphic to the (finite) product $\prod\limits_{v \in S_p} F^+_{v}$ where $F^+_{v} \defeq \cO_{v}[1/p]$ for each $v \in S_p$.
Let 
\[
\un{G}^\vee_{/\Z} \defeq \prod_{F^+_p \ra E} G^\vee_{/\Z} 
\]
be the dual group of $\un{G}$ so that the Langlands dual group of $G_0$ is $^L \un{G}_{/\Z} \defeq \un{G}^\vee\rtimes \Gal(E/\Q_p)$ where $\Gal(E/\Q_p)$ acts on the set of homomorphisms $F^+_p \ra E$ by post-composition.

We now specialize to the case where $S_p=\{v\}$ is a singleton.
Hence $F^+_p=K$ is an unramified extension of degree $f$ with ring of integers $\cO_K$ and residue field $k$. 
Let $W(k)$ be ring of Witt vectors of $k$, which is also the ring of integers of $K$.  

We denote the arithmetic Frobenius automorphism on $W(k)$ by $\phz$; it acts as raising to $p$-th power on the residue field.

Recall that we fixed a separable closure $\ovl{K}$ of $K$.
We choose $\pi \in \ovl{K}$ such that $\pi^{p^f-1} = -p$ and let $\omega_K : G_K \ra \cO_K^\times$ be the character defined by $g(\pi) = \omega_K(g) \pi$, which is independent of the choice of $\pi$.
We fix an embedding $\sigma_0: K \into E$ and define $\sigma_j = \sigma_0 \circ \phz^{-j}$, which identifies $\cJ = \Hom(k, \F) = \Hom_{\Qp}(K, E)$ with $\Z/f \Z$. 
We write $\omega_f:G_K \ra \cO^\times$ for the character $\sigma_0 \circ \omega_K$.

Let $\eps$ denote the $p$-adic cyclotomic character.  
If $W$ is a de Rham representation of $G_K$ over $E$, then for each $\kappa \in \Hom_{\Qp}(K, E)$, we write $\mathrm{HT}_{\kappa}(W)$ for the multiset of Hodge--Tate weights labelled by embedding $\kappa$ normalized so that the $p$-adic cyclotomic character $\eps$ has Hodge--Tate weight $\{1\}$ for every $\kappa$.  For $\mu = (\mu_j)_{j\in\cJ} \in X^*(\un{T})$, we say that a $3$-dimensional representation $W$ has Hodge--Tate weights $\mu$ if 
\[
\mathrm{HT}_{\sigma_j}(W) = \{ \mu_{1, j}, \mu_{2, j}, \mu_{3, j} \}.
\]
Our convention is the opposite of that of \cite{EGstack,CEGGPS}, but agrees with that of \cite{GHS}.

We say that a $3$-dimensional potentially semistable representation $\rho:G_K \ra \GL_n(E)$ has type $(\mu, \tau)$ if $\rho$ has Hodge--Tate weights $\mu$ and the restriction to $I_K$ of the Weil-Deligne representation attached to $\rho$ (via the \emph{covariant} functor $\rho\mapsto\mathrm{WD}(\rho)$) is isomorphic to the inertial type $\tau$. 
Note that this differs from the conventions of \cite{GHS} via a shift by $\eta$.

Let $\Gamma$ be a group.
If $V$ is a finite length $\Gamma$-representation, we let $\JH(V)$ be the (finite) set of Jordan--H\"older factors of $V$.
If $V^\circ$ is a finite $\cO$-module with a $\Gamma$-action, we write $\ovl{V}^\circ$ for the $\Gamma$-representation $V^\circ\otimes_{\cO}\F$ over $\F$.

If $X$ is an ind-scheme defined over $\cO$, we write $X_E\defeq X\times_{\Spec\cO} \Spec E$ and $X_{\F}\defeq X\times_{\Spec \cO}\Spec \F$ to denote its generic and special fiber, respectively. 
If $M$ is any $\cO$-module we write $\ovl{M}$ to denote $M\otimes_{\cO}\F$.

If $P$ is a statement, the symbol $\delta_P\in \{0,1\}$ takes value $1$ if $P$ is true, and $0$ if $P$ is false.

\section{Background}
\label{sec:background}

\subsection{Affine Weyl group, tame inertial types and Deligne--Lusztig representations}
\label{subsub:TIT:DL}

Throughout this section, we assume {that} $S_p=\{v\}$. 
Thus $\cO_p=\cO_K$ is the ring of {integers} of a finite unramified extension $K$ of $\Qp$ and $G_0 = \Res_{\cO_K/\Z_p} G_{/\cO_K}$.
We drop subscripts $v$ from notation and we identify $\cJ=\Hom_{\Qp}(K,E)$ with $\Z/f\Z$ via $\sigma_{j} \defeq \sigma_0 \circ \phz^{-j} \mapsto j$.

\subsubsection{Admissibility}
We follow \cite[\S 2.1--\S 2.4]{MLM}, specializing to {the} case of $n=3$.
We denote by $\leq$ the Bruhat order on $\un{\tld{W}}\cong X^*(\un{T})\rtimes \un{W}$ associated to the choice of the dominant base alcove $\un{C}_0$ and set 
\[
\Adm (\eta) = \{ \tld{w} \in \un{\tld{W}} \mid \tld{w} \leq t_{s(\eta)} \text{ for some } s \in \un{W} \}. 
\] 
We will also consider the partially ordered group $\un{\tld{W}}^\vee$ which is identified with $\tld{\un{W}}$ as a group, but whose Bruhat order is defined by the antidominant base alcove (and still denoted as $\leq$).
Then $\Adm^\vee(\eta)$ is defined as above, using now the antidominant order.
We have an order reversing bijection $\tld{w}\mapsto \tld{w}^*$ between $\tld{\un{W}}$ and $\tld{\un{W}}^\vee$  defined as $(\tld{w}^*)_j\defeq w_j^{-1}t_{\nu_j}$ if $\tld{w}_j=t_{\nu_j}w_j$.

\subsubsection{Tame inertial types and Deligne--Lusztig representations}
\label{subsub:TIT:DL}
An inertial type (for $K$) is the $\GL_3(E)$-conjugacy class of {a} homomorphism  $\tau:I_{K}\ra\GL_3(E)$ with open kernel and which extends to the Weil group of $G_K$.
An inertial type is \emph{tame} {if} it factors through the tame quotient of $I_{K}$. 
We will sometimes identify a tame inertial type with a fixed choice of a representative in its class.

Given ${s=(s_0,\dots,s_{f-1})\in \un{W}} $ and $\mu\in X^*(\un{T})\cap \un{C}_0$, we have an associated integer $r\in \{1,2,3\}$ (which is the order of the element $s_0s_1\dots s_{f-1}\in W$), integers $\bf{a}^{\prime(j')}\in \Z^3$ for $0\leq j'\leq fr-1$ and a tame inertial type $\tau(s,\mu+\eta)$ defined as $\tau(s,\mu+\eta)\defeq \sum_{i=1}^3(\omega_{fr})^{\mathbf{a}_i^{\prime(0)}}$ (see \cite[Example 2.4.1, equations (5.2), (5.1)]{MLM} for the details of this construction).
We say that $(s,\mu)$ is \emph{the lowest alcove presentation} for the tame inertial type $\tau(s,\mu+\eta)$ and that $\tau(s,\mu+\eta)$ is \emph{N-generic} if $\mu$ is $N$-deep in alcove $\un{C}_0$.
We say that a tame inertial type $\tau$ has \emph{a lowest alcove presentation} if there exists a pair $(s,\mu)$ as above such that $\tau\cong \tau(s,\mu+\eta)$ (in which case we will say that $(s,\mu)$ is a lowest alcove presentation for $\tau$), and that $\tau$ is $N$-generic if $\tau$ has a lowest alcove presentation $(s,\mu)$ such that $\mu$ is $N$-deep in alcove $\un{C}_0$.
We remark that different choices of pairs $(s,\mu)$ as above can give rise to isomorphic tame inertial types (see \cite[Proposition 2.2.15]{LLL}).  If $\tau$ is a tame inertial type of the form $\tau=\tau(s,\mu+\eta)$, we write $\tld{w}(\tau)$ for the element $t_{\mu+\eta}s\in\tld{\un{W}}$. (In particular, when writing $\tld{w}(\tau)$ we use an implicit lowest alcove presentation for $\tau$).

Repeating the above with $E$ replaced by $\F$, we obtain the notion of inertial $\F$-types and lowest alcove presentations for tame inertial $\F$-types. 
We use the notation $\taubar$ to denote a tame inertial $\F$-type $\taubar:I_{K}\ra\GL_3(\F)$. %
We say that a tame inertial $\F$-type is $N$-generic if it admits a lowest alcove presentation $(s,\mu)$ such that $\mu$ is $N$-deep in $\un{C}_0$.

If $\mu$ is $1$-deep in $\un{C}_0$, then for each $0\leq j'\leq fr-1$ there is a unique element $s'_{\orient,j'}\in W$ such that $  (s'_{\orient,j'})^{-1}(\bf{a}^{\prime\,(j')})$ is dominant.
(In the terminology of \cite{LLLM}, cf.~Definition 2.6 of \emph{loc.~cit.}, the $fr$-tuple $(s'_{\orient,j'})_{0\leq j'\leq fr-1}$ is the \emph{orientation} of $(\bf{a}^{\prime\,(j')})_{0\leq j'\leq fr-1}$.)

To a pair $(s,\mu)\in \un{W}\times X^*(\un{T})$, we can also associate a virtual $G_0(\Fp)$-representation over $E$ which we denote $R_s(\mu)$ (cf.~\cite[Definition 9.2.2]{GHS}, where $R_s(\mu)$ is denoted as $R(s,\mu)$).
In particular, $R_1(\mu)$ is a principal series representation.
If $\mu-\eta$ is $1$-deep in $\un{C}_0$ then $R_s(\mu)$ is an irreducible representation.
In analogy with the terminology for tame inertial type, if $\mu-\eta$ is $N$-deep in alcove $\un{C}_0$ for $N\geq 0$, we call $(s,\mu-\eta)$ an $N$-generic \emph{lowest alcove presentation} for $R_s(\mu)$, and say that $R_s(\mu)$ is \emph{$N$-generic}. 

\subsubsection{Inertial local Langlands correspondence}
\label{subsub:ILLC}
Given a tame inertial type $\tau: I_{K}\ra \GL_3(E)$, \cite[Theorem 3.7]{CEGGPS} gives an irreducible smooth $E$-valued representation $\sigma(\tau)$ of $G_0(\Fp)= \GL_3(k)$ over $E$ satisfying results towards the inertial local Langlands correspondence (see \emph{loc.~cit}.~for the properties satisfied by $\sigma(\tau)$).
(By inflation, we will consider $\sigma(\tau)$ as a smooth representation of $G_0(\Zp)$ without further comment.)
This representation need not be uniquely determined by $\tau$ and in what follows $\sigma(\tau)$ will denote either a particular choice that we have made or any choice that satisfies the properties of \cite[Theorem 3.7]{CEGGPS} (see also \cite[Theorem 2.5.3]{MLM} and the discussion following it).

When $\tau=\tau(s,\mu+\eta)$ is a tame inertial type such that $\mu\in \un{C}_0$ is $1$-deep, the representation $\sigma(\tau)$ can be taken to be $R_s(\mu+\eta)$ thanks to \cite[Corollary 2.3.5]{LLL}.
\subsubsection{Serre weights}
\label{subsubsec:SW}
We finally recall the notion of Serre weights for $G_0(\Fp)$, and the notion of \emph{lowest alcove presentations} for them, following \cite[\S 2.2]{MLM}.
A \emph{Serre weight} for $G_0(\Fp)$ is the isomorphism class of an (absolutely) irreducible representation of $G_0(\Fp)$ over $\F$. 
(We will sometimes refer to a representative for the isomorphism class as a Serre weight.)

Given $\lambda\in X_1(\un{T})$, we write $F(\lambda)$ for the Serre weight with highest weight $\lambda$; the assignment $\lambda\mapsto F(\lambda)$ induces a bijection between $X_1(\un{T})/(p-\pi)X^0(\un{T})$ and the set of Serre weights (cf.~\cite[Lemma 9.2.4]{GHS}). We say that $F(\lambda)$ is $N$-deep if $\lambda$ is (this does not depend on the choice of $\lambda$).

Recall from \cite[\S 2.2]{MLM} the equivalence relation on $\tld{\un{W}} \times X^*(\un{T})$ defined by $(\tld{w},\omega) \sim (t_\nu \tld{w},\omega-\nu)$ for all $\nu \in X^0(\un{T})$. 
For (an equivalence class of) a pair $(\tld{w}_1,\omega-\eta)\in \tld{\un{W}}^+_1\times (X^*(\un{T})\cap \un{C}_0)/\sim$ the Serre weight $F_{(\tld{w}_1,\omega)}\defeq F(\pi^{-1}(\tld{w}_1)\cdot (\omega-\eta))$ is well defined i.e.~is independent of the representative of the equivalence class of $(\tld{w}_1,\omega)$. 
The equivalence class of $(\tld{w}_1,\omega)$ is called a \emph{lowest alcove presentation} for the Serre weight $F_{(\tld{w}_1,\omega)}$. 
The Serre weight $F_{(\tld{w}_1,\omega)}$ is $N$-deep if and only if $\omega-\eta$ is $N$-deep in alcove $\un{C}_0$.
As above, we sometimes implicitly choose a representative for a lowest alcove presentation to make \emph{a priori} sense of an expression, though it is \emph{a posteriori} independent of this choice. 

\subsubsection{Compatibility for lowest alcove presentations} 
Recall that we have a canonical isomorphism $\tld{\un{W}}/\un{W}_a \cong X^*(\un{Z})$ where $\un{W}_a\cong \Lambda_{\un{R}} \rtimes \un{W}$ is the affine Weyl group of $\un{G}$.
Given an algebraic character $\zeta\in X^*(\un{Z})$, we say tha t an element $\tld{w}\in \tld{\un{W}}$ is \emph{$\zeta$-compatible} if it corresponds to $\zeta$ via the isomorphism $\tld{\un{W}}/\un{W}_a \cong X^*(\un{Z})$.
In particular, a lowest alcove presentation $(s,\mu)$ for a tame inertial type (resp.~a lowest alcove presentation $(s,\mu-\eta)$ for a Deligne--Lusztig representation) is $\zeta$-compatible if the element $t_{\mu+\eta} s \in \tld{\un{W}}$ (resp.~$t_{\mu} s \in \tld{\un{W}}$) is $\zeta$-compatible.
Similarly, a lowest alcove presentation $(\tld{w}_1,\omega)$ for Serre weight is \emph{$\zeta$-compatible} if the element $t_{\omega-\eta}\tld{w}_1\in \tld{\un{W}}$ is $\zeta$-compatible. %

\subsubsection{A comparison to \cite{LLLM2}}
\label{subsec:param:SW}
In \cite{LLLM2}, the parametrization of Serre weights is slightly different from the one in \cite{MLM}.
Here, we give a dictionary between the two.

Define a map
\begin{align}
\label{eqn:centralchar} \tld{\un{W}} \times X^*(\un{T}) &\longrightarrow \tld{\un{W}}/\un{W}_a \cong X^*(\un{Z}) \\
\nonumber (\tld{w},\omega) &\longmapsto t_{\omega-\eta} \tld{w} \un{W}_a
\end{align}
and write $(\tld{\un{W}} \times X^*(\un{T}))^\zeta$ for the preimage of $\zeta \in X^*(\un{Z})$ (presentations \emph{compatible} with $\zeta$). 
The map \eqref{eqn:centralchar} is constant on equivalence classes, and we write $(\tld{\un{W}} \times X^*(\un{T}))^\zeta/\sim$ for the set of equivalence classes in the preimage of $\zeta$. 
The equivalence relation restricts to one on $\tld{\un{W}}_1 \times X^*(\un{T})$ or $\tld{\un{W}}_1 \times (X^*(\un{T}) \cap \un{C}_0 + \eta)$, and we use similar notation, e.g.~$(\tld{\un{W}}_1 \times (X^*(\un{T}) \cap \un{C}_0 + \eta))^\zeta/\sim$, for these subsets. 

We let $\un{\Lambda}_W$ and $\tld{W}^{\der}$ be $X^*(\un{T})/X^0(\un{T})$ and $\tld{W}/X^0(\un{T})$, respectively. 
Recall from \cite[\S 2.1]{LLLM2} the set 
\[
\cP^{\der} = \{(\omega,\tld{w}) \in \un{\Lambda}_W \times \tld{\un{W}}_1^{\der,+} \mid t_\omega \pi(\tld{w}) \in \un{W}_a\}. 
\]
Letting $\cA$ be the set of $p$-restricted alcoves in $X^*(\un{T}) \otimes_{\Z} \R$, the map 
\begin{align*} 
 \beta: \cP^{\der} &\longrightarrow \un{\Lambda}_W \times \cA \\
(\omega,\tld{w}) &\longmapsto (\omega,\pi(\tld{w}) \cdot \un{C}_0)
\end{align*}
is a bijection by \cite[Lemma 2.1.1]{LLLM2}. 

For $\lambda \in X^*(\un{T})$, the map
\begin{align*}
(\tld{\un{W}}_1 \times X^*(\un{T}))^{\lambda-\eta|_{\un{Z}}}/\sim &\overset{\iota_\lambda}{\longrightarrow} \cP^{\der} \\
(\tld{w},\omega) &\longmapsto (\omega - \lambda,\pi^{-1}(\tld{w}))
\end{align*}  
is a bijection. 
(Here $\omega-\lambda$ also denotes its image in $\un{\Lambda}_W$.) 

Then $\beta \circ \iota_\lambda: (\tld{\un{W}}_1 \times X^*(\un{T}))^{\lambda-\eta|_{\un{Z}}}/\sim \longrightarrow \un{\Lambda}_W \times \cA$ is a bijection which induces a bijection 
\begin{align}
\label{eq:bij:SW} (\tld{\un{W}}_1 \times X^*(\un{T}) \cap \un{C}_0 + \eta)^{\lambda-\eta|_{\un{Z}}}/\sim &\longrightarrow \un{\Lambda}_W^\lambda \times \cA\\
\nonumber (\tld{w},\omega) &\longmapsto (\omega-\lambda,\tld{w} \cdot \un{C}_0)
\end{align}
when $\lambda -\eta \in\un{C}_0$ {and $\un{\Lambda}_W^\lambda$ is defined to be the set of $\omega'\in \un{\Lambda}_W$ satisfying $\omega'+\lambda -\eta\in\un{C}_0$, see \cite[\S 2.1]{LLLM2}.}
By the definition of $\mathfrak{Tr}_{\lambda}$ in \cite[\S 2.1]{LLLM2}, for $(\tld{w},\omega) \in (\tld{\un{W}}_1 \times X^*(\un{T}) \cap \un{C}_0 + \eta)^{\lambda-\eta|_{\un{Z}}}/\sim$, we have: 
\begin{equation}
\label{eq:SW:LAP}
F_{(\tld{w},\omega)} = F(\mathfrak{Tr}_{\lambda}(\omega-\lambda,\tld{w}\cdot \un{C}_0)).
\end{equation}  

\subsubsection{Reduction of Deligne--Lusztig representations}\label{sec:reduction}

For $i\in\{1,2\}$, let $\eps_i$ denote the image of $\eps_i'$ via the surjection $X^*(\un{T})\onto \un{\La}_W$.

\begin{prop}\label{prop:JH}
Let $\lambda-\eta$ and $\mu-\eta$ be $0$-deep and $1$-deep in $\un{C}_0$, respectively, such that $\mu+\eta-\lambda\in \un{\Lambda}_R$. 
If $\sigma \in \JH(\ovl{R_s(\mu)})$ is a $0$-deep Serre weight, then $\sigma$ is contained in $F\Big(\mathfrak{Tr}_{\lambda}\big(t_{\mu-\lambda}s\big(\Sigma\big)\big)\Big)$, where $\Sigma=(\Sigma_0)^{f}\subseteq \un{\Lambda}^{\lambda+\eta}_W\times\mathcal{A}$ and
\[
\Sigma_0 \defeq  \begin{Bmatrix} (\eps_1+ \eps_2, 0), (\eps_1 - \eps_2, 0), (\eps_2 - \eps_1, 0) \\
 (0, 1) , (\eps_1, 1), (\eps_2, 1) \\
 (0, 0), (\eps_1, 0), (\eps_2, 0) 
 \end{Bmatrix}.
 \]
If $\mu-\eta$ is furthermore $2$-deep, then $\JH(\ovl{R_s(\mu)})$ is $F\Big(\mathfrak{Tr}_{\lambda}\big(t_{\mu-\lambda}s\big(\Sigma\big)\big)\Big)$. 
\end{prop}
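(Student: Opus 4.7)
The plan is to transport the explicit computation of $\JH(\ovl{R_s(\mu)})$ already available in the literature (for $n=3$, cf.~\cite{LLLM, LLLM2}) into the Serre weight parametrization reviewed in \S \ref{subsec:param:SW}, by means of the bijection \eqref{eq:bij:SW} and the identity \eqref{eq:SW:LAP}. In particular, the statement is essentially a translation/compatibility result between two bookkeeping conventions for Serre weights.

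Assume first that $\mu-\eta$ is $2$-deep in $\un{C}_0$. I would invoke the explicit description of $\JH(\ovl{R_s(\mu)})$ from \cite[\S 2]{LLLM2} (going back to Jantzen's sum formula together with Herzig's computations of generic Serre weights): each of the $9$ JH factors (per embedding) can be written as $F(\mathfrak{Tr}_\lambda(\omega, C))$ for $(\omega,C)$ in an explicit subset of $\un{\La}_W^{\lambda+\eta}\times \cA$. The core of the proof is then the identification of this subset with $t_{\mu-\lambda}s(\Sigma)$. The set $\Sigma_0$ is designed to be the ``standard'' local data attached to the principal series $R_1(\lambda+\eta)$: the $6$ lower-alcove entries and $3$ upper-alcove entries enumerate the $9$ Jordan--H\"older constituents of a generic principal series for $\GL_3(\F_p)$, organized according to their position in the $3\times 3$ weight grid centered at $\lambda$. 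The general DL representation $R_s(\mu)$ is obtained from $R_1(\lambda+\eta)$ by translating the weight (accounting for the factor $t_{\mu-\lambda}$) and twisting by the Weyl element defining the DL representation (the factor $s$). The identification of the two sets is then an embedding-by-embedding comparison.

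For the statement when $\mu-\eta$ is only $1$-deep, only the containment is required. I would argue via a twisting/degeneration argument: replacing $\mu$ by $\mu+p\nu$ for an appropriate $\pi$-invariant $\nu\in X^*(\un{T})$ yields $R_s(\mu+p\nu)\cong R_s(\mu)\otimes\chi$ for an algebraic character $\chi$, and one can arrange $(\mu+p\nu)-\eta$ to be $2$-deep. Step 1 then applies to $R_s(\mu+p\nu)$, and any $0$-deep JH factor of $\ovl{R_s(\mu)}$ transports to a JH factor of $\ovl{R_s(\mu+p\nu)}$ lying in the $9$-element set of the deep case; untwisting recovers the desired containment in $F(\mathfrak{Tr}_\lambda(t_{\mu-\lambda}s(\Sigma)))$. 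Equivalently, one can run Jantzen's algorithm directly in the $1$-deep setting and observe that the possible contributions to JH are indexed by the same set $\Sigma$, with some multiplicities possibly vanishing.

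The main obstacle is the careful bookkeeping required to match the two parametrizations of Serre weights in Step 1: one must track the interplay between the automorphism $\pi$ appearing in the definition $F_{(\tld{w}_1,\omega)}=F(\pi^{-1}(\tld{w}_1)\cdot(\omega-\eta))$ of \S \ref{subsubsec:SW} and the Frobenius twists intrinsic to $R_s(\mu)$ as a Deligne--Lusztig representation, as well as the central character compatibility imposed by the hypothesis $\mu+\eta-\lambda\in\un{\La}_R$, which ensures that $t_{\mu-\lambda}s(\Sigma)$ lands in $\un{\La}_W^{\lambda+\eta}\times\cA$.
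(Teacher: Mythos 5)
Your treatment of the $2$-deep case is essentially the paper's: the paper likewise reduces to the explicit decomposition coming from Herzig's work (the character identity $\ovl{R_s(\mu)} = \sum_{\tld{w} \in \tld{\un{W}}_1^+/X^0(\un{T})} \ovl{W}(\tld{w} \cdot (t_\mu s (\tld{w}_h\tld{w})^{-1}(0) - \eta))$ together with the decomposition of $p$-restricted Weyl modules) and then matches parametrizations via the proof of \cite[Proposition~2.3.4]{LLLM2}, which is exactly the bookkeeping you describe.

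However, your main argument for the $1$-deep containment has a genuine gap. You propose to replace $\mu$ by $\mu+p\nu$ so that $(\mu+p\nu)-\eta$ becomes $2$-deep. This is impossible: for any positive root $\alpha$, adding $p\nu$ changes $\langle \mu,\alpha^\vee\rangle$ by $p\langle\nu,\alpha^\vee\rangle$, so $\mu+p\nu-\eta$ remains in $\un{C}_0$ only if $\langle\nu,\alpha^\vee\rangle=0$ for all $\alpha$, i.e.\ $\nu\in X^0(\un{T})$ --- and such twists leave the depth unchanged. Depth in $\un{C}_0$ is an invariant of the lowest alcove presentation that no character twist can improve, so there is no degeneration from the $1$-deep to the $2$-deep case. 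The correct route (and the one the paper takes) is to observe that Herzig's character identity above is already valid when $\mu-\eta$ is $1$-deep; the subtlety is that the highest weights $\tld{w} \cdot (t_\mu s (\tld{w}_h\tld{w})^{-1}(0) - \eta)$ are then only $-1$-deep in a $p$-restricted alcove, so each is either dominant (in which case its Weyl module's JH factors land in $F\big(\mathfrak{Tr}_{\lambda}(t_{\mu-\lambda}s(\Sigma))\big)$) or its Weyl module character vanishes --- whence only containment, not equality. Your parenthetical fallback (``run Jantzen's algorithm directly in the $1$-deep setting'') gestures at this, but it omits precisely this dominance-or-vanishing dichotomy, which is the actual content of the $1$-deep statement.
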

\begin{proof}
\cite[Appendix, Theorem 3.4]{herzig-duke} gives the identity 
\[
\ovl{R_s(\mu)} = \sum_{\tld{w} \in \tld{\un{W}}_1^+/X^0(\un{T})} \ovl{W}(\tld{w} \cdot (t_\mu s (\tld{w}_h\tld{w})^{-1}(0) - \eta))
\]
at the level of characters (in our situation, ${\gamma'}^{\mathrm{Fr-}1}_{w_1,w_2}$ is $1$ if  $w_1=w_2$ and is $0$ otherwise). 
That $\mu - \eta$ is $1$-deep in $\un{C}_0$ implies that $\tld{w} \cdot (t_\mu s (\tld{w}_h\tld{w})^{-1}(0) - \eta)$ is $-1$-deep in a $p$-restricted alcove. 
This implies that for each $\tld{w} \in \tld{\un{W}}_1^+$, $\tld{w} \cdot (t_\mu s (\tld{w}_h\tld{w})^{-1}(0) - \eta)+\eta$ is dominant so that $\tld{w} \cdot (t_\mu s (\tld{w}_h\tld{w})^{-1}(0) - \eta)$ is dominant or $W(\tld{w} \cdot (t_\mu s (\tld{w}_h\tld{w})^{-1}(0) - \eta))$ is zero. 
If $\sigma \in \JH(\ovl{R_s(\mu)})$, then $\sigma \in \JH(\ovl{W}(\tld{w} \cdot (t_\mu s (\tld{w}_h\tld{w})^{-1}(0) - \eta)))$ for some $\tld{w} \in \tld{\un{W}}_1^+$. 
\cite[Proposition 4.9]{florian-thesis} gives a decomposition of such $p$-restricted Weyl modules. 
The proof of \cite[Proposition~2.3.4]{LLLM2} shows that $\sigma\in F\Big(\mathfrak{Tr}_{\lambda}\big(t_{\mu-\lambda}s\big(\Sigma\big)\big)\Big)$. 
If $\mu-\eta$ is $2$-deep, then $\tld{w} \cdot (t_\mu s (\tld{w}_h\tld{w})^{-1}(0) - \eta)$ is $0$-deep in a $p$-restricted alcove and hence dominant. 
The description of $\JH(\ovl{R_s(\mu)})$ again follows from the proof of \cite[Proposition~2.3.4]{LLLM2}. 
\end{proof}

If $\mu-\eta$ is $1$-deep in $\un{C}_0$, we let the subset $\JH_{\mathrm{out}}(\ovl{R_s(\mu)})\subset\JH(\ovl{R_s(\mu)})$ be the Serre weights of the form $F(\tld{w} \cdot (t_\mu s (\tld{w}_h\tld{w})^{-1}(0) - \eta))$ for some $\tld{w} \in  \tld{\un{W}}_1^+$. 
We call the elements of $\JH_{\mathrm{out}}(\ovl{R_s(\mu)})$ the \emph{outer weights} (of $\JH(\ovl{R_s(\mu)})$). 
In the notation of Proposition \ref{prop:JH}, $\JH_{\mathrm{out}}(\ovl{R_s(\mu)})$ is the subset $F\Big(\mathfrak{Tr}_{\lambda}\big(t_{\mu-\lambda}s\big(\Sigma_{\mathrm{out}}\big)\big)\Big)$, where $\Sigma_{\mathrm{out}}=(\Sigma_{\mathrm{out},0})^{f}\subseteq \un{\Lambda}^{\lambda+\eta}_W\times\mathcal{A}$ and
\[
\Sigma_{\mathrm{out},0} \defeq  \begin{Bmatrix} (\eps_1+ \eps_2, 0), (\eps_1 - \eps_2, 0), (\eps_2 - \eps_1, 0) \\
 (0, 1) , (\eps_1, 1), (\eps_2, 1) 
 \end{Bmatrix}.
 \]

If $\lambda-\eta$ and $\mu-\eta$ are $0$-deep and $2$-deep in $\un{C}_0$ respectively, and $\mu+\eta-\lambda \in \un{\Lambda}_R$, we define $W^?(\taubar(s,\mu+\eta))$ to be the set of Serre weights $F\Big(\mathfrak{Tr}_{\lambda}\Big(t_{{\mu+\eta-\lambda}}s\big(r(\Sigma)\big)\Big)\Big)$, where $r(\Sigma)$ is defined by swapping the digits of $\un{a}\in\cA$ in the elements $(\eps,\un{a})\in\Sigma$.

\subsubsection{The covering order}
\label{subsub:CO}
\begin{defn}\label{defn:cover}
We say that a $3$-deep {Serre} weight $\sigma_0$ \emph{covers} $\sigma$ if 
\[
\sigma \in \underset{\substack{R \, 1\textrm{-generic}, \\ \sigma_0 \in \JH(\overline{R})}}{\bigcap} \JH(\overline{R})
\]
(where $R$ runs over $1$-generic Deligne--Lusztig representations).
\end{defn}

\begin{lemma}\label{lemma:coverup}
A Serre weight $\sigma_1$ covers $\sigma_2$ if and only if $\sigma_2 \uparrow \sigma_1$.
\end{lemma}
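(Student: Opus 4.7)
The plan is to reduce the equivalence to a combinatorial statement about the nine-element Jordan--H\"older set $\JH(\overline{R})$ described by Proposition \ref{prop:JH}, compared against the up-arrow order.

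Since $\sigma_1$ is $3$-deep, any $1$-generic Deligne--Lusztig representation $R = R_s(\mu)$ with $\sigma_1 \in \JH(\overline{R})$ is automatically $2$-generic: the elements of $\Sigma_0$ shift the ``center'' $F(\pi^{-1}(s) \cdot (\mu - \eta))$ by a bounded amount, so $3$-deepness of $\sigma_1$ forces $\mu - \eta$ to be at least $2$-deep. Hence Proposition \ref{prop:JH} gives the exact description $\JH(\overline{R}) = F(\mathfrak{Tr}_{\lambda}(t_{\mu - \lambda} s(\Sigma)))$, a set of nine weights per embedding.

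Using the bijection \eqref{eq:bij:SW}, the $R$'s containing $\sigma_1$ in their reduction are then classified by the ``position'' of $\sigma_1$ within the $\Sigma_0^f$-grid: there are $9^f$ possibilities (up to equivalence of lowest alcove presentations), indexed by a choice $((\eps_j, a_j))_{j \in \cJ} \in \Sigma_0^f$, and for each such choice $\JH(\overline{R})$ is the corresponding translate of $\Sigma$ centered at $\sigma_1$.

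For the \emph{only if} direction, working one embedding at a time, the nine possible translates of $\Sigma_0$ intersect in exactly the down-set of $\sigma_1$ under $\uparrow$; this is a finite verification on the $\SL_3$ alcove diagram. The multi-embedding case follows from the product structure $\Sigma = \Sigma_0^f$. For the \emph{if} direction, given $\sigma_2 \uparrow \sigma_1$, a symmetric case analysis shows $\sigma_2$ lies in every translate of $\Sigma$ centered at any of the nine positions of $\sigma_1$, hence in every $\JH(\overline{R})$ containing $\sigma_1$.

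The main obstacle is the combinatorial case analysis across the nine positions per embedding. The product structure $\Sigma = \Sigma_0^f$ reduces everything to a finite check on the $\SL_3$ affine Weyl group, which the explicit form of $\Sigma_0$ makes tractable; identifying the intersection of the nine translates with the $\uparrow$-down-set of $\sigma_1$ is the one nontrivial combinatorial input.
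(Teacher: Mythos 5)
Your overall strategy---reduce to the explicit nine-element description of $\JH(\overline{R})$ from Proposition \ref{prop:JH} and then perform a finite check on translates of $\Sigma_0$---is the right idea; it is essentially what the cited result \cite[Proposition 2.3.12(4)]{MLM} establishes, and the paper's own proof is nothing more than that citation together with the observation that \S \ref{sec:reduction} supplies the weaker genericity hypotheses. But two of your steps do not hold up as written. First, the count of ``positions'' is wrong: under the bijection \eqref{eq:bij:SW} the alcove label $a\in\cA$ of a pair $(\eps,a)$ is intrinsic to the Serre weight $\sigma_1$ (it is $\tld{w}_1\cdot \un{C}_0$ for a lowest alcove presentation $(\tld{w}_1,\omega)$ of $\sigma_1$), so $\sigma_1$ can occupy only the six lower-alcove positions of $\Sigma_0$ or only the three upper-alcove ones, never all nine; moreover $\Sigma_0$ is not $W$-stable, so the sets $F(\mathfrak{Tr}_{\lambda}(t_{\mu-\lambda}s(\Sigma)))$ are twisted by varying $s\in W$, not merely translated. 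The finite verification you defer to is therefore a different, and in the upper-alcove case tighter, check than the one you describe: for $\sigma_1$ upper alcove you must show that the three available twisted translates already intersect in exactly $\{\sigma_1,\sigma_1'\}$, where $\sigma_1'\uparrow\sigma_1$ is the lower partner.

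Second, and more seriously for the ``if'' direction: your claim that a $1$-generic $R_s(\mu)$ with a $3$-deep constituent is automatically $2$-generic is not justified by the depth bound you invoke. Passing from $\mu-\eta$ to a constituent $F(\mathfrak{Tr}_{\lambda}(t_{\mu-\lambda}s(\eps,a)))$ can change depth by up to $2$ (this is precisely why $4$-generic representations are only guaranteed to have $2$-deep constituents), so a $3$-deep $\sigma_1$ forces $\mu-\eta$ to be only $1$-deep, which you already assumed. Without $2$-genericity, Proposition \ref{prop:JH} gives only the containment $\JH(\overline{R})\subseteq F(\mathfrak{Tr}_{\lambda}(t_{\mu-\lambda}s(\Sigma)))$, so knowing that $\sigma_2$ lies in the translate does not place it in $\JH(\overline{R})$. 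The ``if'' direction should instead be argued from the Weyl module decomposition underlying Proposition \ref{prop:JH}: each $p$-restricted Weyl module occurring in Herzig's character identity contributes either a single lower-alcove weight or an upper-alcove weight together with its $\uparrow$-predecessor, so $\JH(\overline{R})$ is $\uparrow$-down-closed already for $1$-generic $R$. With these two repairs your argument goes through, but as written both directions have genuine gaps.
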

\begin{proof}
This follows from \cite[Proposition 2.3.12(4)]{MLM} where the slightly weaker genericity hypotheses come from \S \ref{sec:reduction}.
\end{proof}

\subsubsection{$L$-parameters}
\label{subsub:Lp}
We now assume that $S_p$ has arbitrary finite cardinality.
An $L$-parameter (over $E$) is a $\un{G}^\vee(E)$-conjugacy class of a continuous homomorphism $\rho:G_{\Qp}\ra{}^L\un{G}(E)$ which is compatible with the projection to $\Gal(E/\Qp)$ (such homomorphism is called \emph{$L$-homomorphism}).
An inertial $L$-parameter is a $\un{G}^\vee(E)$-conjugacy class of {a} homomorphism $\tau:I_{\Qp}\ra\un{G}^\vee(E)$ with open kernel, and which admits an extension to an $L$-homomorphism.
An inertial $L$-parameter is \emph{tame} if some (equivalently, any) representative in its equivalence class factors through the tame quotient of $I_{\Qp}$.

Fixing isomorphisms $\ovl{F^+_v}\stackrel{\sim}{\ra}\ovl{\Q}_p$ for all $v\in S_p$, we have a bijection between $L$-parameters (resp.~tame inertial $L$-parameters) and collections of the form $(\rho_v)_{v\in S_p}$ (resp.~of the form $(\tau_v)_{v\in S_p}$) where for all $v\in S_p$ the element $\rho_v:G_{F^+_v}\ra\GL_3(E)$ is a continuous Galois representation (resp.~the element $\tau_v:I_{F^+_v}\ra\GL_3(E)$ is a tame inertial type for $F^+_v$).

We have similar notions when $E$ is replaced by $\F$.
Again {we} will often abuse terminology, and identify an $L$-parameter (resp.~a tame inertial $L$-parameter) with a fixed choice of a representative in its class.
This shall cause no confusion, and 
nothing in what follows will depend on this choice.

The definitions and results of \S \ref{subsub:TIT:DL}--\ref{subsub:CO} generalize in the evident way for tame inertial $L$-parameters and $L$-homomorphism.
(In the case of the inertial local Langlands correspondence of \S \ref{subsub:ILLC}, given a tame inertial $L$-parameter $\tau$ corresponding to the collection of tame inertial types $(\tau_v)_{v\in S_p}$, we let $\sigma(\tau)$ be the irreducible smooth $E$-valued representation of $G_0(\Z_p)$ given by $\otimes_{v\in S_p} \sigma(\tau_v)$.)

\subsection{Breuil--Kisin modules}
\label{subsub:BKM}
We recall some background on Breuil--Kisin modules with tame descent data.
We refer the reader to \cite[\S 3.1,3.2]{LLLM2} and \cite[\S 5.1]{MLM} for further detail, with the caveat that we are following the conventions of the latter on the labeling of embeddings for tame inertial types and Breuil--Kisin modules (see~\emph{loc.~cit}.~Remark 5.1.2).

Let $\tau=\tau(s,\mu+\eta)$ be a tame inertial type with lowest alcove presentation $(s, \mu)$ which we fix throughout this section (recall that $\mu$ is $1$-deep in $\un{C}_0$).
Recall that $r\in\{1,2,3\}$ is the order of ${s_0 s_{1} s_{2} \cdots s_{f-1} \in W}$.
We write $K'/K$ for the unramified extension of degree $r$ contained in $\ovl{K}$ and set  $f'\defeq fr$, $e' \defeq p^{f'}-1$.
We identify $\Hom_{\Qp}(K',E)$ with $\Z/f' \Z$ via $\sigma_{j'} \defeq \sigma'_0\circ \phz^{-j'} \mapsto j'$ where $\sigma'_0:K' \iarrow E$ is a fixed choice for an embedding extending $\sigma_0:K \iarrow E$. %
In this way, restriction of embeddings corresponds to reduction modulo $f$ in the above identifications.

Let $\pi'\in \ovl{K}$ be an $e'$-th root of $-p$, let $L' \defeq K'(\pi')$ and $\Delta' \defeq \Gal(L'/K') \subset \Delta \defeq \Gal(L'/K)$. 
We define the $\cO_{K'}^{\times}$-valued character $\omega_{K'}(g) \defeq  \frac{g(\pi')}{\pi'}$ for $g \in \Delta'$ (this does not depend on the choice of $\pi'$).
Given an $\cO$-algebra $R$, we set $\fS_{L', R} \defeq (W(k') \otimes_{\Zp} R)[\![u']\!]$. 
The latter is endowed with an endomorphism $\varphi:\fS_{L', R} \ra \fS_{L', R}$ acting as Frobenius on $W(k')$, trivially on $R$, and sending $u'$ to $(u')^{p}$.  
It is endowed moreover with an action of $\Delta$ as follows: for any $g$ in $\Delta'$, $g(u') = \frac{g(\pi')}{\pi'} u' = \omega_{K'}(g) u'$   
and $g$ acts trivially on the coefficients; if $\sigma^f \in\Gal(L'/K)$ is the lift of $p^f$-Frobenius on $W(k')$ which fixes $\pi'$, then $\sigma^f$ is a generator for $\Gal(K'/K)$, acting in natural way on $W(k')$ and trivially on both $u'$ and $R$. 
Set $v = (u')^{e'}$, 
\[
\fS_R \defeq (\fS_{L', R})^{\Delta = 1} = (W(k) \otimes_{\Zp} R)[\![v]\!]
\]
and $E(v) \defeq v + p = (u')^{e'} + p$.

Let $Y^{[0, 2]}(R)$ be the groupoid of Breuil--Kisin modules of rank $3$ over $\fS_{L', R}$, height in $[0,2]$ and descent data of type $\tau$ (cf.~\cite[\S 3]{CL}, \cite[Definition 3.1.3]{LLLM2}, \cite[Definition 5.1.3]{MLM}):
\begin{defn}
An object of $Y^{[0, 2],\tau}(R)$ is the datum of:
\begin{enumerate} 
\item  a finitely generated projective $\fS_{L', R}$-module $\fM$ which is locally free of rank $3$;
\item an injective $\fS_{L', R}$-linear map $\phi_\fM:\phz^*(\fM)\ra\fM$ whose cokernel is annihilated by $E(v)^2$; and
\item a semilinear action of $\Delta$ on $\fM$ which commutes with $\phi_{\fM}$, and such that, for each $j' \in \Hom_{\Qp}(K',E)$, 
\[
(\fM\otimes_{W(k'),\sigma_{j'}}R) \mod u' \cong \tau^{\vee} \otimes_{\cO} R 
\]  
as $\Delta'$-representations.
\end{enumerate}
\end{defn}
Note that $\fM^{(j')}\defeq \fM\otimes_{W(k'),\sigma_{j'}}R$ is a $R[\![u']\!]$-submodule of $\fM$ in a standard way, endowed with a semilinear action of $\Delta'$ and the Frobenius $\phi_{\fM}$ induces $\Delta'$-equivariant morphisms $\phi_\fM^{(j')}:\phz^*(\fM^{(j'-1)})\ra \fM^{(j')}$.    
In particular, by letting $\tau'$ denote the tame inertial type for $K'$ obtained from $\tau$ via the identification $I_{K'}=I_K$ induced by the inclusion $K'\subseteq \ovl{K}$, the semilinear action of $\Delta$ induces an isomorphism $\iota_{\fM}:(\sigma^f)^*(\fM) \cong \fM$ (see \cite[\S 6.1]{LLLM}) as elements of $Y^{[0,2], \tau'}(R)$.

Let $\fM \in Y^{[0, 2], \tau}(R)$.
Recall that an \emph{eigenbasis} of $\fM$ is a collection of bases ${\beta}^{(j')}=(f_1^{(j')},f_2^{(j')},f_3^{(j')})$ for each $\fM^{(j')}$ such that $\Delta'$ acts on $f_i^{(j')}$ via the character $\omega_{f'}^{-\mathbf{a}^{{\prime}\,(0)}_{i}}$ {(see \S \ref{subsub:TIT:DL} for the definition of $\mathbf{a}^{\prime\,(0)}\in \Z^3$)} and such that $\iota_{\fM} ((\sigma^f)^*(\beta^{(j')})) = \beta^{(j' + f)}$ for all $j'\in \Hom_{\Qp}(K',E)$.
Given an eigenbasis $\beta$ for $\fM$, we let $C^{(j')}_{\fM, \beta}$ be the matrix of $\phi_\fM^{(j')}:\phz^*(\fM^{(j' - 1)})\ra \fM^{(j')}$ with respect to the bases $\phz^*(\beta^{(j'-1)})$ and $\beta^{(j')}$ and set
\[
A^{(j')}_{\fM,\beta}\defeq 
\Ad\left(
(s'_{\orient,j'})^{-1}  (u^{\prime})^{-\bf{a}^{\prime\,(j')}}
\right)(C^{(j')}_{\fM,\beta})
\]
for $j'\in \Hom_{\Qp}(K',E)$.
It is an element of $\GL_3(R(\!(v+p)\!))$ with coefficients in $R[\![v+p]\!]$, is upper triangular modulo $v$ and only depends on the restriction of $j'$ to $K$ (see \cite[\S 5.1]{MLM}).

Let $\Iw(\F)$ denote the Iwahori subgroup of $\GL_3(\F(\!(v)\!))$ relative to the Borel of upper triangular matrices.
We define the \emph{shape} of a mod $p$ Breuil--Kisin module $\fM \in Y^{[0,2], \tau}(\F)$ to be the element $\tld{z} = (\widetilde{z}_j) \in \widetilde{\un{W}}^{\vee}$ such that for any eigenbasis $\beta$ and any $j \in \cJ$, the matrix $A^{(j)}_{\fM, \beta}$ lies in $\Iw(\F) \widetilde{z}_j \Iw(\F)$.
This notion doesn't depend on the choice of eigenbasis, see \cite[Proposition 5.1.8]{MLM} (but it \emph{does} depend on the lowest alcove presentation of $\tau$, see \emph{ibid}.~Remark 5.1.5).

\section{Local models in mixed characteristic and the Emerton--Gee stack}
\label{sec:LM:EG}

We assume throughout this section that $S_p=\{v\}$ so $\cO_p=\cO_K$ is the ring of integer of a finite unramified extension $K$ of $\Qp$.  We identify $\cJ=\Hom_{\Qp}(K,E)$ with $\Z/f\Z$ via $\sigma_{j} \defeq \sigma_0 \circ \phz^{-j} \mapsto j$.

\subsection{Local models in mixed characteristic}\label{sec:localmodel}

We now define the mixed characteristic local models which are relevant to our paper.
We follow closely \cite[\S 4]{MLM} and the notation therein.

For any Noetherian $\cO$-algebra $R$, define 
\begin{align*}
L \cG_{\cO}(R)&\defeq \{A \in \GL_3(R(\!(v+p)\!)),\ \text{ $A$ is upper triangular modulo $v$}\};\\
L^+\cG_{\cO}(R)&\defeq \{A \in \GL_3(R[\![v+p]\!]),\ \text{ $A$ is upper triangular modulo $v$}\};
\\
L^{[0,2]} \cG_{\cO}(R)&\defeq \left\{
A \in L \cG_{\cO}(R),
\begin{array}{l}
\text{$A$, $(v+p)^2A^{-1}$ are elements of $\Mat_3(R[\![v+p]\!])$}
\\ 
\text{and are upper triangular modulo $v$}
\end{array}\right\}.
\end{align*}
The fpqc quotients $L^+\cG_{\cO} \backslash L^{[0,2]}\cG_{\cO}\into L^+\cG_{\cO}\backslash L\cG_{\cO}$ induced from inclusions $L^+\cG_{\cO}(R)\subseteq L^{[0,2]} \cG_{\cO}(R)\subseteq L \cG_{\cO}(R)$ are representable by a projective scheme $\Gr^{[0,2]}_{\cG,\cO}$ and an ind-projective ind-scheme $\Gr_{\cG,\cO}$ respectively.

For any $\tld{z}=(\tld{z}_j)_{j\in\cJ}\in \tld{\un{W}}^{\vee}$ and any Noetherian $\cO$-algebra $R$, define 
\[
\tld{U}(\tld{z})(R)=\big(\tld{U}(\tld{z}_j)(R)\big)_{j\in\cJ}\subseteq (L\cG_{\cO}(R))^{\cJ}
\]
to be the set of $f$-tuples of matrices $(A^{(j)})_{j\in\cJ}\in (L\cG_{\cO}(R))^{\cJ}$ such that for all $1\leq i,k\leq 3$ and $j\in\cJ$,
\begin{itemize}
\item $A^{(j)}_{ik}\in v^{\delta_{i>k}}R\left[v+p, \frac{1}{v+p}\right]$; 
\item $\deg_{v+p}(A^{(j)}_{ik})\leq \nu_{j,k}-\delta_{i<z_j(k)}$; and
\item
$\deg_{v+p}(A^{(j)}_{z_j(k)k})=\nu_{j,k}$ and the coefficient of the leading term is a unit of $R$;
\end{itemize}
where we have written $\tld{z}=z t_\nu$ and $\nu=(\nu_{j,1},\nu_{j,2},\nu_{j,3})_{j\in \cJ}$ where we have written $\tld{z}=z t_\nu$ and $\nu=(\nu_{j,1},\nu_{j,2},\nu_{j,3})_{j\in \cJ}$ {(and recall from \S \ref{subsec:notations} the notation for the Kronecker deltas $\delta_{i>k}$, $\delta_{i<z_j(k)}$).}
We set $\tld{U}^{[0,2]}(\tld{z})(R)\defeq \tld{U}(\tld{z})(R)\cap (L^{[0,2]}\cG(R))^{\cJ}$.
Note that both $\tld{U}^{[0,2]}(\tld{z})$ and $\tld{U}(\tld{z})$ are endowed with a $\un{T}^{\vee}_{\cO}$-action induced by left multiplication of matrices. 
It follows from \cite[Lemmas 3.2.2 and 3.2.7]{MLM} that the natural map $\tld{U}(\tld{z})\rightarrow \Gr^{\,\cJ}_{\cG,\cO}$ (resp.~$\tld{U}^{[0,2]}(\tld{z})\rightarrow \Gr^{[0,2],\,\cJ}_{\cG,\cO}$) factors as a $\un{T}^{\vee}_{\cO}$-torsor map followed by an open immersion. 
(We have written $\Gr^{\cJ}_{\cG,\cO}$ for the product, over $\Spec \cO$, of $f$-copies of $\Gr_{\cG,\cO}$ indexed over elements $j\in\cJ$, and $\Gr^{[0,2],\cJ}_{\cG,\cO}$ is defined similarly.)

\vspace{3mm}

We now compare the objects above with groupoids of Breuil--Kisin modules with tame descent.
Let $(s,\mu)\in \un{W}\times X^*(\un{T})$ be a lowest alcove presentation for the tame inertial type $\tau\defeq \tau(s,\mu+\eta)$.
We have the \emph{twisted shifted} conjugation action of $\un{T}_{\cO}^\vee$ on $\tld{U}^{[0,2]}(\tld{z})$ given by
\[
A^{(j)}\mapsto t_j A^{(j)} \mathrm{Ad}(s^{-1}_j)(t_{j-1}),
\]
which is exactly the restriction to $\un{T}_{\cO}^\vee$ of the $(s,\mu)$-twisted $\phz$-conjugation in \cite[\S 5.2]{MLM}.
By \cite[Corollary 5.2.3]{MLM} the quotient of $\tld{U}^{[0,2]}(\tld{z})_{\F}$ by this action is isomorphic to an open substack $Y^{[0,2],\tau}_{\F}(\tld{z})$ of $Y^{[0,2],\tau}_{\F}$.
We denote by $Y^{[0,2],\tau}(\tld{z})$ the open substack of $Y^{[0,2],\tau}$ induced by $Y^{[0,2],\tau}_{\F}(\tld{z})$ (see \cite[Definition 5.2.4]{MLM}).

By \cite[Theorem 5.3.1]{MLM}, whenever $\mu$ is $3$-deep in $\un{C}_0$, we have a morphism of $p$-adic formal algebraic stacks over $\cO$:
\[
\tld{U}^{[0,2]}(\tld{z})^{\wedge_p}\rightarrow  Y^{[0,2],\tau}(\tld{z})\into Y^{[0,2],\tau}
\]
where the left map is a $\un{T}^{\vee}_{\cO}$-torsor for the twisted shifted conjugation action on the source and the second map is an open immersion.

We finally consider Breuil--Kisin modules with height bounded by the cocharacter $\eta\in X_*(\un{T}^\vee)$.
The fiber $\Gr^{\cJ}_{\cG,E}$ of $\Gr^{\cJ}_{\cG,\cO}$ over $E$ is the affine Grassmannian of $\GL_3$ and we let $M_{\cJ}(\leqeta)$ be the Zariski closure in $\Gr^{\cJ}_{\cG,\cO}$ of the open affine Schubert cell associated to $(v+p)^\eta$ in $\Gr^{\cJ}_{\cG,E}$.
Let $\tld{U}(\tld{z},\leqeta)$ be the pull back of $\tld{U}^{[0,2]}(\tld{z})$ along the closed immersion $M_{\cJ}(\leqeta)\into \Gr^{\cJ}_{\cG,\cO}$.

Let $Y^{\leqeta,\tau}$ denote the closed $p$-adic formal substack of $Y^{[0,2],\tau}$ appearing in \cite[\S 5]{CL} (and denoted $Y^{\eta,\tau}$ in loc.~cit.).
The computations of \cite[\S 4]{LLLM} give the following:
\begin{prop}
\label{prop:loc:mod:diag:1}
Let $\tld{z}\in\Adm^{\vee}(\eta)$. 
Then $\tld{U}(\tld{z},\leqeta)$ is an affine scheme over $\cO$, with presentations $\bigotimes_{j=0}^{f-1}\cO(\tld{U}(\tld{z}_j, \leq\!\eta_j))$ where the $\cO$-algebras $\cO(\tld{U}(\tld{z}_j, \leq\!\eta_j))$ are given in Table \ref{table:coord1}.
Moreover, let $(s,\mu)\in \un{W}\times (X^*(\un{T})\cap \un{C}_0)$ be a lowest alcove presentation of $\tau\defeq \tau(s,\mu+\eta)$, with $\mu$ being $3$-deep in $\un{C}_0$ and denote by $Y^{\leqeta,\tau}(\tld{z})$ the pull back of $Y^{[0,2],\tau}(\tld{z})$ along the closed immersion $Y^{\leqeta,\tau}\into Y^{[0,2],\tau}$.

Then we have a morphism of $p$-adic formal algebraic stacks over $\cO$
\begin{equation}
\label{eq:LMD:1}
\tld{U}(\tld{z},\leqeta)^{\wedge_p}\rightarrow  Y^{\leqeta,\tau}(\tld{z})\into Y^{\leqeta,\tau}.
\end{equation}
where the left map is a $\un{T}^{\vee}_{\cO}$-torsor for the twisted shifted conjugation action on the source and the second map is an open immersion.
\end{prop}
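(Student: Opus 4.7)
The plan is to deduce the proposition by pulling back, along the closed immersion $M_{\cJ}(\leqeta)\into \Gr^{\cJ}_{\cG,\cO}$, the structural results already established in \cite{MLM} for $\tld{U}^{[0,2]}(\tld{z})$ and $Y^{[0,2],\tau}(\tld{z})$, and then identifying the resulting equations on each $j$-factor explicitly by a Schubert-cell computation.

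\emph{Step 1: product structure and affineness.} First I would exploit the fact that $\Gr^{\cJ}_{\cG,\cO}$ is by definition a product over $j \in \cJ$ of copies of $\Gr_{\cG,\cO}$, and that $M_{\cJ}(\leqeta)$ is the corresponding product $\prod_{j\in\cJ} M(\leq\!\eta_j)$ of Schubert varieties. Since both the conditions defining $\tld{U}^{[0,2]}(\tld{z})$ and the height condition defining $M_{\cJ}(\leqeta)$ are imposed coordinatewise in $j$, the pullback $\tld{U}(\tld{z},\leqeta)$ inherits a product decomposition $\prod_{j\in\cJ}\tld{U}(\tld{z}_j,\leq\!\eta_j)$. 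Since each factor is a locally closed subscheme of the affine Schubert variety $M(\leq\!\eta_j)$ and the latter is (after open immersion into the affine flag variety via $\tld{U}^{[0,2]}(\tld{z}_j)$) an affine scheme of finite type over $\cO$, affineness of $\tld{U}(\tld{z},\leqeta)$ follows. The tensor product presentation $\bigotimes_{j=0}^{f-1}\cO(\tld{U}(\tld{z}_j,\leq\!\eta_j))$ is then an immediate consequence of the product structure.

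\emph{Step 2: explicit presentation of each factor.} This is the computational heart of the proposition. For each $j\in\cJ$ and each $\tld{z}_j\in \Adm^\vee(\eta_j)$ I would write down the generic matrix $A^{(j)}$ in $\tld{U}^{[0,2]}(\tld{z}_j)$ using the degree and leading-term constraints from the definition of $\tld{U}(\tld{z})$, and then impose the further height bound coming from $M(\leq\!\eta_j)$. The latter is the vanishing of the appropriate minors of $(v+p)A^{(j)}$ or $A^{(j)}$ (a determinantal condition expressing that the elementary divisors are bounded by $(v+p)^{\eta_j}$), together with the unit condition on $\det A^{(j)}$. Working case-by-case over $\tld{z}_j\in \Adm^{\vee}(\eta_j)$ yields the entries of Table \ref{table:coord1}; for shapes with $\ell(\tld{z}_j)\geq 2$ this essentially reproduces the computations of \cite[\S 5.3]{LLLM}, while the low-length (and especially the length zero/one) shapes require the more delicate analysis of \cite[\S 8]{LLLM}. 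I expect this tabulation, and in particular the careful bookkeeping needed in the low-length cases, to be the main obstacle; the payoff is the explicit coordinate ring which is indispensable for the geometric conclusions of Theorem \ref{thm:local_model_cmpt} and Corollary \ref{thm:local_model_main}.

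\emph{Step 3: the local model diagram in the bounded-height setting.} For the second assertion, I would invoke \cite[Theorem 5.3.1]{MLM}, which provides a morphism $\tld{U}^{[0,2]}(\tld{z})^{\wedge_p} \to Y^{[0,2],\tau}(\tld{z}) \hookrightarrow Y^{[0,2],\tau}$ that is a $\un{T}^\vee_{\cO}$-torsor for the shifted conjugation action followed by an open immersion, under the genericity hypothesis that $\mu$ is $3$-deep in $\un{C}_0$. The closed immersion $Y^{\leqeta,\tau}\hookrightarrow Y^{[0,2],\tau}$ (cut out by the bounded-height condition on the cokernel of $\phi_{\fM}$) is compatible with the closed immersion $M_{\cJ}(\leqeta)\hookrightarrow \Gr^{\cJ}_{\cG,\cO}$ in the sense that both express the same height bound $\eta$. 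Pulling back the diagram of \cite[Theorem 5.3.1]{MLM} along this closed immersion yields the desired morphism \eqref{eq:LMD:1}: torsor-ness is preserved because the shifted $\un{T}^{\vee}_{\cO}$-action on $\tld{U}^{[0,2]}(\tld{z})$ preserves the closed subscheme $\tld{U}(\tld{z},\leqeta)$ (as left multiplication by $\un{T}^\vee_{\cO}$ does not change the elementary divisors); open immersion-ness is preserved by pullback; and the definitional identification $Y^{\leqeta,\tau}(\tld{z}) = Y^{[0,2],\tau}(\tld{z})\times_{Y^{[0,2],\tau}} Y^{\leqeta,\tau}$ yields the factorization displayed in \eqref{eq:LMD:1}.
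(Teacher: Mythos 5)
The paper states this proposition without any written proof, treating it as an immediate consequence of the results of \cite{MLM} quoted just before it (\cite[Lemmas 3.2.2, 3.2.7]{MLM}, \cite[Corollary 5.2.3, Theorem 5.3.1]{MLM}) together with the explicit computations of \cite[\S 5.3, \S 8]{LLLM} recorded in Table \ref{table:coord1}; your sketch reconstructs exactly that intended argument — product decomposition over $\cJ$, case-by-case determinantal presentation of each factor, and pullback of the local model diagram of \cite[Theorem 5.3.1]{MLM} along the compatible height-$\leqeta$ closed immersions. The only wrinkle is a slip of phrasing in Step 1: $M(\leq\!\eta_j)$ itself is projective, and affineness comes from $\tld{U}(\tld{z}_j,\leq\!\eta_j)$ being a \emph{closed} subscheme of the affine chart $\tld{U}^{[0,2]}(\tld{z}_j)$, which is what you clearly intend.
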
 

\subsection{Monodromy condition}
\label{sub:Mon:Cond}

We introduce closed subspaces of $Y^{\leqeta,\tau}$, $Y^{\leqeta,\tau}(\tld{z})$ and $\tld{U}(\tld{z},\leqeta)^{\wedge_p}$, and compare them with potentially crystalline substacks of the Emerton--Gee stack (introduced below).

{Recall the element $\eta\in X^*(\un{T})$ we fixed in \S \ref{subsec:notations}.} 
Let $\tau\defeq \tau(s,\mu+\eta)$ be a tame inertial type with lowest alcove presentation $(s, \mu)$, where $\mu$ is $1$-deep in alcove $\un{C}_0$.
By \cite[Proposition 7.1.6]{MLM} the datum of a $p$-adically complete, topologically finite type flat $\cO$-algebra $R$, and a morphism $f:\Spf\, R \to Y^{[0,2],\tau}$ (corresponding to an element $\fM\in Y^{[0,2],\tau}(R)$) defines a $p$-saturated ideal $I_{\fM,\nabla_{\infty}}\subset R$ which is compatible with flat base change. 
(In the terminology of \emph{loc.~cit}., and in the case where $\fM$ is free over $\fS_{L',R}$, the morphism $f:\Spf\, R \to Y^{[0,2],\tau}$ factors through $\Spf\, R\ra \Spf\, (R/I_{\fM,\nabla_{\infty}})$ if and only if $\fM$ satisfies the \emph{monodromy condition} \cite[Definition 7.1.2]{MLM}.)
This gives rise to a $\cO$-flat closed substack $Y^{[0,2],\tau,\nabla_\infty}\into Y^{[0,2],\tau}$ (cf.~\cite[\S 7.2]{MLM}) characterized by the property that for any $\fM\in Y^{[0,2],\tau}(R)$ corresponding to $\Spf\, R \to Y^{[0,2],\tau}$, the pullback of the substack $Y^{[0,2],\tau,\nabla_\infty}$ is $\Spf(R/I_{\fM,\nabla_\infty})$.
We finally define $Y^{\leqeta,\tau,\nabla_\infty}$ as the pullback of $Y^{[0,2],\tau,\nabla_\infty}\into Y^{[0,2],\tau}$ along the closed immersion $Y^{\leqeta,\tau}\into Y^{[0,2],\tau}$.

\vspace{3mm}

Let $\cX_{K,3}$ be the Noetherian formal algebraic stack over $\Spf\,\cO$ defined in \cite[Definition 3.2.1]{EGstack}.
It has the property that for any complete local Noetherian $\cO$-algebra $R$ with finite residue field, the groupoid $\cX_{K,3}(R)$ is equivalent to the groupoid of rank $3$ projective $R$-modules equipped with a continuous $G_K$-action, see \cite[\S 3.6.1]{EGstack}.
(In particular, we will consider closed points of $\cX_{K,3}(\F)$ as continuous Galois representations $\rhobar:G_K\rightarrow \GL_3(\F)$, and conversely.)
Moreover, by \cite[Theorem 4.8.12]{EGstack}, there is a unique $\cO$-flat closed formal substack $\cX^{\eta,\tau}$ of $\cX_{K,3}$ which parametrizes, over finite flat $\cO$-algebras, those $G_K$-representations which after inverting $p$ are 
potentially crystalline with Hodge-Tate weight $\eta$ and inertial type $\tau$. We define $\cX^{\leqeta,\tau}$ in the same way, except that the condition on Hodge-Tate weights becomes $\leqeta$. In particular, $\cX^{\leqeta,\tau}$ is the scheme theoretic union of the substacks $\cX^{\lambda,\tau}$ for $\lambda$ dominant and $\lambda\leq \eta$.

The substacks $\cX^{\eta,\tau}$, $\cX^{\leqeta,\tau}$ have the following fundamental properties:
\begin{thm}[\cite{EGstack}, Theorem 4.8.12, Proposition 4.8.10]
\label{thm:EG_basic_properties}
Let $\cX^{?,\tau}$ denote either the substack $\cX^{\eta,\tau}$ or $\cX^{\leqeta,\tau}$.
Then
\begin{enumerate}
\item the stack $\cX^{?,\tau}$ is a $p$-adic formal algebraic stack, flat and topologically of finite type over $\Spf\, \cO$ %
\item $\cX^{\eta,\tau}$ is equidimensional of dimension $1+3f$, while for $\lambda$ dominant and $\lambda< \eta$, $\cX^{\lambda,\tau}$ is equidimensional of dimension $<1+3f$; 
\item let $\rhobar \in \cX^{\eta,\tau}(\bF)$ corresponding to a mod $p$ representation of $G_K$. 
Then the potentially crystalline deformation ring $R_{\rhobar}^{\eta,\tau}$ is a versal ring to $\cX^{\eta,\tau}$ at $\rhobar$; and
\item for any smooth map $\Spf\, R\ra \cX^{?,\tau}$ with $R$ being a $p$-adically complete, topologically of finite type $\cO$-algebra, the ring $R$ is reduced and $R[1/p]$ is regular.
\end{enumerate}
\end{thm}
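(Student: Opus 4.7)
The plan is to cite Theorem 4.8.12 and Proposition 4.8.10 of \cite{EGstack}, but I would organize the argument conceptually as follows. The basic construction is: take the stack of rank-$3$ Breuil--Kisin modules with descent data of type $\tau$ and appropriate height bound (either in $[0,2]$, or bounded by $\lambda$ pointwise), impose the monodromy condition so the generic fiber matches potentially crystalline representations of the prescribed type, take the $\cO$-flat closure, and pass to the scheme-theoretic image under the natural morphism to $\cX_{K,3}$ that sends $\fM$ to its associated $G_K$-representation. This image, by construction, is $\cO$-flat, $p$-adic formal algebraic, and topologically of finite type, giving (1). Properness of the Breuil--Kisin side over the image is the technical input that makes this image well behaved.

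For (2), the equidimensionality and dimension count follow from a tangent space computation along the lines of Kisin's calculation of the dimension of potentially semistable deformation rings. The key observation is that when $\lambda = \eta$, the Hodge--Tate weights at each embedding are pairwise distinct, so the variety parametrizing Hodge filtrations at each embedding is a full flag variety, contributing $3f$ to the relative dimension; combined with the universal scalar/framing direction this yields $1 + 3f$. When $\lambda < \eta$, at least one pair of labelled Hodge--Tate weights coincides, forcing the Hodge filtration into a proper partial flag variety of strictly smaller dimension, whence the strict inequality.

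Statement (3) is essentially tautological from the moduli interpretation: the complete local ring of $\cX^{\eta,\tau}$ at a mod $p$ point $\rhobar$ pro-represents (up to smoothness) the same deformation functor as $R^{\eta,\tau}_{\rhobar}$, namely lifts of $\rhobar$ that become potentially crystalline of type $(\eta,\tau)$ after inverting $p$. For (4), by smooth descent it suffices to check reducedness and regularity of the generic fiber at the level of versal rings; via (3) this reduces to the analogous classical properties of Kisin's potentially crystalline deformation rings $R^{\eta,\tau}_{\rhobar}$, which are flat over $\cO$, reduced, and have regular generic fiber by standard results. The only delicate point, if one were reproving this from scratch rather than citing, is the algebraicity and finite type nature of the ambient stack $\cX_{K,3}$ itself, which is the main content of \cite{EGstack}; once that is in hand the properties listed here follow by the standard Breuil--Kisin--to--$(\varphi,\Gamma)$ machinery sketched above.
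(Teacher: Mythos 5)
This theorem is quoted from \cite{EGstack} (Theorem 4.8.12 and Proposition 4.8.10) and the paper supplies no proof beyond that citation, which is exactly what you do; your additional sketch of how those results are established (scheme-theoretic images of Breuil--Kisin moduli for (1), the flag-variety dimension count for (2), versality of Kisin's deformation rings for (3), and descent of reducedness/regularity from those rings for (4)) is a faithful account of the Emerton--Gee arguments. So your proposal is correct and takes essentially the same approach as the paper.
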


Using Proposition \ref{prop:loc:mod:diag:1}, we can finally relate the objects introduced so far:
\begin{thm}
\label{prop:loc:mod:diag:2}
Let $\tld{z}\in\Adm^{\vee}(\eta)$ and assume that the character $\mu$ (appearing in the lowest alcove presentation $(s,\mu)$ of $\tau$) is $4$-deep.
We have a commutative diagram of $p$-adic formal algebraic stacks over $\Spf\,\cO$:
\[
\xymatrix{
\tld{U}(\tld{z},\leqeta)^{\wedge_p}\ar[r]& Y^{\leqeta,\tau}(\tld{z})\ar@{^{(}->}[r]& Y^{\leqeta,\tau}\\
\tld{U}(\tld{z},\leqeta,\nabla_{\tau,\infty})\ar[r]\ar@{^{(}->}[u]& Y^{\leqeta,\tau,\nabla_\infty}(\tld{z})\ar@{^{(}->}[r]\ar@{^{(}->}[u]& Y^{\leqeta,\tau,\nabla_\infty}
\ar@{^{(}->}[u]
\\
\tld{\cX}^{\leq \eta,\tau}(\tld{z})
\ar[r]\ar[u]^{\cong}& \cX^{\leq \eta,\tau}(\tld{z})\ar@{^{(}->}[r]\ar[u]^{\cong}&  \cX^{\leq \eta,\tau}
\ar[u]^{\cong}\\
\tld{\cX}^{\eta,\tau}(\tld{z})
\ar[r]\ar@{^{(}->}[u]& \cX^{\eta,\tau}(\tld{z})\ar@{^{(}->}[r]\ar@{^{(}->}[u]&  \cX^{\eta,\tau}
\ar@{^{(}->}[u]}
\]
where:
\begin{itemize}
\item all the stacks appearing in the left column and in the central column 
are \emph{defined} so that all the squares in the diagram are cartesian; 
\item the hooked horizontal arrows are open immersion;
\item the left horizontal arrows are $\un{T}^{\vee}_{\cO}$-torsor for the twisted shifted conjugation action on the source (induced by the twisted shifted conjugation action on  $\tld{U}(\tld{z},\leqeta)^{\wedge_p}$);
\item the vertical hooked arrows are closed immersions and the vertical arrows decorated with ``$\cong$'' are isomorphisms.
\end{itemize}
In particular, $\tld{U}(\tld{z},\leqeta,\nabla_{\tau,\infty})$ is an affine $p$-adic formal scheme over $\Spf\,\cO$, topologically of finite type. 
Furthermore, if $\ell(\tld{z}_j) \geq 2$ for all $j$, then $\tld{U}(\tld{z},\leqeta,\nabla_{\tau,\infty})$ is the $p$-adically completed tensor product over $\cO$ of the rings in Table \ref{Table:integraleqs}. 
\end{thm}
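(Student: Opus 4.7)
The plan is to assemble the diagram layer by layer, starting from Proposition \ref{prop:loc:mod:diag:1} at the top and working downward through the monodromy locus to the Emerton--Gee stack at the bottom.

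The top row is furnished directly by Proposition \ref{prop:loc:mod:diag:1}: the map $\tld{U}(\tld{z},\leqeta)^{\wedge_p}\to Y^{\leqeta,\tau}(\tld{z})$ is a $\un{T}^{\vee}_{\cO}$-torsor for the shifted conjugation action on the source, and $Y^{\leqeta,\tau}(\tld{z})\hookrightarrow Y^{\leqeta,\tau}$ is an open immersion. I would then construct the middle row by pulling back the closed immersion $Y^{\leqeta,\tau,\nabla_\infty}\hookrightarrow Y^{\leqeta,\tau}$ along the two top horizontal maps; this is the \emph{definition} of $Y^{\leqeta,\tau,\nabla_\infty}(\tld{z})$ and $\tld{U}(\tld{z},\leqeta,\nabla_{\tau,\infty})$, and it makes the upper two squares cartesian by construction. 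Base change preserves both the $\un{T}^{\vee}_{\cO}$-torsor structure and the open immersion, while closed subschemes of affines are affine, so $\tld{U}(\tld{z},\leqeta,\nabla_{\tau,\infty})$ is an affine $p$-adic formal scheme topologically of finite type over $\Spf\,\cO$.

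The crucial step is the pair of vertical isomorphisms $Y^{\leqeta,\tau,\nabla_\infty}\cong \cX^{\leq\eta,\tau}$ in the third row. I would realize these via the functor sending a Breuil--Kisin module $\fM\in Y^{\leqeta,\tau,\nabla_\infty}(R)$ to the associated continuous $G_K$-representation, obtained by extracting \'etale $\varphi$-module data from $\fM[1/u]$ and descending through $L'/K$. Kisin's theory then produces, after inverting $p$, a potentially crystalline representation of Hodge--Tate weights $\leqeta$ and inertial type $\tau$; meanwhile the $p$-saturated monodromy ideal $I_{\fM,\nabla_\infty}$ of \cite[Prop.~7.1.6]{MLM} is designed precisely to cut out the crystalline (as opposed to merely semistable) locus in families. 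Under the $4$-deepness hypothesis on $\mu$, I would appeal to the comparison results in \cite[\S 7]{MLM} to upgrade this functor to an isomorphism of $p$-adic formal algebraic stacks with $\cX^{\leq\eta,\tau}$. The bottom row is then obtained by pulling back the closed immersion $\cX^{\eta,\tau}\hookrightarrow \cX^{\leq\eta,\tau}$ of Theorem \ref{thm:EG_basic_properties} along the vertical isomorphisms just produced, so the two lower squares are cartesian by definition.

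For the final assertion, when $\ell(\tld{z}_j)\geq 2$ for all $j$, I would start from the coordinate presentation of $\tld{U}(\tld{z},\leqeta)$ given in Table \ref{table:coord1} and translate the monodromy condition factor by factor into explicit polynomial equations, using the analysis of the operator $N_\infty$ carried out in \cite[\S 5.3]{LLLM} and refined in \cite[\S 8]{LLLM}. The length-at-least-two condition is what ensures that the resulting equations take the clean form tabulated in Table \ref{Table:integraleqs}, and factor-wise compatibility then yields the $p$-adically completed tensor product presentation. The main obstacle is the identification $Y^{\leqeta,\tau,\nabla_\infty}\cong \cX^{\leq\eta,\tau}$, since it requires proving that the algebraically defined monodromy ideal matches the locus of potentially crystalline lifts at the level of versal rings; the $4$-deepness of $\mu$ enters precisely here, via the constructions of \cite{MLM}, to guarantee both the integrality of the comparison functor and the needed control over the ideal structure.
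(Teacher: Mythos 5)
Your proposal is correct and follows essentially the same route as the paper: the paper's proof simply cites \cite[Proposition 7.2.3]{MLM} for the diagram (whose content is exactly the assembly you describe — top row from Proposition \ref{prop:loc:mod:diag:1}, middle and bottom rows defined by pullback, and the identification of the monodromy locus with $\cX^{\leq\eta,\tau}$ coming from the comparison results of \cite[\S 7]{MLM}), and derives the final presentation from the computations of \cite[\S 5.3]{LLLM}, observing as you do that these apply at the level of the $p$-adic completion and under the weaker $4$-deep genericity hypothesis.
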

\begin{proof} This is \cite[Proposition 7.2.3]{MLM}. 
The last assertion follows from the computations in \cite[\S 5.3]{LLLM} noting that the whole discussion there applies to the $p$-adic completion (as opposed to completions at closed points), and that the computations of \emph{loc.~cit}.~can be performed with less stringent genericity assumptions (see the proof of Theorem \ref{thm:local_model_cmpt} below for the precise genericity).
\end{proof}
\begin{rmk}
\label{rmk:=eta}
Note that $\cX^{\eta,\tau}\subseteq \cX^{\leqeta,\tau}$ can be characterized as the union of the $(1+3f)$-dimensional irreducible components (which is the maximal possible dimension).
In particular, by letting $\tld{U}(\tld{z},\eta,\nabla_{\tau,\infty})$ denote the maximal reduced $\cO$-flat $(1+6f)$-dimensional closed $p$-adic formal subscheme of $\tld{U}(\tld{z},\leqeta,\nabla_{\tau,\infty})$, Theorem \ref{prop:loc:mod:diag:2} gives an identification of $\tld{\cX}^{\eta,\tau}(\tld{z})$ with $\tld{U}(\tld{z},\eta,\nabla_{\tau,\infty})$.
\end{rmk}

\subsection{Special fibers}
\label{subsub:SpFi}
Let $\Fl$ denote the affine flag variety over $\F$ for ${\GL_3}_{/\F}$ (with respect to the Iwahori relative to the upper triangular Borel),
identified with the special fiber of $\Gr_{\cG,\cO}$.
As in \cite[(4.7)]{MLM}, we define the closed sub-ind scheme $\Fl^{\nabla_0}\into \Fl$.

\subsubsection{Labeling components of $(\Fl^{\nabla_0})^{\cJ}$}
\label{subsub:Label:FL}

\begin{defn}
Let $(\tld{w}_1,\omega)\in \tld{\un{W}}_1^+\times (X^*(\un{T})\cap \un{C}_0+\eta)$ %
and write $S_\F^\circ(\tld{w}_1^*w_0^*)$ for the open affine Schubert cell associated to $\tld{w}_1^*w_0^*\in \tld{\un{W}}^\vee$. 
We define $C_{(\tld{w}_1,\omega)}$ to be the Zariski closure in $(\Fl^{\nabla_0})^{\cJ}$ of $S_\F^\circ(\tld{w}_1^*w_0^*)t_{\omega^*} \cap (\Fl^{\nabla_0})^{\cJ}$.
It is an irreducible subvariety of $(\Fl^{\nabla_0})^{\cJ}$ of dimension $3f$ when $\omega-\eta$ is $2$-deep (see \cite[Proposition 4.3.3]{MLM}). 
It does not depend on the equivalence class of the pair $(\tld{w}_1,\omega)$ defined in \S \ref{subsubsec:SW}.
\end{defn}
\subsubsection{$\un{T}_\F^\vee$-torsors} Replacing the Iwahori with the pro-$v$ Iwahori in the construction of $\Fl$ yields a $\un{T}_\F^\vee$-torsor $\tld{\Fl}^{\cJ}\ra \Fl^{\cJ}$. 
We use $\tld{\cdot}$ to denote the pullback via this $\un{T}_\F^\vee$-torsor of objects introduced so far (e.g.~$\tld{C}_{(\tld{w}_1,\omega)} \subset (\tld{\Fl}^{\nabla_0})^{\cJ}$).
Let $\tld{\Fl}^{[0,2]}\subset \tld{\Fl}$ be the pullback of the special fibre of $\Gr^{[0,2]}_{\cG,\cO}$.

On the other hand, $\tld{\Fl}^{\cJ}$ is also endowed with a $\un{T}_\F^\vee$-action by twisted shifted conjugation induced, at level of matrices, by $(A^{(j)})_{j \in \cJ} \mapsto (t_{j} A^{(j)} t^{-1}_{j-1})_{j \in \cJ}$ for $(t_j)_{j \in \cJ} \in \un{T}_\F^\vee$.

\subsubsection{Labeling by Serre weights}
Recall from \cite[Lemma 2.2.3]{MLM} the bijection $(\tld{w}_1,\omega) \mapsto (F_{(\tld{w}_1,\omega)},t_{\omega-\eta}\tld{w}_1 \un{W}_a/\un{W}_a)$ between lowest alcove presentations $(\tld{w}_1,\omega)$ of $0$-deep Serre weights and $0$-deep Serre weights $\sigma$ with the choice of an algebraic central character $\zeta\in X^*(\un{Z})$ lifting the central character of $\sigma$. 
If $(\tld{w}_1,\omega)$ maps to $(\sigma,\zeta)$ under this bijection, then we set $C^\zeta_{\sigma}\defeq C_{(\tld{w}_1,\omega)}$. 
If the algebraic central character $\zeta\in X^*(\un{Z})$ is understood, we will simply write $C_{\sigma}$. 

\subsubsection{The local model diagram in characteristic $p$}
As explained in \cite[\S 7.4]{MLM} there is a bijection $\sigma\mapsto \cC_\sigma$  between Serre weights and the top dimensional (namely, $3f$-dimensional) irreducible components of $\cX_{K,3}$.
(This is a relabeling of the bijection of \cite[Theorem 6.5.1]{EGstack}.)
The main result of this section describes sufficiently generic $\cC_\sigma$ in terms of the coordinate charts of Theorem \ref{prop:loc:mod:diag:2}.

Recall that $\Phi\text{-}\Mod^{\text{\emph{\'et}},n}_{K, \F}$ denotes the fppf stack over $\F$ whose $R$-points, for a finite type $\F$-algebra $R$, parametrize projective rank $n$ \'etale $(\phz, \cO_{\cE,K}\otimes_{\Fp}R)$-modules {(recall that $\cO_{\cE,K}$ denotes the $p$-adic completion of $(W(k)[\![v]\!])[1/v]$)}.
We have a morphism $(\cX_{K,3})_{\F}\rightarrow \Phi\text{-}\Mod^{\text{\emph{\'et}},n}_{K, \F}$ corresponding to ``restriction to $G_{K_\infty}$'' (cf.~\cite[\S 3.2]{EGstack}).

\begin{thm}
\label{thm:local_model_cmpt}
Let $(s,\mu)$ be a $\zeta$-compatible $4$-generic lowest alcove presentation of a tame inertial type $\tau$. 
Let $\tld{z}\in\Adm^{\vee}(\eta)$ and $\sigma \in \JH(\ovl{\sigma}(\tau))$.

We have a commutative diagram 

\begin{equation}
\label{diag:cmpts}
\begin{tikzcd}[column sep=small]
&&&\tld{C}^{\zeta}_\sigma\ar[hook]{d}
\\
\tld{\fP}_{\sigma,\tld{z}}\ar[hook]{r}\ar[bend left=20,hook,open]{rrru}&\tld{U}(\tld{z},\eta,\nabla_{\tau,\infty})_{\F}
\ar[hook]{r}&\tld{U}(\tld{z},\leq \eta)_{\F}\ar{ddd}\ar{r}&\tld{\Fl}^{[0,2]}_{\cJ}\cdot s^*t_{\mu^*+\eta^*}\ar{ddd}{T^{\vee,\cJ}_{\F}}
\\
\tld{\cC}_\sigma(\tld{z})\ar{u}{\cong}\ar[hook]{r}\ar[swap]{d}{T^{\vee,\cJ}_{\F}}&\tld{\cX}^{\eta,\tau}(\tld{z})_\F\ar{d}{T^{\vee,\cJ}_{\F}}\ar[swap]{u}{\cong}\ar{u}{\text{\tiny{\emph{Rmk~\ref{rmk:=eta}}}}}&&
\\
\cC_\sigma(\tld{z})\ar[hook]{r}\ar[open,hook]{d}&\cX^{\eta,\tau}(\tld{z})_\F\ar[open,hook]{d}&&\\
\cC_\sigma\ar[hook]{r}&\cX^{\eta,\tau}_\F\ar[hook]{r}\ar[hook]{d}&Y^{\leq\eta,\tau}_\F\ar[hook]{r}{\iota_{(s,\mu)}}&\Big[\tld{\Fl}^{[0,2]}_{\cJ}\cdot s^*t_{\mu^*+\eta^*}/T^{\vee, \cJ}_{\F}\text{\emph{-sh.cnj}}\Big]\ar[hook]{d}{\iota_0}\\
&(\cX_{K,3})_{\F}\ar{rr}&&\Phi\text{-}\Mod^{\text{\emph{\'et}},n}_{K, \F}
\end{tikzcd}
\end{equation}
where 
\begin{itemize}
\item
All the squares are cartesian (this defines the previously undefined objects $\cC_\sigma(\tld{z})$, $\tld{\cC}_\sigma(\tld{z})$ and $\tld{\fP}_{\sigma,\tld{z}}$).
\item All the hooked arrows decorated with a circle are open immersions; all the hooked undecorated arrows are monomorphisms and, except $\iota_0$, are moreover closed immersions; all the arrow decorated with $T^{\vee, \cJ}_{\F}$ are $T^{\vee, \cJ}_{\F}$-torsors.
\item $\tld{U}(\tld{z},\leq \eta)_{\F}\ra Y^{\leq\eta,\tau}_{\F}$ is an open immersion followed by a $T^{\vee, \cJ}_{\F}$-torsor map. The map $\tld{U}(\tld{z},\leq \eta)_{\F}\ra \tld{\Fl}^{[0,h]}_{\cJ}\cdot s^*t_{\mu^*+\eta^*}$ is given by the formula 
\[ (A^{(j)})_j \mapsto  (A^{(j)})_js^*t_{\mu^*+\eta^*}\] 
and is a locally closed immersion.
\item
The map $\iota_{(s,\mu)}$ is given, fpqc-locally, by $\fM\mapsto (A^{(j)}_{\fM,\beta})_js^*t_{\mu^*+\eta^*}$, for any choice of eigenbasis $\beta$ for $\fM\in Y^{\leq\eta,\tau}_\F(R)$.
\item
The map $\iota_0$ is defined, fpqc locally, by sending the class of a tuple $(A^{(j)})_{j\in\cJ}$ to the free rank $n$ \'etale $\phz$-module with Frobenius given by $(A^{(j)})_{j\in\cJ}$ in the standard basis.
\item
If $\sigma = F({\mathfrak{Tr}_{\mu+2\eta}(s(\eps),a)})$, then the closed immersion $\tld{\fP}_{\sigma,\tld{z}}\into \tld{U}(\tld{z},\eta,\nabla_{\tau,\infty})_{\F}$ corresponds to the ideal $\sum_j \tld{\fP}_{(\eps_j,a_j),\tld{z}_j}\cO(\tld{U}(\tld{z},\eta,\nabla_{\tau,\infty})_{\F})$ with $\tld{\fP}_{(\eps_j,a_j),\tld{z}_j}$ in Table \ref{Table:intsct} (or the unit ideal if $\tld{\fP}_{(\eps_j,a_j),\tld{z}_j}$, up to symmetry, does not appear in Table \ref{Table:intsct} for some $j$); in particular $\tld{U}(\tld{z},\eta,\nabla_{\tau,\infty})_{\F}$ is reduced.
\item
The bottom horizontal map identifies  $\cX^{\eta,\tau}_{\F}$ with the reduced union of $\Big[\tld{C}^\zeta_{\sigma'}/T_\F^{\vee,\cJ}\text{\emph{-sh.cnj}}\Big]$ for $\sigma'\in \JH(\ovl{\sigma}(\tau))$. %

\end{itemize}
\end{thm}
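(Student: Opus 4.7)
The diagram is assembled by stacking cartesian squares on top of Theorem \ref{prop:loc:mod:diag:2}, so the first task is simply to verify that each declared pullback exists as claimed and that the morphism classes (open immersion, closed immersion, torsor) propagate correctly. The top row is obtained from Proposition \ref{prop:loc:mod:diag:1} together with the formula $(A^{(j)})_j\mapsto (A^{(j)})_j s^*t_{\mu^*+\eta^*}$, whose image is a translated Schubert-type locally closed subscheme of $\tld{\Fl}^{[0,2]}_{\cJ}$; this formula is literally the matrix coordinate construction recalled in \S\ref{subsub:BKM}, so the map $\iota_{(s,\mu)}$ factors through the quotient by the shifted $T^{\vee,\cJ}_{\F}$-action since an eigenbasis is unique up to this action. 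The second row from the top is obtained by intersection with the monodromy condition, using Theorem \ref{prop:loc:mod:diag:2} and Remark \ref{rmk:=eta} to pick out the maximal-dimensional irreducible components (i.e.\ passing from $\leq\eta$ to $=\eta$). The map $\iota_0$ is the natural restriction-to-$G_{K_\infty}$ map whose compatibility with $\iota_{(s,\mu)}$ is tautological from the definition of the Breuil--Kisin functor on \'etale $\phz$-modules.

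The next task is to identify the components. By \cite[Theorem~6.5.1]{EGstack} together with the relabeling of \cite[\S 7.4]{MLM}, the top-dimensional components of $(\cX_{K,3})_{\F}$ are indexed by Serre weights $\sigma$. Since $\cX^{\eta,\tau}_{\F,\mathrm{red}}=\bigcup_{\sigma\in\JH(\ovl{\sigma}(\tau))}\cC_\sigma$ (\cite[Theorem~1.3.5]{MLM}), the components of $\cX^{\eta,\tau}(\tld{z})_{\F}$ are exactly those $\cC_\sigma$ which meet the open chart indexed by $\tld{z}$. On the local-model side, the component $\tld{C}^\zeta_\sigma$ is by construction the (compatibly labeled) closed subscheme of $(\tld{\Fl}^{\nabla_0})^{\cJ}$ and pulls back along the upper-right map to a component of $\tld{U}(\tld{z},\eta,\nabla_{\tau,\infty})_{\F}$; this defines $\tld{\fP}_{\sigma,\tld{z}}$ as a closed subscheme, and the isomorphism $\tld{\cC}_\sigma(\tld{z})\cong\tld{\fP}_{\sigma,\tld{z}}$ then follows from the cartesian structure of the diagram.

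For the explicit form of the ideals in Table \ref{Table:intsct} and the reducedness of $\tld{U}(\tld{z},\eta,\nabla_{\tau,\infty})_{\F}$, we split on the shape $\tld{z}=(\tld{z}_j)_{j\in\cJ}$. When $\ell(\tld{z}_j)\geq 2$ for every $j$, Theorem \ref{prop:loc:mod:diag:2} identifies $\tld{U}(\tld{z},\leqeta,\nabla_{\tau,\infty})$ as an explicit completed tensor product of the rings in Table \ref{Table:integraleqs}, and a direct inspection (following \cite[\S 5.3]{LLLM}) gives both the reducedness and the description of the component ideals. For lengths $\ell(\tld{z}_j)\leq 1$, one combines the explicit upper bounds on the special fiber coming from the computations in \cite[\S 8]{LLLM} (which can be run here under the stated $4$-genericity, as noted in Remark \ref{rmk:genericity:prelim}) with the already established equality at the level of underlying reduced subschemes. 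This sandwich forces the scheme-theoretic structure to equal the reduced one, giving reducedness and, simultaneously, the identification of the component ideals in Table \ref{Table:intsct}.

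Finally, the bottom-row assertion that $\cX^{\eta,\tau}_{\F}$ is the reduced union $\bigcup_{\sigma\in\JH(\ovl{\sigma}(\tau))}\bigl[\tld{C}^\zeta_\sigma/T^{\vee,\cJ}_{\F}\text{-sh.cnj}\bigr]$ follows by gluing the charts: the open substacks $\cX^{\eta,\tau}(\tld{z})_{\F}$ with $\tld{z}\in\Adm^\vee(\eta)$ cover $\cX^{\eta,\tau}_{\F}$, on each chart we have just shown reducedness and the component decomposition, and each $\cC_\sigma$ is irreducible so appears globally. The main obstacle is the length $\leq 1$ case of the previous paragraph: the raw computation of the monodromy ideal is intractable there, and only the sandwich argument between the \cite{LLLM} upper bound and the \cite{MLM} reduced lower bound makes the reducedness accessible.
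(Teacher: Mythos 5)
Your overall architecture matches the paper's (assemble the diagram from Theorem \ref{prop:loc:mod:diag:2}, compute explicitly when every $\ell(\tld{z}_j)\geq 2$, and close a sandwich in the remaining cases), but the decisive step of the low-length case is where your argument has a genuine gap. You close the sandwich by invoking ``the already established equality at the level of underlying reduced subschemes,'' i.e.\ $\cX^{\eta,\tau}_{\F,\mathrm{red}}=\bigcup_{\sigma\in\JH(\ovl{\sigma}(\tau))}\cC_\sigma$ from \cite[Theorem 1.3.5]{MLM}. That result is proved in \cite{MLM} under an \emph{inexplicit} genericity hypothesis which is not known to follow from the $4$-genericity assumed here; indeed the paper explicitly reproves this equality under the weaker hypothesis (see Remark \ref{rmk:genericity:prelim}), and it appears as an \emph{output} of Theorem \ref{thm:local_model_cmpt}, not an input. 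Using it as an input is therefore either circular or rests on an unavailable hypothesis.

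Even granting that equality, your sandwich does not close as stated: on a fixed chart $\tld{U}(\tld{z},\leq\eta)_\F$ with some $\ell(\tld{z}_j)\leq 1$, the reduced locus could a priori be a union of \emph{fewer} irreducible components than the explicit upper-bound scheme from \cite[\S 8]{LLLM}, in which case the closed immersion into the ``table'' scheme would not be forced to be an isomorphism. The paper supplies the missing lower bound in two steps: (i) the already-treated charts with $\ell(\tld{z}'_j)\geq 3$ show that $\cX^{\eta,\tau}_\F$ contains $\cC_{\sigma'}$ for \emph{every} $\sigma'\in\JH(\ovl{R_s(\mu+\eta)})$; and (ii) a combinatorial count (the analogue of \cite[Theorem 4.7.6]{MLM}, re-derived in this setting via the invariance property of the $\tld{C}^\zeta_{\sigma'}$) shows that exactly $n_{\tld{z}}=\#\big(W^?(\taubar(sz^*,\mu+s(\nu^*)+\eta))\cap\JH(\ovl{R_s(\mu+\eta)})\big)$ of the $\tld{C}^\zeta_{\sigma'}$ contain the point $\tld{z}s^*t_{\mu^*+\eta^*}$ --- the same number as the number of components of the reduced table scheme. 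Only this matching of counts forces the closed immersion to be an isomorphism. Relatedly, your identification of $\tld{\fP}_{\sigma,\tld{z}}$ with the pullback of $\tld{C}^\zeta_\sigma$ ``from the cartesian structure'' skips the labeling argument: one must check, via \cite[Propositions 4.3.4, 4.3.5]{MLM} and the third paragraph of the proof of \cite[Theorem 7.4.2]{MLM}, that the component of $\cX_{K,3}$ induced by the table ideal is $\cC_\sigma$ and not some other $\cC_\kappa$.
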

\begin{proof}
 Theorem \ref{prop:loc:mod:diag:2} (together with Remark \ref{rmk:=eta}) and \cite[Proposition 5.4.7, Theorem 7.4.2]{MLM},
imply the existence of the portion of diagram (\ref{diag:cmpts})  which excludes the leftmost vertical column, the top triangle, and the identification of $\tld{U}(\tld{z},\eta,\nabla_{\tau,\infty})_\F$ with entries of Table \ref{Table:intsct}.
(In the notation of \cite[Proposition 5.4.7]{MLM} the monomorphism $\iota_0$ would be denoted as $\iota_{s^*t_{\mu^*+\eta^*}}$, the morphism $\iota_{(s,\mu)}$ would be the diagonal arrow.) 
Furthermore, all stated properties of this portion of the diagram are already known to hold, except possibly for the last item. We now explain how to fill in the missing parts with all the desired properties except for the last item.
\begin{enumerate}
\item
\label{it:pf:diag:0}
We first deal with the case $\ell(\tld{z}_j)\geq 2$ for all $0\leq j\leq f-1$.
In this situation, the computations in \cite[\S 5.3.1]{LLLM} show that $\tld{U}(\tld{z},\eta,\nabla_{\tau,\infty})_\F\into \tld{U}(\tld{z},\leq \eta)_{\F}$ identifies with the scheme given by Table \ref{Table:intsct}. Indeed, we note that:
\begin{itemize}
\item The computations in \cite[\S 5.3.1]{LLLM} of various completions of $\tld{U}(\tld{z},\eta,\nabla_{\tau,\infty})_\F$ are in fact valid for $\tld{U}(\tld{z},\eta,\nabla_{\tau,\infty})_\F$.
\item The computations are performed with an unnecessary strong genericity condition: indeed, by using the ``(1,3)-entry'' of the leading term in the monodromy condition, one recovers the last displayed equation at page 59 of \emph{loc.~cit}.~with $n-3$ replaced by $n-1$.
\end{itemize}
Choosing the closed subscheme $\tld{\mathfrak{P}}_{\sigma,\tld{z}}$ according to Table \ref{Table:intsct}, we have now constructed the top horizontal arrow of diagram \ref{diag:cmpts}. This uniquely induces the leftmost vertical column of the diagram for \emph{some} choice of irreducible component $\cC_{\kappa}$ of $\cX^{\eta,\tau}$. We need to show that:

\begin{enumerate}
\item 
\label{it:pf:diag:1}
The composite of the top horizontal arrows identifies $\tld{\fP}_{\sigma,\tld{z}}$ with an non-empty open subscheme of $\tld{C}_{\sigma}^{\zeta}$; and
\item
\label{it:pf:diag:2}
$\cC_\kappa=\cC_\sigma$.
\end{enumerate}
For item {(a)}, we use \cite[Proposition 4.3.4, Proposition 4.3.5]{MLM}: one checks that according to Table \ref{Table:intsct}, the image of $\tld{\fP}_{\sigma,\tld{z}}$ along the top horizontal arrows is, in the notation of \emph{loc.~cit}., a $T_{\F}^{\vee,\cJ}$-torsor over 
\[
S_{\F}^{\nabla_0}(\tld{w}_1,\tld{w}_2,\tld{s})
\]
for suitable choices of $\tld{w}_1,\tld{w}_2$, and for $\tld{s}$ taken to be $t_{\mu+\eta}s$, and this is exactly an open subscheme of $\tld{C}_{\sigma}^\zeta$. 

For item {(b)}, we use the just established item {(a)}, and then use the same argument as in the third paragraph in the proof of \cite[Theorem 7.4.2]{MLM} to recognize that $\cC_{\kappa}$ is actually $\cC_{\sigma}$.
\item
\label{it:pf:diag:3}
We deal with the general case.
The computations in \cite[\S 8]{LLLM}, namely Propositions 8.3, 8.11 and 8.13, as well as those 
of\footnote{The computations of \cite[\S 5.3.2, 5.3.3]{LLLM} are also performed with unnecessary strong genericity conditions: again using the ``(1,3)-entries'' of the leading term in the monodromy condition recovers, in \cite[\S 5.3.2, 5.3.3]{LLLM} respectively, the equations
\begin{align*}
&c_{11}((a-b)c_{32}c^*_{23}-(a-c)c'_{22}c'_{33})-p(e-a+c)c^*_{12}c^*_{23}c^*_{31}+O(p^{n-1})\\
&c_{12}((a-b)c_{31}c'_{23}+(b-c)c^*_{21}c'_{33})-p(e-a+c)c^*_{21}c^*_{32}c^*_{13}+O(p^{n-1})
\end{align*}
(cf.~the last displayed equations at page 60 and 61 of \cite{LLLM})
obtaining the ``monodromy equations'' claimed in \emph{loc.~cit}.~as soon as $\mu$ is $4$-deep in alcove $\un{C}_0$.} \emph{loc.~cit}.~\S 5.3.2, 5.3.3, show that the closed embedding of $\tld{U}(\tld{z},\eta,\nabla_{\tau,\infty})_{\F}\into \tld{U}(\tld{z},\leq \eta)_{\F}$ factors through $\tld{U}(\tld{z},\eta,\nabla_{\tau,\infty})_{\mathrm{table},\F}$, where we temporarily write $\tld{U}(\tld{z},\eta,\nabla_{\tau,\infty})_{\mathrm{table},\F}$ for the scheme defined in the second column of Table \ref{Table:intsct}.

We have to prove that this closed immersion is actually an isomorphism. 
Let $n_{\tld{z}}$ be $\# \big(W^?(\taubar(sz^*,\mu+s(\nu^*)+\eta))\cap \mathrm{JH}(\ovl{R_s(\mu+\eta)})\big)$.
Note that the arguments of \cite[\S 8]{LLLM} show that $\tld{U}(\tld{z},\eta,\nabla_{\tau,\infty})_{\mathrm{table},\F}$ is \emph{reduced}, and that its number of irreducible component is $n_{\tld{z}}$.
We will show that there are at least $n_{\tld{z}}$ irreducible components of $\cX^{\eta,\tau}_\F$ which intersect the open substack $\cX^{\eta,\tau}(\tld{z})_\F$.

Now, from the previously established cases of diagram (\ref{diag:cmpts}), we see that $\cX^{\eta,\tau}$ must contain all the $\cC_{\sigma'}$ which occurs in diagram \ref{diag:cmpts} for $\tld{z}'$ such that $\ell(\tld{z}'_j)\geq 3$ for all $0\leq j \leq f-1$. In particular, $\cX^{\eta,\tau}_\F$ contains all $\cC_{\sigma'}$ such that $\sigma' \in \mathrm{JH}(\ovl{R_s(\mu+\eta)})$.  %
Note that by definition, $\cX^{\eta,\tau}(\tld{z})_{\F}=\cX^{\eta,\tau}_\F\cap \Big[\tld{U}(\tld{z},\leq \eta)_\F s^*t_{\mu^*+\eta^*}/T^{\vee,\cJ}_\F\text{-sh.cnj}\Big]$. We are thus reduced to showing that there are at least $n_{\tld{z}}$ choices of $\sigma'$ as above such that $\cC_{\sigma'} \cap \Big[\tld{U}(\tld{z},\leq \eta)_\F s^*t_{\mu^*+\eta^*}/T^{\vee,\cJ}_\F\text{-sh.cnj}\Big]\neq \emptyset$. But this last condition is equivalent to $\tld{C}^\zeta_{\sigma'}\cap \tld{U}(\tld{z},\leq \eta)_\F s^*t_{\mu^*+\eta^*}\neq \emptyset$, and in turn equivalent to $\tld{z}s^*t_{\mu^*+\eta^*}\in \tld{C}^{\zeta}_{\sigma'}$. To summarize, we need to show the combinatorial statement that the number of $\tld{C}^{\zeta}_{\sigma'}$ which contain $\tld{z}s^*t_{\mu^*+\eta^*}$ is exactly $n_{\tld{z}}$.
But this is the same combinatorial statement as \cite[Theorem 4.7.6]{MLM}, and we observe that the conclusion of that Theorem holds in our current setup: this follows from the invariance property \cite[Proposition 4.3.5]{MLM} of $\tld{C}^{\zeta}_{\sigma'}$, as well as the fact that $\tld{z}'s^*t_{\mu^*+\eta^*}\in \tld{C}^{\zeta}_{\sigma'}$ whenever $\tld{C}^{\zeta}_{\sigma'}$ occurs in diagram (\ref{diag:cmpts}) for $\tld{z}'$ such that $\ell(\tld{z}'_j)\geq 3$ for $0\leq j \leq f-1$.

At this point, we have shown that $\tld{U}(\tld{z},\eta,\nabla_{\tau,\infty})_{\F}$ identifies with $\tld{U}(\tld{z},\eta,\nabla_{\tau,\infty})_{\mathrm{table},\F}$, and thus we establish the top horizontal arrow of diagram (\ref{diag:cmpts}). The rest of the proof now is exactly the same as in the previous case.

\end{enumerate}
Finally, it remains to check the last item in the theorem. But the reducedness follow from the reducedness of each $\tld{U}(\tld{z},\eta,\nabla_{\tau,\infty})_{\F}$, while the identification of the irreducible components was already established in the arguments above.
\end{proof}

\begin{cor}
\label{thm:local_model_main} 
Let $\tld{z}\in\Adm^{\vee}(\eta)$ and assume that the character $\mu$ (appearing in the lowest alcove presentation $(s,\mu)$ of $\tau$) is $4$-deep. 
Then:
\begin{itemize}
\item $\cX^{\eta,\tau}_{\bF}$ is reduced; 
\item $\cO\big(\tld{U}(\tld{z},\eta,\nabla_{\tau,\infty})\big)$ is a normal domain; and
\item
for any $\rhobar:G_K\ra\GL_3(\F)$ the ring $R^{\eta,\tau}_{\rhobar}$ is either $0$ or a normal domain. 
\end{itemize}
\end{cor}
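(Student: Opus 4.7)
All three claims will follow from Theorem \ref{thm:local_model_cmpt} combined with Theorem \ref{thm:EG_basic_properties}. The first claim, reducedness of $\cX^{\eta,\tau}_\F$, is immediate: the substacks $\{\cX^{\eta,\tau}(\tld{z})\}_{\tld{z}\in\Adm^{\vee}(\eta)}$ form an open cover of $\cX^{\eta,\tau}$ (inherited from the open cover of the affine Grassmannian by the $\tld{U}^{[0,2]}(\tld{z})$), and each $\cX^{\eta,\tau}(\tld{z})_\F$ is, up to the smooth $\un{T}^{\vee}_\F$-torsor appearing in diagram (\ref{diag:cmpts}), identified with $\tld{U}(\tld{z},\eta,\nabla_{\tau,\infty})_{\F}$. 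The latter is reduced by Theorem \ref{thm:local_model_cmpt}, and reducedness descends along the torsor.

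For the normal domain claim on $R \defeq \cO\big(\tld{U}(\tld{z},\eta,\nabla_{\tau,\infty})\big)$: $R$ is $\cO$-flat and topologically of finite type (Theorem \ref{prop:loc:mod:diag:2}); $R[1/p]$ is regular by Theorem \ref{thm:EG_basic_properties}(4) applied via the open immersion into $\cX^{\eta,\tau}$; and $R/\varpi$ is reduced by the previous step. I would apply Serre's criterion $R_1+S_2$. The $R_1$ condition at primes not containing $\varpi$ follows from regularity of $R[1/p]$; at minimal primes of $R/\varpi$, reducedness of the special fiber and $\cO$-flatness force the corresponding localization of $R$ to be a DVR. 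For $S_2$, I would exploit the explicit presentation of $R/\varpi$ encoded in Table \ref{Table:intsct}: each component $\tld{\fP}_{\sigma,\tld{z}}$ is a complete intersection of the expected codimension, and the combinatorics of the intersections (controlled by Proposition \ref{prop:JH}) shows that the union is Cohen--Macaulay; $\cO$-flatness then propagates Cohen--Macaulayness, hence $S_2$, to $R$. This yields normality. For integrality, a noetherian normal ring is a finite product of normal domains, so it suffices to show that $\Spec R$ is connected; since $\cO$-flatness propagates connectedness from $\Spec R/\varpi$, it is enough that the latter is connected, which follows because all the components $\tld{\fP}_{\sigma,\tld{z}}$ contain the (image of the) distinguished point $\tld{z}s^{*}t_{\mu^{*}+\eta^{*}}$ appearing in diagram (\ref{diag:cmpts}).

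For $R^{\eta,\tau}_\rhobar$: when $\rhobar \notin \cX^{\eta,\tau}(\F)$ the versal ring is $0$. Otherwise, Theorem \ref{thm:EG_basic_properties}(3) identifies $R^{\eta,\tau}_\rhobar$ with a versal ring to $\cX^{\eta,\tau}$ at $\rhobar$, and Theorem \ref{thm:local_model_cmpt} provides, for an appropriate $\tld{z}$, a smooth-local chart $\tld{U}(\tld{z},\eta,\nabla_{\tau,\infty})$ through $\rhobar$. Hence $R^{\eta,\tau}_\rhobar$ is, up to a completed polynomial ring over $\cO$, isomorphic to the completion of $R$ at a closed point. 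Since $R$ is topologically of finite type over $\cO$ it is excellent, so the completion of its localization at a maximal ideal remains normal; being noetherian, local, and normal, this completion is a domain, and the domain/normality property transfers to $R^{\eta,\tau}_\rhobar$. The principal technical obstacle lies in the $S_2$ verification of $R/\varpi$: while in the regime $\ell(\tld{z}_j)\geq 2$ the equations of Table \ref{Table:integraleqs} are manifestly a complete intersection, the low-length cases require a more delicate analysis of the monodromy equations from \cite[\S 8]{LLLM} to ensure that the components meet in the expected codimension, which is precisely the intersection-theoretic computation already carried out in the proof of Theorem \ref{thm:local_model_cmpt}.
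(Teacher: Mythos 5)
Your overall route (reducedness of the special fiber $+$ $\cO$-flatness $+$ regularity of the generic fiber $\Rightarrow$ normality via Serre's criterion, then connectedness $\Rightarrow$ domain, then excellence to pass to completions for $R^{\eta,\tau}_{\rhobar}$) is the same as the paper's, which simply invokes the argument of \cite[Proposition 8.5]{LLLM}. The first and third bullets are handled correctly.

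There is, however, a genuine soft spot in your $S_2$ step. You propose to verify $S_2$ of $R=\cO\big(\tld{U}(\tld{z},\eta,\nabla_{\tau,\infty})\big)$ by showing that $R/\varpi$ is Cohen--Macaulay, on the grounds that each component $\tld{\fP}_{\sigma,\tld{z}}$ is a complete intersection and ``the combinatorics of the intersections'' makes the union Cohen--Macaulay. A reduced union of complete intersections is not automatically Cohen--Macaulay, and you give no argument for this; indeed the paper deliberately isolates Cohen--Macaulayness as a separate, harder fact (Remark \ref{rmk:CM}), proved either by the explicit inspection of \cite[\S 8]{LLLM} or, a posteriori, from cyclicity of patched modules (Theorem \ref{thm:cyclic}) --- so it cannot be waved through here, and using it would risk circularity with \S\ref{sec:cyc}. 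The fix is that Cohen--Macaulayness is not needed: reducedness of $R/\varpi$ already gives the Serre condition $S_1$ for $R/\varpi$, and since $\varpi$ is a nonzerodivisor (flatness), $\mathrm{depth}\,R_{\fp}=1+\mathrm{depth}\,(R/\varpi)_{\fp}\geq 2$ at any prime $\fp\ni\varpi$ of height $\geq 2$, while primes away from $\varpi$ are covered by regularity of $R[1/p]$ (Theorem \ref{thm:EG_basic_properties}(4)). Together with $R_1$ (height-one primes containing $\varpi$ are minimal over $(\varpi)$, where $R_0$ of the reduced special fiber makes $R_{\fp}$ a DVR), this is exactly the argument of \cite[Proposition 8.5]{LLLM}. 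One last minor point: connectedness of $\Spec R$ is inherited from connectedness of $\Spec R/\varpi$ because $R$ is $\varpi$-adically complete (so idempotents lift), not merely because it is flat; your observation that every component of the special fiber passes through the $T^{\vee}_{\F}$-fixed point $\tld{z}$ is the right way to see that $\Spec R/\varpi$ is connected.
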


\begin{rmk}
\label{rmk:CM}
It can be showed that both $\cO\big(\tld{U}(\tld{z},\eta,\nabla_{\tau,\infty})\big)$ and $R^{\eta,\tau}_{\rhobar}$ are Cohen--Macaulay. This can be done by either explicit inspection of the schemes occurring in Table \ref{Table:intsct} as in \cite[\S 8]{LLLM}, or by using the cyclicity of patched modules proven in Theorem \ref{thm:cyclic} below. 
\end{rmk}

\begin{proof}
 The fact that $\cO\big(\tld{U}(\tld{z},\eta,\nabla_{\tau,\infty})\big)$ is a normal domain follows from the fact that its special fiber is reduced, as in the proof of \cite[Proposition 8.5]{LLLM}.
The statement for $R^{\eta,\tau}_{\rhobar}$ follows in the same way, noting that $R^{\eta,\tau}_{\rhobar,\F}$, being (equisingular to) a completion of the excellent reduced ring $\cO\big(\tld{U}(\tld{z},\eta,\nabla_{\tau,\infty})_\F \big)$, is reduced.
\end{proof}

We can finally introduce the notion of Serre weights attached to a continuous Galois representation $\rhobar:G_K\ra\GL_3(\F)$.

\begin{defn}
\label{def:gen:W}
Let $\rhobar:G_K\ra\GL_3(\F)$ be a continuous Galois representation.
We define $W^g(\rhobar)$ to be the set of Serre weights $\sigma$ such that $\rhobar\in \cC_\sigma(\F)$ (cf.~\cite[Definition 9.1.2]{MLM}).

If $\rhobar|_{I_K}$ is \emph{tame} so that $\rhobar|_{I_K}$ is isomorphic to a tame inertial $\F$-type $\tau(w,\mu+\eta)$, then we define $W^?(\rhobar)$ to be $W^?(\taubar(w,\mu+\eta))$ (cf.~subsubsection {\ref{sec:reduction}} for the latter).   We also say that $\rhobar$ is \emph{$N$-generic} if $\tau(w,\mu+\eta)$ is.

Finally, we say that a Serre weight is \emph{generic} if it is a Jordan--H\"older constituent of a $4$-generic Deligne--Lusztig representation. 
(By equation (\ref{eq:SW:LAP}) and \S \ref{sec:reduction}, a Serre weight is generic if and only if it admits a lowest alcove presentation $(\tld{w}_1,\omega)$ such that $(\omega,\tld{w}_1\cdot \un{C}_0)\in t_{\mu+\eta}s(\Sigma)$ for some $\mu\in \un{C}_0\cap X^*(\un{T})$ which is $4$-deep.)
A generic Serre weight is necessarily $2$-deep by \cite[Proposition 2.3.7]{MLM}. 
We let $W^g_{\mathrm{gen}}(\rhobar)$ and $W^?_{\mathrm{gen}}(\rhobar)$ denote the subsets of generic Serre weights of $W^g(\rhobar)$ and $W^?(\rhobar)$, respectively. 
\end{defn}
\begin{rmk}
When restricted to the present setting, the subset $W^g_{\mathrm{gen}}(\rhobar)\subseteq W^g(\rhobar)$ defined in \cite[Definition 9.1.2]{MLM} (consisting of $8$-deep Serre weights) is a subset of the set defined above. 
\end{rmk}
\begin{cor}\label{cor:sspts}
Let $\rhobar: G_K \ra \GL_3(\F)$ be semisimple and $4$-generic.
Then $W^g_{\mathrm{gen}}(\rhobar) = W_{\mathrm{gen}}^?(\rhobar)$.
\end{cor}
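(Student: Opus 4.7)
Since $\rhobar$ is semisimple and $K/\Q_p$ is unramified, $\rhobar|_{I_K}$ factors through the tame quotient, and we can write $\rhobar|_{I_K}\cong\taubar(w,\mu+\eta)$ with $\mu$ being $4$-deep in $\un{C}_0$, so that $W^?(\rhobar)$ is defined as in Definition \ref{def:gen:W}. The plan is to establish both inclusions via the local model diagram of Theorem \ref{thm:local_model_cmpt}, which for any $4$-generic tame inertial type $\tau$ identifies the irreducible components of $\cX^{\eta,\tau}_\F$ with the quotients (under shifted $T^{\vee,\cJ}_\F$-conjugation) of the closed subvarieties $\tld{C}^\zeta_{\sigma'}\subseteq(\tld{\Fl}^{\nabla_0})^\cJ$ indexed by $\sigma'\in\JH(\ovl{\sigma}(\tau))$.

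For the inclusion $W^?_{\mathrm{gen}}(\rhobar)\subseteq W^g_{\mathrm{gen}}(\rhobar)$, the plan is, given $\sigma\in W^?_{\mathrm{gen}}(\rhobar)$, to exhibit a $4$-generic tame inertial type $\tau$ with $\sigma\in\JH(\ovl{\sigma}(\tau))$ together with a potentially crystalline lift of $\rhobar$ of type $(\eta,\tau)$ whose mod-$p$ image lands on the specific component $\cC_\sigma$. Semisimplicity of $\rhobar$ ensures it corresponds to an essentially diagonal Breuil--Kisin module in the coordinate chart $\tld{U}(\tld{z},\eta,\nabla_{\tau,\infty})$ associated to its shape $\tld{z}$, so the explicit description of the closed subschemes $\tld{\fP}_{\sigma,\tld{z}}\subseteq\tld{U}(\tld{z},\eta,\nabla_{\tau,\infty})_\F$ in Table \ref{Table:intsct}, combined with the identification $\tld{\fP}_{\sigma,\tld{z}}\subseteq\tld{\cC}_\sigma(\tld{z})$ of Theorem \ref{thm:local_model_cmpt}, should pinpoint the diagonal point on $\cC_\sigma$; the combinatorics defining $W^?(\rhobar)$ in \S\ref{subsec:param:SW}--\S\ref{sec:reduction} is what guarantees that this component is indeed $\cC_\sigma$.

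For the reverse inclusion $W^g_{\mathrm{gen}}(\rhobar)\subseteq W^?_{\mathrm{gen}}(\rhobar)$, suppose $\rhobar\in\cC_\sigma(\F)$. For any $4$-generic $\tau$ with $\sigma\in\JH(\ovl{\sigma}(\tau))$, Theorem \ref{thm:local_model_cmpt} forces $\rhobar\in\cX^{\eta,\tau}(\F)$, and the image of $\rhobar$ in $\tld{\Fl}^{[0,2]}_\cJ$ under the map $\iota_{(w,\mu)}$ is, by semisimplicity, an explicit diagonal point determined by $(w,\mu)$ and the shape $\tld{z}$. The plan is to translate the containment of this explicit point in $\tld{C}^\zeta_\sigma$ into the combinatorial membership condition $\sigma\in F\bigl(\mathfrak{Tr}_\lambda(t_{\mu+\eta-\lambda}w(r(\Sigma)))\bigr)$ defining $W^?(\rhobar)$ through \S\ref{subsec:param:SW} and the description of $r(\Sigma)$ in \S\ref{sec:reduction}.

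The main obstacle is the combinatorial matching that underlies both inclusions: verifying that the geometric statement ``the diagonal semisimple point attached to $\rhobar$ lies in $\tld{C}^\zeta_\sigma$'' is equivalent to the combinatorial statement $\sigma\in W^?(\rhobar)$ given via $r(\Sigma)$. This is expected to reduce to a case analysis over the possible shapes $\tld{z}_j\in\Adm^\vee(\eta)$ at each embedding $j\in\cJ$, using the affine-flag-variety description of $\tld{C}^\zeta_\sigma$ from \S\ref{subsub:Label:FL} together with the Jordan--H\"older expansion of $\ovl{R_w(\mu+\eta)}$ in Proposition \ref{prop:JH}, so that each admissible shape can be matched to the corresponding subset of outer versus inner weights.
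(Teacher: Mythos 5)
Your argument is essentially the paper's: semisimplicity places $\rhobar$ at the $T^{\vee,\cJ}_{\F}$-fixed point $\tld{z}s^*t_{\mu^*+\eta^*}$ of the chart $\tld{U}(\tld{z},\leq\eta)_{\F}$ in diagram (\ref{diag:cmpts}), and both inclusions come down to matching membership of that point in $\tld{C}^\zeta_\sigma$ with the combinatorial condition $\sigma\in W^?(\rhobar)$. The shape-by-shape case analysis you flag as the main obstacle need not be redone: it is precisely the combinatorial count established at the end of the third paragraph of item (\ref{it:pf:diag:3}) in the proof of Theorem \ref{thm:local_model_cmpt} (resting on \cite[Theorem 4.7.6]{MLM}), which the paper simply cites.
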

\begin{proof}
Let $\sigma$ be a generic Serre weight so that $\sigma\in \JH\big(\ovl{R_{s}(\mu+\eta)}\big)$ for some $4$-deep $\mu\in\un{C}_0$ and $s\in \un{W}$. 
By Theorem \ref{thm:local_model_cmpt}, if $\rhobar\notin \cX^{\eta,\tau}_{\F}$ then $\sigma\notin W^g(\rhobar)$. 
Else $\rhobar$, being semisimple, corresponds to a point in $T_{\F}^{\vee,\cJ}\tld{z}\in\tld{U}(\tld{z},\leq\eta)_{\F}$ in the diagram \eqref{diag:cmpts}.
The proof of Theorem \ref{thm:local_model_cmpt}~(precisely, the end of the third paragraph in the proof of item (\ref{it:pf:diag:3}) of \emph{loc.~cit}.) shows that $\rhobar\in \tld{\cC}_\sigma$ if and only if $\sigma\in W^?(\rhobar)$. 
\end{proof}

\begin{prop}\label{prop:gWE}
If $\rhobar: G_K \ra \GL_3(\F)$ is a Galois representation such that $\rhobar^{\semis}|_{I_K}$ is $6$-generic, then every Serre weight in $W^g(\rhobar)$ is generic i.e.~$W^g_{\mathrm{gen}}(\rhobar)= W^g(\rhobar)$.
\end{prop}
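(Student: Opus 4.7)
My plan is a reduction to the semisimple case, followed by a local model analysis at the resulting torus-fixed point.

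\emph{Reduction to the semisimple case.} I first prove that $W^g(\rhobar)\subseteq W^g(\rhobar^{\semis})$. Choose a Jordan--H\"older filtration of $\rhobar$ and, by scaling each successive extension class by a parameter $t$, build a family $\{\rhobar_t\}_{t\in \A^1}$ which defines a morphism $\A^1\to \cX_{K,3,\F}$ with $\rhobar_1=\rhobar$, $\rhobar_0=\rhobar^{\semis}$, and $\rhobar_t\cong \rhobar$ for $t\in \Gm$. Since each $\cC_\sigma$ is a closed substack, its preimage in $\A^1$ is closed; if it contains $1$, it contains $\Gm$ (as all $\rhobar_t$ for $t\in\Gm$ define the same isomorphism class as $\rhobar$), and hence all of $\A^1$, so $\rhobar^{\semis}\in \cC_\sigma$. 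Thus we may assume $\rhobar$ is semisimple with $\rhobar|_{I_K}$ being $6$-generic.

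\emph{Placing $\rhobar$ in the local model.} Write $\rhobar|_{I_K}\cong \taubar(s,\mu+\eta)$ with $\mu$ $6$-deep in $\un{C}_0$, and let $\tau\defeq \tau(s,\mu+\eta)$ be the corresponding $6$-generic lift. Comparing the formulas in \S\ref{sec:reduction} and Proposition \ref{prop:JH}, the intersection $W^?(\rhobar)\cap \JH(\ovl{\sigma}(\tau))$ is nonempty (for instance it contains the weights indexed, in the notation of Proposition \ref{prop:JH}, by $(0,\un{a})$ for $\un{a}\in \{0,1\}^f$). Any $\sigma_0$ in this intersection is a Jordan--H\"older constituent of the $4$-generic $R_s(\mu+\eta)$, hence generic, and Corollary \ref{cor:sspts} gives $\sigma_0\in W^g_{\mathrm{gen}}(\rhobar)$. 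Therefore $\rhobar\in \cC_{\sigma_0}\subseteq \cX^{\eta,\tau}_\F$, and via the diagram (\ref{diag:cmpts}) of Theorem \ref{thm:local_model_cmpt}, $\rhobar$ corresponds to a torus-fixed point $\tld{z}s^*t_{\mu^*+\eta^*}\in \tld{U}(\tld{z},\leqeta)_\F$ for some $\tld{z}\in \Adm^\vee(\eta)$.

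\emph{Combinatorial conclusion.} Let $\sigma\in W^g(\rhobar)$. Through the diagram (\ref{diag:cmpts}), the condition $\rhobar\in \cC_\sigma$ translates into the torus-fixed point $\tld{z}s^*t_{\mu^*+\eta^*}$ lying in the subvariety $\tld{C}^\zeta_\sigma\subset(\tld{\Fl}^{\nabla_0})^{\cJ}$, where $\zeta$ is the central character attached to $\sigma$ (necessarily compatible with $\det\rhobar|_{I_K}$). Writing $\sigma=F_{(\tld{w}_1,\omega)}$ for a $\zeta$-compatible lowest alcove presentation, the description of $\tld{C}^\zeta_\sigma$ as the Zariski closure of $S^\circ_\F(\tld{w}_1^*w_0^*)t_{\omega^*}\cap(\Fl^{\nabla_0})^{\cJ}$ forces the element $\tld{y}\defeq \tld{z}s^*t_{\mu^*+\eta^*-\omega^*}$ to satisfy $\tld{y}\leq \tld{w}_1^*w_0^*$ in the antidominant Bruhat order together with the monodromy constraint cut out by $\Fl^{\nabla_0}$. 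Combining this with $\tld{z}\in \Adm^\vee(\eta)$ and $\tld{w}_1\in \tld{\un{W}}_1^+$, and invoking the combinatorics of \cite[Proposition 4.3.4, Theorem 4.7.6]{MLM}, one deduces $\omega-(\mu+\eta)\in s(\Sigma)$, so that $\sigma\in \JH(\ovl{R_s(\mu+\eta)})$. Since $\mu$ is $6$-deep, $R_s(\mu+\eta)$ is $4$-generic, and therefore $\sigma$ is generic.

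The main obstacle is the combinatorial step in the last paragraph: showing that the torus-fixed point $\tld{z}s^*t_{\mu^*+\eta^*}$ does not lie on any component $\tld{C}^\zeta_\sigma$ beyond those associated to $\sigma\in \JH(\ovl{\sigma}(\tau))$. The $6$-genericity of $\mu$ is essential here, ruling out pairs $(\tld{w}_1,\omega)$ whose $\omega$ lies far from $\mu+\eta$ in $X^*(\un{T})$ via the Bruhat and monodromy bounds.
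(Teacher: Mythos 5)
Your argument has a genuine gap at its core: the ``combinatorial conclusion'' step presupposes that every component $\cC_\sigma$ with $\rhobar\in\cC_\sigma(\F)$ can be read off inside the affine flag variety as some $\tld{C}^\zeta_\sigma$, but that description is only available for Serre weights $\sigma$ that are already known to be sufficiently deep/generic. The subvarieties $C_{(\tld{w}_1,\omega)}\subset(\Fl^{\nabla_0})^{\cJ}$ and the identification $\cC_\sigma\leftrightarrow\tld{C}^\zeta_\sigma$ in Theorem \ref{thm:local_model_cmpt} (and in \cite[Theorem 7.4.2]{MLM}) are established only for $\sigma\in\JH(\ovl{\sigma}(\tau))$ with $\tau$ $4$-generic; for a non-generic $\sigma$ the component $\cC_\sigma$ of $\cX_{K,3}$ is defined abstractly via \cite[Theorem 6.5.1]{EGstack} and has no local-model avatar. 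Since the whole content of the proposition is to rule out the non-generic components, invoking the diagram \eqref{diag:cmpts} and \cite[Theorem 4.7.6]{MLM} for an arbitrary $\sigma\in W^g(\rhobar)$ is circular: those results only constrain which \emph{generic} components pass through the torus-fixed point, and say nothing about whether a non-generic $\cC_\sigma$ could also contain $\rhobar$.

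This is exactly why the paper's proof leaves the local-model framework entirely (it is flagged as ``orthogonal to those of this section''). Given $F(\lambda)\in W^g(\rhobar)$ with $\lambda$ arbitrary, the paper picks a maximally nonsplit point $\rhobar'$ on $\cC_{F(\lambda)}$, uses its ordinary crystalline lift of weight $\lambda+\eta$ and an automorphic globalization to produce a potentially semistable lift of type $(\eta,\tau(1,\lambda))$, and concludes that all of $\cC_{F(\lambda)}$ — in particular $\rhobar$, hence $\rhobar^{\semis}$ — admits such a lift; the results of Enns together with the $6$-genericity of $\rhobar^{\semis}|_{I_K}$ then force $\tau(1,\lambda)$ to be $4$-generic, i.e.\ $F(\lambda)$ to be generic. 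Your first reduction ($W^g(\rhobar)\subseteq W^g(\rhobar^{\semis})$ by degenerating the extension classes) is a reasonable and essentially standard specialization argument, but it does not address the real difficulty, which persists in the semisimple case: one still has to exclude non-generic components, and no amount of Bruhat-order or monodromy combinatorics at the torus-fixed point can do that, because those components are invisible to the local model.
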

\begin{proof}
Since the proof uses methods which are now well-known (see e.g.~\cite[\S 3]{GHLS}) but orthogonal to those of this section, we will be brief. 
Let $\rhobar$ be as in the statement of the proposition. 
Then in particular, $p > 6$. 
Suppose that $F(\lambda) \in W^g(\rhobar)$. 
Let $\rhobar': G_K \ra \GL_3(\F)$ be a maximally nonsplit Galois representation lying {only} on $\cC_{F(\lambda)}$. 
Then $\rhobar'$ has an ordinary crystalline lift of weight $\lambda+\eta$ by \cite[Lemma 5.5.4]{EGstack}. 
Then by \cite[Corollary A.7]{EG} there is an automorphic globalization $\rbar': G_{F^+} \ra \cG_3(\F)$ (where $\cG_3$ is the algebraic group defined in \cite[\S 2.1]{CHT} with $n=3$) of $\rhobar'$ which is potentially diagonalizably automorphic in the sense of \cite[Theorem 4.3.1]{LLL}. 
The proof of \cite[Theorem 4.3.8]{LLL} implies that $\Hom_{G(\cO_{F^+_p})}(\otimes_{v|p} \ovl{W(\lambda)}^\vee,S(U))^{\mathrm{ord}} \neq 0$ in the notation of \emph{loc.~cit}. 
Since the natural map $\Hom_{G(\cO_{F^+_p})}(\otimes_{v|p} F(\lambda)^\vee,S(U))^{\mathrm{ord}} \ra \Hom_{G(\cO_{F^+_p})}(\otimes_{v|p} \ovl{W(\lambda)}^\vee,S(U))^{\mathrm{ord}}$ is an isomorphism by \cite[Lemma 6.1.3]{gee-geraghty}, we conclude that $\Hom_{G(\cO_{F^+_p})}(\otimes_{v|p} F(\lambda)^\vee,S(U))$ is nonzero. 
This implies that $\Hom_{G(\cO_{F^+_p})}(\otimes_{v|p} \ovl{R}_1(\lambda)^\vee,S(U)) \neq 0$ by \cite[Lemma 2.3]{florian-inv}. 
In particular, $\rhobar'$ has a potentially semistable lift of type $(\eta,\tau)$ for $\tau = \tau(1,\lambda)$. 
Since $W^g(\rhobar') = \{F(\lambda)\}$, we conclude that $\cC_{F(\lambda)}$ is a subset of the substack of $\cX_{K,3}$ corresponding to potentially semistable representations of type $(\eta,\tau)$. 
In particular, since $\rhobar \in \cC_{F(\lambda)}(\F)$, $\rhobar$ has a potentially semistable lift of type $(\eta,\tau)$. 
By \cite[Lemma 5]{Enns}, $\rhobar^\semis$ has a semistable lift of type $(\eta,\tau)$. 
Then $R_1(\lambda)$ is $1$-generic by \cite[Proposition 7]{Enns} (the proof of \cite[Theorem 8]{Enns} shows that $R_1(\lambda)$ is $2$-generic in the sense of \cite[Definition 2]{Enns} where $\delta = 6$ and $n+1 = 4$ here so that $R_1(\lambda)$ is $1$-generic by \cite[Remark 2.2.8]{LLL}). 
In particular, any potentially semistable lift of type $(\eta,\tau)$ is potentially crystalline. 
By the proof of \cite[Proposition 3.3.2]{LLL}, $\tau$ is $4$-generic (the proof shows that \emph{any} lowest alcove presentation of $\tau$ is $3$-generic, but one compatible with a $6$-generic lowest alcove presentation of $\rhobar^{\semis}_{I_K}$ is $4$-generic). 
We conclude that $R_1(\lambda)$ is $4$-generic and that $F(\lambda)$ is generic. 
\end{proof}

\begin{cor}\label{cor:g=?}
Let $\rhobar: G_K \ra \GL_3(\F)$ be semisimple and $6$-generic.
Then $W^g(\rhobar) = W^?(\rhobar)$.
\end{cor}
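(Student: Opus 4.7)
The plan is to combine the two preceding results with a short combinatorial check. Specifically, I would prove the chain
\[
W^g(\rhobar) \;=\; W^g_{\mathrm{gen}}(\rhobar) \;=\; W^?_{\mathrm{gen}}(\rhobar) \;\subseteq\; W^?(\rhobar),
\]
together with the reverse inclusion $W^?(\rhobar)\subseteq W^?_{\mathrm{gen}}(\rhobar)$.

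First I would observe that since $\rhobar$ is semisimple, $\rhobar^{\semis}|_{I_K}=\rhobar|_{I_K}$ is itself $6$-generic, so Proposition \ref{prop:gWE} applies and yields $W^g(\rhobar)=W^g_{\mathrm{gen}}(\rhobar)$. Next, $6$-genericity implies $4$-genericity, so Corollary \ref{cor:sspts} gives the middle equality $W^g_{\mathrm{gen}}(\rhobar)=W^?_{\mathrm{gen}}(\rhobar)$. The inclusion into $W^?(\rhobar)$ is by definition.

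It remains to establish that every $\sigma\in W^?(\rhobar)$ is generic. Writing $\rhobar|_{I_K}\cong \taubar(s,\mu+\eta)$ with $\mu$ being $6$-deep in $\un{C}_0$, the definition in \S \ref{subsec:param:SW} expresses $\sigma$ as $F\bigl(\mathfrak{Tr}_\lambda(t_{\mu+\eta-\lambda}s(y))\bigr)$ for some $y\in r(\Sigma)$. I would then invoke the converse direction of Proposition \ref{prop:JH}: one identifies an auxiliary lowest alcove presentation $(s',\mu')$ --- differing from $(s,\mu)$ by swapping upper/lower alcove indicators compensated by a bounded translation of $\mu$ by at most $2$ in each coordinate --- so that $t_{\mu+\eta-\lambda}s(y)\in t_{\mu'-\lambda}s'(\Sigma)$. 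Since $\mu$ is $6$-deep and the shift is bounded by $2$, the character $\mu'-\eta$ remains at least $4$-deep in $\un{C}_0$, so that the Deligne--Lusztig representation $R_{s'}(\mu'+\eta)$ is $4$-generic and has $\sigma$ as a Jordan--H\"older constituent. This exhibits $\sigma$ as generic in the sense of Definition \ref{def:gen:W} and completes the argument.

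The main obstacle is the last combinatorial bookkeeping step: making explicit, for each of the nine elements of $r(\Sigma_0)$ (in particular the three elements $(\eps_1\pm\eps_2,1)$ and $(\eps_1+\eps_2,1)$ that are genuinely new compared to $\Sigma_0$), the auxiliary pair $(s',\mu')$ witnessing that $\sigma$ lies in the Jordan--H\"older set described by Proposition \ref{prop:JH} for a $4$-generic Deligne--Lusztig representation. Everything else is a direct invocation of Proposition \ref{prop:gWE} and Corollary \ref{cor:sspts}.
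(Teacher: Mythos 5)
Your proposal is correct and follows exactly the same route as the paper, which deduces the corollary from Corollary \ref{cor:sspts}, Proposition \ref{prop:gWE}, and the fact that $W^?_{\mathrm{gen}}(\rhobar)=W^?(\rhobar)$ here. The only difference is that you spell out the last fact (every weight in $W^?(\rhobar)$ is generic when $\rhobar|_{I_K}$ is $6$-deep, via the bounded shift between $r(\Sigma)$ and $\Sigma$), which the paper asserts without proof; your sketch of that bookkeeping is sound.
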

\begin{proof}
This follows from Corollary \ref{cor:sspts}, Proposition \ref{prop:gWE}, and the fact that $W^?_{\mathrm{gen}}(\rhobar)= W^?(\rhobar)$ in this case. 
\end{proof}

\begin{rmk}
\label{rmk:products:1}
The notions and the results of this section hold true, \emph{mutatis mutandis}, when the set $S_p$ has arbitrary finite cardinality, and $\tau$, $\rhobar$ are a tame inertial $L$-parameter and a continuous $\F$-valued $L$-homomorphism respectively.
In this case $\tld{U}(\tld{z})$, $\tld{U}^{[0,2]}(\tld{z})$, etc.~are fibered products, over $\Spec\cO$ and over the elements $v\in S_p$, of objects of the form $\tld{U}(\tld{z}_v)$, $\tld{U}^{[0,2]}(\tld{z})$, etc.~for $\tld{z}_v\in (\tld{W}^{\vee})^{\Hom_{\Qp}(F^+_v,E)}$; the twisted shifted $\un{T}_{\cO}^\vee$-conjugation action is now induced by $A^{(j)}\mapsto t_j A^{(j)} s^{-1}_j(t_{\pi^{-1}(j)})$ for $j\in \Hom_{\Qp}(F_p,E)$.
Analogously, the algebraic stacks $Y^{[0,2],\tau}$, $\cX_{K,3}$, $\cX^{\eta,\tau}$ etc.~are fibered products, over $\Spf \, \cO$ and over the elements $v\in S_p$, of $Y^{[0,2],\tau_v}$, $\cX_{F_v^+,3}$, $\cX^{\eta_v,\tau_v}$ etc.
The results of this section hold true in this more general setting.
\end{rmk}

\section{Geometric Serre weights}
\label{sec:GSW}
The irreducible components of $\cX_{K,n}$ from \cite[Definition 3.2.1]{EGstack} give rise to a partition of $\cX_{K,n}$ with locally closed parts \[\underset{\sigma \in W^+}{\bigcap} \cC_\sigma \setminus \underset{\sigma \notin W^+}{\bigcup} \cC_\sigma\] indexed by sets $W^+$ of Serre weights. 
It is of interest to determine the geometric properties of these pieces e.g.~when they are nonempty. 
In principle, one can directly study 
\begin{equation}
\label{eq:parts}
\underset{\sigma \in W^+}{\bigcap} \cC_\sigma \setminus \underset{\substack{\sigma \notin W^+\\\sigma\text{ generic }}}{\bigcup} \cC_\sigma
\end{equation}
for a set $W^+$ of \emph{generic} Serre weights using the relationship between $\cX_{K,n}$ and $\Fl^{\nabla_0}$, but this seems to be complicated even when $n=4$. 
In this section, we determine when \eqref{eq:parts} is nonempty in generic cases when $n=3$ using a notion of \emph{obvious weights} for wildly ramified representations. 

\subsection{Intersections of generic irreducible components in $\Fl^{\nabla_0}$}
\label{subsec:geometricFL}

We first study the geometry of $\Fl^{\nabla_0}$. 
The set $\cJ$ will be a singleton, and so we will omit it from the notation. 
For $n\in \N$, let $\cC_{n\text{-deep}}$ be the set of $\omega\in X^*(T)$ such that $\omega-\eta$ is $n$-deep in $C_0$.
Recall from \S \ref{subsub:Label:FL} that given $(\tld{w},\omega) \in \tld{W}_1 \times \cC_{2\text{-deep}}$, we have the irreducible subvariety $C_{(\tld{w},\omega)}$ of $\Fl^{\nabla_0}$.
We define $\Fl_{2\textrm{-deep}}^{\nabla_0}$ as the union of the $C_{(\tld{w},\omega)}$ with $(\tld{w},\omega)\in \tld{W}_1 \times \cC_{2\text{-deep}}$ (in particular, these $C_{(\tld{w},\omega)}$ are its irreducible components). 
The action of $T_{\F}^\vee$ (resp.~$\bG_m$) on $\Fl^{\nabla_0}$ induced by right multiplication (resp.~loop rotation $t\cdot v = t^{-1}v$) preserves $\Fl_{2\textrm{-deep}}^{\nabla_0}$ and its irreducible components. 
We let $\tld{T}_{\F}^\vee$ be the extended torus $T_{\F}^\vee \times\bG_m$. 
We write $\Fl_{2\textrm{-deep}}^{\nabla_0,T^\vee}$ for the set of $T_{\F}^\vee$-fixed points (or equivalently $\tld{T}_{\F}^\vee$-fixed points) of $\Fl_{2\textrm{-deep}}^{\nabla_0}$ under right translation.

\begin{defn}
For $x^* \in \Fl_{2\textrm{-deep}}^{\nabla_0}(\F)$, let $W_{2\textrm{-deep}}^g(x^*)$ be the set 
\[
\{(\tld{w},\omega) \in (\tld{W}_1 \times \cC_{2\text{-deep}}) \mid x^* \in C_{(\tld{w},\omega)}(\F)\}/\sim.
\] 
Recall that the equivalence relation is given by $(\tld{w}, \omega) \sim (t_{\nu} \tld{w}, \omega - \nu)$ for any $\nu \in X^0(T)$.    
\end{defn}

The main result of this section classifies the sets $W^g_{2\textrm{-deep}}(x^*)$ for $x^*\in \Fl_{2\textrm{-deep}}^{\nabla_0}(\F)$. 
Combining the proof of Theorem \ref{thm:local_model_cmpt} (namely, the combinatorial statement in the proof of item (\ref{it:pf:diag:3}) of \emph{loc.~cit}.) and Corollary \ref{cor:sspts} (see also \cite[Prop.~2.6.2]{MLM}), we obtain the following description of $W^g_{2\textrm{-deep}}(x^*)$ for $x^* \in \Fl_{2\textrm{-deep}}^{\nabla_0,T^\vee}(\F)$

\begin{thm}\label{thm:Tfixed}
For $x^* \in \Fl_{2\textrm{-deep}}^{\nabla_0,T^\vee}$, $W^g_{2\textrm{-deep}}(x^*)$ is the set
\[
\{(\tld{w},x\tld{w}_2^{-1}(0)) \in \tld{W}_1^+ \times \cC_{2\text{-deep}} \mid \tld{w}_2 \uparrow \tld{w}\}/\sim.
\]
\end{thm}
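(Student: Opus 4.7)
The plan is to combine the $T^\vee$-fixed point analysis in the proof of item (\ref{it:pf:diag:3}) of Theorem \ref{thm:local_model_cmpt} with Corollary \ref{cor:sspts}, in order to reduce the statement to a combinatorial description of $W^?(\rhobar)$ for a semisimple Galois representation.

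First, I identify $x^*\in\Fl_{2\text{-deep}}^{\nabla_0,T^\vee}(\F)$ with an element $x\in\tld{W}$ via the $*$-bijection $\tld{W}\cong\tld{W}^\vee$, and observe that via the diagram (\ref{diag:cmpts}) of Theorem \ref{thm:local_model_cmpt}, $x^*$ corresponds to a semisimple Galois representation $\rhobar$ whose attached element $\tld{w}(\taubar(\rhobar))\in\tld{W}$ is precisely $x$. Concretely, writing $x^*=\tld{z}s^*t_{\mu^*+\eta^*}$ for a sufficiently deep tame inertial type $\tau=\tau(s,\mu+\eta)$ and $\tld{z}=zt_\nu\in\Adm^\vee(\eta)$, a direct calculation yields $x=t_{\mu+s(\nu)+\eta}(sz^{-1})$, matching the $\tld{w}$-element of the inertial type $\taubar(sz^*,\mu+s(\nu^*)+\eta)$ of $\rhobar$ as computed in the proof of item (\ref{it:pf:diag:3}) of Theorem \ref{thm:local_model_cmpt}.

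Second, under this identification the condition $x^*\in C_{(\tld{w},\omega)}(\F)$ is equivalent (via the bottom row of the diagram) to $F_{(\tld{w},\omega)}\in W^g(\rhobar)$, and by Corollary \ref{cor:sspts} the set $W^g_{\mathrm{gen}}(\rhobar)$ coincides with $W^?_{\mathrm{gen}}(\rhobar)$. Now $W^?(\rhobar)$ in the generic tame semisimple case admits an explicit description in terms of the $\uparrow$-order: as established in the combinatorial step of the proof of item (\ref{it:pf:diag:3}) of Theorem \ref{thm:local_model_cmpt} and in \cite[Prop.~2.6.2]{MLM}, the components $\tld{C}^\zeta_{\sigma'}$ through $x^*$ are enumerated by pairs $(\tld{w},\tld{w}_2)$ with $\tld{w}\in\tld{W}_1^+$ and $\tld{w}_2\uparrow\tld{w}$, via an association $(\tld{w},\tld{w}_2)\mapsto F(\tld{w}\cdot\lambda_{\tld{w}_2})$ for an explicit weight $\lambda_{\tld{w}_2}$.

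Finally, I translate this description into the Serre weight parametrization $F_{(\tld{w},\omega)}=F(\pi^{-1}(\tld{w})\cdot(\omega-\eta))$ of \S\ref{subsubsec:SW}. Using Proposition \ref{prop:JH} and the formula for $\lambda_{\tld{w}_2}$ above, one identifies $\lambda_{\tld{w}_2}$ with $x\tld{w}_2^{-1}(0)-\eta$, so that each pair $(\tld{w},\tld{w}_2)$ corresponds to the pair $(\tld{w},\omega)=(\tld{w},x\tld{w}_2^{-1}(0))\in\tld{W}_1^+\times\cC_{2\text{-deep}}$ of the theorem, with the expected compatibility of equivalence relations. The main technical obstacle is the bookkeeping of the multiple parametrizations in play (the $*$-map, the central character $\zeta$, the Serre weight labels, and the labels of Jordan--H\"older factors of Deligne--Lusztig representations), rather than any deep new input beyond Theorem \ref{thm:local_model_cmpt} and Corollary \ref{cor:sspts}.
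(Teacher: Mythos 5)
Your proposal is correct and follows essentially the same route as the paper: the paper's proof of Theorem \ref{thm:Tfixed} consists precisely of combining the combinatorial statement established in the proof of item (\ref{it:pf:diag:3}) of Theorem \ref{thm:local_model_cmpt} (the enumeration of the $\tld{C}^\zeta_{\sigma'}$ containing $\tld{z}s^*t_{\mu^*+\eta^*}$) with Corollary \ref{cor:sspts} and the $\uparrow$-order description of $W^?$ from \cite[Prop.~2.6.2]{MLM}. The additional bookkeeping you carry out (matching $x$ with $\tld{w}(\taubar)$ and translating between the parametrizations) is exactly the unwinding the paper leaves implicit.
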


A first step towards understanding $W^g_{2\textrm{-deep}}(x^*)$ will be the determination of the subset $W_{\mathrm{obv}}(x^*) \subset W^g_{2\textrm{-deep}}(x^*)$ of \emph{obvious weights}. It is defined as follows (cf.~\cite{OBW}).

\begin{defn}\label{defn:OW}
\begin{enumerate}
\item
Let $y\in \tld{W}$ and $\tld{w}\in \tld{W}_1$ be such that $y\tld{w}^{-1}(0)\in\cC_{2\text{-deep}}$. 
We define $C_{(\tld{w},y\tld{w}^{-1}(0))}(y^*)$ to be the intersection $C_{(\tld{w},y\tld{w}^{-1}(0))}\cap T_\F^\vee\backslash \tld{U}(y^*)_{\F}$ in $\Fl_{2\textrm{-deep}}^{\nabla_0}$.
\item
Let $x^* \in \Fl_{2\textrm{-deep}}^{\nabla_0}(\F)$. %
Then define $SP(x^*)$ to be the set 
\[
\{(y,(\tld{w},y\tld{w}^{-1}(0))) \in \tld{W} \times (\tld{W}_1 \times \cC_{2\text{-deep}})/\sim\, \mid x^* \in C_{(\tld{w},y\tld{w}^{-1}(0))}(y^*)(\F)\}
\]
and $S(x^*)$ and $W_{\obv}(x^*)$ be the images of $SP(x^*)$ under the projections of $\tld{W} \times (\tld{W}_1 \times \cC_{2\text{-deep}})/\sim$ to $\tld{W}$ and $(\tld{W}_1 \times \cC_{2\text{-deep}})/\sim$, respectively. 
\end{enumerate}
\end{defn}

We call elements in $S(x^*)$ specializations of $x^*$.  The set $SP(x^*)$ is the set of specialization pairs consisting of a specialization and an obvious weight.

\begin{lemma}\label{lemma:obvgeom}
For $x^* \in \Fl_{2\textrm{-deep}}^{\nabla_0,T^\vee}(\F)$ and $\tld{w}\in \tld{W}_1$ such that $x\tld{w}^{-1}(0)\in \cC_{2\text{-deep}}$, $x^* \in C_{(\tld{w},x\tld{w}^{-1}(0))}$.
\end{lemma}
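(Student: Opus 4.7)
The plan is to deduce the lemma directly from Theorem \ref{thm:Tfixed}, by exhibiting $(\tld{w}, x\tld{w}^{-1}(0))$ as an element of the set on the right-hand side of that theorem. Concretely, I would take the diagonal choice $\tld{w}_2 = \tld{w}$ in the parametrization given by Theorem \ref{thm:Tfixed}; this is legitimate because the linkage order $\uparrow$ (equivalently, the covering order of Lemma \ref{lemma:coverup}) is reflexive. Membership in that set is, by the defining property of $W^g_{2\textrm{-deep}}(x^*)$, equivalent to the statement $x^* \in C_{(\tld{w}, x\tld{w}^{-1}(0))}$, which is the conclusion of the lemma.

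The one mild discrepancy between the hypotheses of the lemma and those of Theorem \ref{thm:Tfixed} is that the former allows $\tld{w} \in \tld{\un{W}}_1$, while the latter is phrased in terms of $\tld{\un{W}}_1^+$. To bridge this, I would argue that every class in $(\tld{\un{W}}_1 \times \cC_{2\textrm{-deep}})/\sim$ admits a representative with first coordinate in $\tld{\un{W}}_1^+$: choose $\nu \in X^0(\un{T})$ with $t_\nu \tld{w} \in \tld{\un{W}}_1^+$, and then use that $\un{W}$ acts trivially on $X^0(\un{T})$ to verify the identity $x(t_\nu \tld{w})^{-1}(0) = x\tld{w}^{-1}(0) - \nu$. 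This yields the required equivalence $(t_\nu \tld{w}, x(t_\nu \tld{w})^{-1}(0)) \sim (\tld{w}, x\tld{w}^{-1}(0))$, so in particular $C_{(\tld{w}, x\tld{w}^{-1}(0))}$ is independent of the chosen representative, and we may assume without loss that $\tld{w} \in \tld{\un{W}}_1^+$ when invoking Theorem \ref{thm:Tfixed}.

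Since the substantive geometric content has already been packaged into Theorem \ref{thm:Tfixed} (whose own proof rests on the combinatorial statement inside the proof of Theorem \ref{thm:local_model_cmpt}, together with Corollary \ref{cor:sspts}), there is no real obstacle to the lemma itself: the proof is essentially a formal reflexivity observation combined with the routine identification between representatives in $\tld{\un{W}}_1$ and $\tld{\un{W}}_1^+$ of the same $\sim$-class. The only thing one should be a little careful about is that the affine action of $x$ on $X^*(\un{T})$ respects the identity $(t_\nu \tld{w})^{-1}(0) = \tld{w}^{-1}(0) - \nu$, which again reduces to the fact that the finite Weyl part of $\tld{w}$ fixes $\nu \in X^0(\un{T})$.
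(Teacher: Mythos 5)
Your argument is correct and non-circular given the paper's ordering (Theorem \ref{thm:Tfixed} is established from the proof of Theorem \ref{thm:local_model_cmpt} and Corollary \ref{cor:sspts}, neither of which uses this lemma), but it is a genuinely different — and in a sense backwards — route. The paper's proof is a direct one-line geometric argument: since $x^*$ is a $T^\vee$-fixed point, one has the factorization $x^* = (\tld{w}^* w_0^*)\,(x\tld{w}^{-1}w_0)^*$, which places $x^*$ in the translated open Schubert cell $S^\circ_\F(\tld{w}^*w_0^*)(x\tld{w}^{-1}w_0)^*$; intersecting with $\Fl^{\nabla_0}$ (which contains $x^*$ by hypothesis) lands $x^*$ in $S_\F^{\nabla_0}(\tld{w},e,x\tld{w}^{-1}w_0) = C_{(\tld{w},x\tld{w}^{-1}(0))}$ by \cite[(4.9), Prop.~4.3.5]{MLM}. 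That argument uses only the definition of $C_{(\tld{w},\omega)}$, whereas your derivation invokes the full classification of Theorem \ref{thm:Tfixed}, whose content (via \cite[Theorem 4.7.6]{MLM}) has the present lemma as precisely its easy ``lower bound'' half (the case $\tld{w}_2=\tld{w}$ of the inclusion $\supseteq$). So you are really observing that the lemma is subsumed by the theorem rather than giving it an independent proof; the direct argument is preferable if one wants the obvious containments available as a self-contained ingredient. Two small caveats: reflexivity of $\uparrow$ is indeed all you need from the order, and that part is fine; but your bridging step between $\tld{W}_1$ and $\tld{W}_1^+$ does not do what you say — translation by $\nu\in X^0(T)$ leaves all pairings $\langle\,\cdot+\eta,\alpha^\vee\rangle$ with simple coroots unchanged, so it cannot move $\tld{w}$ into $\tld{W}_1^+$ if it was not already there. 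The discrepancy is illusory for a different reason: with the paper's conventions ($p$-restricted and dominant are both tested only against simple roots, so $p$-restricted implies dominant), the two sets coincide and the paper uses the notations interchangeably.
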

\begin{proof}
For any $\tld{w} \in \tld{W}_1$, since $x^* \in S^\circ_{\F}(\tld{w}^* w_0^*) (x \tld{w}^{-1} w_0)^*$, $x^* \in S_\F^{\nabla_0}(\tld{w},e,x\tld{w}^{-1}w_0) = C_{(\tld{w},x\tld{w}^{-1}(0))}$ (see \cite[(4.9) and Prop.~4.3.5]{MLM}).
\end{proof}

\begin{rmk}\label{rmk:obv}
\begin{enumerate}
\item 
Let $x^* \in \Fl_{2\textrm{-deep}}^{\nabla_0,T^\vee}$. Since $x^* $ is the unique $T^{\vee}_\F$-fixed point of $C_{(\tld{w},x\tld{w}^{-1}(0))}(x^*)$,  $SP(x^*) = \{(x,(\tld{w},x\tld{w}^{-1}(0))) \in \tld{W} \times (\tld{W}_1 \times \cC_{2\text{-deep}})/\sim\}$ by  Lemma \ref{lemma:obvgeom}. In particular, $S(x^*) = \{x\}$. Moreover, for all tame inertial $\F$-types $\taubar$, $(\tld{w},\omega) \in W_{\obv}(\tld{w}(\taubar)^*)$ if and only if $F_{(\tld{w},\omega)} \in W_{\obv}(\taubar)$ in the sense of \cite[Definition 2.6.3]{MLM}. 
\item \label{item:obvLB} For $x^* \in \Fl_{2\textrm{-deep}}^{\nabla_0}(\F)$, clearly, $W_{\obv}(x^*) \subset W^g_{2\textrm{-deep}}(x^*)$.
\end{enumerate}
\end{rmk}

To determine $W^g_{2\textrm{-deep}}(x^*)$, we first determine $SP(x^*)$. 
The idea is that $W_{\obv}(x^*)$ gives a lower bound for $W^g_{2\textrm{-deep}}(x^*)$ (Remark \ref{rmk:obv}\eqref{item:obvLB}) while $S(x^*)$ gives an upper bound by Lemma \ref{lemma:semicont}\eqref{item:SWineq} and Theorem \ref{thm:Tfixed}, and $SP(x^*)$ combines these invariants into a more uniformly behaved set (see Corollary \ref{cor:surjtheta}). 

The following results are key to our analysis of $SP(x^*)$. 
For $x^* \in \Fl_{2\textrm{-deep}}^{\nabla_0}$, let $\theta_{x^*}: SP(x^*) \ra W$ be the map that takes $(y,(\tld{w},\omega))$ to the image of $y \tld{w}^{-1}$ in $W$.

\begin{prop}\label{prop:injtheta}
For $x^* \in \Fl_{2\textrm{-deep}}^{\nabla_0}$, $\theta_{x^*}: SP(x^*) \ra W$ is injective.
\end{prop}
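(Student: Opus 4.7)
Suppose $(y_i,(\tld{w}_i,\omega_i))\in SP(x^*)$ for $i=1,2$ both map to $u\in W$ under $\theta_{x^*}$, so $y_i\tld{w}_i^{-1}=t_{\omega_i}u$ in $\tld{W}$. The plan is to first reduce the injectivity statement to the equality $y_1\tld{w}_1^{-1}=y_2\tld{w}_2^{-1}$ in $\tld{W}$ (equivalently $\omega_1=\omega_2$). Once this holds, the element $\gamma\defeq \tld{w}_2^{-1}\tld{w}_1=y_2^{-1}y_1\in\tld{W}$ sends the $p$-restricted dominant alcove $\tld{w}_1\cdot C_0$ to the $p$-restricted dominant alcove $\tld{w}_2\cdot C_0$; a direct check (using that distinct classes in $\tld{W}_1^+/\sim$ correspond to distinct $p$-restricted dominant alcoves modulo $X^0(T)$) forces $\gamma\in X^0(T)$, which gives both $\tld{w}_1\sim\tld{w}_2$ and $y_1=y_2$, and hence equality in $SP(x^*)$.

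To establish $\omega_1=\omega_2$, I will interpret $\omega_i$ geometrically. Since $x^*\in\tld{U}(y_i^*)/T^\vee$ and the open patch $\tld{T}^\vee$-contracts onto $y_i^*$, the fixed point $y_i^*$ is a $T^\vee$-specialization of $x^*$. Because $C_{(\tld{w}_i,\omega_i)}$ is $\tld{T}^\vee$-stable and closed, I deduce $y_i^*\in C_{(\tld{w}_i,\omega_i)}$, so Theorem~\ref{thm:Tfixed} applied to the $T^\vee$-fixed point $y_i^*$ yields a representative $\omega_i=y_i\tld{w}_{i,2}^{-1}(0)$ for some $\tld{w}_{i,2}\in\tld{W}_1^+$ with $\tld{w}_{i,2}\uparrow \tld{w}_i$. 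Comparing with the defining relation $\omega_i=y_i\tld{w}_i^{-1}(0)$ and applying $y_i^{-1}$, I obtain $\tld{w}_i^{-1}(0)=\tld{w}_{i,2}^{-1}(0)$; by the explicit description of $\tld{W}_1^+$ (and the injectivity of $\tld{w}\mapsto\tld{w}^{-1}(0)$ modulo $X^0(T)$ on the $p$-restricted dominant alcoves), this forces $\tld{w}_{i,2}=\tld{w}_i$. Thus $y_i^*$ is pinned down explicitly as an element of $\tld{W}^\vee$ by the data $(u,\omega_i,\tld{w}_i)$ via $y_i=t_{\omega_i}u\tld{w}_i$.

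The remaining point is to show that $u$ alone determines the specialization $y_i^*\in S(x^*)$, whence the translation $\omega_i$ follows. Using Lemma~\ref{lemma:obvgeom} together with the description of $C_{(\tld{w},\omega)}$ as the Zariski closure in $(\Fl^{\nabla_0})^{\cJ}$ of the shifted Schubert cell $S^\circ_\F(\tld{w}^*w_0^*)t_{\omega^*}$, one checks that the element $y\tld{w}^{-1}$ records the Iwahori--Bruhat position of $x^*$ in the open patch $\tld{U}(y^*)/T^\vee$ based at $y^*$. The Iwahori--Bruhat cell of $x^*$ in $\Fl$ is unique, so its Weyl part $u$ together with $x^*$ recovers $y^*$, giving $y_1^*=y_2^*$ and hence $\omega_1=\omega_2$.

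The main obstacle is the final step: making rigorous the identification of $y\tld{w}^{-1}\in\tld{W}$ with an Iwahori--Bruhat invariant of the pair $(x^*,y^*)$, and leveraging it to rule out two distinct specializations in $S(x^*)$ sharing the same Weyl part $u$. This requires carefully combining the matrix-theoretic definition of $\tld{U}(\tld{z})$ with the Schubert stratification of $\Fl^{\nabla_0}$, as in the local model dictionary of \cite{MLM}, and tracking the various conventions ($*$-involution, shifted conjugation, loop rotation).
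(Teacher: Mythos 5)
There is a genuine gap, and it sits exactly where you flag "the main obstacle." The paper itself gives no internal argument --- its proof is a citation to \cite[Prop.~3.6.4]{OBW}, with a remark that a direct computation is possible for $\GL_3$ --- so the only question is whether your sketch constitutes such a computation; it does not. First, even granting $\omega_1=\omega_2$, the conclusion $\gamma=\tld{w}_2^{-1}\tld{w}_1\in X^0(T)$ does not follow: a product of two elements of $\tld{W}_1$ lies in $X^0(T)$ only when $\tld{w}_1\cdot C_0=\tld{w}_2\cdot C_0$, and $\gamma$ does not "send $\tld{w}_1\cdot C_0$ to $\tld{w}_2\cdot C_0$" in any sense that forces this. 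The dangerous configuration --- two pairs $(t_\omega u\tld{w}_1,(\tld{w}_1,\omega))$ and $(t_\omega u\tld{w}_2,(\tld{w}_2,\omega))$ with $\tld{w}_1\cdot C_0=C_0$ and $\tld{w}_2\cdot C_0=C_1$, hence distinct specializations $y_1\neq y_2$ --- is consistent with everything you establish and is precisely what the proposition must rule out. (Compare Lemma \ref{lemma:obvwts}: no case of $W_{\obv}(x^*)$ contains both $(\eps,0)$ and $(\eps,1)$; but that lemma is proved \emph{using} Propositions \ref{prop:injtheta} and \ref{prop:walk}, so it cannot be invoked here.)

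Second, the argument offered for "$u$ together with $x^*$ recovers $y^*$" is incoherent. The point $x^*$ lies in a unique Iwahori double coset $\cI\backslash\cI\tld{v}\cI$ of $\Fl$, and the Weyl part of $\tld{v}$ is an invariant of $x^*$ alone; it cannot coincide with the image $u$ of $y\tld{w}^{-1}$ for every $(y,(\tld{w},\omega))\in SP(x^*)$, since by Corollary \ref{cor:surjtheta} the map $\theta_{x^*}$ is typically surjective and $u$ then ranges over all of $W$. Moreover, membership $x^*\in C_{(\tld{w},\omega)}(y^*)$ only places $x^*$ in the \emph{closure} of $S_\F^\circ(\tld{w}^*w_0^*)t_{\omega^*}$ intersected with the patch around $y^*$, so no single Bruhat stratum is being pinned down by this membership. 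The remaining observations (that $y_i^*\in C_{(\tld{w}_i,\omega_i)}$ by closedness and $\tld{T}_\F^\vee$-stability, and the identity $y_i=t_{\omega_i}u\tld{w}_i$) are correct but carry no force: the first is the easy direction of Lemma \ref{lemma:semicont}, and the second is just the definition of $\theta_{x^*}$. A real proof must compare the two patches $\tld{U}(y_1^*)$, $\tld{U}(y_2^*)$ (equivalently, the relevant translated Schubert varieties) concretely and show they cannot both contain $x^*$ in the required strata when $\theta_{x^*}$ takes the same value; that is the content of \cite[Prop.~3.6.4]{OBW} and of the "direct computation" the paper alludes to.
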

\begin{proof}
This is \cite[Prop.~3.6.4]{OBW}.
(It can also be proven by direct computation in the case of $\GL_3$.)
\end{proof}

Let $\cI\defeq L^+\cG_{\F}$ be the Iwahori group scheme.
For $\tld{w}_\tau \in \tld{W}$ and $x^* \in \Fl$, let $\tld{w}(x^*,\tld{w}_\tau) \in \tld{W}$ be the unique element such that $x^* \in \cI\backslash \cI \tld{w}(x^*,\tld{w}_\tau)^* \cI \tld{w}_\tau^*$.

\begin{lemma} \label{lemma:semicont}
Suppose that $y \in S(x^*)$. 
Then 
\begin{enumerate}
\item \label{item:semicont} $\tld{w}(y^*,\tld{w}_\tau) \leq \tld{w}(x^*,\tld{w}_\tau)$; and
\item \label{item:SWineq} $W^g_{2\textrm{-deep}}(x^*) \subset W^g_{2\textrm{-deep}}(y^*)$. 
\end{enumerate}
\end{lemma}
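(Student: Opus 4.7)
The plan is to exploit the fact that $y\in S(x^*)$ places $x^*$ in a standard affine chart of $\Fl_\F$ that is centered at $y^*$. Unwinding the definition, $y\in S(x^*)$ means there exists $\tld w$ with $x^*\in C_{(\tld w,y\tld w^{-1}(0))}\cap T_\F^\vee\backslash \tld U(y^*)_\F$; in particular, $x^*$ lies in the image of $\tld U(y^*)_\F$ in $\Fl_\F$. The open subscheme $\tld U(y^*)_\F\subset L\cG_\F$ is cut out by matrix degree bounds and invertibility of specified leading coefficients, and descends to an open neighborhood of $y^*$ in $\Fl_\F$ with $\{y^*\}$ as its unique $\tld T_\F^\vee$-fixed point (where $\tld T_\F^\vee=T_\F^\vee\times\bG_m$ acts by left translation and loop rotation). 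A standard Bia\l ynicki--Birula argument, applied in this affine chart, furnishes a cocharacter $\lambda:\bG_m\to \tld T_\F^\vee$ whose induced action on $T_\F^\vee\backslash\tld U(y^*)_\F$ contracts onto $\{y^*\}$. Thus $y^*$ lies in the closure of the $\bG_m$-orbit of $x^*$ inside $\Fl_\F$.

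For part (1), observe that the Iwahori-Schubert cell $\cI \tld w(x^*,\tld w_\tau)^*\cI\tld w_\tau^*$ containing $x^*$ is locally closed in $\Fl_\F$, with closure $\bigcup_{\tld v\leq \tld w(x^*,\tld w_\tau)}\cI\tld v^*\cI\tld w_\tau^*$. Both $\cI$ and the right translate by $\tld w_\tau^*$ are fixed up to left-$\cI$ action under loop rotation, so these strata are stable under $\lambda(\bG_m)$. The limit $y^*$ of $x^*$ then lies in the closure of this Schubert cell, proving $\tld w(y^*,\tld w_\tau)\leq\tld w(x^*,\tld w_\tau)$.

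For part (2), let $(\tld w,\omega)\in W^g_{2\text{-deep}}(x^*)$, so $x^*\in C_{(\tld w,\omega)}$. By definition, $C_{(\tld w,\omega)}$ is the Zariski closure in $(\Fl^{\nabla_0})^\cJ$ of $S^\circ_\F(\tld w^* w_0^*)t_{\omega^*}\cap(\Fl^{\nabla_0})^\cJ$; since this locally closed piece is $\tld T_\F^\vee$-invariant, so is its closure $C_{(\tld w,\omega)}$, which moreover is closed. Consequently the entire $\lambda(\bG_m)$-orbit of $x^*$ lies in $C_{(\tld w,\omega)}$, and hence so does its limit $y^*$; that is, $(\tld w,\omega)\in W^g_{2\text{-deep}}(y^*)$.

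The main obstacle is the verification of the contracting $\bG_m$-action on the twisted chart $\tld U(y^*)_\F$, whose defining equations use $v+p$ rather than $v$ and whose base point $y^*$ is not the identity of $\Fl$. This amounts to checking that the explicit coordinates in \S\ref{sec:localmodel} are homogeneous of the expected weight under $\lambda$, and identifying a $\lambda$ for which all weights are positive; this can be done by inspection of the matrix entries appearing in the description of $\tld U(\tld z)$ (compare Proposition \ref{prop:loc:mod:diag:1} and Table \ref{table:coord1}). All other ingredients---the closure formulas for Iwahori-Schubert cells, and the $\tld T_\F^\vee$-equivariance of $C_{(\tld w,\omega)}$---are essentially formal consequences of the definitions.
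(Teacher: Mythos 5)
Your proposal is correct and follows essentially the same route as the paper: both arguments rest on the observation that $y\in S(x^*)$ forces $x^*\in T_\F^\vee\backslash\tld{U}(y^*)$, hence $y^*$ lies in the $\tld{T}_\F^\vee$-orbit closure of $x^*$, and then conclude by noting that the relevant Schubert-cell closures and the components $C_{(\tld{w},\omega)}$ are closed and $\tld{T}_\F^\vee$-stable. The contracting-cocharacter verification you flag as the main obstacle is exactly the content of \cite[Lemma 3.4.7]{MLM}, which the paper invokes rather than reproves.
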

\begin{proof}

That $y \in S(x^*)$ implies that $x^* \in T_{\F}^{\vee}\backslash\tld{U}(y^*)$ or equivalently that $y^*$ is in the $\tld{T}_{\F}^{\vee}$-orbit closure of $x^*$. 
For \eqref{item:semicont}, $y^*$ is in the ($\tld{T}_{\F}^{\vee}$-orbit) closure of $\cI\backslash \cI \tld{w}(x^*,\tld{w}_\tau)^* \cI \tld{w}_\tau^*$ which implies the desired inequality. 
For \eqref{item:SWineq}, if $x^* \in C_{(\tld{w},\omega)}$, then $y^* \in C_{(\tld{w},\omega)}$ since $C_{(\tld{w},\omega)}$ is $\tld{T}_{\F}^{\vee}$-stable and closed. 
\end{proof}

The following result provides a method to start with an element of $SP(x^*)$ and produce another using a simple reflection in $W$.

\begin{prop}\label{prop:walk}
Let $x^*\in \Fl_{2\textrm{-deep}}^{\nabla_0}$ and $s \in W$ be a simple reflection. 
Suppose that for some $y\in \tld{W}$, 
\begin{enumerate}
\item \label{item:2-deep} $y \tld{w}^{-1}s t_{-\eta} \tld{w}(0)-\eta$, $y \tld{sw}^{-1}(0)-\eta$, and $y t_{-(sw)^{-1}(\eta)}(0)-\eta$ are $2$-deep; and 
\item $(y,(\tld{w},y\tld{w}^{-1}(0))) \in SP(x^*)$, 
\end{enumerate}
where $\tld{sw} \in \tld{W}_1$ is the unique element up to $X^0(T)$ such that $\tld{sw}\tld{w}^{-1}s^{-1} \in X^*(T)$. 
Then either $(y\tld{w}^{-1}s\tld{w},(\tld{w},y\tld{w}^{-1}(0))) \in SP(x^*)$ or $(y,(\tld{sw},y\tld{sw}^{-1}(0)))$ is in $SP(x^*)$. 
\end{prop}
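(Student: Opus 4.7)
The plan is to analyze the local geometry of $\Fl^{\nabla_0}_{2\textrm{-deep}}$ along a rational curve attached to the simple reflection $s$, and argue that the position of $x^*$ on that curve forces one of the two prescribed specialization pairs to lie in $SP(x^*)$.

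First I would unwind the definitions. The hypothesis $(y,(\tld{w},y\tld{w}^{-1}(0))) \in SP(x^*)$ means $x^* \in C_{(\tld{w},y\tld{w}^{-1}(0))}$ and $y^*$ lies in the closure of the $\tld{T}_{\F}^\vee$-orbit of $x^*$. Moreover, $C_{(\tld{w},y\tld{w}^{-1}(0))}$ is by definition the Zariski closure of $S_\F^\circ(\tld{w}^* w_0^*) t_{(y\tld{w}^{-1}(0))^*} \cap \Fl^{\nabla_0}$. The two candidate outputs of the proposition correspond to the two natural ``neighbors'' of the data $(y,\tld{w})$ under $s$: the specialization $y$ can be twisted by $\tld{w}^{-1}s\tld{w}$, or the obvious weight $\tld{w}$ can be replaced by its $s$-reflected translate $\tld{sw}\in \tld{W}_1$.

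Next, I would exhibit a $T_\F^\vee$-invariant rational curve $\bP^1\hookrightarrow \Fl^{\nabla_0}_{2\textrm{-deep}}$ passing through $y^*$ whose two $T_\F^\vee$-fixed points are $y^*$ and $(y\tld{w}^{-1}s\tld{w})^*$. Such a curve is the fiber through $y^*$ of the $\bP^1$-fibration $\Fl\ra \Fl/P_s$, where $P_s\supset \cI$ is the parahoric associated to $s$; that this curve lands inside $\Fl^{\nabla_0}_{2\textrm{-deep}}$ is precisely where the three $2$-deepness assumptions in part~(1) are used (they guarantee the three relevant weights $y\tld{w}^{-1}st_{-\eta}\tld{w}(0)-\eta$, $y\tld{sw}^{-1}(0)-\eta$, $yt_{-(sw)^{-1}(\eta)}(0)-\eta$ remain in the dominant $2$-deep region so the cells $C_{(\tld{w},\cdot)}$, $C_{(\tld{sw},\cdot)}$ are components).

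The heart of the argument is then a dichotomy. Since $x^*\in C_{(\tld{w},y\tld{w}^{-1}(0))}$ and the $\tld{T}^\vee_\F$-orbit closure of $x^*$ contains $y^*$, either the whole $\bP^1$ lies in $C_{(\tld{w},y\tld{w}^{-1}(0))}$, in which case the orbit closure also contains the other fixed point $(y\tld{w}^{-1}s\tld{w})^*$, giving the specialization pair $(y\tld{w}^{-1}s\tld{w},(\tld{w},y\tld{w}^{-1}(0)))\in SP(x^*)$; or the $\bP^1$ meets $C_{(\tld{w},y\tld{w}^{-1}(0))}$ only at $y^*$, in which case the transverse direction at $y^*$ inside the orbit closure of $x^*$ must be absorbed by the neighboring component $C_{(\tld{sw},y\tld{sw}^{-1}(0))}$, giving $(y,(\tld{sw},y\tld{sw}^{-1}(0)))\in SP(x^*)$. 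That these exhaust all the possibilities is consistent with Proposition~\ref{prop:injtheta}: the two options have images $y\tld{w}^{-1}s\tld{w}\cdot\tld{w}^{-1}$ and $y\tld{sw}^{-1}$ in $W$, which differ (one is the conjugate $(y\tld{w}^{-1})\cdot s\cdot (y\tld{w}^{-1})^{-1}\cdot y\tld{w}^{-1}$ of the other by $s$), so $\theta_{x^*}$ distinguishes them.

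The main obstacle I anticipate is the transverse case: one must rule out that $x^*$ could be ``tangent'' to the $\bP^1$ at $y^*$ without extending to $(y\tld{w}^{-1}s\tld{w})^*$ while also avoiding $C_{(\tld{sw},y\tld{sw}^{-1}(0))}$. This is where one presumably has to invoke the explicit Iwahori-orbit presentation of $\tld{U}(y^*)$ together with the coordinate descriptions of the components $C_{(\tld{w},\omega)}$ coming from Theorem~\ref{thm:local_model_cmpt} and Table~\ref{Table:intsct}, reducing the claim to a direct check inside a single $P_s$-orbit. Once that local computation is in hand, the $2$-deepness hypotheses ensure that everything remains inside the genericity range where the component labels $(\tld{w},\omega)$ and $(\tld{sw},y\tld{sw}^{-1}(0))$ are unambiguous, completing the dichotomy.
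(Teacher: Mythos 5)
Your overall instinct --- that the proof is a local dichotomy at $y^*$ ``in the $s$-direction'', with the hard case requiring an explicit cell computation --- is in the right spirit, but the dichotomy you set up does not work as stated, and the two ingredients that make the paper's argument go through are missing. First, your case (a) is a non sequitur: knowing that the whole $T^\vee_\F$-stable curve through $y^*$ and $(y\tld{w}^{-1}s\tld{w})^*$ lies in $C_{(\tld{w},y\tld{w}^{-1}(0))}$ says nothing about whether $(y\tld{w}^{-1}s\tld{w})^*$ lies in the $\tld{T}^\vee_\F$-orbit closure of $x^*$, which is what membership in $SP(x^*)$ requires (one needs $x^*\in \tld{U}((y\tld{w}^{-1}s\tld{w})^*)$, a condition on $x^*$ alone, not on the component). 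Second, the curve you propose is not the fiber of $\Fl\ra\Fl/P_s$ for the standard parahoric of the \emph{finite} simple reflection $s$: the relevant reflection connecting $y^*$ to $(y\tld{w}^{-1}s\tld{w})^*$ is the affine reflection $\tld{w}^{-1}s\tld{w}$, and in any case the curve plays no role in the actual argument.

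What the paper does instead is choose an auxiliary element $\tld{w}_\tau = y(\tld{w}^{-1}\tld{w}_h^{-1}w_0 s\tld{w})^{-1}$ (Galois-theoretically, an auxiliary tame type) and study the invariant $\tld{w}(x^*,\tld{w}_\tau)$, i.e.\ which Iwahori double coset $x^*$ lies in after translating by $\tld{w}_\tau^*$. Semicontinuity (Lemma \ref{lemma:semicont}) gives the lower bound $\tld{w}^{-1}\tld{w}_h^{-1}w_0 s\tld{w}\le \tld{w}(x^*,\tld{w}_\tau)$ because $y\in S(x^*)$, and membership $x^*\in C_{(\tld{w},y\tld{w}^{-1}(0))}$ gives the upper bound $t_{w^{-1}(\eta)}$; since these differ in length by exactly $1$, only two cosets are possible --- this is the true source of the dichotomy, and it is entirely about the position of $x^*$, not of a curve. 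In the top stratum one reads off $(y\tld{w}^{-1}s\tld{w},(\tld{w},y\tld{w}^{-1}(0)))\in SP(x^*)$ because $\tld{w}_\tau t_{w^{-1}(\eta)} = y\tld{w}^{-1}s\tld{w}$; in the lower stratum the paper proves the containment \eqref{eqn:irreg} by showing two explicit two-dimensional subvarieties of $\Fl^{\nabla_0}$ (both isomorphic to $\A^2_\F$ by the $2$-deepness hypotheses and \cite[Theorem 4.2.4]{MLM}) coincide, using the factorization $\tld{w}^{-1}\tld{w}_h^{-1}w_0s\tld{w}=\tld{sw}^{-1}\tld{w}_h^{-1}w_0 s\,\tld{sw}$. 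This computation is the bulk of the proof and is exactly the step you defer; without it, and without the Bruhat-order sandwich replacing your curve dichotomy, the argument is incomplete.
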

\begin{proof}
Let $\tld{w}_\tau$ be $y (\tld{w}^{-1}\tld{w}_h^{-1} w_0 s \tld{w})^{-1}$ so that $\tld{w}(y^*,\tld{w}_\tau) = \tld{w}^{-1}\tld{w}_h^{-1} w_0 s \tld{w}$. 
In Galois-theoretic language, this corresponds to the choice of the inertial type $\tau$ in \cite[Proposition 7.16(3)]{LLLM}. 
We will see that there are only two possibilities for $\tld{w}(x^*,\tld{w}_\tau)$.
First, $\tld{w}^{-1}\tld{w}_h^{-1} w_0 s \tld{w} \leq \tld{w}(x^*,\tld{w}_\tau)$ by Lemma \ref{lemma:semicont}\eqref{item:semicont}. 

Let $M(\leq\eta)_{\F} \subset \Fl$ be the reduced closure of $\cup_{w\in W} \cI\backslash \cI t_{w^{-1}(\eta)} \cI$ (this is compatible with the notation in \S \ref{sec:localmodel}). 
Let $M(\eta,\nabla_{\tld{w}_\tau(0)})_{\F}\tld{w}_\tau^*$ be the intersection $M(\eta)_{\F}\tld{w}_\tau^* \cap \Fl^{\nabla_0}$. 
For $\tld{z} \in \tld{W}^\vee$, let $M(\eta,\nabla_{\tld{w}_\tau(0)})_{\F}(\tld{z})$ denote the intersection $M(\eta,\nabla_{\tld{w}_\tau(0)})_{\F} \cap \big(T_{\F}^{\vee}\backslash\tld{U}(\tld{z})\big)$ which is isomorphic to $U(\tld{z},\eta,\nabla_{\tld{w}_\tau(0)})_{\F}\defeq T^{\vee}_{\F}\backslash\tld{U}(\tld{z},\eta,\nabla_{\tld{w}_\tau(0)})_{\F}$. 
Now, $x^*$ lies in $C_{(\tld{w},y\tld{w}^{-1}(0))}$ which is the closure of $$M(\eta,\nabla_{\tld{w}_\tau(0)})_\F(t_{w^{-1}(\eta)}^*)\tld{w}_\tau^*=\cI\backslash\cI t_{w^{-1}(\eta)} \cI\tld{w}^*_{\tau}\cap \Fl^{\nabla_0},$$ hence $\tld{w}(x^*,\tld{w}_\tau) \leq t_{w^{-1}(\eta)}$. 
Combining this with the last paragraph, we have $\tld{w}^{-1}\tld{w}_h^{-1} w_0 s \tld{w} \leq \tld{w}(x^*,\tld{w}_\tau)\leq t_{w^{-1}(\eta)}$. 
Since $\ell(\tld{w}^{-1}\tld{w}_h^{-1} w_0 s \tld{w}) = 3 = \ell(t_{w^{-1}(\eta)}) - 1$ (this is a consequence of a more general result in \cite{OBW}, but can be checked directly using \cite[Table 1]{LLLM}), we see that $\tld{w}(x^*,\tld{w}_\tau) = t_{w^{-1}(\eta)}$ or $\tld{w}^{-1}\tld{w}_h^{-1} w_0 s \tld{w}$. 

If $\tld{w}(x^*,\tld{w}_\tau) = t_{w^{-1}(\eta)}$, then $(y\tld{w}^{-1}s\tld{w},(\tld{w},y\tld{w}^{-1}(0))) \in SP(x^*)$ (this is represented by the red and blue parts in Figure \ref{BasicUL}). 
We claim that if $\tld{w}(x^*,\tld{w}_\tau) = \tld{w}^{-1}\tld{w}_h^{-1} w_0 s \tld{w}$, then %
$x^*\in C_{(\tld{sw},y\tld{sw}^{-1}(0))}(y^*)$
(this is represented by the arrows in Figure \ref{BasicUL}). 
It suffices to show that 
\begin{equation}\label{eqn:irreg}
\cI \backslash \cI (\tld{w}^{-1}\tld{w}_h^{-1} w_0 s \tld{w})^* \cI \tld{w}_\tau^* \cap \Fl^{\nabla_0} \subset \cI \backslash \cI t_{(sw)^{-1}(\eta)}^* \cI t_{-(sw)^{-1}(\eta)}^* y^* \cap \Fl^{\nabla_0}. 
\end{equation}
Using \eqref{item:2-deep}, \cite[Theorem 4.2.4]{MLM} shows that both
\begin{equation}
\label{eqn:2spaces} \cI \backslash \cI(\tld{w}^{-1}\tld{w}_h^{-1} w_0 s \tld{w})^* \cI \tld{w}_\tau^* \cap \Fl^{\nabla_0} %
\quad \textrm{and} \quad \cI \backslash \cI(w_0 s \tld{sw})^*\cI (\tld{sw}^{-1}\tld{w}_h^{-1})^* \tld{w}_\tau^* \cap \Fl^{\nabla_0}
\end{equation} 
are isomorphic to $\A^2_{\F}$. 
The equality $\tld{w}^{-1}\tld{w}_h^{-1} w_0 s \tld{w} = \tld{sw}^{-1} \tld{w}_h^{-1} w_0 s \tld{sw}$ implies that the latter space in  \eqref{eqn:2spaces} is contained in the former by the proof of \cite[Proposition 4.3.4]{MLM} (one can directly check that $(\tld{sw}^{-1}\tld{w}_h^{-1}) (w_0 s) \tld{sw}$ is a reduced factorization) so that the spaces in \eqref{eqn:2spaces} are equal. 
Finally, observe that 
\[
\cI \backslash \cI(w_0 s \tld{sw})^*\cI (\tld{sw}^{-1}\tld{w}_h^{-1})^* \tld{w}_\tau^* \subset \cI \backslash \cI(w_0 \tld{sw})^*\cI (w_0 s w_0^{-1})^*(\tld{sw}^{-1}\tld{w}_h^{-1})^* \tld{w}_\tau^*
\] 
and 
\begin{align*}
&\cI \backslash \cI(w_0 \tld{sw})^* \cI(w_0 s w_0^{-1})^*(\tld{sw}^{-1}\tld{w}_h^{-1})^* \tld{w}_\tau^* \cap \Fl^{\nabla_0} \\ = &\cI \backslash \cI(\tld{sw}^{-1} \tld{w}_h^{-1} w_0 \tld{sw})^*\cI (\tld{w}_h \tld{sw})^*(w_0 s w_0^{-1})^*(\tld{sw}^{-1}\tld{w}_h^{-1})^* \tld{w}_\tau^*\cap \Fl^{\nabla_0} \\
=& \cI \backslash \cI t_{(sw)^{-1}(\eta)}^* \cI t_{-(sw)^{-1}(\eta)}^* y^* \cap \Fl^{\nabla_0}
\end{align*} 
by (the proof of) \cite[Proposition 4.3.4]{MLM}. 
Putting this all together yields \eqref{eqn:irreg}. 

\end{proof}
\begin{figure}[h]
    \centering
     \includegraphics[scale=.1]{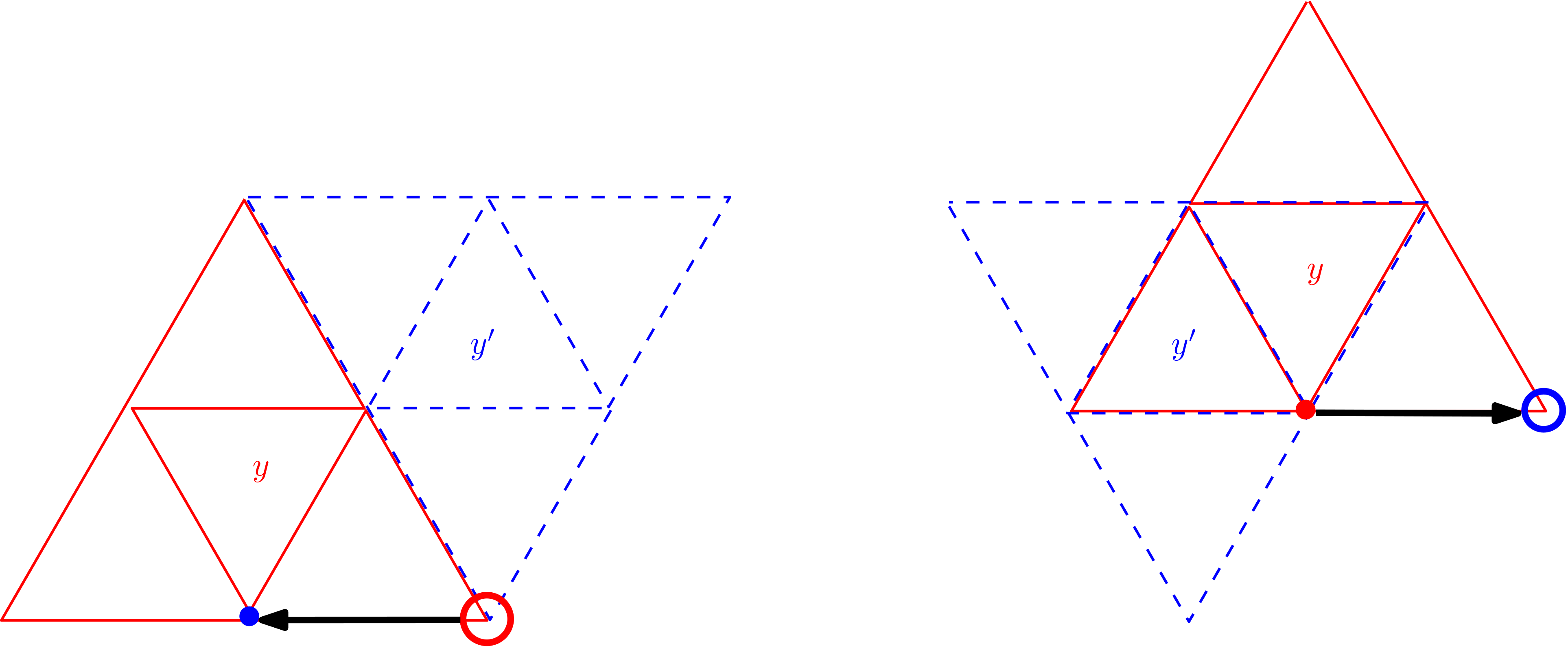} 
        \caption{We illustrate the dichotomy given by the last paragraph of the proof of Proposition \ref{prop:walk}. We represent the data $(y,(\tld{w},\omega))\in SP(x^*)$ by the alcove labeled by $y$ and the dot (resp.~circle) at $\omega\in X^*(T)/X^0(T)$ when $\tld{w}\cdot C_0$ is lower (resp.~upper) alcove (thus the left picture is the case where $\tld{w}\cdot C_0$ is upper alcove, and the right picture the case where $\tld{w}\cdot C_0$ is lower alcove). The starting pair $(y,(\tld{w},y\tld{w}^{-1}(0)))\in SP(x^*)$ is given by the red triangle (with vertexes labeled by the set $W_{\obv}(y^*)$) and the source of the arrow (labeled by the obvious weight $(\tld{w},y\tld{w}^{-1}(0))\in W_{\obv}(x^*)$).
        The dotted triangle represents the possible new specialization, while the tip of the arrow represents the new obvious weight.}
        \label{BasicUL}
\end{figure}

\begin{prop}\label{prop:SPnonempty}
For $x^* \in \Fl_{2\textrm{-deep}}^{\nabla_0}$, $SP(x^*)$ is nonempty. 
\end{prop}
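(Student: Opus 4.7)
The plan is to produce an element of $SP(x^*)$ by combining a $\bG_m^{\cJ}$-specialization of $x^*$ with the combinatorial description of $W^g_{2\textrm{-deep}}$ at $T_\F^\vee$-fixed points given by Theorem \ref{thm:Tfixed}. Since $x^*\in\Fl^{\nabla_0}_{2\textrm{-deep}}$, it lies on at least one irreducible component $C_{(\tld{w},\omega)}$, so $W^g_{2\textrm{-deep}}(x^*)$ is nonempty to start with.

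First I would use a generic cocharacter of the loop rotation torus $\bG_m^{\cJ}$ to contract $x^*$ to a $T_\F^\vee$-fixed point $y^*=\lim_{t\to 0}t\cdot x^*$, where $y\in\tld{W}$ is the unique element such that $x^*$ lies in the Iwahori cell $\cI\backslash\cI y^*\cI$ (Bia\l{}ynicki-Birula). Since the $\bG_m^{\cJ}$-action commutes with the $T_\F^\vee$-action, preserves $\Fl^{\nabla_0}_{2\textrm{-deep}}$, and preserves each irreducible component $C_{(\tld{w},\omega)}$, the limit $y^*$ lies in every component through $x^*$. Hence $W^g_{2\textrm{-deep}}(x^*)\subseteq W^g_{2\textrm{-deep}}(y^*)$, and $y\in S(x^*)$ by Lemma \ref{lemma:semicont}, so in particular $x^*\in T_\F^\vee\backslash\tld{U}(y^*)$.

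By Theorem \ref{thm:Tfixed}, every pair in $W^g_{2\textrm{-deep}}(y^*)$ is equivalent to one of the form $(\tld{w}_1,y\tld{w}_2^{-1}(0))$ with $\tld{w}_2\uparrow\tld{w}_1$ in $\tld{W}_1^+$. To conclude it suffices to find a ``diagonal'' pair (i.e.~one with $\tld{w}_1=\tld{w}_2$) in the image of the inclusion $W^g_{2\textrm{-deep}}(x^*)\hookrightarrow W^g_{2\textrm{-deep}}(y^*)$, since any such pair $(\tld{w},y\tld{w}^{-1}(0))$, together with $y$, lies in $SP(x^*)$ by construction.

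The main obstacle is thus the existence of this diagonal pair surviving the inclusion $W^g_{2\textrm{-deep}}(x^*)\hookrightarrow W^g_{2\textrm{-deep}}(y^*)$. I would handle this using the explicit Schubert-cell description of the components $C_{(\tld{w},\omega)}$ from \cite[Prop.~4.3.4, 4.3.5]{MLM}: the Iwahori cell $\cI\backslash\cI y^*\cI$ containing $x^*$ must meet, within $\Fl^{\nabla_0}_{2\textrm{-deep}}$, at least one component of the form $C_{(\tld{w},y\tld{w}^{-1}(0))}$. Galois-theoretically, via the local model diagram of Theorem \ref{thm:local_model_cmpt}, this reflects the fact that every $\rhobar$ of sufficiently generic shape $y$ relative to a fixed tame inertial type admits at least one ``obvious'' Serre weight for that shape, and the case analysis can ultimately be reduced to the tables of \cite[\S 5.3, \S 8]{LLLM}.
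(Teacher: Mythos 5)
Your reduction---contract $x^*$ under a generic cocharacter to a $\tld{T}_\F^\vee$-fixed point $y^*$, observe $W^g_{2\textrm{-deep}}(x^*)\subset W^g_{2\textrm{-deep}}(y^*)$ and $x^*\in T_\F^\vee\backslash\tld{U}(y^*)_\F$, then invoke Theorem \ref{thm:Tfixed}---correctly isolates what has to be shown: that some component through $x^*$ is \emph{diagonal} for your chosen $y$, i.e.\ of the form $C_{(\tld{w},y\tld{w}^{-1}(0))}$. (Minor point: Lemma \ref{lemma:semicont} assumes $y\in S(x^*)$ rather than proving it; the substantive input you need is the orbit-closure characterization of membership in $T_\F^\vee\backslash\tld{U}(y^*)_\F$.) But that diagonal-pair step is exactly the content of the proposition, and your treatment of it is not a proof. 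Two concrete problems. First, you fix $y$ once and for all as the Bia{\l}ynicki--Birula limit for the standard Iwahori, whereas the $\tld{T}_\F^\vee$-orbit closure of $x^*$ generally has several fixed points; nothing you say excludes the possibility that every $(\tld{w}_1,\omega)\in W^g_{2\textrm{-deep}}(x^*)$ is realized inside $W^g_{2\textrm{-deep}}(y^*)$ only off-diagonally (with $\tld{w}_2<\tld{w}_1$ in Theorem \ref{thm:Tfixed}), in which case this particular $y$ contributes nothing to $SP(x^*)$ and one must pass to a different fixed point. Second, the assertion that the Iwahori cell of $x^*$ ``must meet'' a diagonal component is both unjustified and insufficient: you need $x^*$ itself, not merely some point of its cell, to lie on such a component.

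The paper closes this gap by a purely combinatorial device that you would need to replicate. It covers $C_{(\tld{w},\omega)}$ by the open charts $C_{(\tld{w},\omega)}(\tld{z}t_{\omega^*})$ for $\tld{z}\le\tld{w}^*w_0^*$ and sets $y=t_{\omega}\tld{z}^*$ for a chart actually containing $x^*$, so that $y$ is adapted to $x^*$ rather than fixed in advance; the pair $(y,(\tld{w},\omega))$ lies in $SP(x^*)$ precisely when $\tld{z}^*\in W\tld{w}$, since that is the condition for $y\tld{w}^{-1}(0)=\omega$. If $\tld{w}\cdot C_0=C_0$ this holds for \emph{every} $\tld{z}\le\tld{w}^*w_0^*$ and one is done. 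If $\tld{w}\cdot C_0$ is the upper alcove, then either $x^*$ lies in a chart with $\tld{z}^*\in W\tld{w}$, or else $x^*\in\big(\cup_{\tld{z}^*\in W\tld{w}'}S^\circ_\F(\tld{z})t_{\omega^*}\big)\cap\Fl^{\nabla_0}=C_{(\tld{w}',\omega)}$ for the lower-alcove element $\tld{w}'<\tld{w}$, and one reruns the argument with $(\tld{w}',\omega)$, which terminates. Without this dichotomy (or a direct proof that your BB limit always admits a diagonal weight of $x^*$), the proposal is incomplete.
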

\begin{proof}
By hypothesis, we have $x^*\in C_{(\tld{w},\omega)}$ for some $(\tld{w},\omega) \in \tld{W}_1 \times \cC_{2\text{-deep}}$. It follows from the definition of $C_{(\tld{w},\omega)}$ that it is a closed subscheme of $\ovl{S^{\circ}_\F(\tld{w}^*w_0^*)}t_{\omega^*} \cap \Fl^{\nabla_0}$.
Thus $C_{(\tld{w},\omega)}\subset \cup_{\tld{z} \leq \tld{w}^*w_0^*} S^{\circ}_\F(\tld{z})t_{\omega^*}$. Note that $S^{\circ}_\F(\tld{z})\subset  T_\F^\vee\backslash \tld{U}(\tld{z})_{\F}t_{\omega^*}= T_\F^\vee\backslash \tld{U}(\tld{z}t_{\omega^*})_{\F}$, for example cf.~\cite[Proposition 4.2.13]{MLM}. It follows that $C_{(\tld{w},\omega)}$ has an open cover $\cup_{\tld{z} \leq \tld{w}^*w_0^*}C_{(\tld{w},\omega)}(\tld{z}t_{\omega^*})$.

If $\tld{w}\cdot C_0=C_0$, then any $\tld{z}\leq \tld{w}^*w_0^*$ satisfies $\tld{z}^*\in W\tld{w}$. Choosing such a $\tld{z}$ with $x^*\in C_{(\tld{w},\omega)}(\tld{z}t_{\omega^*})$ gives $(t_{\omega}\tld{z}^*,(\tld{w},\omega))\in SP(x^*)$ and we are done.

Suppose now that $\tld{w}\cdot C_0$ is the upper $p$-restricted alcove. For $\tld{z}\leq  \tld{w}^*w_0^*$, either $\tld{z}^*\in W \tld{w}^*$, or $\tld{z}^*\in W\tld{w}^{\prime}$, where $\tld{w}^{\prime}$ is the unique element in $\tld{W}_1$ such that $\tld{w}^\prime < \tld{w}$. In particular $\tld{w}^\prime \cdot C_0=C_0$.
There are two cases:
\begin{itemize}
\item If $x^*\in C_{(\tld{w},\omega)}(\tld{z}t_{\omega^*})$ for some $\tld{z}^*\in W\tld{w}^*$, we get $(t_{\omega}\tld{z}^*,(\tld{w},\omega))\in SP(x^*)$ as above.
\item Otherwise, $x^*\in (\cup_{\tld{z}^*\in W\tld{w}^\prime} S^\circ_{\F}(\tld{z})t_{\omega^*})\cap \Fl^{\nabla_0}=C_{(\tld{w}^\prime,\omega)}$. Repeating our arguments with $(\tld{w},\omega)$ replaced by $(\tld{w}^\prime, \omega)$, we are also done in this case.
\end{itemize}
\end{proof}

\begin{cor}\label{cor:surjtheta}
Let $x^* \in \Fl_{2\textrm{-deep}}^{\nabla_0}$. 
If there exists $y_0\in S(x^*)$ with $y_0(0) \in \cC_{6\textrm{-deep}}$, then $\theta_{x^*}: SP(x^*) \ra W$ is bijective.
\end{cor}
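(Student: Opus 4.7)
By Proposition~\ref{prop:injtheta}, $\theta_{x^*}$ is injective, so the plan is to establish surjectivity: I will start from a pair in $SP(x^*)$ whose existence is forced by $y_0 \in S(x^*)$, and then iteratively apply Proposition~\ref{prop:walk} with simple reflections to produce pairs whose images under $\theta_{x^*}$ cover all of $W$.

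More concretely, the definition of $S(x^*)$ supplies some $\tld{w}_0\in \tld{W}_1$ with $P_0:=(y_0,(\tld{w}_0,y_0\tld{w}_0^{-1}(0)))\in SP(x^*)$; set $w_0:=\theta_{x^*}(P_0)\in W$. Given any $w\in W$, I will write $w_0^{-1}w=s_1\cdots s_k$ with $s_i$ simple reflections and construct pairs $P_i\in SP(x^*)$ inductively so that $\theta_{x^*}(P_i)=w_0 s_1\cdots s_i$, by applying Proposition~\ref{prop:walk} to $P_{i-1}$ with the reflection $s_i$. In case~(a) of that proposition, the new pair has first coordinate $y_{i-1}\tld{w}_{i-1}^{-1}s_i\tld{w}_{i-1}$ and second coordinate $(\tld{w}_{i-1},y_{i-1}\tld{w}_{i-1}^{-1}(0))$, giving $\theta_{x^*}(P_i)=\ovl{y_{i-1}}\,\ovl{\tld{w}_{i-1}}^{-1}s_i$. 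In case~(b), the relation $\tld{sw}\cdot \tld{w}_{i-1}^{-1}\cdot s_i^{-1}\in X^*(T)$ forces $\ovl{\tld{sw}}=s_i\ovl{\tld{w}_{i-1}}$ in $W$, so that again $\theta_{x^*}(P_i)=\ovl{y_{i-1}}\,(s_i\ovl{\tld{w}_{i-1}})^{-1}=\ovl{y_{i-1}}\,\ovl{\tld{w}_{i-1}}^{-1}s_i$. Iterating, $\theta_{x^*}(P_k)=w$, which exhibits $w$ in the image of $\theta_{x^*}$.

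The hard part will be verifying the genericity condition~(1) of Proposition~\ref{prop:walk} at every step of the iteration. The three quantities $y\tld{w}^{-1}st_{-\eta}\tld{w}(0)-\eta$, $y\tld{sw}^{-1}(0)-\eta$, and $yt_{-(sw)^{-1}(\eta)}(0)-\eta$ each expand as $y(0)-\eta+\xi$, where $\xi$ is an explicit bounded correction involving $\tld{w}(0)$ (bounded since $\tld{w}\in\tld{W}_1$) together with Weyl-conjugates of $\eta$ and of a simple root. Along the walk, case~(a) shifts $y_{i-1}(0)$ by $-\langle\tld{w}_{i-1}(0),\alpha^\vee\rangle\, w_{y_{i-1}}\ovl{\tld{w}_{i-1}}^{-1}(\alpha)$ for $s_i=s_\alpha$, a root with a small integer coefficient, while case~(b) leaves $y_{i-1}$ unchanged and alters $\tld{w}_{i-1}$ to another element of $\tld{W}_1$ by a bounded translation. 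Since the longest element of $W\simeq S_3$ for $\GL_3$ has length~$3$, the iteration runs at most three times, so the cumulative drift of $y_i(0)$ from $y_0(0)$ is bounded by a small sum of such root corrections. The $6$-deep hypothesis on $y_0(0)-\eta$ then provides enough buffer for each of the three genericity quantities to remain $2$-deep in $C_0$ at every step of the walk, which justifies each application of Proposition~\ref{prop:walk} and completes the proof.
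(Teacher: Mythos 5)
Your proposal is correct and follows essentially the same route as the paper: injectivity from Proposition \ref{prop:injtheta}, a starting pair in $SP(x^*)$ (the paper cites Proposition \ref{prop:SPnonempty}, though as you note the hypothesis $y_0\in S(x^*)$ already forces nonemptiness), and then the observation that either alternative of Proposition \ref{prop:walk} multiplies the $\theta_{x^*}$-value by the chosen simple reflection on the right, so the image is closed under right multiplication by generators of $W$. Your verification that the $6$-deep hypothesis supplies the buffer needed for condition (1) of Proposition \ref{prop:walk} along the (length at most $3$) walk is exactly the content of the paper's parenthetical remark, just spelled out in more detail.
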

\begin{proof}
By Proposition \ref{prop:injtheta}, it suffices to show that $\theta_{x^*}: SP(x^*) \ra W$ is surjective. 
By Proposition \ref{prop:SPnonempty}, $SP(x^*)$ is nonempty. 
If $(y,(\tld{w},y\tld{w}^{-1}(0))) \in SP(x^*)$ and $s\in W$ is a simple reflection, then either $(y\tld{w}^{-1}s\tld{w},(\tld{w},y\tld{w}^{-1}(0))) \in SP(x^*)$ or $(y,(\tld{sw},y\tld{sw}^{-1}(0))){\in} SP(x^*)$ by Proposition \ref{prop:walk} so that ${\theta_{x^*}}(y,(\tld{w},y\tld{w}^{-1}(0))) s$ is in the image of ${\theta_{x^*}}$. 
(The hypothesis that $y_0(0)\in \cC_{6\textrm{-deep}}$ guarantees that Proposition \ref{prop:walk}\eqref{item:2-deep} applies.)
Since simple reflections generate $W$, the result follows.
\end{proof}

\begin{lemma}\label{lemma:obvwts}
Suppose that $x^* \in \Fl_{2\textrm{-deep}}^{\nabla_0}$ such that there exists $y_0\in S(x^*)$ with $y_0(0) \in \cC_{6\textrm{-deep}}$. Then there exists $\lambda-\eta \in  C_0$ and $w\in W$ such that the image of $W_{\mathrm{obv}}(x^*)$ under \eqref{eq:bij:SW} is one of the following sets: 
\begin{enumerate}
\item
\label{case1} 
$w\{(0,0)\}$;
\item
\label{case2} 
$w\{(\eps_1-\eps_2,1)\}$;
\item
\label{case3} 
$w\{(0,0),(\eps_1-\eps_2,1)\}$;
\item 
\label{case4}
$w\{(0,0),(\eps_1-\eps_2,1), (\eps_2-\eps_1,1)\}$;
\item 
\label{case5}
$w\{(0,1),(\eps_1,0), (\eps_2,0), (\eps_1+\eps_2,1)\}$; and
\item 
\label{case6}
$w\{(0,0),(\eps_1-\eps_2,1), (\eps_2-\eps_1,1), (\eps_1,0),(\eps_2,0),(\eps_1+\eps_2,1)\}$.
\end{enumerate}
Moreover, every possibility arises.
Finally, with respect to the six above alternatives for $W_{\mathrm{obv}}(x^*)$ the image of $W^g_{2\textrm{-deep}}(x^*)$ under \eqref{eq:bij:SW} is contained in 
\begin{enumerate}[label=(\arabic*$'$)]
\item 
\label{case1'}
$w\{(0,0),(0,1)\}$;
\item 
\label{case2'}
$w\{(\eps_1-\eps_2,1)\}$;
\item 
\label{case3'}
$w\{(0,0),(0,1),(\eps_1-\eps_2,1)\}$;
\item 
\label{case4'}
$w\{(0,0),(0,1),(\eps_1-\eps_2,1), (\eps_2-\eps_1,1)\}$;
\item 
\label{case5'}
$w\{(0,1),(\eps_1,0),(\eps_1,1), (\eps_2,0),(\eps_2,1), (\eps_1+\eps_2,1)\}$; and
\item 
\label{case6'}
$w\{(0,0),(0,1),(\eps_1-\eps_2,1), (\eps_2-\eps_1,1), (\eps_1,0),(\eps_1,1),(\eps_2,0),(\eps_2,1),(\eps_1+\eps_2,1)\}$.
\end{enumerate}
\end{lemma}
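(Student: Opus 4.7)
My plan is to leverage Corollary \ref{cor:surjtheta}, which guarantees that $\theta_{x^*}: SP(x^*) \to W$ is a bijection, and Proposition \ref{prop:walk}, which controls how obvious weights evolve along simple reflections of $W$. For each $w \in W$, denote by $v_w = (y_w,(\tld{w}_w,\omega_w)) \in SP(x^*)$ the unique preimage of $w$ under $\theta_{x^*}$. The set $W_{\mathrm{obv}}(x^*)$ is then the image of the projection $w \mapsto (\tld{w}_w,\omega_w)$, and the hypothesis that $y_0(0) \in \cC_{6\text{-deep}}$ propagates along the walk of Proposition \ref{prop:walk} to every $y_w$, so the depth condition there holds at every step.

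To classify $W_{\mathrm{obv}}(x^*)$, I would label each edge of the Cayley hexagon of $(W,\{s_1,s_2\})$ as type A (the obvious weight is preserved, only the specialization changes) or type B (the specialization is preserved, the obvious weight changes via $\tld{w} \mapsto \tld{sw}$) according to the dichotomy of Proposition \ref{prop:walk}. Then $W_{\mathrm{obv}}(x^*)$ is in bijection with the connected components of the hexagon obtained by keeping only the A-edges, and the position of each such component under \eqref{eq:bij:SW} is tracked by iterating the operation $\tld{w} \mapsto \tld{sw}$ using the explicit condition $\tld{sw}\tld{w}^{-1}s^{-1}\in X^*(\un{T})$. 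The braid and commutation relations $s_1^2=s_2^2=(s_1 s_2)^3=1$, combined with the involutive character of the B-step, constrain the realizable colorings so as to produce precisely the six configurations listed, each up to a global $W$-translation encoded by the element $w$ in the statement.

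For the upper bounds on $W^g_{2\text{-deep}}(x^*)$, I would apply the second assertion of Lemma \ref{lemma:semicont} to reduce to $W^g_{2\text{-deep}}(x^*) \subseteq \bigcap_{y \in S(x^*)} W^g_{2\text{-deep}}(y^*)$. Each $W^g_{2\text{-deep}}(y_w^*)$ is given by Theorem \ref{thm:Tfixed}, namely the set of pairs $(\tld{w}, y_w \tld{w}_2^{-1}(0))$ with $\tld{w}_2 \uparrow \tld{w}$. In each of the six cases, $S(x^*) = \{y_w\}_{w \in W}$ has cardinality $6/|W_{\mathrm{obv}}(x^*)|$, and intersecting the corresponding $\uparrow$-patterns via \eqref{eq:bij:SW} cuts them down exactly to the six sets listed as the upper bounds.

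Realizability is then established by exhibiting points: the final configuration is attained at any $T^\vee$-fixed $x^*$ via Remark \ref{rmk:obv}(1), while the remaining five cases arise at wildly ramified points lying on specific strata of $\cX^{\eta,\tau}_{\F}$ described in Theorem \ref{thm:local_model_cmpt} and Table \ref{Table:intsct}. The main obstacle is the combinatorial enumeration of valid A/B-colorings of the hexagon and the matching with the explicit configurations; though essentially bookkeeping, it reduces to direct Coxeter-theoretic calculations using the formulas in Proposition \ref{prop:walk} together with the $\uparrow$-relation from Theorem \ref{thm:Tfixed}.
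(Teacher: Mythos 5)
Your overall strategy is the paper's: combine the bijectivity of $\theta_{x^*}$ (Corollary \ref{cor:surjtheta}) with repeated applications of the walk in Proposition \ref{prop:walk} to enumerate $SP(x^*)$, bound $W^g_{2\textrm{-deep}}(x^*)$ by $\bigcap_{y\in S(x^*)}W^g_{2\textrm{-deep}}(y^*)$ via Lemma \ref{lemma:semicont} and Theorem \ref{thm:Tfixed}, and then exhibit points realizing each case. However, your combinatorial model of the enumeration is too rigid and, as stated, cannot produce the correct list. An exclusive A/B-colouring of the six edges of the Cayley hexagon, with $W_{\mathrm{obv}}(x^*)$ in bijection with A-components and $S(x^*)$ with B-components, forces $\#W_{\mathrm{obv}}(x^*)+\#S(x^*)=6$; this fails in case \eqref{case4}, where $\#W_{\mathrm{obv}}(x^*)=3$ and $\#S(x^*)=2$ (here $SP(x^*)$ is the full product $\{y_1,y_2\}\times W_{\mathrm{obv}}(x^*)$, so distinct A-components share obvious weights and the dichotomy of Proposition \ref{prop:walk} is not exclusive). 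Likewise your cardinality formula $\#S(x^*)=6/\#W_{\mathrm{obv}}(x^*)$ is false: in case \eqref{case3} one has $\#S(x^*)=4$ (not $3$), and in case \eqref{case5} the formula does not even yield an integer while $\#S(x^*)=2$. Since the upper bounds \ref{case1'}--\ref{case6'} are computed precisely as intersections over the actual set $S(x^*)$, getting $S(x^*)$ wrong changes the answer. The enumeration has to track the pairs $(y,(\tld{w},\omega))$ themselves, using the constraint that a walk producing a new obvious weight changes the alcove and translates $\eps$ by an element of $W\{\eps_1,\eps_2\}$; this is what drives the case analysis.

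The realizability argument is also missing rather than merely sketched. Case \eqref{case6} at a $T^\vee$-fixed point is fine, but pointing at "wildly ramified points on strata of $\cX^{\eta,\tau}_{\F}$" does not identify which of the remaining five configurations actually occur: for each one you must produce an explicit $x^*$ (e.g.\ a generic point of $\cI\backslash \cI \gamma^* \cI (t_\lambda w)^*$ for case \eqref{case5}, or of $\cI\backslash \cI s_1 s_2 \cI s_2 s_1 (t_\lambda w)^* \cap \cI\backslash \cI s_2 s_1 \cI s_1 s_2 (t_\lambda w)^*$ for case \eqref{case4}) and then verify both an upper bound, by reading off enough elements of $S(x^*)$ from the cell and intersecting, and a matching lower bound on $W_{\mathrm{obv}}(x^*)$. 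Without this, nothing rules out that a putative configuration degenerates to a smaller one. Note also that this lemma lives entirely in $\Fl^{\nabla_0}$; routing the existence of points through the Galois-theoretic stacks is an unnecessary detour.
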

\begin{rmk}
The sets in the second part of Lemma \ref{lemma:obvwts} are the minimal sets containing the corresponding sets in the first part closed under changing a $0$ in the second argument to a $1$. 
Since the set in the second part are obtained by taking intersections $\cap_{y\in S(x^*)} W^g_{2\textrm{-deep}}(y^*)$ which are closed under this operation, these sets are a natural upper bound for $W^g_{2\textrm{-deep}}(x^*)$. 
\end{rmk}
\begin{proof}
We will illustrate the proof with various figures, all of which follow the same graphic conventions as in Figure \ref{BasicUL}.

Recall that we have canonical isomorphisms $\tld{W}/W_a\stackrel{\sim}{\ra}X^*(Z)$ and $\pi_0(\Fl)\stackrel{\sim}{\ra}X^*(Z)$.
In this proof we will choose various $\lambda \in X^*(T)$ with the property that the image of $t_\lambda$ in $X^*(Z)$ is the same as the image of $x^*$, and use \eqref{eq:bij:SW} to identify $(\tld{W}_1 \times (X^*(T) \cap C_0 +\eta)^{\lambda-\eta|_{\un{Z}}})/\sim\}$ and $\Lambda_W^{\lambda} \times \cA$, since the latter set is more convenient to work with here.

By Corollary \ref{cor:surjtheta}, one obtains the elements of $W_{\mathrm{obv}}(x^*)$ by repeatedly applying the process described in Proposition \ref{prop:walk} which we call a \emph{simple walk}. 
We use the following two basic facts repeatedly. 
\begin{enumerate}[label=(\roman*)]
\item \label{item:adjacent} If $(\eps,a) \in W_{\mathrm{obv}}(x^*)$, then either $W_{\mathrm{obv}}(x^*) = \{(\eps,a)\}$ or there is a simple walk from some $(y,(\eps,a))$ giving another element $(\eps',a') \in W_{\mathrm{obv}}(x^*)$ in which case $a \neq a'$ and $\eps-\eps' \in W\{\eps_1,\eps_2\}$. 
\item \label{item:upperbound} $W^g_{2\textrm{-deep}}(x^*) \subset \cap_{y\in S(x^*)} W^g_{2\textrm{-deep}}(y^*)$ by Lemma \ref{lemma:semicont}\eqref{item:SWineq}.
\end{enumerate}

The analysis can be divided into a number of cases.

\begin{itemize}
\item Suppose that there is no element of the form $(\eps,1)$ in $W_{\mathrm{obv}}(x^*)$. 
Then $W_{\mathrm{obv}}(x^*)$ consists of a single element by \ref{item:adjacent}, which after changing $\lambda$, we assume to be $\{(0,0)\}$. 
Then by Corollary \ref{cor:surjtheta}, $S(x^*) = \{t_\lambda w \mid w \in W\}$. 
Then $W^g_{2\textrm{-deep}}(x^*) \subset \cap_{y\in S(x^*)} W^g_{2\textrm{-deep}}(y^*) = \{(0,0),(0,1)\}$ by \ref{item:upperbound}; see Figure \ref{CaseI}.
This gives (\ref{case1}) and \ref{case1'}. 
\begin{figure}[h]
    \centering
     \includegraphics[scale=.25]{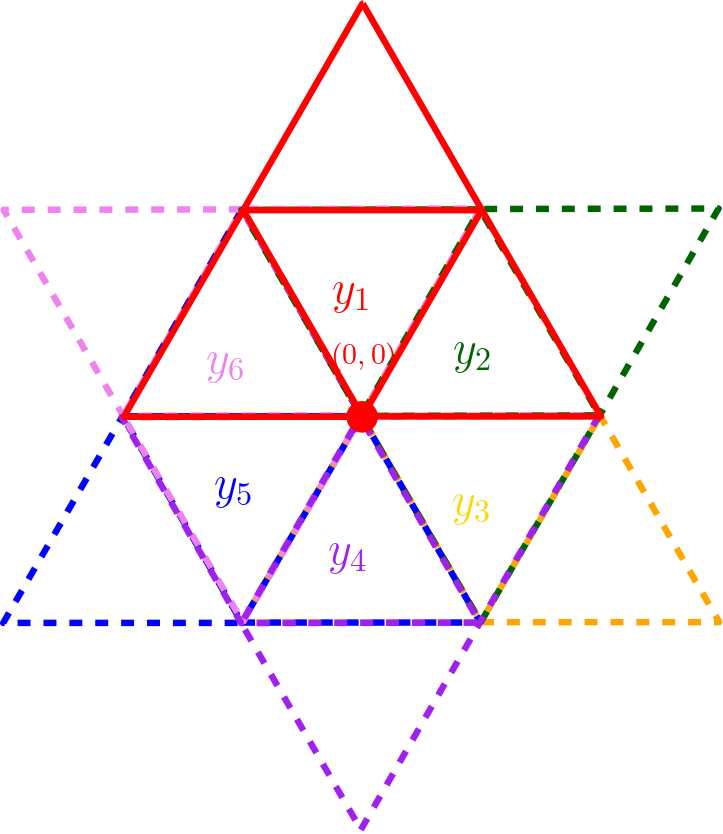} 
        \caption{Case where $W_{\mathrm{obv}}(x^*)=\{(0,0)\}$ and $\#S(x^*)=\#W$. The set $W_{\mathrm{obv}}(x^*)$ is pictured by the red dot and the set $S(x^*)$ by the $6$ triangles. The simple walks only produce new specializations.}
        \label{CaseI}
\end{figure}
\item 
Suppose now that $(\eps,1)\in W_{\mathrm{obv}}(x^*)$ for some $\eps \in \Lambda_W$. 
Say $(y_1,(\eps,1)) \in SP(x^*)$. 
If a simple walk produces $(y_2,(\eps,1)) \in SP(x^*)$ (i.e.~a new element of $S(x^*)$ rather than $W_{\obv}(x^*)$), then there exist $\lambda - \eta \in  C_0$ and $w\in W$ such that $y_1 = t_\lambda w$ and $y_2 = t_\lambda w w_0$. 
Then $W^g_{2\textrm{-deep}}(y_1^*) \cap W^g_{2\textrm{-deep}}(y_2^*) = w\{(0,0),(0,1),(\eps_1-\eps_2,1),(\eps_2-\eps_1,1)\}$. 
Fact \ref{item:adjacent} precludes $w(0,1)$ from being in $W_{\obv}(x^*)$. 
Moreover, if $w(\eps_1-\eps_2,1)$ and $w(\eps_2-\eps_1,1)$ are in $W_{\obv}(x^*)$, then so is $w(0,0)$.
Changing $w$ if necessary, we assume that $\eps$ is $w(\eps_1-\eps_2,1)$.
Then $W_{\mathrm{obv}}(x^*)$ is one of cases (\ref{case2}), (\ref{case3}), or (\ref{case4}).
\begin{itemize} 
\item In case (\ref{case2}), $S(x^*)$ has six elements by Corollary \ref{cor:surjtheta}, and furthermore $W^g_{2\textrm{-deep}}(x^*) \subset \cap_{y\in S(x^*)} W^g_{2\textrm{-deep}}(y^*) = \{(\eps,1)\}$ by \ref{item:upperbound}---this is \ref{case2'}; see Figure \ref{CaseII-2}.
\begin{figure}[h]
    \centering
     \includegraphics[scale=.25]{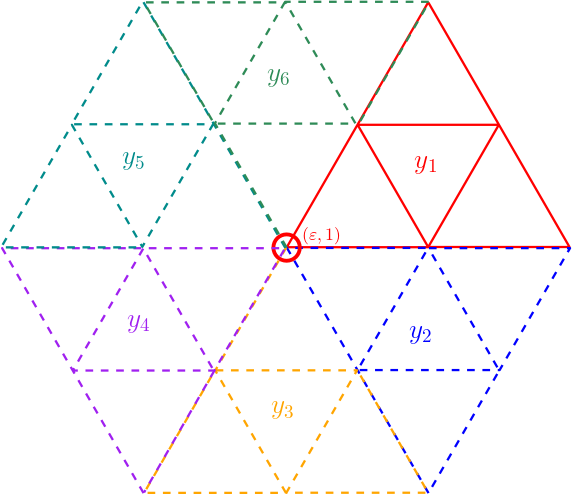} 
        \caption{Case where $W_{\mathrm{obv}}(x^*)=\{(\eps,1)\}$ and $\#S(x^*)=\#W$. The set $W_{\mathrm{obv}}(x^*)$ is pictured by the red circle and the set $S(x^*)$ by the $6$ triangles. Again, the simple walks only produce new specializations.}
        \label{CaseII-2}
\end{figure}
\item In case (\ref{case3}), $S(x^*)$ has four elements, and $\cap_{y\in S(x^*)} W^g_{2\textrm{-deep}}(y^*)$ is given by \ref{case3'}; see Figure \ref{CaseII-3}.
\begin{figure}[h]
    \centering
     \includegraphics[scale=.12]{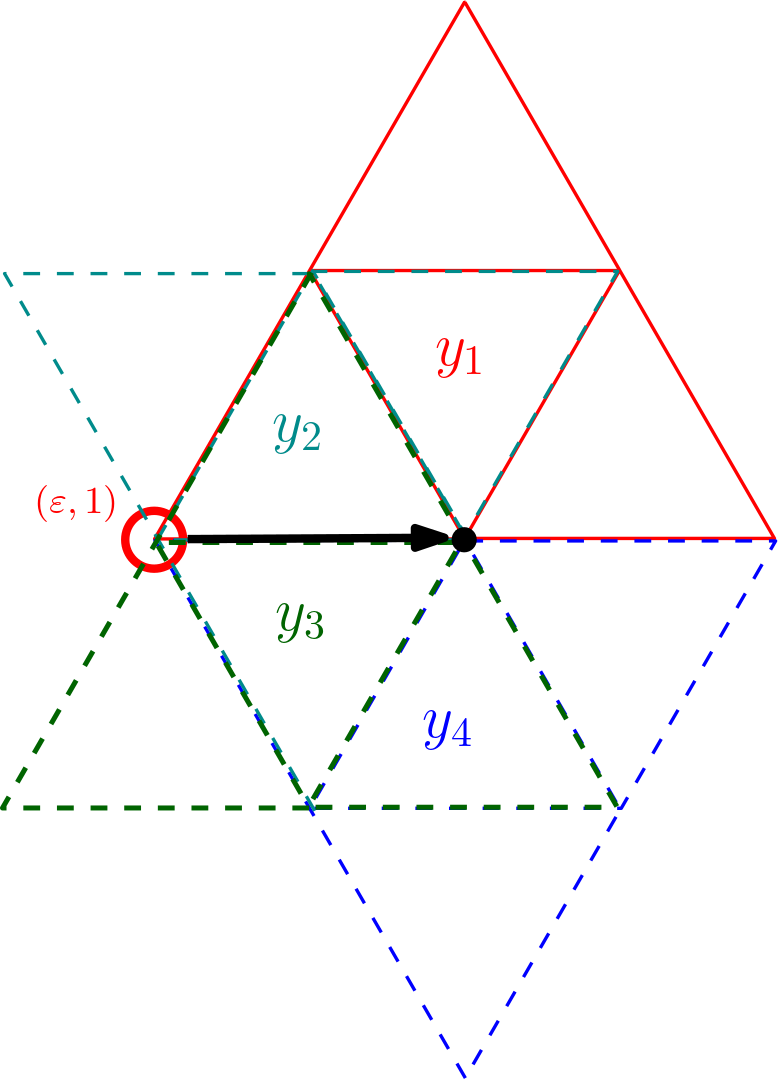} 
        \caption{Case where $\#S(x^*)=4$. We have the four specializations given by the triangles, and the starting obvious weight $(\varepsilon,1)$ by the red circle.
        The simple walk producing the new obvious weight is pictured by the black thickened arrow.}
        \label{CaseII-3}
\end{figure}
\item In case (\ref{case4}), given by Figure \ref{CaseII-4}, $S(x^*)=\{y_1,y_2\}$ and $W^g_{2\textrm{-deep}}(y_1^*) \cap W^g_{2\textrm{-deep}}(y_2^*)$ is given by \ref{case4'}. 
\begin{figure}[h]
    \centering
     \includegraphics[scale=.12]{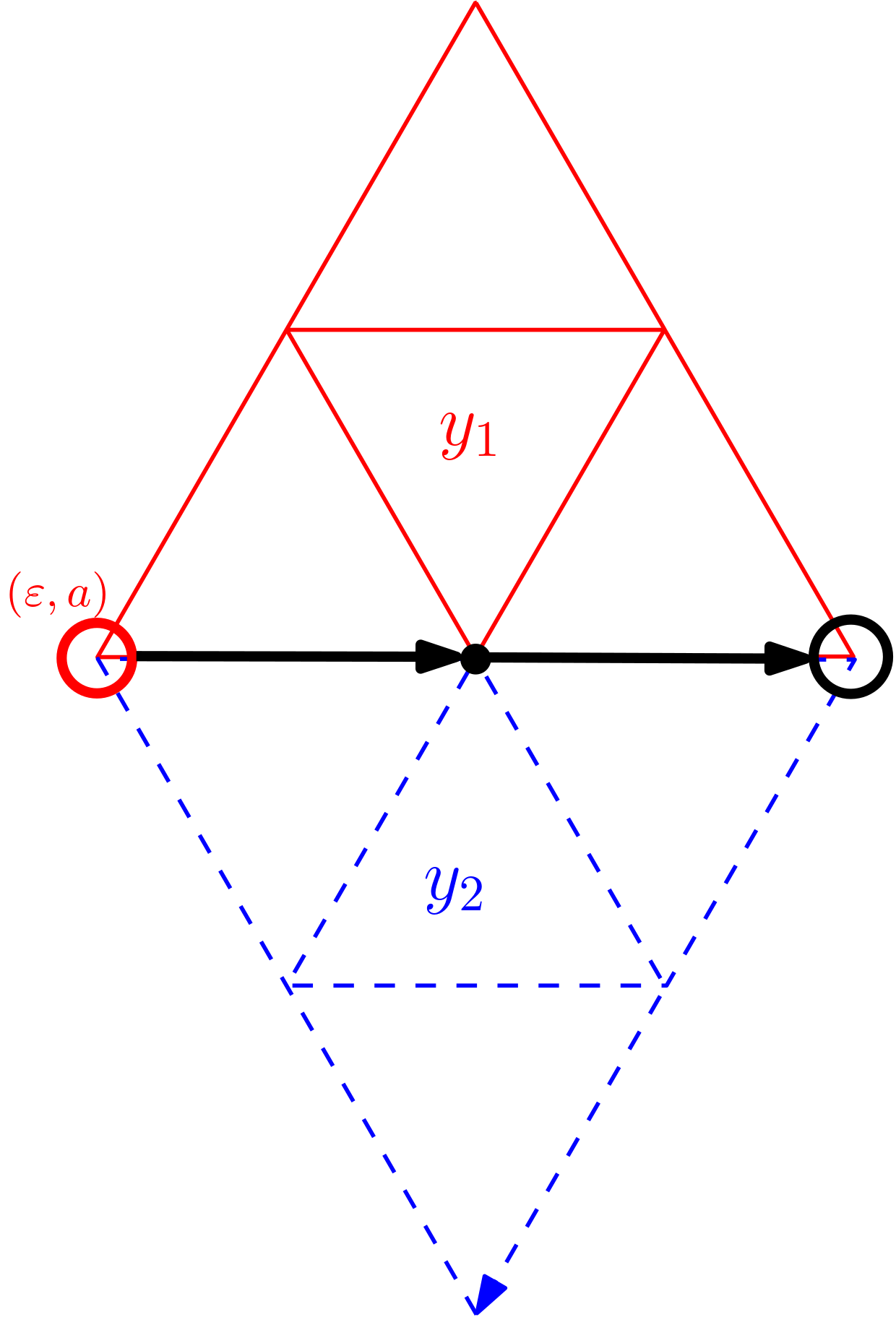} 
        \caption{Case where $\#S(x^*)=2$. We have the two specializations given by the red and blue triangle, and the starting obvious weight $(\varepsilon,1)$ by the red circle.
        The simple walks producing the new obvious weights are pictured by the black thickened arrows.}
        \label{CaseII-4}
\end{figure}
\end{itemize}
\item
Finally, we suppose that $(\eps,1)\in W_{\mathrm{obv}}(x^*)$ for some $\eps \in \Lambda_W$ \emph{and} a simple walk starting with $(\eps',1)$ for \emph{any} $\eps' \in \Lambda^\lambda_W$ such that $(\eps',1) \in W_{\mathrm{obv}}(x^*)$ \emph{always} yields a new element of $W_{\obv}(x^*)$. 
Let $(y_1,(\eps,1)) \in SP(x^*)$. 
After possibly changing $\lambda$, there exists $w\in W$ such that $W^g_{2\textrm{-deep}}(y_1^*)$ is 
\[
w\{(0,0),(0,1),(\eps_1-\eps_2,1), (\eps_2-\eps_1,1), (\eps_1,0),(\eps_1,1),(\eps_2,0),(\eps_2,1),(\eps_1+\eps_2,1)\}
\] 
and $\eps = \eps_1+\eps_2$. 
By assumption, we have that $(y_1,w(\eps_1,0))$ and $(y_1,w(\eps_2,0)) \in SP(x^*)$. 
\begin{itemize}
\item
If a simple walk starting with $(y_1,w(\eps_1,0))$ yields $(y_2,w(\eps_1,0)) \in SP(x^*)$ for some $y_2 \in \tld{W}$, then $W^g_{2\textrm{-deep}}(x^*) \subset W^g_{2\textrm{-deep}}(y_1^*) \cap W^g_{2\textrm{-deep}}(y_2^*)$ which is \ref{case5'} (see Figure \ref{CaseIII-1}). 
\begin{figure}[h]
    \centering
     \includegraphics[scale=.15]{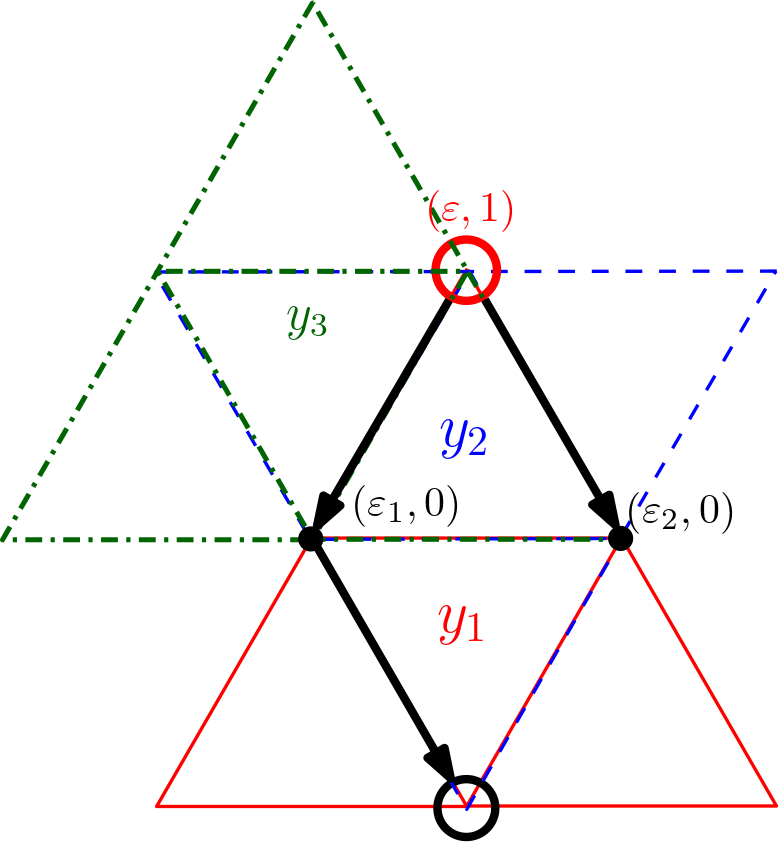} 
        \caption{We picture (for $w=1$) the simple walks pertaining the third bullet point in the analysis of the proof of Lemma \ref{lemma:obvwts}.
        The arrows from $(\eps,1)\in W_{\obv}(x^*)$ to $(\eps_1,0), (\eps_2,0)$ exist by the assumption that any simple walk from $(\eps,1)$ always yields a new element of $W_{\obv}(x^*)$.
        \newline
        \indent
        In the picture above, we consider the case where a simple walk from $(y_1,(\eps_1,0))$ yields $(y_2,(\eps_1,0)) \in SP(x^*)$ with $y_2\neq y_1$ (such $y_{2}$ is uniquely determined.) %
        Since $(y_3,(\eps_2,0))\notin SP(x^*)$, we have the arrow from $(\eps_1,0)$ to $(0,1)$.}
        \label{CaseIII-1}
\end{figure}
We claim that $w(\eps_1,1)\notin W_{\mathrm{obv}}(\rhobar)$. 
If $w(\eps_1,1)\in W_{\mathrm{obv}}(\rhobar)$ then, as argued before with $w(\eps_1+\eps_2,1)$, there would necessarily be two elements in $W_{\obv}(x^*) \subset W^g_{2\textrm{-deep}}(y_1^*) \cap W^g_{2\textrm{-deep}}(y_2^*)$ which correspond to two adjacent vertices in Figure \ref{CaseIII-1}.
However, there are no such elements in \ref{case5'}. 
Similarly, $w(\eps_2,1)\notin W_{\mathrm{obv}}(x^*)$.
A simple walk from $(y_2,w(\eps_1,0)) \in SP(x^*)$ yields two elements in $SP(x^*)$---$(y_1,w(\eps_1,0))$ and $(y_3,w(\eps',a))$. 
If $y_2\neq y_3$, then $w(\eps_2,0) \notin W^g_{2\textrm{-deep}}(y_3^*)$ which contradicts Lemma \ref{lemma:semicont}\eqref{item:SWineq}. 
We conclude that $w(\eps',a) = w(0,1) \in W_{\mathrm{obv}}(\rhobar)$. 
This gives the set in (\ref{case5}).
\item
If the process in Proposition \ref{prop:walk} from $(y_1,(\eps_1,0))\in SP(x^*)$ yields $(y_1,w(\eps_1-\eps_2,1))\in SP(x^*)$ instead of $(y_2,w(\eps_1,0))$, then $S(x^*) = \{y_1\}$ by \ref{item:upperbound} since $y_1$ is the unique $y\in \tld{W}$ such that $W^g_{2\textrm{-deep}}(y^*)$ contains $w(\eps_1-\eps_2,1)$, $w(\eps_1,0)$, $w(\eps_2,0)$, and $w(\eps_1+\eps_2,1)$; see Figure \ref{CaseIII-2}. 
\begin{figure}[h]
    \centering
     \includegraphics[scale=.2]{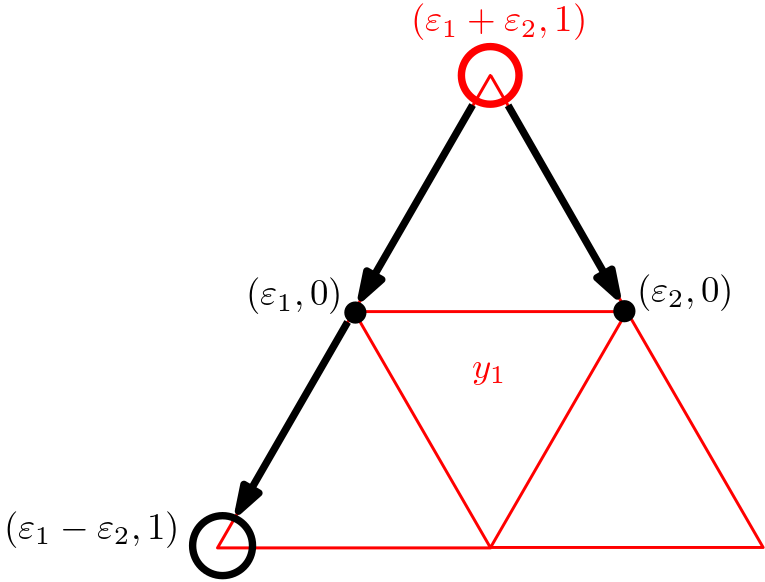} 
        \caption{Again, we consider the third bullet point in the analysis of the proof of Lemma \ref{lemma:obvwts}, but now we consider the case when a simple walk from $(y_1,(\eps_1,0))$ yields $(y_1,(\eps_1-\eps_2,1)) \in SP(x^*)$ (hence, yields the arrow from $(\eps_1,0)$ to $(\eps_1-\eps_2,1)$, cf.~with Figure \ref{CaseIII-1}).}
        \label{CaseIII-2}
\end{figure}
We conclude from Corollary \ref{cor:surjtheta} that $W_{\mathrm{obv}}(x^*) = W_{\obv}(y_1^*)$ which is given by (\ref{case6}). 
Then the upper bounds for $W^g_{2\textrm{-deep}}(x^*)$ in cases \eqref{case5} and \eqref{case6} again follow from \ref{item:upperbound}.
\end{itemize}
\end{itemize}

We now show that every possibility arises. 
\begin{itemize}
\item 
One checks that case \eqref{case6} arises when $x^* = \cI\backslash \cI (t_\lambda w)^*$. 
\item 
If we let $\gamma = t_{(1,0,-1)} w_0$ and $x^* \in \cI\backslash \cI \gamma^* \cI (t_\lambda w)^* (\F)$ but is not equal to $\cI\backslash \cI (t_\lambda w\gamma)^*$, then one can check that $t_\lambda w, \, t_\lambda w \gamma \in S(x^*)$ so that $W_{\mathrm{obv}}(x^*) \subset W^g_{2\textrm{-deep}}(t_\lambda w) \cap W^g_{2\textrm{-deep}}(t_\lambda w \gamma)$ by Lemma \ref{lemma:semicont}. 
This rules out \eqref{case6}. 
One can furthermore check that the image of $W_{\mathrm{obv}}(x^*)$ under \eqref{eq:bij:SW} contains \eqref{case5} so that out of the six possibilities it must equal \eqref{case5}. 
\item 
Next, if $s_1$ and $s_2 \in W$ denote the simple reflections and $x^*$ is generic in $\cI\backslash \cI s_1 s_2 \cI s_2 s_1 (t_\lambda w)^* \cap \cI\backslash \cI s_2 s_1 \cI s_1 s_2 (t_\lambda w)^*$, then $t_\lambda w, \, t_\lambda w w_0 \in S(x^*)$ so that $$W_{\mathrm{obv}}(x^*) \subset W^g_{2\textrm{-deep}}(t_\lambda w) \cap W^g_{2\textrm{-deep}}(t_\lambda w w_0)$$ by Lemma \ref{lemma:semicont}. 
This rules out \eqref{case6}. 
One can furthermore check that the image of $W_{\mathrm{obv}}(x^*)$ under \eqref{eq:bij:SW} contains \eqref{case4} so that out of the six possibilities it must equal \eqref{case4}. 
\item 
If $x^*$ is generic in $\cI\backslash \cI s_2 s_1 \cI s_1 s_2 (t_\lambda w)^*$, then $t_\lambda w, \, t_\lambda w s_2,\, t_\lambda w s_2s_1, \, t_\lambda w w_0 \in S(x^*)$. 
Then by similar arguments as above $W_{\mathrm{obv}}(x^*)$ is contained in \eqref{case3}. 
One can furthermore check that $W_{\mathrm{obv}}(x^*)$ contains \eqref{case3}. 
\item 
If $x^*$ is generic in $\cI\backslash \cI w_0 \cI (t_\lambda w)^*$, then $W^g_{2\textrm{-deep}}(x^*)$ only has one element, namely \eqref{case1}. 
\item If $x^*$ is generic in an upper alcove component, then $W^g_{2\textrm{-deep}}(x^*)$ only has one element corresponding to \eqref{case2}. 
\end{itemize}
\end{proof}

\begin{thm}\label{thm:Wg'}
Suppose that $x^* \in \Fl_{2\textrm{-deep}}^{\nabla_0}$ such that there exists $y_0\in S(x^*)$ with $y_0(0) \in \cC_{6\textrm{-deep}}$.
Then there exist $\lambda-\eta \in C_0$ and $w\in W$ such that under \eqref{eq:bij:SW} the image of $W_{\mathrm{obv}}(x^*)$ is as in Lemma \ref{lemma:obvwts} and the image of $W^g_{2\textrm{-deep}}(x^*)$ is correspondingly one of the following: 
\begin{enumerate}
\item
\label{case1:thm:Wg}
$w\{(0,0)\}$ or $w\{(0,0),(0,1)\}$;
\item 
\label{case2:thm:Wg}
$w\{(\eps_1-\eps_2,1)\}$;
\item 
\label{case3:thm:Wg}
$w\{(0,0),(\eps_1-\eps_2,1)\}$;
\item 
\label{case4:thm:Wg}
$w\{(0,0),(\eps_1-\eps_2,1), (\eps_2-\eps_1,1)\}$;
\item 
\label{case5:thm:Wg}
$w\{(0,1),(\eps_1,0),(\eps_1,1), (\eps_2,0),(\eps_2,1), (\eps_1+\eps_2,1)\}$; {and}
\item 
\label{case6:thm:Wg}
$w\{(0,0),(0,1),(\eps_1-\eps_2,1), (\eps_2-\eps_1,1), (\eps_1,0),(\eps_1,1),(\eps_2,0),(\eps_2,1),(\eps_1+\eps_2,1)\}$.
\end{enumerate}
Moreover, every possibility arises.
\end{thm}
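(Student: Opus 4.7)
The plan is to refine the upper and lower bounds on $W^g_{2\textrm{-deep}}(x^*)$ given by Lemma \ref{lemma:obvwts} via the local model diagram of Theorem \ref{thm:local_model_cmpt}. Fix $x^*$ and pick a $4$-generic tame inertial type $\tau$ whose JH set contains the upper bound $U$ from Lemma \ref{lemma:obvwts}; such $\tau$ exists by the assumption that some specialization of $x^*$ is $6$-deep. The image of $x^*$ in $\cX^{\eta,\tau}_\F$ is a closed point $\rhobar$, and by Theorem \ref{thm:local_model_cmpt} the components of $\cX^{\eta,\tau}_\F$ containing $\rhobar$ correspond exactly to $W^g_{2\textrm{-deep}}(x^*)$. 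This reduces the question to an inspection of the ideals $\tld{\fP}_{\sigma,\tld{z}}$ appearing in Table \ref{Table:intsct} at the shape $\tld{z}$ attached to $x^*$.

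The main step is the case-by-case analysis according to the six possibilities of Lemma \ref{lemma:obvwts}. In cases \eqref{case5:thm:Wg} and \eqref{case6:thm:Wg}, the relevant shapes $\tld{z}$ have at least one short factor $\ell(\tld{z}_j) \le 1$ and the explicit equations computed in \cite[\S 8]{LLLM} force the ideals corresponding to the extra weights in $U \setminus W_{\obv}(x^*)$ to vanish whenever those of the obvious weights do; hence $W^g_{2\textrm{-deep}}(x^*) = U$. Case \eqref{case2:thm:Wg} corresponds to a shape for which a unique component passes through $x^*$. For cases \eqref{case3:thm:Wg} and \eqref{case4:thm:Wg}, the exclusion of $w(0,1)$ from $W^g_{2\textrm{-deep}}(x^*)$ follows from a direct inspection of Table \ref{Table:intsct}: the ideal defining $\cC_{w(0,1)}$ in the relevant chart is not contained in the intersection of the ideals defining the obvious components, so $x^*$ cannot lie on $\cC_{w(0,1)}$ while already lying on $W_{\obv}(x^*)$. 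This reflects the combinatorial pattern from \cite{LLLM} alluded to in the introduction, parallel to the counting argument in the third paragraph of the proof of Theorem \ref{thm:local_model_cmpt}, item (\ref{it:pf:diag:3}).

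Finally, to show every possibility in \eqref{case1:thm:Wg}--\eqref{case6:thm:Wg} arises, we revisit the explicit $x^*$'s constructed at the end of the proof of Lemma \ref{lemma:obvwts} and verify via the corresponding local charts that the computed $W^g_{2\textrm{-deep}}(x^*)$ matches the claimed case of the theorem. Case \eqref{case1:thm:Wg} has two sub-possibilities: $\{w(0,0)\}$ is realized by $x^*$ generic on the single lower-alcove component $C_{(\tld{w},y\tld{w}^{-1}(0))}$, while $\{w(0,0),w(0,1)\}$ arises when $x^*$ additionally lies on the upper-alcove component sharing the central character. The main obstacle is the careful extraction from Table \ref{Table:intsct} of the precise inclusions in cases \eqref{case5:thm:Wg}, \eqref{case6:thm:Wg} and exclusions in cases \eqref{case3:thm:Wg}, \eqref{case4:thm:Wg}; once the lowest alcove presentation $(s,\mu)$ of $\tau$ and the shape $\tld{z}$ attached to each case are fixed, this reduces to a direct if intricate computation with the equations in the table.
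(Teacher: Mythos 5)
Your overall strategy coincides with the paper's: start from the upper and lower bounds of Lemma \ref{lemma:obvwts} and close the gap using the local model diagram of Theorem \ref{thm:local_model_cmpt} and Table \ref{Table:intsct}. However, the mechanism you propose for deciding the undetermined weights does not work as stated, and the idea that actually makes the argument go through is missing.

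The paper's proof rests on a single numerological fact extracted from Table \ref{Table:intsct}: at \emph{any} $\F$-point of \emph{any} chart $\tld{U}(\tld{z},\eta,\nabla)_{\F}$, the number of irreducible components passing through that point is never exactly three; hence $\#\,\Sigma^g(x^*)\cap t_\nu s(\Sigma_0)\neq 3$ for every $t_\nu s\in W_a$. For each $(\eps,a)$ in $\Sigma^{\mathrm{ub}}(x^*)\setminus\Sigma^{\mathrm{lb}}(x^*)$ one then chooses $t_\nu s$ so that this chart isolates $(\eps,a)$ and so that either the upper or the lower bound meets $t_\nu s(\Sigma_0)$ in exactly three elements; the ``never three'' rule then forces $(\eps,a)$ out (cases \eqref{case3:thm:Wg}, \eqref{case4:thm:Wg}) or in (cases \eqref{case5:thm:Wg}, \eqref{case6:thm:Wg}). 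Your substitute for the exclusion step --- that the ideal of $\cC_{w(0,1)}$ is not contained in the intersection of the ideals of the obvious components --- is logically insufficient: non-containment of ideals only says that $\cC_{w(0,1)}$ does not contain the union of the obvious components, and says nothing about whether the particular point $x^*$, which lies on all the obvious components, also lies on $\cC_{w(0,1)}$. Indeed the triple intersection $\cC_{w(0,0)}\cap\cC_{w(0,1)}\cap\cC_{w(\eps_1-\eps_2,1)}$ is nonempty in general (any $x^*$ in case \eqref{case6:thm:Wg} lies on all three), so no chart-level ideal computation can exclude $w(0,1)$ without using the point-specific information that $W_{\obv}(x^*)$ and the upper bound pin $\Sigma^g(x^*)\cap t_\nu s(\Sigma_0)$ between $2$ and $3$, at which point only the ``never three'' fact decides it. Similarly, in cases \eqref{case5:thm:Wg} and \eqref{case6:thm:Wg} your claim that the equations ``force'' the extra weights is precisely the content of this numerology and needs to be stated and justified as such. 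A secondary omission: for the second alternative of \eqref{case1:thm:Wg} you need the dimension count (the intersection of the two components is two-dimensional, whereas the loci of cases \eqref{case5:thm:Wg} and \eqref{case6:thm:Wg} have dimensions one and zero) to ensure a generic point of that intersection really falls in case \eqref{case1:thm:Wg}.
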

\begin{proof}
We explain how the bounds on $W_{\obv}(x^*)$ and Table \ref{Table:intsct} can be used to determine $W^g_{2\textrm{-deep}}(x^*)$. 
Let $x^* \in \Fl_{2\textrm{-deep}}^{\nabla_0}$ be as in the statement of the theorem, and let $\lambda$ and $w$ be as in Lemma \ref{lemma:obvwts}.
Define $\Sigma^g(x^*)$ to be the image of $W^g_{2\textrm{-deep}}(x^*)$ under \eqref{eq:bij:SW}. 
Table \ref{Table:intsct} (with $s_j$ in the notation of \emph{loc.~cit}.~taken to be $w$ in this proof) implies that the number of irreducible components of the completion of $\tld{U}(\tld{z}, \eta,\nabla_{w^{-1}(\mu+\eta)})_{\F}$ at an $\F$-point is never three (and that each irreducible component is smooth). 
Theorem \ref{thm:local_model_cmpt} then implies that $\# \Sigma^g(x^*) \cap t_\nu s(\Sigma_0) \neq 3$ for all $t_\nu s \in W_a$. 
(The relevant type $\tau$ in Theorem \ref{thm:local_model_cmpt} is $4$-generic since $\tld{w}(y_0^*,\tld{w}(\tau)) \leq \tld{w}(x^*,\tld{w}(\tau)) \in \Adm(\eta)$ by Lemma \ref{lemma:semicont}\eqref{item:semicont} and $y_0(0) \in \cC_{6\textrm{-deep}}$.)
This is the key fact that we will use in our analysis of $\Sigma^g(x^*)$. %

The upper and lower bounds, say $\Sigma^{\mathrm{ub}}(x^*)$ and $\Sigma^{\mathrm{lb}}(x^*)$, respectively, for $\Sigma^g(x^*)$ from Lemma \ref{lemma:obvwts} give upper and lower bounds for $\Sigma^g(x^*) \cap t_\nu s(\Sigma_0)$ for each $t_\nu s \in W_a$. 
For each $(\eps,a) \in \Sigma^{\mathrm{ub}}(x^*)\setminus \Sigma^{\mathrm{lb}}(x^*)$ in cases \eqref{case3:thm:Wg}-\eqref{case6:thm:Wg}, one can choose $t_\nu s \in W_a$ such that
\begin{itemize}
\item $(\Sigma^{\mathrm{ub}}(x^*)\setminus \Sigma^{\mathrm{lb}}(x^*)) \cap t_\nu s(\Sigma_0) = \{(\eps,a)\}$; and
\item $\# \Sigma^{\mathrm{ub}}(x^*) \cap t_\nu s(\Sigma_0) = 3$ or $\# \Sigma^{\mathrm{lb}}(x^*) \cap t_\nu s(\Sigma_0) = 3$.
\end{itemize}
Then the fact above implies that $\# \Sigma^{\mathrm{lb}}(x^*) \cap t_\nu s(\Sigma_0) = 3$ if and only if $(\eps,a) \in \Sigma^g(x^*)$. 
From this, one checks that $\Sigma^g(x^*)$ is as claimed. 
We now illustrate in Figures \ref{fig:7} and \ref{fig:8} the choices of $\tau$ in cases \ref{case3:thm:Wg}-\ref{case6:thm:Wg}. 
\begin{figure}[h]
    \centering
     \includegraphics[scale=.25]{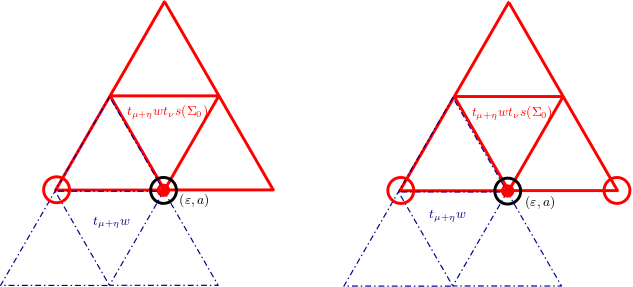} 
        \caption{Choice of $t_\nu s$ in cases \ref{case3:thm:Wg} and \ref{case4:thm:Wg}.
        The red circles and the dot represent $\Sigma^{\mathrm{lb}}(x^*)$, the black circle the element in $\Sigma^{\mathrm{ub}}(x^*)\setminus\Sigma^{\mathrm{lb}}(x^*)$. We have fixed an element $t_{\mu+\eta}w\in \tld{W}$ so that $s^{-1}t_{\nu^*}w^{-1}t_{\mu^*+\eta^*}\in S(x^*)$.} 
        \label{fig:7}
\end{figure}

\begin{figure}[h]
    \centering
     \includegraphics[scale=.15]{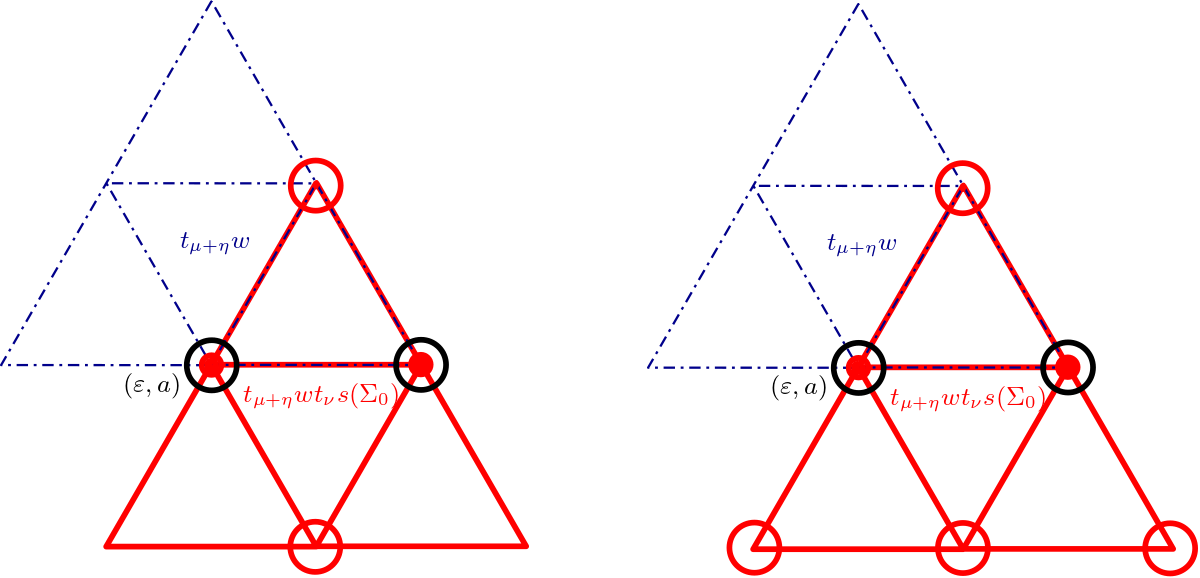} 
        \caption{Choice of $t_\nu s$ in cases \ref{case5:thm:Wg} and \ref{case6:thm:Wg}.The red circles and dots represent $\Sigma^{\mathrm{lb}}(x^*)$, the black circles the element in $\Sigma^{\mathrm{ub}}(x^*)\setminus\Sigma^{\mathrm{lb}}(x^*)$. We have fixed an element $t_{\mu+\eta}w\in \tld{W}$ so that $s^{-1}t_{\nu^*}w^{-1}t_{\mu^*+\eta^*}\in S(x^*)$.} 
        \label{fig:8}
\end{figure} 

Finally, we show that every possibility arises. 
Since Lemma \ref{lemma:obvwts} showed that every one of the six possibilities arises, we only need to show that the two possibilities in case \ref{case1:thm:Wg} arise. 
The case $w\{(0,0)\}$ arises when $x^*$ is generic on a lower alcove component so that $W^g_{2\textrm{-deep}}(x^*)$ only has one element corresponding to this component. 
The case $w\{(0,0),(0,1)\}$ arises when $x^*$ is generic in the intersection of the two components corresponding to $w\{(0,0),(0,1)\}$. 
Indeed, this intersection is two-dimensional so as long as $x^*$ is not in cases \ref{case5:thm:Wg} or \ref{case6:thm:Wg} which are of dimensions one and zero, respectively, the case $w\{(0,0),(0,1)\}$ must apply. 
\end{proof}

\begin{rmk}
\label{rmk:productFL}
The notions in this {section} extend to the case of products: if $x^* = (x^*_i)_{i\in \cJ} \in (\Fl^{\nabla_0}_{2\textrm{-deep}})^\cJ$, we let $W^g_{2\textrm{-deep}}(x^*)$ and $W_{\obv}(x^*)$ be the subsets $\prod_{i\in \cJ} W^g_{2\textrm{-deep}}(x^*_i)$ and $\prod_{i\in \cJ} W_{\obv}(x^*_i)$ of $(\tld{\un{W}}_1 \times (X^*(\un{T}) \cap  C_0 +\eta))/\sim$, respectively. 
The natural analogues of Theorems \ref{thm:Tfixed} and \ref{thm:Wg'} generalize to this setting.
\end{rmk}

\subsection{Classification of geometric weight sets}

Let $\rhobar: G_K \ra \GL_3(\F)$ be a continuous Galois representation.
If $\tau$ is a $4$-generic tame inertial type and $\rhobar$ arises as an $\F$-point of $\cX^{\eta,\tau}_{\F}$ then Theorem \ref{prop:loc:mod:diag:2} attaches a Breuil--Kisin module $\fM\in Y^{[0,2],\tau}(\F)$ to $\rhobar$.
In this scenario, we define the \emph{shape $\tld{w}^*(\rhobar,\tau)\in \tld{W}^{\vee,\cJ}$} of $\rhobar$ with respect to $\tau$ to be the shape of $\fM\in Y^{[0,2],\tau}(\F)$ (see \S \ref{subsub:BKM}).
Let $W^g_{2\textrm{-deep}}(\rhobar)$ be the set of Serre weights $\{\sigma \mid \text{$\sigma$ is $2$-deep and $\rhobar \in \cC_\sigma(\F)$}\}$. 
\begin{defn}
Let $SP(\rhobar)$ be the set of pairs $(\rhobar_1,\sigma)$ with $\rhobar_1$ a tame inertial $\F$-type  and $\sigma \in W^g_{2\textrm{-deep}}(\rhobar)$ such that there exists a $4$-generic tame inertial type $\tau$ with
\begin{itemize}
\item $\sigma \in \JH(\ovl{\sigma}(\tau))$ and 
\item $\tld{w}^*(\rhobar,\tau) = \tld{w}^*(\tld{\rhobar}_1,\tau) \in t_{\un{W}^\vee \eta}$  
\end{itemize}
for any, or equivalently all, extensions $\tld{\rhobar}_1: G_K \ra \GL_3(\F)$ of $\rhobar_1$
(in particular $\sigma\in W_{\mathrm{obv}}(\rhobar_1)$).
Let $W_{\mathrm{obv}}(\rhobar) \subset W^g_{2\textrm{-deep}}(\rhobar)$ be the image of $SP(\rhobar)$ under the projection to $W^g_{2\textrm{-deep}}(\rhobar)$. 
Let $S(\rhobar)$ be the image of $SP(\rhobar)$ under the projection to the set of tame inertial $\F$-types. 
We call an element of $S(\rhobar)$ a \emph{specialization} of $\rhobar$. 
\end{defn}

\begin{defn}\label{defn:generic}
We say that a Galois representation $\rhobar: G_K \ra \GL_3(\F)$ is $m$-generic if the tame inertial $\F$-type $\rhobar^{\semis}|_{I_K}$ is $m$-generic and $\rhobar$ has an $m$-generic specialization. 
\end{defn}

\begin{rmk}
\begin{enumerate}
\item If $m\geq 6$, $\rhobar: G_K \ra \GL_3(\F)$ is semisimple, and
$\rhobar|_{I_K}$ is $m$-generic, then $\rhobar$ is $m$-generic in the sense of Definition \ref{defn:generic} since $\rhobar|_{I_K} \in S(\rhobar)$. 
\item It is shown (in greater generality under a suitable genericity assumption) in \cite{OBW} that $\rhobar^{\semis}|_{I_K} \in S(\rhobar)$ so that the requirement that $\rhobar$ {has} an $m$-generic specialization in Definition \ref{defn:generic} is superfluous when $m$ is sufficiently large. 
\end{enumerate}
\end{rmk}

We now recall the setting of Theorem \ref{thm:local_model_cmpt}.
In particular, we have a pair $(\sigma,\zeta)$ which corresponds to a lowest alcove presentation $(\tld{w},\omega)$ of a Serre weight $\sigma$. Given the auxiliary choice of an appropriate tame inertial type $\tau$ (and letting $(s,\mu)$ be the compatible lowest alcove presentation), we have the diagram:
\begin{equation}
\begin{tikzcd}[column sep=small]
&\tld{C}^{\zeta}_\sigma=\tld{C}_{(\tld{w},\omega)}\ar[hook]{d}\ar{r} & C_{(\tld{w},\omega)}\ar[hook]{d}
\\
& \tld{\Fl}^{[0,2]}_{\cJ}\cdot s^*t_{\mu^*+\eta^*}\ar{d}{T^{\vee,\cJ}_{\F}}\ar{r}& \Fl_{\cJ}
\\
\cC_\sigma\ar[hook]{r}\ar[hook]{d}&\Big[\tld{\Fl}^{[0,2]}_{\cJ}\cdot s^*t_{\mu^*+\eta^*}/T^{\vee, \cJ}_{\F}\text{\textrm{-sh.cnj}}\Big]\ar[hook]{d}{\iota_0}\\
(\cX_{K,3})_{\F}\ar[hook]{r}&\Phi\text{-}\Mod^{\text{\emph{\'et}},n}_{K, \F}
\end{tikzcd}
\end{equation}
In particular the composite of the middle column gives a map $\tld{C}^{\zeta}_{\sigma}\ra\Phi\text{-Mod}_{K,\F}^{\text{\'et},n}$  which does not depend on $\tau$ and which factors through $\cX_{K,3}$. Note also that $\tld{C}^{\zeta}_{\sigma}$ is a subvariety of $(\tld{\Fl}^{\nabla_0})^{\cJ}$, and that the rightmost vertical arrow factors through $(\Fl^{\nabla_0})^{\cJ}$.

The following Proposition relates the Galois theoretic notions in \S \ref{subsub:SpFi} with the geometric notions in \S \ref{subsec:geometricFL} (or rather, its product version as in Remark \ref{rmk:productFL}):
\begin{prop}\label{prop:LMweights}
Let $\rhobar\in \cX_{K,3}(\F)$ be $6$-generic and $(\tld{w},\omega)$ be a lowest alcove presentation of an element of $W^g_{\textrm{gen}}(\rhobar)$ compatible with a $6$-generic lowest alcove presentation of $\rhobar^{\semis}$.
If $\tld{x}^* \in (\tld{\Fl}^{\nabla_0})^{\cJ}(\F)$ has images $x^* \in (\Fl_{2\textrm{-deep}}^{\nabla_0})^{\cJ}(\F)$ and $\rhobar \in \cX_{K,3}(\F)$
then $(\tld{w},\omega)$ is in $W_{\obv}(x^*)$ (resp.~$W^g_{2\textrm{-deep}}(x^*)$) if and only if $F_{(\tld{w},\omega)} \in W_{\obv}(\rhobar)$ (resp.~$W^g_{2\textrm{-deep}}(\rhobar)$). 
\end{prop}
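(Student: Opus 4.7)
I will split the argument into the $W^g$ equivalence and the $W_{\obv}$ equivalence, both of which are obtained by unwinding Theorem \ref{thm:local_model_cmpt} (and Remark \ref{rmk:productFL}).

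For the $W^g$ statement, I would pick a $4$-generic tame inertial type $\tau$ with lowest alcove presentation $(s,\mu)$ compatible with $(\tld{w},\omega)$ and such that $\sigma\defeq F_{(\tld{w},\omega)}\in \JH(\ovl{\sigma}(\tau))$; such $\tau$ exists because $\sigma$ is $W^g_{\mathrm{gen}}(\rhobar)$-generic in the sense of Definition \ref{def:gen:W}. Since all squares in diagram \eqref{diag:cmpts} are cartesian and the image of $\tld{C}^\zeta_\sigma\cdot s^*t_{\mu^*+\eta^*}$ inside $\cX_{K,3,\F}$ (through the torsor quotient by the shifted $T^{\vee,\cJ}_\F$-conjugation) equals $\cC_\sigma$, the condition $\rhobar\in \cC_\sigma(\F)$ pulls back to the condition that $\tld{x}^*\cdot s^*t_{\mu^*+\eta^*}$ lies in $\tld{C}^\zeta_\sigma\cdot s^*t_{\mu^*+\eta^*}$ inside $\tld{\Fl}^{[0,2]}_{\cJ}\cdot s^*t_{\mu^*+\eta^*}$, which by right-translation is equivalent to $\tld{x}^*\in \tld{C}^\zeta_\sigma$, hence (passing through the $T^{\vee,\cJ}_\F$-torsor $\tld{C}^\zeta_\sigma\ra C_{(\tld{w},\omega)}$) to $x^*\in C_{(\tld{w},\omega)}$.

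For the $W_{\obv}$ statement, I would unfold the definitions in parallel. The pair $(\tld{w},\omega)$ lies in $W_{\obv}(x^*)$ iff there exists $y\in \tld{\un{W}}$ with $\omega=y\tld{w}^{-1}(0)$ and $x^*\in C_{(\tld{w},\omega)}\cap \big(T^{\vee,\cJ}_\F\backslash \tld{U}(y^*)_\F\big)$. With $\tau$, $(s,\mu)$ as above and $\tld{z}\in\tld{\un{W}}^{\vee}$ defined by $y^*=\tld{z}\, s^*t_{\mu^*+\eta^*}$, the locally closed immersion $\tld{U}(\tld{z},\leq\eta)_\F\hookrightarrow \tld{\Fl}^{[0,2]}_\cJ\cdot s^*t_{\mu^*+\eta^*}$ of Theorem \ref{thm:local_model_cmpt} identifies the condition $\tld{x}^*\in \tld{U}(y^*)_\F$ with the shape condition $\tld{w}^*(\rhobar,\tau)=\tld{z}$, and the length computation in Table \ref{Table:intsct} shows that $y$ being translation-generic (equivalently $x^*$ lying in the $3f$-dimensional Schubert cell $T^{\vee,\cJ}_\F\backslash \tld{U}(y^*)_\F$) is exactly $\tld{z}\in t_{\un{W}^{\vee,\cJ}\eta}$. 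Moreover, the $T^{\vee,\cJ}_\F$-fixed point $y^*\in \tld{U}(y^*)_\F$ corresponds under the local model diagram to a (semisimple) Galois representation $\rhobar_1$ with $\rhobar_1|_{I_K}$ tame such that its extension $\tld{\rhobar}_1$ has shape $\tld{w}^*(\tld{\rhobar}_1,\tau)=\tld{z}$; conversely, any witnessing pair $(\rhobar_1,\tau)$ for $\sigma\in W_{\obv}(\rhobar)$ produces a $T^{\vee,\cJ}_\F$-fixed point $y^*$ of the local model in the required Schubert cell, by the same diagram applied to $\tld{\rhobar}_1$. Combined with the $W^g$ part (which yields $x^*\in C_{(\tld{w},\omega)}$), this gives the desired equivalence.

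The main technical issue I expect is tracking the bookkeeping of the shifts by $s^*t_{\mu^*+\eta^*}$ and the two different $T^{\vee,\cJ}_\F$-torsor actions (right multiplication versus shifted conjugation) which govern the passage between the naive affine flag variety $\tld{\Fl}^{\nabla_0,\cJ}$ and the local model $\tld{U}(\tld{z},\leq\eta)_\F\cdot s^*t_{\mu^*+\eta^*}$; once one checks that the correspondence between tame inertial $\F$-types and $T^{\vee,\cJ}_\F$-fixed points in the shifted admissible set matches the definitions of $W_{\obv}$ on both sides (using the eigenbasis description of shapes from \S \ref{subsub:BKM}), the equivalence falls out of Theorem \ref{thm:local_model_cmpt}.
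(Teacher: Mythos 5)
Your argument is essentially the paper's own proof, which simply states that the proposition follows from Theorem \ref{thm:local_model_cmpt} applied to suitably chosen auxiliary $4$-generic types $\tau$ containing $F_{(\tld{w},\omega)}$; your two-part unwinding (the $W^g$ equivalence via the cartesian squares of diagram \eqref{diag:cmpts}, and the $W_{\obv}$ equivalence via matching charts $\tld{U}(y^*)$ with shape conditions $\tld{w}^*(\rhobar,\tau)=\tld{w}^*(\tld{\rhobar}_1,\tau)\in t_{\un{W}^\vee\eta}$) is exactly the intended content. The translation bookkeeping you flag is a genuine but routine point, and your account of it is correct in substance.
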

\begin{proof}
This follows from Theorem \ref{thm:local_model_cmpt}, applied to suitably chosen auxiliary $4$-generic types $\tau$ containing $F_{(\tld{w},\omega)}$. 
\end{proof}

\begin{thm}\label{thm:Wg}
If $\rhobar: G_K \ra \GL_3(\F)$ is $6$-generic, then there exist $\lambda \in X^*(\un{T})$ and $w\in \un{W}$ such that $W_{\obv}(\rhobar)$ is $F(\Trns_\lambda(\prod_{j\in \cJ} w_j\Sigma_{\mathrm{obv},j}(\rhobar)))$ where for each $j\in \cJ$, $\Sigma_{\mathrm{obv},j}(\rhobar)$ one of the following sets: 
\begin{enumerate}
\item
\label{case1} 
$\{(0,0)\}$;
\item
\label{case2} 
$\{(\eps_1-\eps_2,1)\}$;
\item
\label{case3} 
$\{(0,0),(\eps_1-\eps_2,1)\}$;
\item 
\label{case4}
$\{(0,0),(\eps_1-\eps_2,1), (\eps_2-\eps_1,1)\}$;
\item 
\label{case5}
$\{(0,1),(\eps_1,0), (\eps_2,0), (\eps_1+\eps_2,1)\}$; and
\item 
\label{case6}
$\{(0,0),(\eps_1-\eps_2,1), (\eps_2-\eps_1,1), (\eps_1,0),(\eps_2,0),(\eps_1+\eps_2,1)\}$.
\end{enumerate}
Furthermore, $W^g_{\mathrm{gen}}(\rhobar) = F(\Trns_\lambda(\prod_{j\in \cJ} w_j\Sigma^{g}_j(\rhobar)))$ where with respect to the six above alternatives for $\Sigma_{\mathrm{obv},j}(\rhobar)$, $\Sigma^{g}_j(\rhobar)$ is 
\begin{enumerate}[label=(\arabic*$'$)]
\item
\label{case1:thm:Wg}
$\{(0,0)\}$ or $\{(0,0),(0,1)\}$;
\item 
\label{case2:thm:Wg}
$\{(\eps_1-\eps_2,1)\}$;
\item 
\label{case3:thm:Wg}
$\{(0,0),(\eps_1-\eps_2,1)\}$;
\item 
\label{case4:thm:Wg}
$\{(0,0),(\eps_1-\eps_2,1), (\eps_2-\eps_1,1)\}$;
\item 
\label{case5:thm:Wg}
$\{(0,1),(\eps_1,0),(\eps_1,1), (\eps_2,0),(\eps_2,1), (\eps_1+\eps_2,1)\}$; or
\item 
\label{case6:thm:Wg}
$\{(0,0),(0,1),(\eps_1-\eps_2,1), (\eps_2-\eps_1,1), (\eps_1,0),(\eps_1,1),(\eps_2,0),(\eps_2,1),(\eps_1+\eps_2,1)\}$. 
\end{enumerate}
Moreover, every possibility arises. 
If $\rhobar$ is furthermore $8$-generic, then $W^g(\rhobar)= F(\Trns_\lambda(\prod_{j\in \cJ} w_j\Sigma^{g}_j(\rhobar)))$ with $\Sigma^{g}_j$ as above.
\end{thm}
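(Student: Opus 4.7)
My plan is to reduce Theorem \ref{thm:Wg} to its geometric counterpart, Theorem \ref{thm:Wg'}, via the local model diagram, and then apply Proposition \ref{prop:gWE} for the final assertion. Since $\rhobar$ is $6$-generic, Definition \ref{defn:generic} yields a $6$-generic specialization $\rhobar_1 \in S(\rhobar)$, and by the definition of $S(\rhobar)$ there is a $4$-generic tame inertial $L$-parameter $\tau$ with $\rhobar \in \cX^{\eta,\tau}_{\F}(\F)$ and $\tld{w}^*(\rhobar,\tau) = \tld{w}^*(\tld{\rhobar}_1,\tau) \in t_{\un{W}^\vee \eta}$. Fixing a $\zeta$-compatible lowest alcove presentation for $\tau$ (with $\zeta$ chosen to match the algebraic central character of a given Serre weight we wish to parametrize) and applying Theorem \ref{thm:local_model_cmpt} in its product form (per Remark \ref{rmk:products:1}), I will lift $\rhobar$ to an $\F$-point $\tld{x}^* \in (\tld{\Fl}^{\nabla_0})^{\cJ}(\F)$ with image $x^* \in (\Fl^{\nabla_0}_{2\textrm{-deep}})^{\cJ}(\F)$; the specialization $\rhobar_1$, being tame, lifts under the same diagram to an element $y_0 \in S(x^*)$ with $y_0(0) \in \cC_{6\textrm{-deep}}^{\cJ}$, supplying the hypothesis needed for the product version of Theorem \ref{thm:Wg'}.

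Next, Proposition \ref{prop:LMweights} translates the classification problem: the bijection $(\tld{w},\omega) \mapsto F_{(\tld{w},\omega)}$ identifies $W_{\obv}(x^*)$ with $W_{\obv}(\rhobar)$ and $W^g_{2\textrm{-deep}}(x^*)$ with $W^g_{\mathrm{gen}}(\rhobar)$, and formula \eqref{eq:SW:LAP} rewrites this bijection as $F(\Trns_\lambda(\omega-\lambda, \tld{w} \cdot \un{C}_0))$, matching the parametrization appearing in the statement of Theorem \ref{thm:Wg}. Combining $\Fl^{\nabla_0}_{\cJ} = \prod_{j \in \cJ} \Fl^{\nabla_0}$ with Remark \ref{rmk:productFL} lets me apply Theorem \ref{thm:Wg'} factor by factor, producing for each $j \in \cJ$ an element $w_j \in W$ and subsets $\Sigma_{\obv,j}(\rhobar), \Sigma^g_j(\rhobar)$ among the six possibilities listed; choosing a common $\lambda \in X^*(\un{T})$ compatible with the central character assembles the per-embedding data into the desired factorized descriptions of $W_{\obv}(\rhobar)$ and $W^g_{\mathrm{gen}}(\rhobar)$. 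The ``every possibility arises'' clause follows by realizing each product combination geometrically, using the corresponding realization in Theorem \ref{thm:Wg'} at each factor.

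Finally, for the case when $\rhobar$ is $8$-generic, the semisimplification $\rhobar^{\semis}|_{I_K}$ is in particular $6$-generic so that Proposition \ref{prop:gWE} gives $W^g(\rhobar) = W^g_{\mathrm{gen}}(\rhobar)$, whence the last assertion. The main obstacle I anticipate is the careful transfer in the first step: namely, verifying that the Galois-theoretic specialization datum $\rhobar_1 \in S(\rhobar)$ produces a $\tld{T}^\vee_\F$-fixed geometric specialization $y_0^*$ of $x^*$ of the required depth. This ultimately reduces to tracking shapes through the local model diagram of Theorem \ref{thm:local_model_cmpt} and using that the shape $\tld{w}^*(\rhobar,\tau)$ records exactly the affine flag stratum in which $x^*$ lies; the matching condition $\tld{w}^*(\rhobar,\tau) = \tld{w}^*(\tld{\rhobar}_1,\tau)$ then forces $y_0$ to specialize $x^*$ in the sense of Definition \ref{defn:OW}.
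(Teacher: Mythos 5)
Your proposal is correct and follows essentially the same route as the paper, whose proof is precisely the combination of Proposition \ref{prop:LMweights}, Theorem \ref{thm:Wg'} (in its product form via Remark \ref{rmk:productFL}), and Proposition \ref{prop:gWE}. The extra care you take in verifying that the $6$-generic specialization $\rhobar_1$ furnishes the hypothesis $y_0(0)\in\cC_{6\textrm{-deep}}$ of Theorem \ref{thm:Wg'} is exactly the content packaged into Proposition \ref{prop:LMweights} and the surrounding definitions.
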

\begin{proof}
This follows from Proposition \ref{prop:LMweights}, Theorem \ref{thm:Wg'}, and Proposition \ref{prop:gWE}.
\end{proof}

\begin{rmk}\label{rmk:genbound}
By the proof of Theorem \ref{thm:Wg} (and Lemma \ref{lemma:obvwts}), if $\rhobar$ is $6$-generic and has an $m$-generic specialization, then every specialization is $(m-4)$-generic.  
\end{rmk}

\begin{thm}\label{thm:semicont}
Let $\tau$ and $\tau'$ be $4$-generic tame inertial types and $\rhobar \in \cX^{\eta,\tau}(\F)$ and $\rhobar' \in \cX^{\eta,\tau'}(\F)$. 
Suppose further that $\tld{x}^*,\tld{x}'^* \in (\tld{\Fl}^{\nabla_0})^{\cJ}(\F)$ have images $x^*,{x'}^* \in (\Fl_{2\textrm{-deep}}^{\nabla_0})^{\cJ}(\F)$ and $\rhobar,\rhobar' \in \cX_3(\F)$, respectively such that ${x'}^*$ is in the closure of the $\tld{T}^{\vee}_\F$-orbit of $x^*$. 
Then $\rhobar' \in \cX^{\eta,\tau}(\F)$ and $\tld{w}^*(\rhobar',\tau) \leq \tld{w}^*(\rhobar,\tau)$. 
Moreover, $W^g_{\mathrm{gen}}(\rhobar) \subset W^g_{\mathrm{gen}}(\rhobar')$. 
\end{thm}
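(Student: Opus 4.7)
The plan is to transport everything to the affine flag variety via the local model diagram of Theorem \ref{thm:local_model_cmpt} and Proposition \ref{prop:LMweights}, and to exploit closedness and $\tld{T}^\vee_\F$-invariance of the relevant subvarieties.

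I would first establish $W^g_{\mathrm{gen}}(\rhobar) \subseteq W^g_{\mathrm{gen}}(\rhobar')$. For any generic $\sigma \in W^g(\rhobar)$ with a lowest alcove presentation $(\tld{w},\omega)$ compatible with a $6$-generic presentation of $\rhobar^{\semis}|_{I_K}$, Proposition \ref{prop:LMweights} gives $x^* \in C_{(\tld{w},\omega)}$. The component $C_{(\tld{w},\omega)}$ is closed in $(\Fl^{\nabla_0})^\cJ$ and stable under the $\tld{T}^\vee_\F$-action, since right $T^\vee_\F$-multiplication and loop rotation both preserve Iwahori double cosets (hence their closures), while $\Fl^{\nabla_0}$ is itself $\tld{T}^\vee_\F$-stable. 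The orbit-closure hypothesis then forces $x'^* \in C_{(\tld{w},\omega)}$, and applying Proposition \ref{prop:LMweights} to $\rhobar'$ yields $\sigma \in W^g_{\mathrm{gen}}(\rhobar')$.

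From this, the containment $\rhobar' \in \cX^{\eta,\tau}(\F)$ is immediate: by the last bullet of Theorem \ref{thm:local_model_cmpt}, $\rhobar$ lies in $\cC_\sigma(\F)$ for some $\sigma \in \JH(\ovl{\sigma}(\tau))$, which is generic because $\tau$ is $4$-generic. Hence $\sigma \in W^g_{\mathrm{gen}}(\rhobar) \subseteq W^g_{\mathrm{gen}}(\rhobar')$, so $\rhobar' \in \cC_\sigma(\F) \subseteq \cX^{\eta,\tau}(\F)$.

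For the shape inequality, I would invoke the stratification $\cX^{\eta,\tau}_\F = \bigcup_{\tld{z} \in \Adm^\vee(\eta)} \cX^{\eta,\tau}(\tld{z})_\F$, which via the local model diagram is induced from the Iwahori stratification of $\tld{\Fl}^{[0,2]}_{\cJ} \cdot s^* t_{\mu^*+\eta^*}$; its closure relations are governed by the Bruhat order on $\tld{\un{W}}^\vee$. Since the $\tld{T}^\vee_\F$-action preserves Iwahori double cosets, the orbit closure of $\tld{x}^*$ meets only Schubert strata bounded above (in $\tld{\un{W}}^\vee$) by the one containing $\tld{x}^*$ itself, giving $\tld{w}^*(\rhobar',\tau) \leq \tld{w}^*(\rhobar,\tau)$. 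The main technical check will be that $\tld{x}^*$, which is given only as some lift of $\rhobar$ to $(\tld{\Fl}^{\nabla_0})^\cJ$, is compatible with a Breuil--Kisin presentation of $\rhobar$ used to define the shape; this will follow from the $\un{T}^\vee_\F$-torsor structure of $\tld{\cX}^{\eta,\tau}(\tld{z})_\F \to \cX^{\eta,\tau}(\tld{z})_\F$ in diagram \eqref{diag:cmpts}, combined with the fact that the shape is independent of the choice of eigenbasis.
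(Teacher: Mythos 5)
Your proposal is correct and follows essentially the same route as the paper: both arguments rest on the closedness and $\tld{T}^{\vee}_\F$-stability of the components $C_\sigma$ (so membership passes to orbit closures) together with the identification of Theorem \ref{thm:local_model_cmpt}, and on the Bruhat-order closure relations of translated Schubert cells for the shape inequality. The only difference is cosmetic — you deduce $\rhobar'\in\cX^{\eta,\tau}(\F)$ from the weight containment while the paper does it the other way around — and note that the paper invokes Theorem \ref{thm:local_model_cmpt} directly for a generic $\sigma$ rather than Proposition \ref{prop:LMweights}, which sidesteps the $6$-genericity hypothesis of the latter that your phrasing implicitly assumes.
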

\begin{proof}
Since $\rhobar \in \cX^{\eta,\tau}(\F)$, $x^*\in C_\sigma$ for some $\sigma \in \JH(\ovl{\sigma}(\tau))$ by Theorem \ref{thm:local_model_cmpt}. 
Since $C_\sigma$ is closed and $\tld{T}^{\vee}_\F$-stable, ${x'}^* \in C_\sigma$. 
We conclude that $\rhobar' \in \cX^{\eta,\tau}(\F)$ again using Theorem \ref{thm:local_model_cmpt}. 
Similarly, ${x'}^*$ is in the closure of $\cI \backslash \cI \tld{w}^*(\rhobar,\tau) \cI \tld{w}^*(\tau)$ which implies the desired inequality. 

If $\rhobar \in \cC_\sigma$ for a generic weight $\sigma$, then the same argument above shows that $\rhobar' \in \cC_\sigma$. 
\end{proof}

\begin{cor}\label{cor:semicont}
Let $\tau$ and $\tau'$ be $4$-generic tame inertial types and $\rhobar \in \cX^{\eta,\tau}(\F) \cap \cX^{\eta,\tau'}(\F)$. 
Suppose that $\rhobar_1 \in \cX^{\eta,\tau'}(\F)$ such that $\tld{w}^*(\rhobar_1,\tau') = \tld{w}^*(\rhobar,\tau')$. 
Then $\rhobar_1 \in \cX^{\eta,\tau}(\F)$ and $\tld{w}^*(\rhobar_1,\tau) \leq \tld{w}^*(\rhobar,\tau)$. 
\end{cor}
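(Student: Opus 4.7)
The plan is to derive the corollary from Theorem \ref{thm:semicont} by constructing suitable lifts. Set $\tld{z} \defeq \tld{w}^*(\rhobar,\tau') = \tld{w}^*(\rhobar_1,\tau')$, so that both $\rhobar$ and $\rhobar_1$ belong to the open substack $\cX^{\eta,\tau'}(\tld{z})_{\F}$. By Theorem \ref{prop:loc:mod:diag:2} applied to $\tau'$ with a $4$-generic lowest alcove presentation $(s,\mu)$, the representations $\rhobar$ and $\rhobar_1$ correspond to $T^{\vee}_{\F}$-shifted conjugation orbits of $\F$-points $A,A_1 \in \tld{U}(\tld{z},\eta,\nabla_{\tau',\infty})_{\F}(\F)$. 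Via the locally closed immersion $\tld{U}(\tld{z}, \leq \eta)_{\F} \hookrightarrow \tld{\Fl}^{[0,2]}_{\cJ}$ given by $A \mapsto A\cdot s^* t_{\mu^*+\eta^*}$ (and using that both representations are potentially crystalline), I obtain lifts $\tld{x}^*, \tld{x}_1^* \in (\tld{\Fl}^{\nabla_0})^{\cJ}(\F)$ of $\rhobar$ and $\rhobar_1$ that both lie in the common translated open Schubert cell $S \defeq S^\circ_{\F}(\tld{z}) \cdot s^* t_{\mu^*+\eta^*}$.

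The crux of the proof is to verify that, after possibly replacing $A$ within its $T^{\vee}_{\F}$-shifted conjugation orbit (which leaves $\rhobar$ unchanged), the point $\tld{x}_1^*$ can be arranged to lie in the closure of the $\tld{T}^{\vee}_{\F}$-orbit of $\tld{x}^*$ inside $S$. The key geometric input is that $S$ is isomorphic as a $\tld{T}^{\vee}_{\F}$-variety to an affine space on which $\tld{T}^{\vee}_{\F} = T^\vee_{\F} \times \bG_m^{\cJ}$ acts linearly through nonzero characters (so that the orbit closure of any point in $S$ contains the unique $\tld{T}^{\vee}_{\F}$-fixed point $\tld{z}\cdot s^*t_{\mu^*+\eta^*}$, and the closure of a sufficiently generic orbit exhausts $S$). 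Combining this with the additional freedom coming from shifted conjugation on $A$, we obtain the required orbit closure relation; this is the main technical obstacle and is carried out through a combinatorial analysis in the explicit coordinates of Table \ref{Table:integraleqs}, handled separately in each factor of the product over $\cJ$ according to the length of $\tld{z}_j$.

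With the orbit closure relation in place, Theorem \ref{thm:semicont} applied to the pair $(\tld{x}^*, \tld{x}_1^*)$ immediately yields the two conclusions: $\rhobar_1 \in \cX^{\eta,\tau}(\F)$ (since each $\cC_\sigma$ is closed and $\tld{T}^{\vee}_{\F}$-stable, hence contains the orbit closure of any of its points) and $\tld{w}^*(\rhobar_1, \tau) \leq \tld{w}^*(\rhobar, \tau)$ (from the corresponding Schubert stratum closure inequality). The hardest part of the argument is the verification of the orbit closure relation, as two arbitrary points in the same Schubert cell need not be orbit-related without exploiting both the right $\tld{T}^{\vee}_{\F}$-action on $\tld{\Fl}$ and the shifted conjugation freedom on the local model.
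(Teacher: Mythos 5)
Your overall strategy --- produce lifts $\tld{x}^*,\tld{x}_1^*$ in the common translated Schubert cell, establish a $\tld{T}^\vee_{\F}$-orbit--closure relation, and then quote Theorem \ref{thm:semicont} --- is the right framework, but the step you yourself flag as ``the main technical obstacle'' is not carried out, and the sketch you give for it does not work. You claim that, after adjusting by shifted conjugation, $\tld{x}_1^*$ can be placed in $\overline{\tld{T}^\vee_{\F}\cdot \tld{x}^*}$ for an \emph{arbitrary} pair of points of the cell, on the grounds that the cell is an affine space on which $\tld{T}^\vee_{\F}$ acts linearly and ``the closure of a sufficiently generic orbit exhausts'' it. This fails: after passing to $\Fl$ the center of $T^\vee_{\F}$ acts trivially, so a $\tld{T}^\vee_{\F}$-orbit closure is an irreducible toric variety of dimension at most $3f$, whereas the relevant locus $U(\tld{z},\eta,\nabla_{\tau',\infty})_{\F}$ is $3f$-dimensional and, for $\ell(\tld{z}_j)\leq 1$, \emph{reducible} (Table \ref{Table:intsct}); no single orbit closure can contain two points lying generically on distinct irreducible components, and indeed such a containment would force $W^g_{2\textrm{-deep}}(x^*)\subset W^g_{2\textrm{-deep}}(x_1^*)$ by Lemma \ref{lemma:semicont}\eqref{item:SWineq}, which is false for generic points of different components of the same chart. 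The ``additional freedom coming from shifted conjugation'' does not rescue this: shifted conjugation is left multiplication by $\cI$ composed with right multiplication by $T^\vee_{\F}$, so its effect on the image in $\Fl^{\cJ}$ is already absorbed into the right $\tld{T}^\vee_{\F}$-action you are using.

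The actual argument is much shorter and does not require relating $x^*$ and $x_1^*$ directly. By \cite[Lemma 3.4.7]{MLM} each translated Schubert cell admits a contracting $\tld{T}^\vee_{\F}$-cocharacter, so the unique $\tld{T}^\vee_{\F}$-fixed point $\tld{z}^*\tld{w}(\tau')^*$ of the cell lies in $\overline{\tld{T}^\vee_{\F}\cdot x^*}$; this fixed point is exactly the point attached to $\rhobar_1$ in the situations where the corollary is invoked ($\rhobar_1$ semisimple with $\tld{w}^*(\rhobar_1,\tau')=\tld{w}^*(\rhobar,\tau')$, e.g.\ $\rhobar^{\speci}$ in \S\ref{sec:cyc} or a tame specialization in Proposition \ref{prop:WE}). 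Theorem \ref{thm:semicont} applied to this single degeneration then yields both $\rhobar_1\in\cX^{\eta,\tau}(\F)$ and $\tld{w}^*(\rhobar_1,\tau)\leq\tld{w}^*(\rhobar,\tau)$. In short: you should degenerate to the fixed point of the cell rather than try to degenerate one arbitrary point of the cell to another, which is both unnecessary and, in general, impossible.
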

\begin{proof}
This follows from Theorem \ref{thm:semicont} since there is a contracting $\tld{T}_\F^\vee$-cocharacter for each translated Schubert cell (see \cite[Lemma 3.4.7]{MLM}). 
\end{proof}
\section{Results for patching functors}
\label{sec:PF:global_applications}

We start in \S \ref{subsec:WMPF} by recalling the formalism of weak (minimal) patching functors and we prove abstract versions of Serre weight conjectures assuming the modularity of an obvious weight (see Propositions \ref{prop:supportcyclebound}, \ref{prop:minimalcycle} below).
This assumption is removed in Section \ref{subsec:AM} if the weak patching functor comes from an \emph{arithmetic module}. %
In \S \ref{sec:cyc}, we prove results on cyclicity of patching functors arising from arithmetic modules and we finally give global applications of the above results in \ref{subsec:global}.

\subsection{Patching functors and Serre weights}
\label{subsec:WMPF}
We recall the setup and the basic definitions for weak minimal patching functors.
Recall from \S \ref{subsec:notations} that we write the finite \'etale $\Z_p$-algebra $\cO_p$ as the product $\prod\limits_{v\in S_p} \cO_v$, where $S_p$ is a finite set and for each $v\in S_p$, $\cO_v$ is the ring of integers in a finite unramified extension $F^+_v$ of $\Q_p$, and that  $^L \un{G}$ denotes the Langlands dual group of $G_0\defeq \Res_{\cO_p/\Z_p} ({\GL_3}_{/\cO_p})$.
Following \S \ref{subsub:Lp}, an $\F$-valued $L$-homomorphism $\rhobar: G_{\Q_p} \ra{}^L \un{G}(\F)$ (resp.~a tame inertial $L$-parameter $\tau: I_{\Q_p} \ra\un{G}^\vee(E)$) is identified with a collection $(\rhobar_v)_{v\in S_p}$ of continuous homomorphisms $\rhobar_v:G_{F^+_v}\ra\GL_3(\F)$ (resp.~with a collection $(\tau_v)_{v\in S_p}$ of tame inertial types $\tau_v:I_{F^+_v}\ra\GL_3(E)$).

Let $\rhobar$ be an $L$-homomorphism over $\F$ with corresponding collection $(\rhobar_v)_{v \in S_p}$.
We write $R_\infty$ for the $\cO$-algebra $R_{\rhobar} \widehat{\otimes}_{\cO} R^p$ where
\[
R_{\rhobar} \defeq \widehat{\bigotimes}_{v\in S_p,\cO} R_{\rhobar_v}^\square
\]
and $R^p$ is a (nonzero) complete local Noetherian equidimensional flat $\cO$-algebra with residue field $\F$ such that each irreducible component of $\Spec R^p$ and of $\Spec \overline{R}^p$ is geometrically irreducible (we remind the reader that $\ovl{M}$ denotes $M\otimes_\cO\F$ for any $\cO$-module $M$). 
We suppress the dependence on $R^p$ below.
For a Weil--Deligne inertial $L$-parameter $\tau$, let $R_\infty(\tau)$
be $R_\infty \otimes_{R_{\rhobar}} R_{\rhobar}^{\eta,\tau}$ 
where
\[
R_{\rhobar}^{\eta,\tau} \defeq \widehat{\bigotimes}_{v\in S_p,\cO} R_{\rhobar_{v}}^{\eta_{v},\tau_{v}}.%
\]
Let $X_\infty$, $X_\infty(\tau)$, and $\ovl{X}_\infty(\tau)$
be $\Spec R_\infty$, $\Spec R_\infty(\tau)$, and $\Spec \ovl{R}_\infty(\tau)$ 
respectively. 
Let $\Mod(X_\infty)$ be the category of coherent sheaves over $X_\infty$, and let $\Rep_{\cO}(\GL_3(\cO_p))$ denote the category of topological $\cO[\GL_3(\cO_p)]$-modules which are finitely generated over $\cO$. 

Recall from \S \ref{subsub:ILLC} that given a tame inertial $L$-parameter $\tau$ we have an irreducible smooth $E$-representation $\sigma(\tau)$ attached to it. 
If $\sigma^\circ(\tau)\subseteq \sigma(\tau)$ is an $\cO$-lattice, we write $\ovl{\sigma}^\circ(\tau)$ for $\sigma^\circ(\tau)\otimes_\cO\F$ in what follows.

\begin{defn}\label{minimalpatching}
A \emph{weak patching functor} for an $L$-homomorphism $\rhobar: W_{\Q_p} \ra$ $^L \un{G}(\F)$ is a nonzero covariant exact functor $M_\infty:\Rep_{\cO}(\GL_3(\cO_p))\ra \Coh(X_{\infty})$ satisfying the following: if $\tau$ is an inertial $L$-parameter and $\sigma^\circ(\tau)$ is an $\cO$-lattice in $\sigma(\tau)$ then
\begin{enumerate}
\item 
\label{it:minimalpatching:1}
$M_\infty(\sigma^\circ(\tau))$ is {either zero or} a maximal Cohen--Macaulay sheaf on $X_\infty(\tau)$; and
\item 
\label{it:minimalpatching:2}
for all $\sigma \in \JH(\ovl{\sigma}^\circ(\tau))$, $M_\infty(\sigma)$ is a maximal Cohen--Macaulay sheaf on $\ovl{X}_\infty(\tau)$ (or is $0$).
\item
\label{it:minimalpatching:3}
Suppose $\sigma^\circ$ is an $\cO$-lattice in a principal series representation $R_1(\mu)$.
Then $M_\infty(\sigma^\circ)$ is supported on the potentially \emph{semistable} locus of type $(\eta, \tau(1,\mu))$ in $X_\infty$.
\end{enumerate}
We say that a weak patching functor $M_\infty$ is \emph{minimal} if $R^p$ is formally smooth over $\cO$ and whenever $\tau$ is an inertial $L$-parameter, $M_\infty(\sigma^\circ(\tau))[p^{-1}]$, which is locally free over (the regular scheme) $\Spec R_\infty(\tau)[p^{-1}]$, has rank at most one on each connected component.
\end{defn}

\begin{rmk}
The above definition of weak patching functor is slightly weaker than that in \cite[Definition 6.2.1]{MLM} and closer in spirit to that of \cite[Definition 4.2.1]{LLL}: the purpose of the third item is to eliminate non-regular Serre weights.
\end{rmk}

Let $d$ be the (common) dimension of $\ovl{X}_\infty(\tau)$ for any inertial $L$-parameter $\tau$. 
If $M$ is an $\ovl{R}_\infty$-module whose action factors through $\ovl{R}_\infty(\tau)$ for some inertial $L$-parameter $\tau$, let $Z(M)$ be the associated $d$-dimensional cycle. 
Note that $Z(M_\infty(-))$ is additive in exact sequences. 

We now fix an $L$-homomorphism $\rhobar: W_{\Q_p} \ra$ $^L \un{G}(\F)$ and a weak patching functor $M_\infty$. 
Let $W(\rhobar)$ be the set of Serre weights $\sigma$ such that $M_\infty(\sigma) \neq 0$.

\begin{prop}
\label{prop:gen:8}
If $\rhobar: W_{\Q_p} \ra$ $^L \un{G}(\F)$ is an $L$-homomorphism with $6$-generic semisimplification, then $W_{\mathrm{gen}}(\rhobar)=W(\rhobar)$.
\end{prop}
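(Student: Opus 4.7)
The plan is to reduce Proposition \ref{prop:gen:8} to the analogous statement for the Emerton--Gee stack, namely Proposition \ref{prop:gWE}, by using axiom \eqref{it:minimalpatching:3} as the bridge between $M_\infty(\sigma) \neq 0$ and the existence of a potentially semistable lift of $\rhobar$ of a specific type. Since $W_{\mathrm{gen}}(\rhobar) \subset W(\rhobar)$ is tautological, the content is the reverse inclusion: any $\sigma \in W(\rhobar)$ must be generic.

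First, suppose $\sigma = \otimes_{v \in S_p} F(\lambda_v) \in W(\rhobar)$, so $M_\infty(\sigma) \neq 0$. For each $v$, choose $\mu_v \in X^*(T)$ with $R_1(\mu_v)$ an honest irreducible principal series Deligne--Lusztig representation such that $F(\lambda_v) \in \JH(\ovl{R_1(\mu_v)})$; this is possible for any Serre weight $F(\lambda_v)$ by the standard description of reductions of principal series (take $\mu_v = \lambda_v+\eta$ when $\lambda_v \in C_0$, and pick $\mu_v$ appropriately for upper alcove weights so that $F(\lambda_v)$ arises as an outer weight in the reduction, cf.~Proposition \ref{prop:JH}). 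Let $\sigma^\circ$ be an $\cO$-lattice in the principal series $\otimes_{v} R_1(\mu_v)$. By exactness of $M_\infty$ applied to a Jordan--H\"older filtration of $\ovl{\sigma}^\circ$, the module $M_\infty(\ovl{\sigma}^\circ)$ admits $M_\infty(\sigma)$ as a subquotient, so it is nonzero; hence $M_\infty(\sigma^\circ) \neq 0$.

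Now axiom \eqref{it:minimalpatching:3} forces $M_\infty(\sigma^\circ)$ to be supported in the potentially semistable locus of type $(\eta, \tau(1,\mu))$ inside $X_\infty$, where $\mu = (\mu_v)_{v \in S_p}$. Because $X_\infty$ is local with residue field $\F$ and the support is nonempty, the closed point lies in this locus. By the product decomposition $R_\rhobar^{\mathrm{ss},\eta,\tau(1,\mu)} = \widehat{\bigotimes}_{v,\cO} R_{\rhobar_v}^{\mathrm{ss},\eta_v,\tau(1,\mu_v)}$, this forces each local ring $R_{\rhobar_v}^{\mathrm{ss},\eta_v,\tau(1,\mu_v)}$ to be nonzero, i.e., $\rhobar_v$ admits a potentially semistable lift of type $(\eta_v, \tau(1,\mu_v))$.

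From this point the argument follows the last paragraph of the proof of Proposition \ref{prop:gWE} verbatim, applied at each place $v$: \cite[Lemma 5]{Enns} promotes the potentially semistable lift of $\rhobar_v$ to a genuine semistable lift of $\rhobar_v^{\semis}$ of the same type; \cite[Proposition 7]{Enns} then forces $R_1(\mu_v)$ to be $1$-generic, and since $\rhobar_v^{\semis}|_{I_{F_v^+}}$ is $6$-generic by hypothesis, the proof of \cite[Proposition 3.3.2]{LLL} shows that one can take a compatible lowest alcove presentation making $\tau(1,\mu_v)$ actually $4$-generic. Consequently each $F(\lambda_v)$ is generic, and so is $\sigma$. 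The only conceptual content beyond Proposition \ref{prop:gWE} is the invocation of axiom \eqref{it:minimalpatching:3}, which was designed precisely so that the automorphic globalization step in \emph{loc.\ cit}.\ can be replaced with a purely axiomatic input on the patching functor; I expect no substantive obstacle beyond verifying the bookkeeping between the product-over-$v$ notation of the axiom and the per-place citations to \cite{Enns} and \cite{LLL}.
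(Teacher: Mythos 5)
Your proof is correct and follows essentially the same route as the paper: the paper's proof also passes from $M_\infty(F(\lambda))\neq 0$ to $M_\infty(R_1(\lambda))\neq 0$ by exactness, invokes axiom \eqref{it:minimalpatching:3} to produce a potentially semistable lift of type $(\eta,\tau(1,\lambda))$, and then declares "the rest of the argument is the same" as Proposition \ref{prop:gWE}. The only cosmetic difference is that the paper takes the principal series $R_1(\lambda)$ attached directly to the highest weight of $\sigma$ rather than an auxiliary $R_1(\mu_v)$, which changes nothing in the genericity bookkeeping.
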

\begin{proof} We adapt the argument in Proposition \ref{prop:gWE}: If $F(\lambda) \in W(\rhobar)$, then $M_\infty(F(\lambda))\neq 0$ so that $M_{\infty}(R_1(\lambda))\neq 0$. 
This implies that $\rhobar$, and hence $\rhobar^{\mathrm{ss}}$, has a potentially semistable lift of type $(\eta,\tau(1,\lambda))$. 
The rest of the argument is the same. 
\end{proof}

\begin{prop}\label{prop:WE}
\begin{enumerate}
\item If $\rhobar^{\semis}|_{I_K}$ is $7$-generic, then $W(\rhobar) \subset W^?(\rhobar^{\semis}|_{I_K})$. 
\item If $\rhobar^{\semis}|_{I_K}$ is $7$-generic, then for any $4$-generic $\rhobar_1\in S(\rhobar)$, $W(\rhobar) \subset W^?(\rhobar_1)$.
(Note that $S(\rhobar)$ consists of tame
inertial $\F$-types so that $W^?(\rhobar_1)$ is defined.)
\end{enumerate}
\end{prop}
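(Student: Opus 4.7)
The plan is to establish the inclusion $W(\rhobar) \subset W^g(\rhobar)$ of modular into geometric Serre weights; once in hand, Theorem \ref{thm:semicont} transfers this to the specialization $\rhobar^{\semis}$ of $\rhobar$ (or to any $\rhobar_1 \in S(\rhobar)$), and Corollary \ref{cor:g=?} (respectively Corollary \ref{cor:sspts}) converts $W^g$ into $W^?$. Both statements thus reduce to a ``modularity implies geometric'' assertion combined with a specialization argument.

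First I would use Proposition \ref{prop:gen:8} to reduce to generic Serre weights: since $\rhobar^{\semis}|_{I_K}$ is $7$-generic and in particular $6$-generic, every $\sigma \in W(\rhobar)$ is generic. Fix such a $\sigma$; genericity provides a $4$-generic tame inertial type $\tau$ with $\sigma \in \JH(\ovl\sigma(\tau))$, and by property (2) in the definition of weak patching functor, $M_\infty(\sigma)$ is a nonzero maximal Cohen--Macaulay sheaf on $\ovl X_\infty(\tau)$. Combining with Theorem \ref{thm:local_model_cmpt}, the support of $M_\infty(\sigma)$ inside $X_\infty$ is therefore a union of closed subsets of the form $\cC_{\sigma'}\times \Spec R^p$ for $\sigma'\in \JH(\ovl\sigma(\tau))$. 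Since this bound holds for \emph{every} such $\tau$, taking the intersection and using Definition \ref{defn:cover} and Lemma \ref{lemma:coverup} to recognize the resulting set in terms of the covering order, the support lies inside $\bigcup_{\sigma_0\uparrow \sigma}\cC_{\sigma_0}\times \Spec R^p$. Consequently $\rhobar \in \cC_{\sigma_0}$ for some $\sigma_0\uparrow \sigma$.

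The subsequent step identifies $\sigma_0 = \sigma$. When $\sigma$ lies in the lower alcove, $p$-restrictedness and $\sigma_0\uparrow \sigma$ directly force $\sigma_0=\sigma$. The upper-alcove case, in which an $\uparrow$-companion $\sigma_0\neq\sigma$ in the lower alcove could \emph{a priori} exist, is ruled out by the classification of possible $W^g_{\mathrm{gen}}(\rhobar)$ in Theorem \ref{thm:Wg'} (transported to the Galois side via Proposition \ref{prop:LMweights}): the only geometric weight configurations consistent with $\rhobar\in \cC_{\sigma_0}$ already contain $\sigma$. Once $W(\rhobar)\subset W^g_{\mathrm{gen}}(\rhobar)$ is secured, Theorem \ref{thm:semicont} applied to the $T$-fixed specialization $\rhobar^{\semis}$ (respectively to $\rhobar_1\in S(\rhobar)$, which is a specialization of $\rhobar$ by definition) yields $W^g_{\mathrm{gen}}(\rhobar)\subset W^g_{\mathrm{gen}}(\rhobar^{\semis})$ (respectively $\subset W^g_{\mathrm{gen}}(\rhobar_1)$). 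Corollary \ref{cor:g=?} then identifies $W^g(\rhobar^{\semis}) = W^?(\rhobar^{\semis}|_{I_K})$ under $7$-genericity, finishing (1); Corollary \ref{cor:sspts}, applicable to the semisimple $4$-generic $\rhobar_1$, gives $W^g_{\mathrm{gen}}(\rhobar_1) = W^?_{\mathrm{gen}}(\rhobar_1)\subset W^?(\rhobar_1)$, finishing (2).

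The main obstacle is the upper-alcove case of the identification $\sigma_0=\sigma$: that is, excluding the possibility that $\rhobar$ lies only on $\cC_{\sigma_0}$ for a lower-alcove $\sigma_0\uparrow\sigma$. This is precisely where the detailed classification of geometric weight sets from Theorem \ref{thm:Wg'} is essential, together with the fact that the $\uparrow$-order for $\GL_3$ in the $p$-restricted region has a very constrained shape.
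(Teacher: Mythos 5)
Your reduction to the claim $W(\rhobar)\subset W^g(\rhobar)$ does not work in this setting, and the step where you dispose of the upper-alcove case is where it breaks. From $M_\infty(\sigma)\neq 0$ and maximal Cohen--Macaulayness you correctly deduce (as in Lemma \ref{lemma:supportcyclebound}) that $\rhobar\in\cC_{\sigma_0}$ for some $\sigma_0\uparrow\sigma$; but when $\sigma$ is upper alcove, Theorem \ref{thm:Wg'} does \emph{not} rule out $\sigma_0\neq\sigma$: case \ref{case1:thm:Wg} of that theorem explicitly allows $W^g_{2\textrm{-deep}}(x^*)=w\{(0,0)\}$, i.e.\ $\rhobar$ lying only on the lower-alcove component $\cC_{\sigma_0}$ and not on $\cC_\sigma$. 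This is precisely the pathology described in the introduction (``one cannot deduce from this method the modularity of an obvious weight to get started''); excluding it requires the $U_p$-operator argument of \S\ref{subsec:AM} (Lemmas \ref{lemma:outer} and \ref{lemma:supportupperbound}), which is only available for arithmetic $R_\infty[\GL_3(F_p)]$-modules, whereas Proposition \ref{prop:WE} is stated for weak patching functors and is itself an input to those later results — so invoking them here would be circular. A secondary gap: passing from $\rhobar$ to $\rhobar^{\semis}$ via Theorem \ref{thm:semicont} presupposes that $\rhobar^{\semis}|_{I_K}$ is a specialization of $\rhobar$ (equivalently, lies in the relevant torus-orbit closure), which this paper attributes to \cite{OBW} and deliberately avoids using.

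The point you are missing is that the statement is strictly weaker than $W(\rhobar)\subset W^g(\rhobar)$: the target $W^?(\rhobar_1)$ always contains the upper-alcove companion of any lower-alcove weight it contains, so one never needs to decide whether $\rhobar$ lies on $\cC_\sigma$ or only on $\cC_{\sigma_0}$. The paper argues by contradiction: if $\sigma\in W(\rhobar)$ but $\sigma\notin W^?(\rhobar_1)$, one chooses (via \cite[Lemma 2.3.13]{LLLM2} and Proposition \ref{prop:JH}) a separating type $\tau$ with $\sigma\in\JH(\ovl{\sigma}(\tau))$ and $\JH(\ovl{\sigma}(\tau))\cap W^?(\rhobar_1)=\emptyset$. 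Modularity of $\sigma$ gives a potentially crystalline lift of $\rhobar$ of type $(\eta,\tau)$, hence (by \cite[Lemma 5]{Enns} or Corollary \ref{cor:semicont}) $\tld{\rhobar}_1\in\cX^{\eta,\tau}(\F)$, so $\JH(\ovl{\sigma}(\tau))\cap W^g_{\mathrm{gen}}(\tld{\rhobar}_1)\neq\emptyset$ by Theorem \ref{thm:local_model_cmpt}; Corollary \ref{cor:sspts} identifies this set with $\JH(\ovl{\sigma}(\tau))\cap W^?(\rhobar_1)=\emptyset$, a contradiction. Only the nonvanishing of $M_\infty(\sigma(\tau)^\circ)$ is used, never the exact component on which $\rhobar$ sits.
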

\begin{proof}
Let $\sigma$ be in $W(\rhobar)$. 
By the proof of Propositions \ref{prop:gen:8} and \ref{prop:gWE}, $R_1(\lambda)$ is $5$-generic if $\sigma = F(\lambda)$ so that $\sigma$ is $3$-deep. 
Let $\rhobar_1$ be a $4$-generic element of $S(\rhobar)$ or $\rhobar^{\semis}|_{I_K}$. 
Suppose that $\sigma\notin W^?(\rhobar_1)$. 
By (the proof of) \cite[Lemma 2.3.13]{LLLM2} and Proposition \ref{prop:JH}, we can find a $1$-generic type $\tau$ such that $\sigma\in \JH(\sigma(\tau))$ and $\JH(\sigma(\tau))\cap W^?(\rhobar_1)=\emptyset$. 
That $\sigma \in W(\rhobar)$ implies that $\rhobar$, and thus $\rhobar^{\semis}$, has a potentially crystalline lift of type $\tau$ as in the proof of Proposition \ref{prop:gen:8}. 
\cite[Proposition 3.3.2]{LLL} implies that $\tau$ is $4$-generic. 
\cite[Lemma 5]{Enns} or Corollary \ref{cor:semicont} then implies that $\tld{\rhobar}_1 \in \cX^{\eta,\tau}(\F)$ for any extension $\tld{\rhobar}_1$ of $\rhobar_1$ to an $L$-homomorphism. 
Theorem \ref{thm:local_model_cmpt} implies that $\JH(\ovl{\sigma}(\tau)) \cap W^g_{\mathrm{gen}}(\tld{\rhobar}_1) \neq \emptyset$. 
On the other hand, 
\[
\JH(\ovl{\sigma}(\tau)) \cap W^g_{\mathrm{gen}}(\tld{\rhobar}_1) = \JH(\ovl{\sigma}(\tau)) \cap W^?_{\mathrm{gen}}(\rhobar_1) = \JH(\ovl{\sigma}(\tau)) \cap W^?(\rhobar_1) = \emptyset
\]
by Corollary \ref{cor:sspts}. 
This is a contradiction.
\end{proof}

For a Serre weight $\sigma$, let $\mathfrak{p}(\sigma)$ be the prime ideal or unit ideal in $R_{\rhobar}$ corresponding to the pullback of the stack $\cC_\sigma$ to $\Spec R_{\rhobar}$. 
For an inertial $L$-parameter $\tau$, let $I(\tau)$ be the kernel of the surjection $R_{\rhobar} \surj R_{\rhobar}^{\eta,\tau}$. Observe that if $I(\tau)\subset \fp(\sigma)\neq 1$, then $\fp(\sigma)$ induces a minimal prime of $\ovl{R}^{\eta,\tau}_{\rhobar}$, and all minimal primes arise this way.

\begin{lemma}\label{lemma:prodcomp}
Suppose that $\tau$ is an inertial $L$-parameter corresponding to a collection of $4$-generic tame inertial types $(\tau_v)_{v\in S_p}$. 
Then any minimal prime ideal of $R_\infty(\tau)$ is of the form $I(\tau)  R_\infty+ \mathfrak{p}R_\infty$ for some minimal prime ideal $\mathfrak{p} \subset R^p$. 

If $M$ is a nonzero finitely generated maximal Cohen--Macaulay $R_\infty(\tau)$-module, then $\Ann_{R_{\rhobar}} (\ovl{M}) = I(\tau)+(\varpi)$. 
\end{lemma}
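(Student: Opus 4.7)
The plan is to deduce the second assertion from the first via a faithful flatness argument, while the first assertion reduces to showing $R_{\rhobar}^{\eta,\tau}$ is a domain, after which the hypothesis that minimal primes of $R^p$ are geometrically irreducible pins down the minimal primes of $R_\infty(\tau)$.

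For the first assertion, each factor $R_{\rhobar_v}^{\eta_v,\tau_v}$ is a normal $\cO$-flat domain (Corollary \ref{thm:local_model_main}) with regular---and after enlarging $E$, geometrically regular---generic fiber (Theorem \ref{thm:EG_basic_properties}). A standard argument for completed tensor products of complete Noetherian $\cO$-flat rings with reduced special fibers and geometrically integral generic fibers yields that $R_{\rhobar}^{\eta,\tau}=\widehat{\bigotimes}_{v\in S_p,\cO}R_{\rhobar_v}^{\eta_v,\tau_v}$ is itself a normal $\cO$-flat domain; verifying this integrality globally across the places in $S_p$ is the main technical obstacle. Granting this, for any minimal prime $\mathfrak{p}\subset R^p$, the quotient $R^p/\mathfrak{p}$ is a geometrically integral $\cO$-flat domain by hypothesis, hence $R_\infty(\tau)/\mathfrak{p}R_\infty(\tau)=R_{\rhobar}^{\eta,\tau}\widehat{\otimes}_{\cO}(R^p/\mathfrak{p})$ remains a domain; equivalently, $I(\tau)R_\infty+\mathfrak{p}R_\infty$ is prime in $R_\infty$. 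A dimension count using equidimensionality of both $R^p$ and $R_{\rhobar}^{\eta,\tau}$ (established by Theorem \ref{thm:EG_basic_properties}) shows these are exactly the minimal primes of $R_\infty(\tau)$.

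For the second assertion, the inclusion $I(\tau)+(\varpi)\subseteq\mathrm{Ann}_{R_{\rhobar}}(\overline{M})$ is immediate. For the reverse, $\cO$-flatness of $R_\infty(\tau)$ together with $M$ being MCM imply that $\varpi$ is $M$-regular, so $\overline{M}$ is a nonzero MCM module over $\overline{R}_\infty(\tau)$. Being MCM, $M$ has support equal to a nonempty union of minimal prime components, which by the first assertion has the form $\bigcup_{\mathfrak{p}\in S'}V(I(\tau)R_\infty+\mathfrak{p}R_\infty)$ for some nonempty set $S'$ of minimal primes of $R^p$. For any such $\mathfrak{p}$, the quotient $R_\infty/(I(\tau)R_\infty+\mathfrak{p}R_\infty+\varpi R_\infty)\cong\overline{R}_{\rhobar}^{\eta,\tau}\widehat{\otimes}_{\F}\overline{R^p/\mathfrak{p}}$ is faithfully flat over $\overline{R}_{\rhobar}^{\eta,\tau}$, since $\overline{R^p/\mathfrak{p}}$ is a nonzero local $\F$-algebra. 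Hence the projection to $\Spec\overline{R}_{\rhobar}^{\eta,\tau}=V(I(\tau)+(\varpi))$ is surjective. Taking unions, the support of $\overline{M}$ in $\Spec R_{\rhobar}$ equals $V(I(\tau)+(\varpi))$, so $\sqrt{\mathrm{Ann}_{R_{\rhobar}}(\overline{M})}=\sqrt{I(\tau)+(\varpi)}$. Since $\overline{R}_{\rhobar}^{\eta,\tau}$ is reduced (Corollary \ref{thm:local_model_main}), $I(\tau)+(\varpi)$ is already radical, giving $\mathrm{Ann}_{R_{\rhobar}}(\overline{M})=I(\tau)+(\varpi)$.
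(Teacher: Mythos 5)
Your proposal is correct and follows essentially the same route as the paper: geometric integrality of $R_{\rhobar}^{\eta,\tau}$ (via reducedness of the special fiber and normality from Corollary \ref{thm:local_model_main}) plus the standard structure of minimal primes of completed tensor products (the paper cites \cite[Lemma 3.3(5)]{BLGGHT2} for what you flag as the "main technical obstacle"), and then the MCM property forcing the support of $M$ to be a union of full components. The only cosmetic difference is in the descent step: where the paper contracts each minimal prime of $\ovl{R}_\infty(\tau)$ to $R_{\rhobar}$ using the injectivity of $R \ra R \widehat{\otimes}_{\F} S$ (Lemma \ref{lemma:completedtensor}) and then intersects over the minimal primes $\fp(\sigma)$ of the reduced special fiber, you invoke faithful flatness of $\ovl{R}_{\rhobar}^{\eta,\tau}\widehat{\otimes}_{\F}\ovl{R^p/\fp}$ over $\ovl{R}_{\rhobar}^{\eta,\tau}$ to get surjectivity on Spec in one step --- both are valid and rest on the same reducedness of $\ovl{R}_{\rhobar}^{\eta,\tau}$ at the end.
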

\begin{proof}
Since $R_{\rhobar}^{\eta,\tau}$ is geometrically irreducible (its special fiber is reduced after arbitrary finite extension of $\F$ and hence is normal; see the proof of \cite[Lemma 3.5.4]{LLLM2}), the first part follows from \cite[Lemma 3.3(5)]{BLGGHT2}. Similarly, any minimal prime of $\ovl{R}_\infty(\tau)$ is of the form $\fp(\sigma)  \ovl{R}_\infty+ \ovl{\fp}\ovl{R}_\infty$, where $\fp(\sigma)$ {corresponds} to a minimal prime of $\ovl{R}^{\eta,\tau}_{\rhobar}$, and $\ovl{\fp}$ is a minimal prime of $\ovl{R}^p$.

If $M$ is a nonzero finitely generated maximal Cohen--Macaulay $R_\infty(\tau)$-module, then $Z(\ovl{M})$ is at least the reduction of the cycle in $\Spec R_\infty(\tau)[1/p]$ corresponding to a minimal prime of $R_\infty(\tau)$. 
In particular, for any prime $\fp(\sigma)$ of $R_{\rhobar}$ inducing a minimal prime of $\ovl{R}^{\eta,\tau}_{\rhobar}$, $\Ann_{R_\infty(\tau)} (\ovl{M})$ is contained in a prime induced by $\fp(\sigma) \ovl{R}_\infty+ \ovl{\fp} \ovl{R}_\infty$ for some minimal prime $\ovl{\fp}$ of $\ovl{R}^p$. 
Since $R_\infty/(\fp(\sigma) R_\infty+ \ovl{\fp} R_\infty) \cong {R_{\rhobar}/\fp(\sigma) \widehat{\otimes} R^p/\ovl{\fp}}$, $(\fp(\sigma) R_\infty+ \ovl{\fp} R_\infty) \cap R_{\rhobar} = \fp(\sigma)$ by Lemma \ref{lemma:completedtensor}. 
We conclude that $\Ann_{R_{\rhobar}} (\ovl{M}) \subset \fp(\sigma)$ for each minimal prime ideal $\fp(\sigma)\ovl{R}_\infty(\tau)$ of $\ovl{R}_\infty(\tau)$. 
Since $\ovl{R}_\infty(\tau)$ is reduced, $\Ann_{R_{\rhobar}} (\ovl{M}) \subset I(\tau)+(\varpi)$. 
The reverse inclusion is clear.
\end{proof}

\begin{lemma}\label{lemma:completedtensor}
Let $\F$ be a field. 
If $R$ and $S$ are complete Noetherian local $\F$-algebras with residue field $\F$, then the natural map $R \ra R \widehat{\otimes}_{\F} S$, $r\mapsto r \widehat{\otimes} 1$ is an injection.
\end{lemma}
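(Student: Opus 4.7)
The plan is to exhibit an explicit retraction of the map $R \to R \widehat{\otimes}_{\F} S$, which will force it to be injective (indeed, split injective as an $\F$-linear map).

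First, I would use the augmentation $S \twoheadrightarrow S/\fm_S = \F$ provided by the residue field identification. This is an $\F$-algebra homomorphism, and for each $n \geq 1$ it induces a compatible family of surjections $S/\fm_S^n \twoheadrightarrow \F$. Tensoring with $R/\fm_R^m$ over $\F$ yields, for every pair $(m,n)$, a map
\[
R/\fm_R^m \otimes_{\F} S/\fm_S^n \longrightarrow R/\fm_R^m \otimes_{\F} \F = R/\fm_R^m
\]
of $\F$-vector spaces (in fact, of $\F$-algebras). These are visibly compatible with the transition maps in both $m$ and $n$, so passing to the inverse limit (and using that $R$ is $\fm_R$-adically complete, so that $\varprojlim_m R/\fm_R^m = R$) one obtains an $\F$-algebra homomorphism
\[
\pi \colon R \widehat{\otimes}_{\F} S \longrightarrow R.
\]

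Next, I would verify that $\pi$ is a retraction of $\iota \colon R \to R \widehat{\otimes}_{\F} S$, $r \mapsto r \widehat{\otimes} 1$. This is a direct check at each finite level: the composite $R/\fm_R^m \to R/\fm_R^m \otimes_{\F} S/\fm_S^n \to R/\fm_R^m$ sends $\bar r \mapsto \bar r \otimes 1 \mapsto \bar r \otimes 1 = \bar r$, because $1 \in S/\fm_S^n$ maps to $1 \in \F$. Passing to the limit gives $\pi \circ \iota = \mathrm{id}_R$, whence $\iota$ is (split) injective.

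There is no substantive obstacle here; the only subtlety is just to be careful that the augmentation $S \to \F$ is genuinely a map of $\F$-algebras (which requires that the residue field $\F$ embed into $S$ as part of the given $\F$-algebra structure, guaranteed by the hypothesis that $S$ is an $\F$-algebra with residue field $\F$), so that the tensor products above really do descend along it. Once that is noted, the argument is formal.
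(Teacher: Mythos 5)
Your proposal is correct and is essentially the paper's own argument: the paper likewise composes $R \to R \widehat{\otimes}_{\F} S$ with the map induced by $S \twoheadrightarrow S/\fm_S \cong \F$ and observes that the composite is the identity on $R$. Your write-up just makes the finite-level construction of this retraction explicit.
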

\begin{proof}
Let $\fm_S \subset S$ be the maximal ideal. 
The composition $R \ra R \widehat{\otimes}_{\F} S \ra R \widehat{\otimes}_{\F} (S/\fm_S) \cong R \otimes_{\F} \F$ is the isomorphism given by $r \mapsto r\otimes 1$. 
The result follows. 
\end{proof}

For the rest of the section, we assume that $\rhobar$ is $8$-generic. In particular, every element of $S(\rhobar)$ is $4$-generic by Remark \ref{rmk:genbound}. 

\begin{lemma}\label{lemma:supportcyclebound}
If $\sigma_1$ is a Serre weight and $\Ann_{R_{\rhobar}} M_\infty(\sigma_1) \subset \mathfrak{p}(\sigma_2) \subsetneq R_{\rhobar}$ for a Serre weight $\sigma_2$, then $\sigma_2 \uparrow \sigma_1$. 
\end{lemma}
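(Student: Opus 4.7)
The plan is to combine the annihilator computation of Lemma \ref{lemma:prodcomp} with the component description in Theorem \ref{thm:local_model_cmpt}, and then recognize the resulting condition as precisely the defining property of the covering order (Definition \ref{defn:cover}), which by Lemma \ref{lemma:coverup} is equivalent to $\sigma_2 \uparrow \sigma_1$.

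First I would observe that the hypothesis $\Ann_{R_{\rhobar}} M_\infty(\sigma_1) \subset \fp(\sigma_2) \subsetneq R_{\rhobar}$ forces $M_\infty(\sigma_1) \neq 0$, so $\sigma_1 \in W(\rhobar)$. Combined with the standing $8$-genericity of $\rhobar$ and Proposition \ref{prop:gen:8}, this forces $\sigma_1$ to be generic, so in particular $\sigma_1 \in \JH(\ovl{\sigma}(\tau))$ for many $4$-generic tame inertial $L$-parameters $\tau$.

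For any such $\tau$, exactness of $M_\infty$ applied to a Jordan--H\"older filtration of $\ovl{\sigma}^\circ(\tau)$ shows that $M_\infty(\sigma_1)$ is a nonzero submodule/subquotient of $M_\infty(\ovl{\sigma}^\circ(\tau))$, and by axiom \eqref{it:minimalpatching:2} of a weak patching functor it is a nonzero maximal Cohen--Macaulay module over $\ovl{R}_\infty(\tau)$. Lemma \ref{lemma:prodcomp} then computes $\Ann_{R_{\rhobar}} M_\infty(\sigma_1) = I(\tau) + (\varpi)$, and feeding this into the hypothesis gives $I(\tau) + (\varpi) \subset \fp(\sigma_2)$. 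Geometrically this means $\cC_{\sigma_2} \subseteq \ovl{\cX}^{\eta,\tau}$, and since Theorem \ref{thm:local_model_cmpt} identifies $\ovl{\cX}^{\eta,\tau}$ with the (reduced) scheme-theoretic union $\bigcup_{\sigma \in \JH(\ovl{\sigma}(\tau))} \cC_\sigma$, this forces $\sigma_2 \in \JH(\ovl{\sigma}(\tau))$.

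Having shown that $\sigma_2 \in \JH(\ovl{\sigma}(\tau))$ for every $4$-generic $\tau$ with $\sigma_1 \in \JH(\ovl{\sigma}(\tau))$, I would invoke Definition \ref{defn:cover} and Lemma \ref{lemma:coverup} to conclude $\sigma_2 \uparrow \sigma_1$. The main delicate point I anticipate is reconciling the genericity hypotheses: Definition \ref{defn:cover} quantifies over $1$-generic Deligne--Lusztig representations while the annihilator computation naturally requires $4$-generic $\tau$. I would resolve this by noting that for the generic weight $\sigma_1$ (arising from the $8$-generic $\rhobar$), any $1$-generic type $\tau$ with $\sigma_1 \in \JH(\ovl{\sigma}(\tau))$ admits a lowest alcove presentation that is automatically $4$-generic (compare \cite[Proposition 3.3.2]{LLL} and the proof of Proposition \ref{prop:WE}), so the two versions of the covering condition coincide in our situation.
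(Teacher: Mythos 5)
Your proposal is correct and follows essentially the same route as the paper: establish that $\sigma_1$ is deep enough that every tame type $\tau$ with $\sigma_1 \in \JH(\ovl{\sigma}(\tau))$ is $4$-generic, deduce $I(\tau)+(\varpi) \subset \fp(\sigma_2)$ and hence $\sigma_2 \in \JH(\ovl{\sigma}(\tau))$ from Theorem \ref{thm:local_model_cmpt}, and conclude via Definition \ref{defn:cover} and Lemma \ref{lemma:coverup}.

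One correction to an intermediate claim: Lemma \ref{lemma:prodcomp} does \emph{not} compute $\Ann_{R_{\rhobar}} M_\infty(\sigma_1) = I(\tau)+(\varpi)$. That lemma applies to $\ovl{M}$ for $M$ a nonzero maximal Cohen--Macaulay $R_\infty(\tau)$-module (e.g.\ $M_\infty(\sigma^\circ(\tau))$), whose reduction has support meeting every irreducible component of $\ovl{X}_\infty(\tau)$; the module $M_\infty(\sigma_1)$ is only maximal Cohen--Macaulay over the \emph{special fiber} $\ovl{X}_\infty(\tau)$, and its annihilator is in general strictly larger than $I(\tau)+(\varpi)$ --- indeed the point of this section is that it equals $\fp(\sigma_1)+(\varpi)$. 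Fortunately your argument only uses the containment $I(\tau)+(\varpi) \subset \Ann_{R_{\rhobar}} M_\infty(\sigma_1)$, which is immediate from axiom \eqref{it:minimalpatching:2} of a weak patching functor (the $R_\infty$-action on $M_\infty(\sigma_1)$ factors through $\ovl{R}_\infty(\tau)$), so the step survives; just don't cite the equality. On the genericity bookkeeping, the paper's route is slightly more direct: from Proposition \ref{prop:WE} and the standing $8$-genericity of $\rhobar$ one gets that $\sigma_1$ is $6$-deep, whence any $\tau$ with $\sigma_1 \in \JH(\ovl{\sigma}(\tau))$ is $4$-generic immediately from the shape of $\Sigma_0$ in Proposition \ref{prop:JH}; your detour through the lifting argument in the proof of Proposition \ref{prop:WE} reaches the same conclusion but starting only from ``$\sigma_1$ generic'' (hence $2$-deep) would not suffice on its own, so make sure to quote the stronger depth bound.
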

\begin{proof}
Since $M_\infty(\sigma_1)$ is nonzero by assumption, Proposition \ref{prop:WE} implies that $\sigma_1 \in W^?(\rhobar_1)$ for any specialization $\rhobar_1$ of $\rhobar$. 
Then $\sigma_1$ is $6$-deep because $\rhobar$ is $8$-generic. 
If $\sigma_1 \in \JH(\ovl{\sigma}(\tau))$ for a tame inertial type $\tau$ (necessarily $4$-generic), then 
\[
\cap_{\sigma \in \JH(\ovl{\sigma}(\tau))} \mathfrak{p}(\sigma) = (\varpi) + I(\tau) \subset \Ann_{R_{\rhobar}} M_\infty(\sigma_1) \subset \mathfrak{p}(\sigma_2) \subsetneq R_{\rhobar}
\]
by Theorem \ref{thm:local_model_cmpt}.
This implies that $\sigma_2 \in \JH(\ovl{\sigma}(\tau))$. 
We conclude that $\sigma_1$ covers $\sigma_2$ (Definition \ref{defn:cover}). 
The result now follows from Lemma \ref{lemma:coverup}. 
\end{proof}

\begin{lemma}\label{lemma:obvmodular}
If $W_{\mathrm{obv}}(\rhobar) \cap W(\rhobar)$ is nonempty, then $W_{\mathrm{obv}}(\rhobar) \subset W(\rhobar)$. 
\end{lemma}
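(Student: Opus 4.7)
The plan is to propagate modularity from $\sigma_0$ through $4$-generic tame types at which $\rhobar$ has maximal shape $\tld{w}^* \in t_{\un{W}^\vee \eta}$, using the normality (i.e.~integral-domain property) of the corresponding deformation rings furnished by Corollary \ref{thm:local_model_main}. First, let $\sigma_0 \in W_{\mathrm{obv}}(\rhobar) \cap W(\rhobar)$, and choose $(\rhobar_1,\sigma_0) \in SP(\rhobar)$ with associated $4$-generic tame inertial type $\tau_0$, so that $\sigma_0 \in \JH(\ovl{\sigma}(\tau_0))$ and $\tld{w}^*(\rhobar,\tau_0) = \tld{w}^*(\tld{\rhobar}_1,\tau_0) \in t_{\un{W}^\vee \eta}$. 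By Corollary \ref{thm:local_model_main}, $R_{\rhobar}^{\eta,\tau_0}$ is a normal domain, so every irreducible component of $X_\infty(\tau_0) = \Spec(R_{\rhobar}^{\eta,\tau_0}\widehat\otimes R^p)$ is of the form $\Spec(R_{\rhobar}^{\eta,\tau_0}\widehat\otimes R^p/\mathfrak{p})$ for $\mathfrak{p}$ a minimal prime of $R^p$ (Lemma \ref{lemma:prodcomp}).

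Since $M_\infty(\sigma_0) \neq 0$ and $\sigma_0 \in \JH(\ovl{\sigma}(\tau_0))$, the maximal Cohen--Macaulay sheaf $M_\infty(\sigma_0^\circ(\tau_0))$ on $X_\infty(\tau_0)$ is nonzero; its support is therefore a nontrivial union of the irreducible components described above. Reducing modulo $\varpi$, Lemma \ref{lemma:prodcomp} yields $\Ann_{R_{\rhobar}}\bigl(\ovl{M_\infty(\sigma_0^\circ(\tau_0))}\bigr) = I(\tau_0)+(\varpi)$, so the cycle $Z\bigl(\ovl{M_\infty(\sigma_0^\circ(\tau_0))}\bigr)$ covers every irreducible component of $\ovl{X}_\infty(\tau_0)$. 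Since $\ovl{\sigma}(\tau_0)$ is multiplicity-free as a $\GL_3(\cO_p)$-representation (Proposition \ref{prop:JH}, applicable because $\tau_0$ is $4$-generic), exactness of $M_\infty$ gives
\[
Z\bigl(\ovl{M_\infty(\sigma_0^\circ(\tau_0))}\bigr) = \sum_{\sigma' \in \JH(\ovl{\sigma}(\tau_0))} Z\bigl(M_\infty(\sigma')\bigr).
\]
Combined with the decomposition $\ovl{\cX}^{\eta,\tau_0} = \bigcup_{\sigma' \in \JH(\ovl{\sigma}(\tau_0))} \cC_{\sigma'}$ from Theorem \ref{thm:local_model_cmpt}, this forces $M_\infty(\sigma') \neq 0$ for every $\sigma' \in \JH(\ovl{\sigma}(\tau_0)) \cap W^g(\rhobar)$. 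In particular, every $\sigma' \in W_{\mathrm{obv}}(\rhobar) \cap \JH(\ovl{\sigma}(\tau_0))$ now lies in $W(\rhobar)$.

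Finally, one iterates: each newly-modular obvious weight $\sigma'$ has its own pair in $SP(\rhobar)$ with associated $4$-generic type $\tau'$ of maximal shape, to which the same argument applies to produce further modular obvious weights. Using the explicit six-case classification of $\Sigma_{\mathrm{obv},j}(\rhobar)$ in Theorem \ref{thm:Wg} together with the parametrization of $SP(\rhobar)$ by pairs $(\rhobar_1,\sigma)$ with $\rhobar_1 \in S(\rhobar)$, one checks that the ``modularity propagation graph'' on $W_{\mathrm{obv}}(\rhobar)$ (with edges joining two weights that share a type $\tau$ as above) is connected. Hence every obvious weight of $\rhobar$ becomes modular, and $W_{\mathrm{obv}}(\rhobar) \subset W(\rhobar)$.

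The main obstacle is precisely this combinatorial connectedness in the iteration. When $S(\rhobar)$ consists of a single specialization and we are in the most generic case of Theorem \ref{thm:Wg}, a single type $\tau_0$ already contains all of $W_{\mathrm{obv}}(\rhobar)$ in its Jordan--H\"older set so one step suffices; but when $S(\rhobar)$ contains multiple specializations (each contributing its own obvious weights via different maximal-shape types), connectedness must be verified by explicitly matching the obvious weights of the different $\rhobar_1 \in S(\rhobar)$ through overlapping tame types.
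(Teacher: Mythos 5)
There is a genuine gap, in two places. First, your central propagation step does not follow: from $Z\bigl(\ovl{M_\infty(\sigma_0^\circ(\tau_0))}\bigr) = \sum_{\sigma'} Z(M_\infty(\sigma'))$ and the fact that the left-hand side covers every irreducible component of $\ovl{X}_\infty(\tau_0)$, you can only conclude that each component $\fp(\sigma'')$ lies in the support of \emph{some} $M_\infty(\sigma')$ with $\sigma' \in \JH(\ovl{\sigma}(\tau_0))$; by Lemma \ref{lemma:supportcyclebound} that $\sigma'$ satisfies $\sigma'' \uparrow \sigma'$ but need not equal $\sigma''$. So it does not "force $M_\infty(\sigma') \neq 0$ for every $\sigma' \in \JH(\ovl{\sigma}(\tau_0)) \cap W^g(\rhobar)$" — this is precisely the ambiguity the paper's introduction flags. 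The paper resolves it by choosing, for each step of the chain, a type $\tau_i$ with $W^?(\rhobar_i) \cap \JH(\ovl{\sigma}(\tau_i)) = \{\sigma_{i-1},\sigma_i\}$ (exactly two weights), so that the containment $\prod \Ann_{R_{\rhobar}} M_\infty(\sigma') \subset \fp(\sigma_i)$ leaves only two alternatives, one of which ($\Ann M_\infty(\sigma_{i-1}) \subset \fp(\sigma_i)$, i.e.\ $\sigma_i \uparrow \sigma_{i-1}$) is excluded by Lemma \ref{lemma:supportcyclebound}.

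Second, your restriction to types of \emph{maximal} shape $\tld{w}^*(\rhobar,\tau_0) \in t_{\un{W}^\vee\eta}$ is self-defeating: by Table \ref{Table:intsct} (first two rows) the chart $\tld{U}(\tld{z},\eta,\nabla_{\tau_0,\infty})_\F$ is irreducible for such $\tld{z}$, so exactly one component of $\cX^{\eta,\tau_0}_\F$ passes through $\rhobar$ and $\JH(\ovl{\sigma}(\tau_0)) \cap W^g(\rhobar) = \{\sigma_0\}$. Your first step therefore produces no new modular weights, and the same is true at every iteration. The connecting types must have intermediate shape (length $3$), and the existence of a chain of such types running through all of $W_{\mathrm{obv}}(\rhobar)$ is exactly the content of Proposition \ref{prop:walk} and Corollary \ref{cor:surjtheta}, which you defer to an unverified claim of connectedness of a "propagation graph." That connectedness, together with the two-weight dichotomy above, is the substance of the paper's proof and is missing from yours.
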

\begin{proof}
Let $\sigma_0 \in W_{\mathrm{obv}}(\rhobar)$. 
{We claim that there is an $n\in \bN$ and sequences of tame inertial types $(\tau_i)_{i=1}^n$, specializations $(\rhobar_i)_{i=1}^n$ (elements in $S(\rhobar))$, and (not necessarily distinct) Serre weights $(\sigma_i)_{i=1}^n$ such that}
\begin{itemize}
\item $\{\sigma_i\}_{i=0}^{n} = W_{\mathrm{obv}}(\rhobar)$; 
\item $\rhobar_i \in S(\rhobar)$ for all $i = 1,\ldots, n$; 
\item $W^?(\rhobar_i) \cap \JH(\ovl{\sigma}(\tau_i)) = W_{\mathrm{obv}}(\rhobar_i) \cap \JH(\ovl{\sigma}(\tau_i))  = \{\sigma_{i-1},\sigma_i\}$ for all $i = 1,\ldots, n$. 
\end{itemize}
{Indeed, the proof of Corollary \ref{cor:surjtheta} gives a sequence of elements $(y,(\tld{w},y\tld{w}^{-1}(0)))$ in $SP(x^*)$. We define the sequences by taking the specializations $\rhobar_i$ corresponding to the elements $y$, taking the Serre weights $\sigma_i$ corresponding to the elements $F_{(\tld{w},y\tld{w}^{-1}(0))}$, and taking the tame inertial types $\tau_i$ to be $\tau(u,y(\tld{w}^{-1}\tld{w}_h^{-1}w_0s\tld{w})^{-1}(0))$ where $u$ is the image of $y(\tld{w}^{-1}\tld{w}_h^{-1}w_0s\tld{w})^{-1}$ in $W$ (see also the proof of Proposition \ref{prop:walk}). 
We will use these sequences to prove the result by induction.}

Suppose that $\sigma_{i-1} \in W(\rhobar)$ for some $1 \leq i \leq n$. 
Then $M_\infty(\ovl{\sigma}^\circ(\tau_i))$ is nonzero. 
Since $M_\infty(\ovl{\sigma}^\circ(\tau_i))$ is a nonzero finitely generated maximal Cohen--Macaulay $\ovl{R}_\infty(\tau_i)$-module, Proposition \ref{prop:WE} and Lemma \ref{lemma:prodcomp} give 
\[
\prod_{\sigma' \in \JH(\ovl{\sigma}(\tau_i)) \cap W^?(\rhobar_i)} \Ann_{R_{\rhobar}} M_\infty(\sigma') \subset \Ann_{R_{\rhobar}} M_\infty(\ovl{\sigma}^\circ(\tau_i)) = I(\tau_i) +(\varpi)\subset \fp(\sigma_i).
\]
Then $\Ann_{R_{\rhobar}} M_\infty(\sigma_{i-1}) \subset \fp(\sigma_i)$ or $\Ann_{R_{\rhobar}} M_\infty(\sigma_i) \subset \fp(\sigma_i)$. 
The former contradicts Lemma \ref{lemma:supportcyclebound} and so $\sigma_i \in W(\rhobar)$. 
\end{proof}

\begin{lemma}\label{lemma:mintype}
If $\rhobar$ is semisimple and $8$-generic, and $\sigma \in W^?(\rhobar)$, then there exists a tame inertial $L$-homomorphism $\tau$ such that 
\begin{enumerate}
\item $\sigma \in \JH(\ovl{\sigma}(\tau))$; and
\item $\sigma' \in W^?(\rhobar) \cap \JH(\ovl{\sigma}(\tau))$ implies that $\sigma' \uparrow \sigma$. 
\end{enumerate}
\end{lemma}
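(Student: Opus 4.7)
The plan is to construct the type $\tau$ explicitly by adapting its lowest alcove presentation to the combinatorial position of $\sigma$ within $W^?(\rhobar)$, so that $\sigma$ occupies an $\uparrow$-maximal location in $\JH(\ovl{\sigma}(\tau))$.

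First, I would use Corollary \ref{cor:g=?} (applicable since $\rhobar$ is $8$-generic and semisimple) to identify $W^?(\rhobar)$ with $W^g(\rhobar)$, and record the explicit combinatorial description from \S \ref{sec:reduction}: writing $\rhobar|_{I_K} \cong \taubar(w,\mu+\eta)$ in a sufficiently generic lowest alcove presentation, we have
\[
W^?(\rhobar) \;=\; F\bigl(\mathfrak{Tr}_\lambda\bigl(t_{\mu+\eta-\lambda}\, w\, (r(\Sigma))\bigr)\bigr),
\]
and for any $2$-generic $\tau' = \tau(w',\mu'+\eta)$, Proposition \ref{prop:JH} gives $\JH(\ovl{\sigma}(\tau')) = F(\mathfrak{Tr}_\lambda(t_{\mu'+\eta-\lambda} w' (\Sigma)))$. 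The intersection of these two sets corresponds to common positions of two shifted copies of $\Sigma_0^{\otimes f}$ inside $\un{\Lambda}_W^{\lambda+\eta}\times \mathcal{A}$. Given $\sigma\in W^?(\rhobar)$, I would record its parametrizing element $\un{a}_\sigma \in r(\Sigma_0)^{\otimes f}$.

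Second, I would construct $\tau=\tau(w',\mu'+\eta)$ so that $\sigma$ corresponds to a chosen position $\un{a}'_\sigma \in \Sigma_0^{\otimes f}$ of $\JH(\ovl{\sigma}(\tau))$. The choice of $\un{a}'_\sigma$ is dictated by two constraints: the alcove bit of each component must match that of $\sigma$ (which is intrinsic to $\sigma$), and subject to this $\un{a}'_\sigma$ should be $\uparrow$-maximal within $\Sigma_0^{\otimes f}$. When $\sigma$ lies in the upper alcove at embedding $j$, the maximal choice is $(0,1)$; when $\sigma$ lies in the lower alcove at $j$, we pick the lower-alcove position adjacent to the $\uparrow$-greatest upper-alcove weight compatible with the shift. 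Equating the two expressions for $\sigma$ pins down $(w',\mu')$ in terms of $(w,\mu,\un{a}_\sigma,\un{a}'_\sigma)$; the genericity hypotheses ensure that $\tau$ is $4$-generic.

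Third, I would verify that every $\sigma' \in W^?(\rhobar)\cap \JH(\ovl{\sigma}(\tau))$ satisfies $\sigma'\uparrow \sigma$. After transporting via $\mathfrak{Tr}_\lambda\circ t_{\mu'+\eta-\lambda}\,w'$, this reduces to a combinatorial claim that the common positions of the two shifted copies of $\Sigma_0^{\otimes f}$ lie below $\un{a}'_\sigma$ in the $\uparrow$ order on $\Sigma_0^{\otimes f}$; by Lemma \ref{lemma:coverup} and Proposition \ref{prop:JH} this amounts to the covering relation within $\Sigma_0$, which can be read off from the $p$-alcove combinatorics of $\GL_3$.

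The main obstacle is the case analysis for the nine possible positions of $\un{a}_\sigma \in r(\Sigma_0)$ at each embedding. For upper-alcove components of $\un{a}_\sigma$ the choice $\un{a}'_\sigma=(0,1)$ makes $\sigma$ the $\uparrow$-maximum of the whole $\JH(\ovl{\sigma}(\tau))$ and the verification is automatic. The delicate cases are the six lower-alcove positions of $\Sigma_0$: for these, $\sigma$ cannot be the $\uparrow$-max of $\JH(\ovl{\sigma}(\tau))$, and one must check that the upper-alcove weights of $\JH(\ovl{\sigma}(\tau))$ which happen to lie in $W^?(\rhobar)$ are already $\uparrow$-above $\sigma$ (or absent). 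This final check is a direct inspection in the extended affine Weyl group of $\GL_3$, made uniform by the identification of $W^?$-sets with shifted $\Sigma_0$-patterns.
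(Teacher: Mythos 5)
The paper disposes of this lemma by citing \cite[Lemma 3.5.9]{LLLM2}, and your plan — construct $\tau$ explicitly through lowest alcove presentations and the $\Sigma_0$-parametrization of $\JH(\ovl{\sigma}(\tau))$ and $W^?(\rhobar)$ — is indeed the kind of argument that proves that cited lemma. However, your recipe has a genuine gap in the upper-alcove case, precisely where you claim the verification is ``automatic.'' The element $(0,1)$ is \emph{not} the $\uparrow$-maximum of $\Sigma_0$: the $\uparrow$-order on $\JH(\ovl{\sigma}(\tau))$ consists of the three chains $(\eps,0)\uparrow(\eps,1)$ for $\eps\in\{0,\eps_1,\eps_2\}$ together with three isolated outer lower-alcove weights, so there are six $\uparrow$-maximal elements and $(0,1)$ dominates only $(0,0)$. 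Concretely, take $\rhobar\cong\taubar(w,\mu+\eta)$ semisimple and $\sigma$ the weight at position $(0,1)\in r(\Sigma_0)$ of $W^?(\rhobar)$. Since the weight at position $(0,1)$ of $\JH(\ovl{R_{w'}(\mu'+\eta)})$ is $F(\Trns_\lambda(\mu'+\eta-\lambda,1))$, independent of $w'$, ``equating the two expressions for $\sigma$'' does not pin down the finite Weyl part of the presentation of $\tau$; taking $w'=w$ places $\sigma$ at $(0,1)$ of $\JH(\ovl{\sigma}(\tau))$, yet $\JH(\ovl{\sigma}(\tau))\cap W^?(\rhobar)$ then corresponds to $\Sigma_0\cap r(\Sigma_0)$, which contains $(\eps_1,1)$ and $(\eps_2,1)$ — neither of which is $\uparrow\sigma$. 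So the choice of position alone does not force the conclusion.

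What actually has to be controlled is the relative position $\tld{w}(\rhobar,\tau)\in\Adm^\vee(\eta)$, i.e.\ the finite Weyl group part of $\tld{w}(\tau)$ as well as the translation part. Reading off the third column of Table \ref{Table:intsct}: the intersection $\JH(\ovl{\sigma}(\tau))\cap W^?(\rhobar)$ is a singleton only for the length-$4$ shapes, and among the length-$3$ shapes only $\alpha\beta\alpha\, t_{\un{1}}$ (i.e.\ $w_0t_{\un{1}}$, up to conjugacy) yields a pair of the form $\{(\eps,0),(\eps,1)\}$, which is the down-set of an upper-alcove weight. So the correct construction is: if $\sigma$ sits at an outer position of $r(\Sigma_0)$ (an obvious weight), choose $\tau$ so that the shape has length $4$ with $\sigma$ at the corresponding outer position of $\Sigma_0$; if $\sigma$ sits at a non-outer position $(\eps,1)$, choose $\tau$ with $\tld{w}(\tau)=t_{\mu+\eta}ww_0$ (shape $w_0t_{\un{1}}$), giving $\JH(\ovl{\sigma}(\tau))\cap W^?(\rhobar)=\{\sigma,\sigma^-\}$. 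Your recipe for the lower-alcove case is likewise off: the down-set of a lower-alcove $\sigma$ is $\{\sigma\}$, so $\sigma$ must be placed at an \emph{outer} lower position such as $(\eps_1+\eps_2,0)$ (length-$4$ shape), not at a position ``adjacent to an upper-alcove weight,'' which would force $\{(\eps,0),(\eps,1)\}\subset\JH(\ovl{\sigma}(\tau))\cap W^?(\rhobar)$ and violate the conclusion. With these corrections the case analysis closes, but as written the construction is both underdetermined and, in the cases you call automatic, false.
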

\begin{proof}
This follows from \cite[Lemma 3.5.9]{LLLM2}. 
\end{proof}

In fact, $\tau$ is unique. 
We say that $\tau$ is minimal with respect to $\rhobar$ and $\sigma$. 

\begin{prop}\label{prop:supportcyclebound}
Let $\rhobar: W_{\Q_p} \ra \, ^L \un{G}(\F)$ be an $8$-generic $L$-homomorphism and let $M_\infty$ be a weak patching functor. 
If $W_{\mathrm{obv}}(\rhobar) \cap W(\rhobar)$ is nonempty, then $\Ann_{R_{\rhobar}} M_\infty(\sigma) \subset \fp(\sigma)$ for all Serre weights $\sigma$. 
In particular, $W^g(\rhobar) \subset W(\rhobar)$.
\end{prop}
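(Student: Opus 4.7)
The plan is to show $\Ann_{R_{\rhobar}} M_\infty(\sigma) \subset \fp(\sigma)$ for every Serre weight $\sigma$, which is trivial when $\sigma \notin W^g(\rhobar)$ (since then $\fp(\sigma) = R_{\rhobar}$) and which yields $W^g(\rhobar) \subset W(\rhobar)$ in particular. The argument splits obvious from shadow weights, exploiting the explicit classification of $W^g(\rhobar)$ from Theorem \ref{thm:Wg}.

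For an obvious weight $\sigma \in W_{\obv}(\rhobar)$, Lemma \ref{lemma:obvmodular} first yields $\sigma \in W(\rhobar)$. The definition of obvious weight provides a specialization $\rhobar_1 \in S(\rhobar)$ ($4$-generic by Remark \ref{rmk:genbound}) and a tame inertial type $\tau$ whose shape $\tld{w}^*(\tld\rhobar_1,\tau)$ lies in $t_{\un{W}^\vee \eta}$, forcing $\JH(\ovl\sigma(\tau)) \cap W^?(\rhobar_1) = \{\sigma\}$; Theorem \ref{thm:semicont} combined with Corollary \ref{cor:g=?} upgrades this to $\JH(\ovl\sigma(\tau)) \cap W^g(\rhobar) = \{\sigma\}$. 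Since $M_\infty(\sigma^\circ(\tau))$ is nonzero and maximal Cohen--Macaulay on $X_\infty(\tau)$, its reduction cycle is supported only on $V(\fp(\sigma) + \ovl\fp)$ components of $\ovl{X}_\infty(\tau)$ by Theorem \ref{thm:local_model_cmpt}. Additivity of $Z(M_\infty(-))$ in the $\varpi$-reduction of a Jordan--H\"older filtration of $\sigma^\circ(\tau)$, combined with non-negativity of the summands, forces $Z(M_\infty(\sigma))$ to live on such components and gives the containment.

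For a shadow weight $\sigma \in W^g(\rhobar) \setminus W_{\obv}(\rhobar)$, Theorem \ref{thm:Wg} shows $\sigma$ has the form $w(\eps,1)$ with $\eps \in \{0,\eps_1,\eps_2\}$ and occurs only in cases (1'), (5'), (6'). One chooses a tame inertial type $\tau$ minimal for $\sigma$ with respect to $\rhobar^{\semis}|_{I_K}$ via Lemma \ref{lemma:mintype}; inspection of the explicit cases shows $\JH(\ovl\sigma(\tau)) \cap W^g(\rhobar)$ consists of $\sigma$, together with obvious weights $\sigma_0 \uparrow \sigma$ and possibly further shadow weights which one handles by an inductive separation argument. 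The obvious case already proved ensures that each $Z(M_\infty(\sigma_0))$ avoids $V(\fp(\sigma) + \ovl\fp)$ components, so any positive multiplicity at such a component in $Z(M_\infty(\sigma^\circ(\tau)))$ necessarily comes from $Z(M_\infty(\sigma))$. Existence of this positive multiplicity is extracted from the cycle-theoretic description of $\cX^{\eta,\tau}_\F$ in Theorem \ref{thm:local_model_cmpt}, together with the maximal Cohen--Macaulay property of $M_\infty(\sigma^\circ(\tau))$.

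The main obstacle lies in cases (5') and (6') of Theorem \ref{thm:Wg}, where several shadow weights coexist inside a single $\JH(\ovl\sigma(\tau))$, so one cannot immediately isolate the $Z(M_\infty(\sigma))$ contribution. Untangling this requires simultaneous cycle accounting across several carefully chosen types, making crucial use of the ``no triple intersection'' numerology from \cite{LLLM} to pin down multiplicities; the $8$-genericity assumption enters precisely to guarantee that enough types are available for the combinatorial argument to close.
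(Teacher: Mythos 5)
Your treatment of obvious weights is essentially sound, but there is a genuine gap in the shadow-weight case, and you end up proposing machinery the argument does not need. The isolation step you defer ("any positive multiplicity at such a component necessarily comes from $Z(M_\infty(\sigma))$", then "simultaneous cycle accounting across several carefully chosen types" in cases (5$'$), (6$'$)) is never actually carried out, and the "no triple intersection" numerology plays no role here --- it is used to pin down $W^g(\rhobar)$ in Theorem \ref{thm:Wg'}, not to separate cycle contributions of Jordan--H\"older constituents. As written, when several weights of $W^g(\rhobar)$ lie in a single $\JH(\ovl{\sigma}(\tau))$ you have no mechanism to rule out that the positive multiplicity along $\fp(\sigma)$ comes entirely from some $\sigma'\neq\sigma$.

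The missing idea is that the minimal type already does the isolation, uniformly and without any case analysis or cycle decomposition. Fix an $8$-generic $\rhobar'\in S(\rhobar)$ and take $\tau$ minimal with respect to $\rhobar'$ and $\sigma$ (Lemma \ref{lemma:mintype}), so that every $\sigma'\in W^?(\rhobar')\cap\JH(\ovl{\sigma}(\tau))$ satisfies $\sigma'\uparrow\sigma$. Since $\JH(\ovl{\sigma}(\tau))\cap W_{\obv}(\rhobar)\neq\emptyset$ (Theorem \ref{thm:Wg}) and Lemma \ref{lemma:obvmodular} applies, $M_\infty(\ovl{\sigma}^\circ(\tau))\neq 0$, and Lemma \ref{lemma:prodcomp} together with Proposition \ref{prop:WE} gives
\[
\prod_{\sigma'\in\JH(\ovl{\sigma}(\tau))\cap W^?(\rhobar')}\Ann_{R_{\rhobar}}M_\infty(\sigma')\subset I(\tau)+(\varpi)\subset\fp(\sigma).
\]
Primality of $\fp(\sigma)$ then yields a single $\sigma'$ with $\Ann_{R_{\rhobar}}M_\infty(\sigma')\subset\fp(\sigma)$; Lemma \ref{lemma:supportcyclebound} gives $\sigma\uparrow\sigma'$, and minimality gives $\sigma'\uparrow\sigma$, forcing $\sigma'=\sigma$. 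This works for obvious and shadow weights alike, so your bifurcation (and the explicit per-embedding classification from Theorem \ref{thm:Wg}) is unnecessary; more importantly, without the combination of Lemma \ref{lemma:supportcyclebound} with minimality of $\tau$, your shadow-weight branch does not close.
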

\begin{proof}
The inclusion is trivial if $\sigma \notin W^g(\rhobar)$.
Suppose that $\sigma \in W^g(\rhobar)$.
Choose an $8$-generic $\rhobar'\in S(\rhobar)$ (e.g.~$\rhobar^{\semis}$) and choose the tame inertial type $\tau$ which is minimal with respect to $\rhobar'$ and $\sigma$. 

For any Serre weight $\sigma' \uparrow \sigma$, $\sigma' \in \JH(\ovl{\sigma}(\tau))$. 
Theorem \ref{thm:Wg} implies that $\JH(\ovl{\sigma}(\tau)) \cap W_{\mathrm{obv}}(\rhobar)$ is nonempty.
Lemma \ref{lemma:obvmodular} implies that $M_\infty(\ovl{\sigma}^\circ(\tau))$ is nonzero for any lattice $\sigma^\circ(\tau) \subset \sigma(\tau)$. 
Since $M_\infty(\ovl{\sigma}^\circ(\tau))$ is a nonzero finitely generated maximal Cohen--Macaulay $\ovl{R}_\infty(\tau)$-module, Proposition \ref{prop:WE} and Lemma \ref{lemma:prodcomp} give 
\[
\prod_{\sigma' \in \JH(\ovl{\sigma}(\tau)) \cap W^?(\rhobar')} \Ann_{R_{\rhobar}} M_\infty(\sigma') \subset \Ann_{R_{\rhobar}} M_\infty(\ovl{\sigma}^\circ(\tau)) = I(\tau) +(\varpi)\subset \fp(\sigma).
\]
Then $\Ann_{R_{\rhobar}} M_\infty(\sigma') \subset \fp(\sigma)$ for some $\sigma' \in \JH(\ovl{\sigma}(\tau)) \cap W^?(\rhobar')$. 
Lemma \ref{lemma:supportcyclebound} implies that $\sigma \uparrow \sigma'$. 
That $\tau$ is minimal with respect to $\rhobar'$ and $\sigma$ implies that $\sigma' \uparrow \sigma$, hence $\sigma = \sigma'$.
\end{proof}

\begin{prop}\label{prop:minimalcycle}
Let $\rhobar: W_{\Q_p} \ra \, ^L \un{G}(\F)$ be an $8$-generic $L$-homomorphism and let $M_\infty$ be a weak minimal patching functor. 
Assume that $W_{\obv}(\rhobar)\cap W(\rhobar)$ is nonempty.
Then $Z(M_\infty(\sigma))$ is the irreducible or zero cycle corresponding to the prime or unit ideal $\fp(\sigma)R_\infty$. In particular, $W(\rhobar) = W^g(\rhobar)$.
\end{prop}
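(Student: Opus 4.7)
The plan is to prove by induction on the upward-closure order $\uparrow$ on Serre weights that
$$
Z(M_\infty(\sigma))=\mathbf{1}_{\sigma\in W^g(\rhobar)}\,[V(\mathfrak{p}(\sigma)R_\infty)];
$$
the final assertion $W(\rhobar)=W^g(\rhobar)$ then follows because whenever $M_\infty(\sigma)\neq 0$ it is MCM (hence of nonzero cycle) on some $\ovl{X}_\infty(\tau)$ with $\sigma\in \JH(\ovl{\sigma}(\tau))$. Two inputs will drive the argument. The first is additivity of $Z$ along the Jordan--H\"older filtration of $\ovl{\sigma}^\circ(\tau)$:
$$
Z\bigl(M_\infty(\ovl{\sigma}^\circ(\tau))\bigr) = \sum_{\sigma'\in \JH(\ovl{\sigma}(\tau))} Z(M_\infty(\sigma')).
$$
The second is the ``fundamental cycle'' identity, which should hold for any $4$-generic tame inertial $L$-parameter $\tau$ with $M_\infty(\sigma^\circ(\tau))\neq 0$:
$$
Z\bigl(M_\infty(\ovl{\sigma}^\circ(\tau))\bigr) = \sum_{\sigma''\in \JH(\ovl{\sigma}(\tau))\cap W^g(\rhobar)} [V(\mathfrak{p}(\sigma'')R_\infty)]. \quad (\ast)
$$

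To establish $(\ast)$, I would first note that by Corollary \ref{thm:local_model_main}, $R_{\rhobar}^{\eta,\tau}$ is a normal domain, and since $R^p$ is formally smooth over $\cO$, Lemma \ref{lemma:prodcomp} shows that $R_\infty(\tau)=R_{\rhobar}^{\eta,\tau}\,\widehat{\otimes}_\cO R^p$ is again a normal domain; in particular $\Spec R_\infty(\tau)[1/p]$ is irreducible. The minimality of $M_\infty$ combined with the MCM (hence $\cO$-flat) property then forces $M_\infty(\sigma^\circ(\tau))[1/p]$ to be locally free of generic rank exactly $1$. Since $R_\infty(\tau)$ is Cohen--Macaulay and $\cO$-flat, this generic rank governs the multiplicity of $M_\infty(\ovl{\sigma}^\circ(\tau))$ along each minimal prime of the reduced (by Corollary \ref{thm:local_model_main}) ring $\ovl{R}_\infty(\tau)$, so every irreducible component occurs with multiplicity exactly $1$. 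Theorem \ref{thm:local_model_cmpt} then identifies these components with $\sigma''\in \JH(\ovl{\sigma}(\tau))$ such that $\rhobar\in \cC_{\sigma''}$, namely with $\JH(\ovl{\sigma}(\tau))\cap W^g(\rhobar)$, yielding $(\ast)$.

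The induction itself would proceed with the fixed $8$-generic specialization $\rhobar' \defeq \rhobar^{\semis}|_{I_K}\in S(\rhobar)$. If $\sigma\notin W^?(\rhobar')$, then Proposition \ref{prop:WE} forces $M_\infty(\sigma)=0$, while Proposition \ref{prop:supportcyclebound} (available thanks to the hypothesis $W_{\obv}(\rhobar)\cap W(\rhobar)\neq\emptyset$) combined with Proposition \ref{prop:WE} gives $W^g(\rhobar)\subset W^?(\rhobar')$, so the claim is trivially $0=0$. For $\sigma\in W^?(\rhobar')$, I would take $\tau_\sigma$ to be the minimal tame type from Lemma \ref{lemma:mintype}. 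For each $\sigma'\in \JH(\ovl{\sigma}(\tau_\sigma))\setminus\{\sigma\}$, either $\sigma'\notin W^?(\rhobar')$ (so $M_\infty(\sigma')=0$), or else $\sigma'\uparrow\sigma$ strictly by minimality of $\tau_\sigma$, in which case $Z(M_\infty(\sigma'))$ is known by induction. Subtracting these known contributions from the two displayed identities (applied with $\tau=\tau_\sigma$) isolates $Z(M_\infty(\sigma))$: the induction-hypothesis terms indexed by $\sigma'\in \JH(\ovl{\sigma}(\tau_\sigma))\cap W^g(\rhobar)\setminus\{\sigma\}$ cancel against the corresponding summands on the right of $(\ast)$, leaving exactly $\mathbf{1}_{\sigma\in W^g(\rhobar)}\,[V(\mathfrak{p}(\sigma)R_\infty)]$. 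The degenerate case $M_\infty(\ovl{\sigma}^\circ(\tau_\sigma))=0$ is handled separately: exactness of $M_\infty$ then forces $M_\infty(\sigma)=0$, and $\JH(\ovl{\sigma}(\tau_\sigma))\cap W^g(\rhobar)=\emptyset$ via $W^g(\rhobar)\subset W(\rhobar)$, so both sides vanish.

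The hard part will be step $(\ast)$: translating minimality (generic rank $\leq 1$ on the characteristic-$0$ fiber) into the scheme-theoretic statement that the mod-$p$ cycle places multiplicity exactly $1$ on every irreducible component of $\ovl{R}_\infty(\tau)$. This rests on the combined force of the normality from Corollary \ref{thm:local_model_main}, the formal smoothness of $R^p$, and the Cohen--Macaulayness and $\cO$-flatness of $R_\infty(\tau)$, which together ensure that characteristic-$0$ generic ranks control mod-$p$ multiplicities on each component.
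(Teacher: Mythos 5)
Your proposal is correct, and it reaches the conclusion by a genuinely different (and longer) route than the paper. The paper's proof is a one-step squeeze: minimality gives the upper bound $Z(M_\infty(\ovl{\sigma}^\circ(\tau))) \leq Z(\ovl{R}_\infty(\tau))$, Proposition \ref{prop:supportcyclebound} gives the termwise lower bound $Z(M_\infty(\sigma')) \geq \cC_{\sigma'}(\rhobar)$ for every $\sigma' \in \JH(\ovl{\sigma}(\tau))$, and Theorem \ref{thm:local_model_cmpt} (reducedness of the special fiber) identifies the two ends, forcing equality term by term for any single $4$-generic $\tau$; non-generic weights are then dispatched by Propositions \ref{prop:gen:8} and \ref{prop:gWE}. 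You instead first upgrade the upper bound to the exact identity $(\ast)$ (rank exactly one because $X_\infty(\tau)$ is irreducible and the module is MCM and nonzero, then multiplicity exactly one on each component of the reduced special fiber), and then isolate each $Z(M_\infty(\sigma))$ by an induction on the $\uparrow$-order using the minimal types of Lemma \ref{lemma:mintype} and subtraction. Both arguments rest on the same three pillars (minimality, reducedness/irreducibility from Corollary \ref{thm:local_model_main} and Theorem \ref{thm:local_model_cmpt}, and the standard fact that the mod-$\varpi$ cycle of an $\cO$-flat MCM module is governed by its generic rank in characteristic zero), but your induction effectively re-derives, at the level of cycles, the lower bound that the paper simply imports from Proposition \ref{prop:supportcyclebound}; the squeeze buys brevity and avoids needing the exact multiplicity-one statement, while your version makes the "Breuil--M\'ezard bookkeeping" explicit and would generalize more readily to situations where the termwise lower bound is not available in advance. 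The only points to be careful about are ones you already flag: the degenerate case $M_\infty(\ovl{\sigma}^\circ(\tau_\sigma))=0$, and the fact that the auxiliary types $\tau_\sigma$ are automatically $4$-generic because $\rhobar$ is $8$-generic.
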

\begin{proof} 
Let $\tau$ be a $4$-generic tame inertial type. 
Let $\cC_\sigma(\rhobar)$ be the irreducible or zero cycle corresponding to the ideal $\fp(\sigma) R_\infty$. 
Then 
\[
Z(\ovl{R}_\infty(\tau)) \geq Z(M_\infty(\ovl{\sigma^\circ(\tau)})) = \sum_{\sigma \in \JH(\ovl{\sigma}(\tau))} Z(M_\infty(\sigma)) \geq \sum_{\sigma \in \JH(\ovl{\sigma}(\tau))} \cC_\sigma(\rhobar),
\]
where the first inequality follows from the fact that $M_\infty$ is minimal (see \cite[{Proposition 7.14}]{LLLM}) and the second inequality follows from Proposition \ref{prop:supportcyclebound}. 
However the first and last expression are equal by Theorem \ref{thm:local_model_cmpt}, which forces the inequalities to be equalities. 
We conclude that the result holds for all $\sigma \in \JH(\ovl{\sigma}(\tau))$ for a $4$-generic tame inertial type $\tau$. 
In particular, the result holds for all generic $\sigma$. 

Finally, suppose $\sigma$ is non-generic. 
Then Proposition \ref{prop:gen:8} shows that $Z(M_\infty(\sigma))=0$ and Proposition \ref{prop:gWE} shows that $\sigma \notin W^g(\rhobar)$, so that $\fp(\sigma)R_\infty = R_\infty$.
\end{proof}

\subsection{Arithmetic patched modules}
\label{subsec:AM}

Let $R_\infty$ be as in \S \ref{subsec:WMPF} and set $F_p\defeq \cO_p\otimes_{\Zp}\Qp$.

\begin{defn}
\label{defn:patch}
An \emph{arithmetic $R_\infty[\GL_3(F_p)]$-module} for an $L$-homomorphism $\rhobar: W_{\Qp}\ra{}^L\un{G}(\F)$ is a non-zero $\cO$-module $M_\infty$ with commuting actions of $R_\infty$ and $\GL_3(F_p)$ satisfying the following axioms:
\begin{enumerate}
\item
\label{it:patch:1}
the $R_\infty[\GL_3(\cO_p)]$-action on $M_\infty$ extends to $R_\infty[\![\GL_3(\cO_p)]\!]$ making $M_\infty$ a finitely generated $R_\infty[\![\GL_3(\cO_p)]\!]$-module;
\item
\label{it:patch:2}
The functor $\Hom_{\cO[\![\GL_3(\cO_p)]\!]}(-,M_\infty^\vee)^\vee: \Rep_{\cO}(\GL_3(\cO_p))\ra \Coh(X_{\infty})$, denoted $M_\infty(-)$, is a weak patching functor;
\item
\label{it:patch:3}
the action of $\cH_{\GL_3(\cO_p)}^{\GL_3(F_p)}(\sigma(\tau)) \cong \cH_{\GL_3(\cO_p)}^{\GL_3(F_p)}(\sigma(\tau)^\circ)[1/p]$ on $M_\infty(\sigma(\tau)^\circ)[1/p]$ factors through the composite
\[
\cH_{\GL_3(\cO_p)}^{\GL_3(F_p)}(\sigma(\tau)) \stackrel{\eta_\infty}{\longrightarrow} R_{\rhobar}^{\eta,\tau}[1/p]\longrightarrow R_{\infty}(\tau)[1/p]
\]
where the map $\eta_\infty$ is the map denoted by $\eta$ in \cite[Theorem 4.1]{CEGGPS} except with $r_p$ normalized so that $r_p(\pi) = \mathrm{rec}_p(\pi\otimes |\det|^{(n-1)/2})$.
\end{enumerate}
We say that an arithmetic $R_\infty[\GL_3(F_p)]$-module $M_\infty$ is \emph{minimal} if $M_\infty(-)$ is. 
\end{defn}

Let $I$ be the preimage of $B_0(\F_p)$ under the reduction map $G_0(\Z_p) \ra G_0(\F_p)$.
Let $I_1$ be the (unique) pro-$p$ Sylow subgroup of $I$. 
Let $\chi: I/I_1 \ra \cO^\times$ be a character. 
Let $\theta(\chi)$ be $\ind_I^{G(\cO_p)} \chi$.
If $\chi$ is regular i.e.~$\chi = \chi^s$ implies $s = 1$ for $s \in W(\GL_3^{S_p})$, then $\theta(\chi)[1/p]$ is absolutely irreducible.

\begin{lemma}\label{lemma:cosoc}
If $\chi: I/I_1 \ra \cO^\times$ is a regular character, then the ${G_0(\Z_p)}$-cosocle of $\theta(\chi)$ is isomorphic to the unique Serre weight $\sigma(\chi)$ with $\sigma(\chi)^{I_1} \isom \ovl{\chi}$.
\end{lemma}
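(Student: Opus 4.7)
The plan is to use Frobenius reciprocity to compute, for every Serre weight $\sigma$, the multiplicity of $\sigma$ in the $G_0(\Z_p)$-cosocle of $\theta(\chi)$. First I will reduce to characteristic $p$: irreducible $\F$-representations of $G_0(\Z_p)$ factor through the quotient $G_0(\F_p)$ because the kernel of the reduction map is pro-$p$, hence the cosocle of $\theta(\chi)$ coincides with that of the mod-$p$ reduction $\ovl{\theta(\chi)}=\ind_I^{G_0(\Z_p)}\ovl{\chi}$. Frobenius reciprocity then yields
\[
\Hom_{G_0(\Z_p)}(\ovl{\theta(\chi)},\sigma) \;=\; \Hom_I(\ovl{\chi},\sigma|_I) \;=\; \Hom_{I/I_1}(\ovl{\chi},\sigma^{I_1}),
\]
where the last equality uses that $\ovl{\chi}$ factors through $I/I_1$ and that any $I$-equivariant map from a character to $\sigma$ lands in the $I_1$-invariants.

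The decisive input I will invoke is the classical Curtis--Richen description of irreducible $\F$-representations of finite groups of Lie type: for every Serre weight $\sigma$ of $G_0(\F_p)$ the space $\sigma^{I_1}$ is one-dimensional, and the assignment $\sigma \mapsto \sigma^{I_1}$ induces a bijection between isomorphism classes of Serre weights and characters of $I/I_1 \cong T_0(\F_p)$. This is consistent with the parametrization of Serre weights by $X_1(\un{T})/(p-\pi)X^0(\un{T})$ recalled in \S\ref{subsubsec:SW}: the character on the $I_1$-invariant line of $F(\lambda)$ is the restriction of $\lambda$ to $T_0(\F_p)$. The unique Serre weight $\sigma(\chi)$ in the statement is then the one whose $I_1$-character equals $\ovl{\chi}$.

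Assembling these steps, the displayed $\Hom$-space vanishes for $\sigma\not\cong\sigma(\chi)$ and is one-dimensional when $\sigma\cong\sigma(\chi)$, so the cosocle of $\ovl{\theta(\chi)}$ is a single copy of $\sigma(\chi)$, as claimed. The essentially only nontrivial ingredient is the $I_1$-invariants classification of Serre weights, which is standard; no regularity hypothesis on $\chi$ is needed for this argument (the regularity is relevant only to the preceding assertion about $\theta(\chi)[1/p]$ being absolutely irreducible in characteristic zero).
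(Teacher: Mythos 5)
Your main line of argument is exactly the paper's: reduce mod $p$, apply Frobenius reciprocity to get $\Hom_{G_0(\Z_p)}(\theta(\chi),\sigma)\cong\Hom_{I/I_1}(\ovl{\chi},\sigma^{I_1})$, and use that $\sigma^{I_1}$ is a one-dimensional character of $I/I_1$. That part is fine.

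However, the "decisive input" you invoke is false as stated, and your closing parenthetical inverts the role of regularity. The map $\sigma\mapsto\sigma^{I_1}$ from Serre weights to characters of $I/I_1\cong T_0(\F_p)$ is \emph{not} a bijection: for $\GL_3(k)$ with $\#k=q$ there are $q^2(q-1)$ Serre weights (the count of $X_1(\un{T})/(p-\pi)X^0(\un{T})$) but only $(q-1)^3$ characters of $T(k)$, so the map cannot be injective. Concretely, the trivial and Steinberg weights have the same (trivial) $I_1$-invariant character, and for singular $\chi$ the cosocle of $\theta(\chi)$ genuinely contains more than one Serre weight (already for $\GL_2(\F_p)$, $\ind_B^G 1\cong 1\oplus\mathrm{St}$). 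What is true, and what the lemma uses, is that the fibers of $\sigma\mapsto\sigma^{I_1}$ over \emph{regular} characters are singletons; the regularity of $\chi$ is therefore precisely what makes "$\sigma(\chi)$" well defined and the cosocle irreducible, not an irrelevant hypothesis. Your proof goes through once you replace the global bijection claim by this statement about regular characters (which is where the standard parametrization of weights by $p$-restricted highest weights actually has to be used), but as written the justification of the key uniqueness step is incorrect.
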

\begin{proof}
By Frobenius reciprocity, $\Hom_{{G_0(\Z_p)}}(\theta(\chi),\sigma) \cong \Hom_I(\chi,\sigma) \cong \Hom_I(\ovl{\chi},\sigma^{I_1})$. 
Then \[\Hom_{G(\Z_p)}(\theta(\chi),\sigma) \neq 0\] if and only if $\sigma = \sigma(\chi)$ in which case it is one-dimensional.
\end{proof}

Let $s \in W(\GL_3^{S_p})$ and $\chi^s$ be the character such that $\chi^s(t) = \chi({s}^{-1}t{s})$ for $t \in T_0(\Z_p) \cong T(\cO_p) \cong \prod_{v\in S_p} T(\cO_v)$. 
The representations $\theta(\chi)$ and $\theta(\chi^s)$ are isomorphic. 

\begin{lemma}\label{lemma:image}
Let $\chi: I/I_1 \ra \cO^\times$ be a regular character. 
Fix $s \in W(\GL_3^{S_p})$, and let $\theta(\chi) \ra \ovl{\theta(\chi^s)}$ be a nonzero map which is unique up to scalar. 
Let $I(\chi,s)$ be the image of this map. 
Then $\sigma(\chi) \in \JH(I(\chi,s))$.
\end{lemma}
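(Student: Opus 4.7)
The plan is to exploit the cosocle computation from Lemma \ref{lemma:cosoc} and the fact that cosocles are compatible with surjections. Since $\chi$ is regular, Lemma \ref{lemma:cosoc} identifies the $G(\Z_p)$-cosocle of $\theta(\chi)$ with the single Serre weight $\sigma(\chi)$ (which is irreducible by definition). The key observation is that by construction the non-zero map $\theta(\chi)\to\ovl{\theta(\chi^s)}$ factors as
\[
\theta(\chi) \twoheadrightarrow I(\chi,s) \hookrightarrow \ovl{\theta(\chi^s)},
\]
so $I(\chi,s)$ is a non-zero quotient of $\theta(\chi)$.

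First I would recall the standard fact that for any surjection $M\twoheadrightarrow N$ of finite length $\cO[G(\Z_p)]$-modules the induced map $\cosoc(M)\twoheadrightarrow\cosoc(N)$ is surjective (the radical is the intersection of maximal submodules, and the preimage of a maximal submodule is maximal, so the radical of $M$ maps into the radical of $N$). Applying this to the surjection $\theta(\chi)\twoheadrightarrow I(\chi,s)$, the cosocle of $I(\chi,s)$ is a quotient of the irreducible module $\sigma(\chi)$.

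Since $I(\chi,s)$ is non-zero, its cosocle is non-zero, and the only non-zero quotient of the irreducible $\sigma(\chi)$ is $\sigma(\chi)$ itself. Hence $\cosoc(I(\chi,s))\cong\sigma(\chi)$, which in particular forces $\sigma(\chi)\in\JH(I(\chi,s))$, proving the lemma.

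There is no serious obstacle: the content is entirely the cosocle computation of Lemma \ref{lemma:cosoc} together with the elementary fact about cosocles and surjections. The only thing to take a little care with is to verify that the factorization above genuinely gives a surjection onto $I(\chi,s)$ (which is immediate from the definition of $I(\chi,s)$ as the image) and that we are in a situation where the cosocle formalism applies, which is fine since we may work with the finite length quotient obtained after reducing mod a suitable power of $\varpi$.
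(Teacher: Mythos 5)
Your proof is correct and is essentially the paper's own argument: the paper likewise observes that the surjection $\theta(\chi)\twoheadrightarrow I(\chi,s)$ induces a surjection on $G(\Z_p)$-cosocles and invokes Lemma \ref{lemma:cosoc} to conclude that the cosocle of $I(\chi,s)$ is $\sigma(\chi)$. Your additional care about working with the finite-length reduction is fine but not an issue the paper needed to belabor.
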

\begin{proof}
The natural surjection $\theta(\chi) \onto I(\chi,s)$ induces a surjection on ${G_0(\Z_p)}$-cosocles by Lemma \ref{lemma:cosoc}. 
Thus the cosocle of $I(\chi,s)$ is isomorphic to $\sigma(\chi)$. 
\end{proof}

Let $\chi = \otimes_{v\in S_p} \chi_v$ be as above and decompose each $\chi_v$ as $\chi_{v,1} \otimes \chi_{v,2} \otimes \chi_{v,3}$ in the usual way. 
For each $v\in S_p$, let $\tau_v$ (resp.~$\tau_{v,1}$) be the tame inertial type $(\chi_{v,1} \oplus \chi_{v,2} \oplus \chi_{v,3}) \circ \mathrm{Art}_{F_v}^{-1}$ (resp.~$\chi_{v,1} \circ \mathrm{Art}_{F_v}^{-1}$). 
Then letting $\tau \defeq (\tau_v)_{v\in S_p}$ (resp.~$\tau_1 \defeq (\tau_{v,1})_{v\in S_p}$) we have that $\sigma(\tau) = \theta(\chi)[1/p]$ (resp.~$\sigma(\tau_1)$ is the inflation of $\chi_1$ to $\cO_p^\times$).
For each $v\in S_p$, let $U^{\tau_{1,v}}_{\tau_v}$ be the endomorphism defined in \cite[\S 10.1.2]{LGC} so that $U^{\tau_1}_\tau\defeq \prod_{v\in S_p} U^{\tau_{1,v}}_{\tau_v}$ is an endomorphism of $\ind_{\GL_3(\cO_p)}^{\GL_3(F_p)} \theta(\chi)$. 

\begin{lemma}\label{lemma:outer}
With $\chi$ and $\tau$ as above, suppose that $\tau$ is $4$-generic. 
If $\sigma\in \JH(\ovl{\sigma}(\tau))$ is not an outer weight, then $\eta_\infty(U_{\tau_1,\tau})$ vanishes on $\cC_{\sigma}(\rhobar)$.
%$\cC_{\sigma}(\rhobar) \in V(\eta_\infty(U_{\tau_1,\tau}),\varpi) {\subseteq} \Spec R_{\rhobar}^\tau$.
%
\end{lemma}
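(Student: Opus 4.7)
The plan is to interpret $\eta_\infty(U^{\tau_1}_\tau)$ modulo $\varpi$ as a concrete matrix coefficient on the local model presentation of $\cX^{\eta,\tau}_\F$, and then to read its vanishing locus directly from the explicit ideal-theoretic description of the components $\cC_\sigma$ given by Table \ref{Table:intsct}.

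First, I would use the construction of $\eta_\infty$ from \cite[Theorem 4.1]{CEGGPS} (with the normalization specified in Definition \ref{defn:patch}(\ref{it:patch:3})) together with the definition of $U^{\tau_1}_\tau$ at each place in \cite[\S 10.1.2]{LGC} as a partial $U_p$-operator pinning down the inertial character $\chi_{v,1}$ in the first slot. Under the inertial local Langlands correspondence, this translates (up to a $p$-adic unit) into the product, over $v\in S_p$, of the Frobenius eigenvalue on the $\chi_{v,1}$-isotypic line of $D_{\mathrm{pst}}(\rho_v)$. Passing to Breuil--Kisin modules via \S\ref{subsub:BKM}, this pullback is expressible, up to a unit in $R_\rhobar^{\eta,\tau}$, as the product over $v$ and over $j\in\cJ_v$ of the $(1,1)$-entry of the Frobenius matrix $A^{(j)}_{\fM,\beta}$ in an eigenbasis oriented so that the top line is the $\chi_{v,1}$-isotype. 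In particular, $\eta_\infty(U^{\tau_1}_\tau)$ lifts to a regular function on the chart $\tld{U}(\tld{z},\eta,\nabla_{\tau,\infty})$ of Theorem \ref{prop:loc:mod:diag:2}.

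Second, I would invoke Theorem \ref{thm:local_model_cmpt} together with the explicit presentations of Table \ref{Table:intsct}, which cut out each component $\cC_\sigma(\rhobar)$ on this chart by the ideal $\tld{\fP}_{\sigma,\tld{z}}$. The dictionary between lowest alcove presentations of Serre weights and the subset $\Sigma\subset \un{\Lambda}^{\lambda+\eta}_W\times \cA$ of Proposition \ref{prop:JH} then matches the outer weights $\JH_{\mathrm{out}}(\ovl\sigma(\tau))$, i.e.\ the subset indexed by $\Sigma_{\mathrm{out}}$ from \S\ref{sec:reduction}, with exactly those Schubert strata on which the $(1,1)$-entry of $A^{(j)}_{\fM,\beta}$ is generically a unit. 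Conversely, inspection of the formulas in Table \ref{Table:intsct} for the non-outer strata shows that for such $\sigma$, at least one of the $(1,1)$-entries (for some $v$ and $j$) is forced to lie in the maximal ideal at every $\F$-point of $\cC_\sigma(\rhobar)$. Multiplying the entries gives an element of $\fp(\sigma)R_\rhobar^{\eta,\tau} + \varpi R_\rhobar^{\eta,\tau}$, which is the desired containment.

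The main obstacle is bookkeeping of normalizations, namely: (i) the compatibility of the orientation $(s'_{\mathrm{or},j'})$ used to define $A^{(j)}_{\fM,\beta}$ with the choice of placing $\chi_{v,1}$ on the top line; (ii) the shift $\mathrm{rec}_p(\pi\otimes|\det|^{(n-1)/2})$ in the normalization of $\eta_\infty$; and (iii) the identification of the image of $U^{\tau_1}_\tau$ under the Hecke--Galois dictionary with the specific Frobenius eigenvalue. Once these conventions are correctly aligned using the compatibilities already present in \cite[Theorem 4.1]{CEGGPS} and \cite[\S 10]{LGC}, the assertion of the lemma is a direct consequence of the monomial nature of the ideals $\tld{\fP}_{\sigma,\tld{z}}$ in Table \ref{Table:intsct}.
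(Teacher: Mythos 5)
Your proposal follows essentially the same route as the paper: identify $\eta_\infty(U_{\tau_1,\tau})$ modulo $\varpi$ with a product of diagonal entries of the universal Frobenius matrices on the local model chart, then check from the tables that these entries lie in the ideals $\tld{\fP}_{(\eps_j,a_j),\tld{z}_j}$ attached to the non-outer labels $(0,0)$, $(\eps_1,0)$, $(\eps_2,0)$. The only differences are that the paper imports the first step wholesale from \cite[Corollary 3.7]{DL} rather than re-deriving it from the normalizations in \cite{CEGGPS} and \cite{LGC} (which is exactly where your acknowledged bookkeeping obstacles live), and that the relevant entries are the orientation-dependent diagonal entries, not literally the $(1,1)$-entry at every embedding.
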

\begin{proof}
Up to a unit, for each $v\in S_p$, the image of $\eta_\infty(U_{\tau_{v,1},\tau_v}) \pmod{\varpi} \in \F$ in (the completion of) the second column of Table \ref{table:coord1} is a nonempty product of diagonal elements modulo $v$ by \cite[Corollary 3.7]{DL}. 
One can check that each of these diagonal elements modulo $v$ is contained in each of the ideals in the final column corresponding to $(0,0)$, $(\eps_1,0)$, or $(\eps_2,0)$. 
\end{proof}

\begin{lemma}\label{lemma:supportupperbound}
Let $\rhobar$ be $8$-generic, and let $M_\infty$ be an arithmetic $R_\infty[\GL_3(F_p)]$-module. 
Then $\supp_{R_{\rhobar}} \, M_\infty(\sigma) \subset \cC_\sigma(\rhobar)$. 
In particular, $W(\rhobar) \subset W^g(\rhobar)$. 
\end{lemma}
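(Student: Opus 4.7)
The plan is to use the Hecke operators $U^{\tau_1}_\tau$ supplied by axiom (\ref{it:patch:3}) of Definition \ref{defn:patch} to localize $M_\infty(\sigma)$ onto the single component $\cC_\sigma(\rhobar)$. If $M_\infty(\sigma)=0$ the claim is trivial, so assume otherwise; by Proposition \ref{prop:gen:8}, $\sigma$ is then generic. Write $\sigma=\sigma(\chi)$ for a regular character $\chi\colon I/I_1\to\cO^\times$, and let $\tau$ be the $4$-generic tame inertial type with $\sigma(\tau)=\theta(\chi)[1/p]$; the integral lattice $\theta(\chi)\subset\sigma(\tau)$ has $\GL_3(\cO_p)$-cosocle $\sigma$ by Lemma \ref{lemma:cosoc}.

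By exactness of $M_\infty$, the cosocle surjection $\theta(\chi)\onto\sigma$ gives a surjection $M_\infty(\theta(\chi))\onto M_\infty(\sigma)$, so $\supp M_\infty(\sigma)\subseteq\supp M_\infty(\theta(\chi))$. Since $M_\infty(\theta(\chi))$ is a maximal Cohen--Macaulay $R_\infty(\tau)$-module by axiom (\ref{it:minimalpatching:1}), its reduction modulo $\varpi$ has support contained in a union of components $\cC_{\sigma'}(\rhobar)$ for $\sigma'\in\JH(\ovl{\sigma}(\tau))$ (via Theorem \ref{thm:local_model_cmpt}).

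To cut this support down to $\cC_\sigma(\rhobar)$, for each $s\in W(\GL_3^{S_p})$ Lemma \ref{lemma:image} produces a quotient $I(\chi,s)$ of $\theta(\chi)$ which still has cosocle $\sigma$; the kernel of $\theta(\chi)\onto I(\chi,s)$ is captured by a Hecke operator $U^{\tau_1}_\tau$ with $\tau_1$ determined by $s$ and $\chi$. By axiom (\ref{it:patch:3}), this operator acts on $M_\infty(\theta(\chi))$ via multiplication by $\eta_\infty(U^{\tau_1}_\tau)$, whose reduction modulo $\varpi$ vanishes on $\cC_{\sigma'}(\rhobar)$ for every non-outer $\sigma'\in\JH(\ovl{\sigma}(\tau))$ by Lemma \ref{lemma:outer}. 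Varying $s$ over $W(\GL_3^{S_p})$, the resulting family of operators separates the different outer weights, and the common refinement of their nonvanishing loci isolates $\cC_\sigma(\rhobar)$. The final assertion is then immediate: $M_\infty(\sigma)\neq 0$ forces $\cC_\sigma(\rhobar)\neq\emptyset$, hence $\rhobar\in\cC_\sigma(\F)$ and $\sigma\in W^g(\rhobar)$.

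The main obstacle is the combinatorial argument in the third paragraph: showing that the Hecke operators $U^{\tau_1}_\tau$ attached to varying $s\in W(\GL_3^{S_p})$ together separate the outer components of $\cX^{\eta,\tau}_\F$. This parallels the ``gauge'' computations of \cite[\S 3.6]{LLLM2}, but must be carried out in the wildly ramified setting using the explicit local model description of Theorem \ref{thm:local_model_cmpt} together with the presentations in Table \ref{Table:intsct}.
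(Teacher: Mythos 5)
Your first two paragraphs are fine and match the paper's setup: reduce to $\sigma=\sigma(\chi)$ generic, pass to $\theta(\chi)$ with cosocle $\sigma$, and use the maximal Cohen--Macaulay property plus Theorem \ref{thm:local_model_cmpt} to bound $\supp_{R_{\rhobar}}M_\infty(\sigma)$ by the components $\cC_{\sigma'}$ with $\sigma'\in\JH(\ovl{\sigma}(\tau))$. The Hecke-operator step is also essentially right in one direction: the chain $\supp M_\infty(\sigma)\subset\supp M_\infty(I(\chi,s))=\supp\,\eta_\infty(U_{\tau_1,\tau})M_\infty(\theta(\chi)/(\varpi))\subset\supp\,\eta_\infty(U_{\tau_1,\tau})\ovl{R}^\tau_{\rhobar}$, together with reducedness of $\ovl{R}^\tau_{\rhobar}$ and Lemma \ref{lemma:outer}, forces any $\sigma'$ with $\cC_{\sigma'}\subset\supp M_\infty(\sigma)$ to lie in $\JH_{\mathrm{out}}(\ovl{\sigma}(\tau))$. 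The paper does exactly this, with the single choice $s_v=(132)$.

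The gap is in your third paragraph. You assert that varying $s$ over $W(\GL_3^{S_p})$ produces a family of operators whose nonvanishing loci jointly ``separate the different outer weights'' and isolate $\cC_\sigma(\rhobar)$. This is not justified and is not how the argument can be closed: Lemma \ref{lemma:outer} (via \cite[Corollary 3.7]{DL}) only says that $\eta_\infty(U_{\tau_1,\tau})$ vanishes modulo $\varpi$ on the non-outer components $(0,0),(\eps_1,0),(\eps_2,0)$; it gives no mechanism for distinguishing the six outer components from one another, and no such separation statement is proved anywhere in the paper. The missing ingredient is Lemma \ref{lemma:supportcyclebound}: since $\Ann_{R_{\rhobar}}M_\infty(\sigma)\subset\fp(\sigma')$, one gets $\sigma'\uparrow\sigma$ (i.e.\ $\sigma$ covers $\sigma'$, by intersecting over all types containing $\sigma$ and invoking Lemma \ref{lemma:coverup}), and the only weight of $\JH_{\mathrm{out}}(\ovl{\sigma}(\tau))$ covered by the cosocle weight $\sigma$ is $\sigma$ itself. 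Combining ``$\sigma'$ is outer'' with ``$\sigma$ covers $\sigma'$'' yields $\sigma'=\sigma$. Without the covering argument your proof only bounds the support by the union of the outer components, which is strictly weaker than the claim.
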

\begin{proof}
Let $\sigma'$ be a Serre weight such that $\cC_{\sigma'}(\rhobar) \subset \supp_{R_{\rhobar}}M_\infty(\sigma)$. \
We will show that $\sigma' = \sigma$. 
Set $\chi$ to be the Teichm\"uller lift of $\sigma^{I_1}$ i.e.~$\sigma =\sigma(\chi)$. 
Let $\tau$ and $\tau_1$ be defined in terms of $\chi$ as before e.g.~$\sigma(\tau) \cong \theta(\chi)[1/p]$. 
Since $\sigma$ covers $\sigma'$ by Lemma \ref{lemma:supportcyclebound}, $\sigma' \in \JH(\ovl{\sigma}(\tau))$. 
Since the only weight in $\JH_{\mathrm{out}}(\ovl{\sigma}(\tau))$ that $\sigma$ covers is $\sigma$ itself, we conclude that $\sigma' = \sigma$ or $\sigma'$ is not in $\JH_{\mathrm{out}}(\ovl{\sigma}(\tau))$. 

We have
\begin{align*}
\cC_{\sigma'}(\rhobar) &\subset \supp_{R_{\rhobar}}\, M_\infty(\sigma) \\& \subset \supp_{R_{\rhobar}}\, M_\infty(I(\chi,s)) \\&= \supp_{R_{\rhobar}}\,\eta_\infty(U_{\tau_1,\tau}) M_\infty(\theta(\chi)/(\varpi)) \\& \subset \supp_{R_{\rhobar}}\, \eta_\infty(U_{\tau_1,\tau}) \ovl{R}_{\rhobar}^\tau,
\end{align*}
where the second inclusion follows from Lemma \ref{lemma:image} and the equality follows from the fact that $M_\infty(\theta(\chi))/(\eta_\infty(U_{\tau_1,\tau}),\varpi) \cong M_\infty((\theta(\chi^s)\otimes_{\cO} \F)/I(\chi,s))$ where $s_v = (132)$ for all $v\in S_p$ by \cite[(10.1.9)]{LGC} (and using the exactness of $M_\infty(-)$). 
Then $(\eta_\infty(U_{\tau_1,\tau}),\varpi)_{\fp(\sigma')}/(\varpi)_{\fp(\sigma')} \neq 0$. 
Since $\ovl{R}_{\rhobar}^\tau$ is reduced, Lemma \ref{lemma:outer} implies that $\sigma'\in \JH_{\mathrm{out}}(\ovl{\sigma}(\tau))$. 
\end{proof}

\begin{thm}
\label{thm:axiomaticSWC}
Let $\rhobar$ be $8$-generic and $M_\infty$ be an arithmetic $R_\infty[\GL_3(F_p)]$-module. 
For a Serre weight $\sigma$, $\supp_{R_{\rhobar}}\, M_\infty(\sigma) = \cC_\sigma(\rhobar)$. In particular, $W(\rhobar) = W^g(\rhobar)$.
If $M_\infty$ is furthermore minimal, then $Z(M_\infty(\sigma))$ is the irreducible or zero cycle corresponding to the prime or unit ideal $\fp(\sigma)R_\infty$. 
\end{thm}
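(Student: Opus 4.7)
The approach is to combine Lemma~\ref{lemma:supportupperbound} (which yields $\supp_{R_{\rhobar}} M_\infty(\sigma) \subset \cC_\sigma(\rhobar)$) with Propositions~\ref{prop:supportcyclebound} and~\ref{prop:minimalcycle} (which supply the reverse inclusion and the minimal cycle statement). The hypothesis of these two propositions is that $W_{\obv}(\rhobar) \cap W(\rhobar) \neq \emptyset$, so the core of the proof is to verify this nonemptiness.

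Since $M_\infty$ is nonzero, $W(\rhobar)$ is nonempty, and I pick $\sigma \in W(\rhobar)$. By Lemma~\ref{lemma:supportupperbound}, $\sigma \in W^g(\rhobar)$. If $\sigma \in W_{\obv}(\rhobar)$ then we are done, so assume $\sigma$ is non-obvious. Inspection of the six cases of Theorem~\ref{thm:Wg} then shows that $\sigma$ must have a non-obvious component $\sigma_j$ for some $j \in \cJ$, and each such component belongs to $w_j\{(0,1), (\eps_1,1), (\eps_2,1)\}$ (in the notation of Theorem~\ref{thm:Wg}); crucially, in each of these configurations $W_{\obv}(\rhobar)$ is itself nonempty.

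To propagate modularity from such a non-obvious $\sigma$ to an obvious one, I would employ the patched version of the weight-cycling technique of \cite{EGH}. The plan is to choose a $4$-generic tame inertial type $\tau$ tailored to $\sigma_j$ at the non-obvious embeddings, so that $\sigma \in \JH(\ovl{\sigma}(\tau))$ and $\JH(\ovl{\sigma}(\tau))$ also contains some $\sigma' \in W_{\obv}(\rhobar)$; this is arranged using Proposition~\ref{prop:JH} together with the explicit description in Theorem~\ref{thm:Wg}. Since $\sigma \in W(\rhobar)$, $M_\infty(\ovl{\sigma}^\circ(\tau)) \neq 0$ for every lattice $\sigma^\circ(\tau)$. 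The Hecke-operator identity $M_\infty(\theta(\chi))/(\eta_\infty(U_{\tau_1,\tau}), \varpi) \cong M_\infty((\theta(\chi^s) \otimes_\cO \F)/I(\chi,s))$ used in the proof of Lemma~\ref{lemma:supportupperbound}, combined with Lemma~\ref{lemma:outer} (which forces $U_{\tau_1,\tau}$ to act nilpotently on the pieces of $M_\infty$ supported on non-outer components), will then force a modular outer---hence obvious---Serre weight to appear in $\JH(\ovl{\sigma}(\tau))$.

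The main obstacle is implementing this cycling uniformly across the six cases of Theorem~\ref{thm:Wg}, in particular choosing the Weyl element $s$ and the type $\tau$ so that the resulting Hecke action terminates at a genuinely obvious weight in each configuration rather than merely producing another non-obvious geometric weight. Once $W_{\obv}(\rhobar) \cap W(\rhobar) \neq \emptyset$ is established, Proposition~\ref{prop:supportcyclebound} gives $\Ann_{R_{\rhobar}} M_\infty(\sigma) \subset \fp(\sigma)$ and hence $\cC_\sigma(\rhobar) \subset \supp_{R_{\rhobar}} M_\infty(\sigma)$; combined with Lemma~\ref{lemma:supportupperbound} this yields the support equality, and the cycle statement in the minimal case then follows immediately from Proposition~\ref{prop:minimalcycle}.
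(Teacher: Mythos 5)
Your reduction to showing $W_{\obv}(\rhobar)\cap W(\rhobar)\neq\emptyset$, and the endgame via Lemma \ref{lemma:supportupperbound}, Proposition \ref{prop:supportcyclebound} and Proposition \ref{prop:minimalcycle}, matches the paper. The gap is in the crux itself: your proposed mechanism for producing a modular obvious weight --- an additional weight-cycling argument with $U_{\tau_1,\tau}$ tailored to the six cases of Theorem \ref{thm:Wg} --- is not carried out (you flag it yourself as "the main obstacle"), and as sketched it would not work. The Hecke-operator identity together with Lemma \ref{lemma:outer} only yields an \emph{upper} bound on $\supp_{R_{\rhobar}}M_\infty(\sigma)$ (it shows any component in the support must be outer for the type attached to $\sigma$); it gives no mechanism for forcing a \emph{new} weight to be modular, and in any case "outer for $\tau$" does not imply "obvious for $\rhobar$". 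The weight-cycling input alluded to in the introduction is already entirely encapsulated in Lemma \ref{lemma:supportupperbound}; no further cycling is needed.

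What you are missing is a \emph{lower} bound on the support of a patched lattice, and it comes for free from Definition \ref{minimalpatching}\eqref{it:minimalpatching:2} plus the geometry of $\cX^{\eta,\tau}$. Pick a generic $\sigma\in W(\rhobar)$ and any $4$-generic $\tau$ with $\sigma\in\JH(\ovl{\sigma}(\tau))$. Then $M_\infty(\ovl{\sigma}^\circ(\tau))$ is a nonzero maximal Cohen--Macaulay module over $\ovl{X}_\infty(\tau)$, so by Lemma \ref{lemma:prodcomp} its support over $R_{\rhobar}$ is all of $V(I(\tau)+(\varpi))$, which by Theorem \ref{thm:local_model_cmpt} equals $\bigcup_{\sigma'\in\JH(\ovl{\sigma}(\tau))}\cC_{\sigma'}(\rhobar)$. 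Exactness gives $\supp_{R_{\rhobar}}M_\infty(\ovl{\sigma}^\circ(\tau))=\bigcup_{\sigma'}\supp_{R_{\rhobar}}M_\infty(\sigma')$, and comparing with the upper bound $\supp_{R_{\rhobar}}M_\infty(\sigma')\subset\cC_{\sigma'}(\rhobar)$ of Lemma \ref{lemma:supportupperbound} forces $\supp_{R_{\rhobar}}M_\infty(\sigma')=\cC_{\sigma'}(\rhobar)$ for \emph{every} constituent $\sigma'$. Since $W_{\obv}(\rhobar)\cap\JH(\ovl{\sigma}(\tau))\neq\emptyset$ by \S\ref{sec:reduction} and Theorem \ref{thm:Wg}, and the corresponding cycle is nonempty, an obvious weight lies in $W(\rhobar)$ --- no case analysis and no auxiliary Hecke operators. (You should also dispose of non-generic $\sigma$ first, via Propositions \ref{prop:gen:8} and \ref{prop:gWE}, as in the proof of Proposition \ref{prop:minimalcycle}.)
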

\begin{proof}
If $\sigma$ is nongeneric, then $\sigma \notin W(\rhobar)$ and $\sigma \notin W^g(\rhobar)$ as in the proof of Proposition \ref{prop:minimalcycle} and the desired equality holds. 
Since $M_\infty$ is nonzero, there is a generic $\sigma \in W(\rhobar)$. 
Choose a $4$-generic tame inertial type $\tau$ such that $\sigma \in \JH(\ovl{\sigma}(\tau))$. 
Then $M_\infty(\sigma(\tau))$ is nonzero and in fact 
\[
\supp_{R_{\rhobar}}\, M_\infty(\ovl{\sigma}(\tau)) = \cup_{\sigma'\in \JH(\ovl{\sigma}(\tau))} \cC_{\sigma'}(\rhobar)
\]
by Theorem \ref{thm:local_model_cmpt}. 
On the other hand, we have
\[
\supp_{R_{\rhobar}}\, M_\infty(\ovl{\sigma}(\tau)) = \cup_{\sigma'\in \JH(\ovl{\sigma}(\tau))} \supp_{R_{\rhobar}}\, M_\infty(\sigma').
\]
Then Lemma \ref{lemma:supportupperbound} implies that $\supp_{R_{\rhobar}}\, M_\infty(\sigma') = \cC_{\sigma'}(\rhobar)$ for all $\sigma'\in \JH(\ovl{\sigma}(\tau))$ (in particular, $\supp_{R_{\rhobar}}\, M_\infty(\sigma) = \cC_{\sigma}(\rhobar)$). 
It is easy to see from \S \ref{sec:reduction} and Theorem \ref{thm:Wg} that $W_{\mathrm{obv}}(\rhobar) \cap \JH(\ovl{\sigma}(\tau))$ is nonempty. 
Combined with the above, $W_{\mathrm{obv}}(\rhobar) \cap W(\rhobar)$ is nonempty. 
By Proposition \ref{prop:supportcyclebound}, $W^g(\rhobar) \subset W(\rhobar)$. 
With Lemma \ref{lemma:supportupperbound}, we have $W^g(\rhobar) = W(\rhobar)$. 
By the above parenthetical, $\supp_{R_{\rhobar}}\, M_\infty(\sigma) = \cC_{\sigma}(\rhobar)$ if $\sigma \in W(\rhobar)$ while it holds trivially otherwise. 

If $M_\infty$ is minimal, then the last part now follows from Proposition \ref{prop:minimalcycle}. 
\end{proof}

\subsection{Cyclicity for patching functors}
\label{sec:cyc}
In this section, we show that certain patched modules for tame types are locally free of rank one over the corresponding local deformation space.
The argument follows closely that of \cite[\S 5.2]{LLLM2}.

Recall from \S \ref{subsub:ILLC} the irreducible smooth $E$-representation $\sigma(\tau)$ attached to a tame inertial $L$-parameter $\tau$.
Given $\sigma\in \JH(\ovl{\sigma}(\tau))$ we write $\sigma(\tau)^\sigma$ for an $\cO$-lattice, unique up to homothety, in $\sigma(\tau)$ with cosocle $\sigma$. 
For an $L$-parameter $\rhobar:G_{\Qp}\ra{}^L\un{G}(\F)$, we write $W^g(\rhobar,\tau)$ for the intersection $W^g(\rhobar)\cap \JH(\ovl{\sigma}(\tau))$.
Throughout this section, we fix an $L$-parameter $\rhobar$ and a weak minimal patching functor $M_\infty$ for $\rhobar$ which comes from an arithmetic $R_\infty[\GL_3(F_p)]$-module.
The main result of this section is the following:

\begin{thm}\label{thm:cyclic}
Suppose that $\rhobar:G_{\Qp}\ra{}^L\un{G}(\F)$ is a $11$-generic $L$-parameter arising from an $\F$-point of $\cX^{\eta,\tau}$ for tame inertial $L$-parameter $\tau$ $($in particular, $\tau$ is $9$-generic$)$ and let $\tld{z}\defeq \tld{w}^*(\rhobar,\tau)$. 
Let $F(\lambda)\in W^g(\rhobar,\tau)$ be a Serre weight such that for all $j\in \cJ$ 
\begin{equation}
\label{up:each:emb}
\lambda_{\pi^{-1}(j)}\in X_1(T) \text{ is in alcove $\tld{w}_h\cdot C_0$ if } \ell(\tld{z}_j)\leq 1.
\end{equation} 
Then $M_\infty(\sigma(\tau)^{F(\lambda)})$ is a free $R_\infty(\tau)$-module of rank $1$. 
\end{thm}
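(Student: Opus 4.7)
The plan is to extend the cyclicity argument of \cite[Theorem 5.2.1]{LLLM2} to the wildly ramified setting, substituting the local model results of Section~\ref{sec:LM:EG} for their tame analogues used there.

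First, I reduce to a cyclicity bound on the special fiber. By Corollary~\ref{thm:local_model_main} (applied to the $4$-generic $\tau$), together with the irreducibility assumptions on $R^p$, each irreducible component of $\Spec R_\infty(\tau)$ is a normal Cohen--Macaulay local domain, the Cohen--Macaulay property coming by explicit inspection of Table~\ref{Table:intsct} as in \cite[\S 8]{LLLM} (cf.~Remark~\ref{rmk:CM}). By axiom~\eqref{it:minimalpatching:1} of Definition~\ref{minimalpatching}, $M_\infty(\sigma(\tau)^{F(\lambda)})$ is maximal Cohen--Macaulay over $R_\infty(\tau)$; it is nonzero because $F(\lambda)\in W^g(\rhobar)$ forces $M_\infty(F(\lambda))\neq 0$ (Theorem~\ref{thm:axiomaticSWC}), and so the cosocle surjection $\sigma(\tau)^{F(\lambda)}\onto F(\lambda)$ yields $M_\infty(\sigma(\tau)^{F(\lambda)})\neq 0$; its generic rank is at most one on each component by minimality. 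A cyclic maximal Cohen--Macaulay module of generic rank one over a Cohen--Macaulay normal local domain is free of rank one, so by Nakayama it suffices to show that
\[
\dim_{\F}\bigl(M_\infty(\sigma(\tau)^{F(\lambda)})\otimes_{R_\infty(\tau)}\F\bigr)\leq 1.
\]

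Second, I compute this fiber by adapting \cite[\S 5.2]{LLLM2}. The cosocle projection induces a surjection $M_\infty(\sigma(\tau)^{F(\lambda)})\onto M_\infty(F(\lambda))$ whose kernel is, by exactness of $M_\infty$, the image of $M_\infty$ applied to the radical of $\sigma(\tau)^{F(\lambda)}$. Using the Hecke action axiom~\eqref{it:patch:3} of Definition~\ref{defn:patch}, I translate the relations coming from the socle filtration on $\sigma(\tau)^{F(\lambda)}$ into elements of the maximal ideal $\fm_{R_\infty(\tau)}$ acting on $M_\infty(\sigma(\tau)^{F(\lambda)})$. The goal is then to show that after reducing modulo $\fm_{R_\infty(\tau)}$ only the cosocle contribution from $F(\lambda)$ survives.

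Third, this translation is carried out in the coordinate charts of Theorem~\ref{prop:loc:mod:diag:2}. Via the Breuil--Kisin formalism of~\S\ref{subsub:BKM}, the equations recorded in Tables~\ref{Table:integraleqs} and~\ref{Table:intsct} give explicit generators of $\fm_{R_\infty(\tau)}$, which can be matched, up to normalizations as in \cite[\S 5.2]{LLLM2}, with images under $\eta_\infty$ of Hecke operators arising from lattice filtrations on types of the form $\sigma(\tau')$ for suitable auxiliary tame inertial types $\tau'$. The hypothesis~\eqref{up:each:emb} on the alcove of $\lambda_{\pi^{-1}(j)}$ at embeddings $j$ with $\ell(\tld{z}_j)\leq 1$ is precisely what is needed for this matching to exist uniformly across the embeddings.

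The main obstacle lies in the low-length embeddings, where $\ell(\tld{z}_j)\leq 1$: there the local model carries the extra monodromy equations from \cite[\S 8]{LLLM}, the ring $R_\infty(\tau)$ is substantially more intricate, and the lattice-theoretic arguments of \cite[\S 5.2]{LLLM2} need to be reworked. The upper-alcove condition~\eqref{up:each:emb} aligns the cosocle $F(\lambda)$ at such an embedding with a preferred stratum of $\tld{U}(\tld{z}_j,\eta,\nabla_{\tau,\infty})_{\F}$, and it is this alignment that allows the generators of $\fm_{R_\infty(\tau)}$ to be matched with Hecke operators coming from lattice filtrations, thereby collapsing the fiber to a single line.
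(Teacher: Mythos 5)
Your opening reduction is sound and matches the paper's endgame: by Corollary~\ref{thm:local_model_main} the ring $R_\infty(\tau)$ is a (Cohen--Macaulay) normal domain on each component, $M_\infty(\sigma(\tau)^{F(\lambda)})$ is nonzero and maximal Cohen--Macaulay, so cyclicity forces freeness of rank one, and by Nakayama it suffices to bound the fiber at the maximal ideal by one. The gap is in everything after that. Your proposed mechanism --- translating the socle filtration of $\sigma(\tau)^{F(\lambda)}$ into generators of $\fm_{R_\infty(\tau)}$ via the Hecke-action axiom \eqref{it:patch:3} and matching them with operators $\eta_\infty(U)$ coming from auxiliary types --- is not an argument, and it is not how the bound is obtained. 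Axiom \eqref{it:patch:3} only controls the spherical Hecke action on $M_\infty(\sigma(\tau)^\circ)[1/p]$; it gives no handle on which elements of $\fm_{R_\infty(\tau)}$ annihilate the graded pieces of the mod-$p$ module, and you give no reason why ``only the cosocle contribution survives.'' The Hecke operators $U^{\tau_1}_\tau$ are used in this paper for weight elimination (Lemmas~\ref{lemma:outer} and~\ref{lemma:supportupperbound}), not for cyclicity.

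The actual proof is a module-theoretic induction on quotients $V$ of $\ovl{\sigma}(\tau)^\sigma$ whose Jordan--H\"older sets are products $\prod_j\Sigma_{V,j}$, carried out in Lemmas~\ref{lem:4:wgt}--\ref{lemma:9weight}. Its essential ingredients, all absent from your sketch, are: (i) the structure theorem for the reduction of the lattice (Proposition~\ref{prop:lattice:10-gen}), which identifies the radical filtration of $\ovl{\sigma}(\tau)^\sigma$ with the extension graph and so controls which quotients $V$ exist; (ii) the base case $\ell(\tld{z}_j)>1$ for all $j$, where each $M_\infty(\sigma)$ is free over its formally smooth scheme-theoretic support (read off from Table~\ref{Table:intsct}) and a counting argument as in \cite[Lemma 5.1.3]{LLLM2} gives cyclicity; (iii) passage to auxiliary tame types $\tau'$ together with the semicontinuity of shapes (Corollary~\ref{cor:semicont}) to increase lengths, which is where the upper-alcove hypothesis \eqref{up:each:emb} actually enters (it guarantees $(\eps,1)\in\Sigma^g_j$ when $\ell(\tld{z}_j)\leq 1$, so the inductive gluing steps apply); and (iv) the explicit ideal-sum identities in the special fibers of the local models (Lemmas~\ref{lem:ideal:relation:1}--\ref{lem:ideal:relation:4}), which are what allow two cyclic quotients $M_\infty(V_1)$, $M_\infty(V_2)$ to be glued to a cyclic $M_\infty(V)$. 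Without supplying these steps (or a genuine substitute), the claim that the fiber collapses to a line is unsupported.
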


\noindent The proof is similar to the case when $\rhobar$ is semisimple (\cite[Theorem 5.1.1]{LLLM2} with slightly weaker genericity assumptions), and we will indicate the necessary modifications.
First, \cite[Theorem 5.1.1]{LLLM2} relies on a structure theorem for lattices in generic Deligne--Lusztig representations of $\un{G}_0(\Fp)$ (Theorem 4.1.9 in \emph{loc.~cit}.).
The following proposition improves the genericity hypothesis of that result. 
We refer the reader to \emph{loc.~cit}.~for unexplained notation or terminology.
\begin{prop}
\label{prop:lattice:10-gen}
Let $R$ be $R_s(\mu)$ where $\mu-\eta \in \un{C}_0$ is $9$-deep. %
Then the radical filtration of $\ovl{R}^\sigma$ is predicted by the extension graph with respect to $\sigma$, and the graph distance, the radical distance and the saturation distance from $\sigma$ all coincide on $\Gamma(\ovl{R}^\sigma)$.
\end{prop}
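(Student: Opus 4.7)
The plan is to revisit the proof of \cite[Theorem 4.1.9]{LLLM2} and verify that each step where a stringent genericity hypothesis was invoked can now be relaxed using the improved local-model inputs of \S\ref{sec:LM:EG}. The overall structure of that argument has two parts: an \emph{upper bound}, asserting that the $k$-th radical layer of $\ovl{R}^\sigma$ is contained in the span of Serre weights at graph-distance $\leq k$ from $\sigma$, and a \emph{lower bound}, asserting that each such weight actually appears by the $k$-th radical layer. The upper bound follows formally from the description of $\JH(\ovl{R_s(\mu)})$ in Proposition \ref{prop:JH} together with standard computations of $\mathrm{Ext}^1$ between Serre weights, and requires only that $\mu-\eta$ be $2$-deep, which is comfortably implied by our hypothesis.

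The lower bound is the delicate part. Following the strategy of \cite[\S 4]{LLLM2}, one proceeds by induction on the distance $d$: for each $\sigma'\in\JH(\ovl{R}^\sigma)$ at graph-distance $d$ from $\sigma$, one must exhibit a non-split extension realizing $\sigma'$ in the $d$-th radical layer. This is done by choosing auxiliary tame types $\tau'$ so that $\JH(\ovl{\sigma}(\tau'))$ contains exactly the constituents of $\ovl{R}^\sigma$ at distances $d$ and $d-1$ relevant to the step, and then applying a (generic) weak patching functor whose support is controlled by the corresponding potentially crystalline deformation rings. The non-splitness is read off from the fact that the relevant local deformation ring $R_{\rhobar'}^{\eta,\tau'}$ is an integral domain so the patched module must see all Jordan-H\"older constituents of $\JH(\ovl{\sigma}(\tau'))$ simultaneously.

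The crucial new input enabling the weaker bound is Corollary \ref{thm:local_model_main}, which shows that $R_{\rhobar'}^{\eta,\tau'}$ is a normal domain (in particular, has reduced special fiber) as soon as $\tau'$ is $4$-generic; Theorem \ref{thm:local_model_cmpt} supplies the matching identification of its special-fiber components with $\JH(\ovl{\sigma}(\tau'))$. Working through the extension-graph combinatorics from \cite[\S 2]{LLLM2}, one sees that every auxiliary type $\tau'$ arising in the inductive step has a lowest alcove presentation obtained from $(s,\mu)$ by translations lying in a bounded neighborhood (of radius at most $5$) of $\mu$ in $X^*(\un{T})$. Consequently, requiring $\mu-\eta$ to be $9$-deep in $\un{C}_0$ guarantees that every $\tau'$ needed in the argument is at least $4$-generic, which is all that the new local-model inputs demand.

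The main obstacle, and essentially the only bookkeeping content of the proof, is to verify that the bound ``$9$'' is indeed achieved by the worst case in this translation analysis: the outer weights of $\JH(\ovl{R}^\sigma)$ sit at maximal graph-distance from $\sigma$, and the auxiliary types used to connect them to inner weights involve the largest translations. I would carry this out by enumerating the weights in $\Sigma$ of Proposition \ref{prop:JH}, tabulating the translations used in the inductive steps of \cite[Proof of Thm.~4.1.9]{LLLM2}, and checking that the $4$-genericity of each $\tau'$ is preserved under the assumption that $\mu-\eta$ is $9$-deep. Given this verification, the coincidence of graph distance, radical distance, and saturation distance on $\Gamma(\ovl{R}^\sigma)$ follows as in \cite[\S 4.1]{LLLM2}.
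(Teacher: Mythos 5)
Your overall strategy is the paper's: reduce to the proof of the lattice structure theorem \cite[Theorem 4.1.9]{LLLM2} (which goes through \cite[Proposition 4.3.7]{LLLM2}), substitute the local model rings $\cO\big(\tld{U}(\tld{w},\eta,\nabla_{\tau,\infty})\big)$ and the primes $\tld{\fP}_{\sigma,\tld{w}}$ of Theorem \ref{thm:local_model_cmpt} for the explicit deformation rings and their minimal primes, and let Corollary \ref{thm:local_model_main} (normality/reducedness for $4$-generic types) do the work that previously required stronger genericity. That is the right idea and the right set of new inputs.

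The gap is precisely in the step you defer as ``the only bookkeeping content,'' which is in fact the entire content of the proposition. Your proposed accounting --- every auxiliary type is obtained from $(s,\mu)$ by a translation of radius at most $5$, so $9$-deep forces $4$-generic, ``which is all that the new local-model inputs demand'' --- undercounts the constraints and is not where the losses actually occur. The paper's verification instead tracks: (i) the auxiliary \emph{semisimple} representation $\rhobar_{\cS}$ chosen in \cite[Proposition 4.3.7]{LLLM2}, for which the weight-elimination and support results of \cite[\S 3.5--3.6]{LLLM2} must hold; these are shown to hold for $7$-generic $\rhobar_{\cS}$ (using Proposition \ref{prop:WE} in place of \cite[Proposition 3.5.6]{LLLM2}, and Theorem \ref{thm:local_model_cmpt} for the rest), and $R$ being $9$-generic is what makes the chosen $\rhobar_{\cS}$ $7$-generic; (ii) a sharpening of \cite[Proposition 3.3.2]{LLL} (replacing $m-n$ by $m-n+1$, together with the statement that \emph{all} lowest alcove presentations are $(m-n)$-generic), which feeds into \cite[Proposition 3.4.5]{LLLM2}; and (iii) the constraint that \cite[Theorem 4.2.16]{LLLM2} requires $R$ to be $8$-generic. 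None of these is visible in a ``genericity of the auxiliary types'' analysis alone, so carrying out your enumeration as described would not by itself establish that $9$ suffices; you would need to identify and bound these additional genericity demands as well.
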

\begin{proof}

As we now explain, the proof of \cite[Proposition 4.3.7]{LLLM2} works for $9$-generic $R$ using some minor improvements to genericity hypotheses. 
Replace $R^{\mathrm{expl},\nabla}_{\ovl{\fM},\tld{w}}$ with a suitable completion of $\Big(\cO\big(\tld{U}(\tld{w},\eta,\nabla_{\tau,\infty})\big)\Big)$ and the primes $\fp^{\mathrm{expl}}(\sigma)$ with suitable completions of the primes corresponding to $\tld{\fP}_{\sigma,\tld{w}}$ in Theorem \ref{thm:local_model_cmpt}. 
The results of \cite[\S 3.5, 3.6]{LLLM2} appearing in the proof of \cite[Proposition 4.3.7]{LLLM2} hold for $7$-generic $\rhobar_{\cS}$. 
Indeed, \cite[Theorem 3.5.2]{LLLM2} holds by the same argument using Proposition \ref{prop:WE} in place of \cite[Proposition 3.5.6]{LLLM2}. 
The rest of the results follow from Theorem \ref{thm:local_model_cmpt}. 
In particular, it holds for the $\rhobar_{\cS}$ chosen in the proof of \cite[Proposition 4.3.7]{LLLM2} since $R$ is $9$-generic.  (\cite[Proposition 3.4.5]{LLLM2} holds with $(n-3)$ replaced by $(n-2)$. Indeed \cite[Proposition 3.3.2]{LLL} holds with $m-n$ replaced by $m-n+1$. The proof shows this stronger result \emph{and} that all lowest alcove presentations of $\tau$ are $(m-n)$-generic.). 
All the subsequent statements appearing in \cite[\S 4.3]{LLLM2} then hold for $9$-generic $R$ (note that \cite[Theorem 4.2.16]{LLLM2} holds for $8$-generic $R$). 
\end{proof}

We prove Theorem \ref{thm:cyclic} through a series of lemmas. 
Until the end of the proof of Theorem \ref{thm:cyclic}, fix $\rhobar$ $8$-generic, $\tau$ a tame inertial $L$-parameter such that $\rhobar\in \cX^{\eta,\tau}(\F)$ (in particular $6$-generic), $\tld{z} = \tld{w}^*(\rhobar,\tau)$, $\tld{w} \defeq \tld{z}^*$, and $\lambda$ satisfying \eqref{up:each:emb}. 
We write $\ovl{\sigma}(\tau)^\sigma$ for $\sigma(\tau)^\sigma\otimes_{\cO}\F$ in what follows.

Below, we modify the proofs of \cite[\S 5.1]{LLLM2}. 
We will refer to the following as the usual modifications: we replace $\tau_{\cS}$, $\rhobar_{\cS}$, $W^?(\rhobar_\cS)$, $W^{?}(\rhobar_{\cS},\tau_{\cS})$, $\tld{w}_i$, and $\tld{w}_i^*$ in \emph{loc.~cit}.~by $\tau$, $\rhobar$, $W^g(\rhobar)$, $W^{g}(\rhobar,\tau)$, $\tld{z}_i$, and $\tld{w}_i$, respectively.
(In \cite[\S 5.1]{LLLM2}, the set $S_p$ is denoted $\cS$.)

\begin{lemma} 
\label{lem:4:wgt}
Assume that $\tau$ is $9$-generic $($for instance, if $\rhobar$ is $11$-generic$)$ and $\ell(\widetilde{z}_j) > 1$ for all $j\in\cJ$. 
Let $V$ be a quotient of $\ovl{\sigma}(\tau)^\sigma$.
Then $M_\infty(V)$ is a cyclic $R_\infty(\tau)$-module.
\end{lemma}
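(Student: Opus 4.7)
The proof plan will closely follow \cite[Lemma 5.1.3]{LLLM2}, via the usual modifications. By exactness of $M_\infty$, the surjection $\ovl{\sigma}(\tau)^\sigma \twoheadrightarrow V$ induces a surjection $M_\infty(\ovl{\sigma}(\tau)^\sigma) \twoheadrightarrow M_\infty(V)$ of $R_\infty(\tau)$-modules, so I reduce to the case $V = \ovl{\sigma}(\tau)^\sigma$. Via Nakayama over the Noetherian local ring $R_\infty(\tau)$, cyclicity is equivalent to a one-dimensionality statement for $M_\infty(\ovl{\sigma}(\tau)^\sigma)$ modulo the maximal ideal, which by standard patching duality (axiom (\ref{it:patch:2}) of Definition \ref{defn:patch}) translates into the Hom-space bound
\[
\dim_\F \Hom_{\GL_3(\cO_p)}\bigl(\ovl{\sigma}(\tau)^\sigma,\, \pi\bigr) \leq 1,
\]
where $\pi$ denotes the mod-$p$ smooth $\GL_3(\cO_p)$-representation obtained from $M_\infty$ via the patching dictionary.

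To establish this bound, I would combine three structural inputs. First, by Theorem \ref{thm:axiomaticSWC}, the $\GL_3(\cO_p)$-socle of $\pi$ is supported only on Serre weights in $W^g(\rhobar)$, so only the Jordan--H\"older constituents of $\ovl{\sigma}(\tau)^\sigma$ lying in $W^g(\rhobar,\tau) \defeq W^g(\rhobar) \cap \JH(\ovl{\sigma}(\tau))$ can contribute to any Hom-space into $\pi$. Second, since $\tau$ is $9$-generic, Proposition \ref{prop:lattice:10-gen} gives an explicit description of the radical filtration of $\ovl{\sigma}(\tau)^\sigma$ in terms of the extension graph rooted at $\sigma$, with graph, radical, and saturation distances all coinciding. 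Third, the hypothesis $\ell(\tld{z}_j) > 1$ for every $j\in\cJ$, combined with the case analysis of Theorem \ref{thm:Wg} applied factorwise over $\cJ$, forces $W^g(\rhobar,\tau)$ into one of the combinatorially most restrictive configurations, in which $\sigma$ is the unique maximal element under the extension-graph order.

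The strategy is then to analyze, in each such configuration, the non-split sub-extensions inside $\ovl{\sigma}(\tau)^\sigma$ whose graded pieces lie in $W^g(\rhobar,\tau)$, and check that a nonzero $\GL_3(\cO_p)$-equivariant map $\ovl{\sigma}(\tau)^\sigma \to \pi$ is uniquely determined (up to scalar) by its restriction to the top constituent $\sigma$. The hard part will be executing this case analysis precisely: one must verify, using the explicit description of $\tld{U}(\tld{z},\eta,\nabla_{\tau,\infty})$ via Table \ref{Table:intsct} (and hence of the cycles $\cC_{\sigma'}$ passing through $\rhobar$), that the ``shadow weight'' constituents of $\ovl{\sigma}(\tau)^\sigma$ relevant to $\pi$ produce $\Ext^1$-obstructions that vanish on the kernel of $\ovl{\sigma}(\tau)^\sigma \twoheadrightarrow \sigma$. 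This is the wildly ramified analog of the computation in \cite[\S 5.1]{LLLM2}, and the $9$-genericity hypothesis (now sufficient thanks to Proposition \ref{prop:lattice:10-gen}) ensures that the argument goes through verbatim.
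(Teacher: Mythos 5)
Your reduction to $V=\ovl{\sigma}(\tau)^\sigma$ and the Nakayama translation of cyclicity into the bound $\dim_\F \Hom_{\GL_3(\cO_p)}(\ovl{\sigma}(\tau)^\sigma,\pi)\leq 1$ are fine, but from that point on your route diverges from the paper's and, more importantly, has a genuine gap. Knowing (from Theorem \ref{thm:axiomaticSWC}) that the socle of $\pi$ is supported on $W^g(\rhobar)$ tells you \emph{which} constituents of $\ovl{\sigma}(\tau)^\sigma$ can map nontrivially to $\pi$, but it gives no bound on the \emph{multiplicity} $\dim_\F\Hom(\sigma',\pi)=\dim_\F M_\infty(\sigma')/\fm$ for an individual Serre weight $\sigma'$. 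This multiplicity-one input is exactly where the hypothesis $\ell(\tld{z}_j)>1$ enters in the paper: the scheme-theoretic support of $M_\infty(\sigma')$ is generically reduced by Theorem \ref{thm:axiomaticSWC}, hence reduced, and is formally smooth by Table \ref{Table:intsct} (this is where the length condition is used — \emph{not} to force $\sigma$ to be a ``unique maximal element,'' which it does not), so that $M_\infty(\sigma')$ is free of rank one over its support by Auslander--Buchsbaum--Serre and Auslander--Buchsbaum. Your proposal omits this step entirely, and your proposed substitute — an $\Ext^1$-obstruction analysis of maps $\ovl{\sigma}(\tau)^\sigma\to\pi$ — amounts to reproving mod $p$ multiplicity one for $\pi$, which in this paper is a \emph{consequence} of Theorem \ref{thm:cyclic} (via Theorem \ref{thm:modpmultone}); as written it is circular, or at least requires structural information about $\pi$ that is not available at this stage.

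The paper instead stays entirely on the commutative-algebra side: having established that each $M_\infty(\sigma')$ is cyclic with support the reduced irreducible cycle $\fp(\sigma')$, it runs the inductive gluing argument of \cite[Lemma 5.1.3]{LLLM2} (ultimately \cite[Lemma 10.1.13]{EGS}), building $M_\infty(\ovl{\sigma}(\tau)^\sigma)$ up along the cosocle filtration supplied by Proposition \ref{prop:lattice:10-gen} and using the explicit ideal-theoretic relations among the $\fp(\sigma')$ (the analogue of \cite[Lemma 3.6.10]{LLLM2}, with $N=\log_2\#W^g(\rhobar,\tau)$). If you want to salvage your Hom-space formulation, you must first supply the rank-one freeness of $M_\infty(\sigma')$ over its support for each $\sigma'\in W^g(\rhobar,\tau)$, and then explain how the extension structure of $\ovl{\sigma}(\tau)^\sigma$ interacts with $\pi$ without presupposing multiplicity one for $\pi$ itself; the deformation-ring argument avoids both difficulties.
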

\begin{proof}
First, the scheme-theoretic support of $M_\infty(\sigma)$ is (nonempty and) generically reduced by Theorem \ref{thm:axiomaticSWC} and hence reduced e.g.~by the proof of \cite[Lemma 3.6.2]{LLLM2}. 
It is then formally smooth by Table \ref{Table:intsct}, and so $M_\infty(\sigma)$ is free over its scheme-theoretic support by the Auslander--Buchsbaum--Serre theorem and the Auslander--Buchsbaum formula. 

Now the proof of \cite[Lemma 5.1.3]{LLLM2} applies after the usual modifications. 
Moreover, in the setup for \cite[Proposition 4.3.7, Lemma 3.6.10]{LLLM2}, $R^{\mathrm{expl},\nabla}_{\ovl{\fM},\tld{w}}$ should be replaced by $\Big(\cO\big(\tld{U}(\tld{z},\eta,\nabla_{\tau,\infty})\big)\Big)^\wedge_{x}$ (for a suitable $x\in \tld{U}(\tld{z},\eta,\nabla_{\tau,\infty})(\F)$) and $N$ by $\log_2 \# W^g(\rhobar,\tau)$. 
(\cite[Lemma 3.6.10]{LLLM2} holds for $\rhobar$ and $\tau$ with $W^?(\rhobar,\tau)$ replaced by $W^g(\rhobar,\tau)$ by Theorem \ref{thm:local_model_cmpt}.) 
\end{proof}

We now assume that $\rhobar$ is $11$-generic (in particular, $\tau$ is $9$-generic). 
We fix a semisimple $\rhobar^{\speci}: G_{\Qp}\ra{}^{L}\un{G}(\F)$ such that $\tld{w}^*(\rhobar^\speci,\tau) = \tld{w}^*(\rhobar,\tau) = \tld{z}$. 
By Corollary \ref{cor:semicont}, if $\rhobar \in \cX^{\eta,\tau'}(\F)$ for a $4$-generic $\tau'$, then $\rhobar^{\speci} \in \cX^{\eta,\tau'}(\F)$ and
\begin{equation}
\label{eq:sc}
\tld{w}^{*}(\rhobar^{\speci},\tau') \leq \tld{w}^{*}(\rhobar,\tau').
\end{equation}
Let $(s,\mu-\eta)$ be the $7$-generic lowest alcove presentation for $\rhobar^\speci$ compatible with the implicit $9$-generic lowest alcove presentation of $\tau$ so that $\rhobar^\speci|_{I_{\Q_p}} \cong \taubar(s,\mu)$. 

Now let $V$ be a quotient of $\ovl{\sigma}(\tau)^\sigma$ such that there exist subsets $\Sigma_{V,j}\subseteq \tld{w}_j^{-1}(\Sigma_0)$ such that
\begin{align*}
\label{it:bij:cyclic}
\prod_{i\in\cJ} \Sigma_{V,i}&\stackrel{\sim}{\longrightarrow} \JH(V)\\
(\omega,a)&\mapsto \sigma_{(\omega,a)}\defeq F(\Trns_{\mu}(s\omega,a))
\end{align*}
is a bijection. 
We will show the cyclicity of $M_\infty(V)$ by inducting on the complexity of the set $W^g(\rhobar,\tau) \cap \JH(V)$. 

Let $\Sigma^g_j \subset r(\Sigma_0)$ such that $(\omega,a) \mapsto F(\Trns_{\mu}(s\omega,a))$ defines a bijection from $\Sigma^g_j \ra W^g(\rhobar)$. (The sets $\Sigma^g_j$ exist by \S \ref{sec:reduction}, Corollary \ref{cor:g=?}, Theorem \ref{thm:Wg}, and Corollary \ref{cor:semicont}.)

\begin{lemma}\label{lemma:broom}
Suppose that for all $j\in\cJ$ either $\ell(\tld{z}_j) > 1$ or $\Sigma_{V,j} \subset \{(\eps,1),\,(0,0),\,(\eps_1,0),\,(\eps_2,0)\}$ for some $\eps\in\{0,\, \eps_1,\,\eps_2\}$.
Then $M_\infty(V)$ is a cyclic $R_\infty(\tau)$-module.

\end{lemma}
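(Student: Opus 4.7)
The plan is to induct on $\#\bigl(W^g(\rhobar,\tau) \cap \JH(V)\bigr)$, following the template of \cite[\S 5.1]{LLLM2} with the usual modifications. The base case, where $W^g(\rhobar,\tau) \cap \JH(V) = \{\sigma'\}$ is a singleton, forces $M_\infty(V)$ to be supported on $\cC_{\sigma'}(\rhobar)$ by Theorem \ref{thm:axiomaticSWC}, which is formally smooth by Table \ref{Table:intsct}; the Auslander--Buchsbaum--Serre theorem then yields cyclicity, exactly as in the opening step of the proof of Lemma \ref{lem:4:wgt}.

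For the inductive step, at each embedding $j$ with $\ell(\tld{z}_j) \leq 1$ the broom hypothesis allows me to pick a ``leaf'' constituent---one of $(\eps_1,0), (\eps_2,0)$ or $(\eps,1)$ in the $j$-th coordinate---whose removal still yields a subset of $\{(\eps,1),(0,0),(\eps_1,0),(\eps_2,0)\}$. Assembling such a choice at every constrained embedding into a single Serre weight $\sigma' \in \cosoc(V) \cap W^g(\rhobar,\tau)$ produces a short exact sequence $0 \to V' \to V \to \sigma' \to 0$, with $V'$ still satisfying the broom hypothesis and $\#\bigl(W^g(\rhobar,\tau) \cap \JH(V')\bigr)$ strictly smaller. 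At the remaining embeddings (where $\ell(\tld{z}_j)>1$) no constraint needs to be preserved, and the combinatorics of such ``unconstrained'' factors is already controlled by the machinery behind Lemma \ref{lem:4:wgt}. By the induction hypothesis, $M_\infty(V')$ is cyclic over $R_\infty(\tau)$.

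To conclude cyclicity of $M_\infty(V)$ from the resulting exact sequence $0 \to M_\infty(V') \to M_\infty(V) \to M_\infty(\sigma') \to 0$, I would apply Nakayama's lemma: it suffices to verify that $M_\infty(V') \subset \fm_\infty M_\infty(V)$, equivalently, that a generator of $M_\infty(V)$ projects to a generator of $M_\infty(\sigma')$. This is the main technical obstacle, and is where the specific shape of the broom becomes crucial: the explicit equations defining the relevant intersections in Table \ref{Table:intsct}---even at embeddings with $\ell(\tld{z}_j) \leq 1$, which are precluded by the hypothesis of Lemma \ref{lem:4:wgt}---take a sufficiently simple form within the broom configuration to allow the same ideal-theoretic comparison as in \cite[Lemma 5.1.3]{LLLM2}. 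The key observation is that the ``antipodal'' constituents that would otherwise obstruct the Nakayama step (such as simultaneous occurrences of $(\eps,0)$ and $(-\eps,1)$ away from $0$) are exactly the ones excluded by the broom, so the explicit completions of $\cO(\tld{U}(\tld{z},\eta,\nabla_{\tau,\infty}))$ at the relevant points remain formally smooth along the supports being compared.
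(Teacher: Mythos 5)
Your overall strategy diverges from the paper's in both the induction variable and the mechanism of the inductive step, and the inductive step as written has a genuine gap. First, a structural problem: $V$ is a quotient of $\ovl{\sigma}(\tau)^{\sigma}$, so every irreducible quotient of $V$ is its cosocle $\sigma=F(\lambda)$; a short exact sequence $0\to V'\to V\to\sigma'\to 0$ therefore forces $\sigma'=\sigma$ and cannot realize an arbitrary ``leaf'' of the broom. If you instead peel off a socle constituent, $0\to\sigma'\to V\to V'\to 0$, you face the real issue: cyclicity of $M_\infty(\sigma')$ and $M_\infty(V')$ does \emph{not} imply cyclicity of the extension $M_\infty(V)$. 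What is needed is precisely that $M_\infty(\sigma')\subset \fm_\infty M_\infty(V)$, and this containment is the entire content of the lemma --- it is not delivered by formal smoothness of the completions along the individual supports. The paper (following \cite[Lemma 5.1.4]{LLLM2}) obtains it by exhibiting $M_\infty(V)$ as a fiber product of cyclic patched modules attached to two smaller sub-brooms and invoking the explicit ideal-sum relations of Lemmas \ref{lem:ideal:relation:1} and \ref{lem:ideal:relation:2}, e.g.\ $\bigl(\tld{\fP}_{(0,0)}\cap\tld{\fP}_{(0,1)}\cap\tld{\fP}_{(\eps_1,0)}\bigr)+\bigl(\tld{\fP}_{(0,0)}\cap\tld{\fP}_{(0,1)}\cap\tld{\fP}_{(\eps_2,0)}\bigr)=\tld{\fP}_{(0,0)}\cap\tld{\fP}_{(0,1)}$, which is the algebraic input your sketch replaces with an appeal to smoothness. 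A one-constituent-at-a-time filtration with Nakayama does not see these relations and does not close.

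Second, your base case is not the one used, and the paper's choice matters. The induction is on the number of embeddings $j$ with $\ell(\tld{z}_j)\leq 1$ and a full three-element broom, not on $\#\bigl(W^g(\rhobar,\tau)\cap\JH(V)\bigr)$; the base case is handled by passing to an auxiliary tame type $\tau'$ (constructed from $\rhobar^{\speci}$) for which the shape satisfies $\ell(\tld{z}'_j)>1$ at every embedding, so that Lemma \ref{lem:4:wgt} applies directly. This type-switching trick is what lets one avoid arguing with the small-length charts in the base case, and it is absent from your proposal. Your singleton base case (support on one smooth component, Auslander--Buchsbaum, plus minimality for rank one) is fine as far as it goes, but it does not feed a workable inductive step.
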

\begin{proof}
We induct on 
\[
n\defeq \#\big\{i\in\cJ\ :\  \ell(\tld{z}_{i})\leq 1\text{ and }\#\ovl{\Sigma}_{V,i}=3 \big\}
\]
If $n=0$ we let $\tau'$ be the tame inertial $L$-parameter corresponding to $\tau'_{\cS}$ constructed in the first paragraph of \cite[Lemma 5.1.4]{LLLM2} with respect to $\rhobar^{\speci}$.
Since $\sigma \in \JH(\ovl{\sigma}(\tau))$ by construction, $\cX^{\eta,\tau'}(\F)$ contains the $11$-generic $\rhobar$ so that $\tau'$ is $9$-generic. 
Then $\rhobar$ arises from a point in $\tld{U}(\tld{z}',\eta,\nabla_{\tau',\infty})(\F)$ by Theorem \ref{thm:local_model_cmpt} since $\sigma \in \JH(\ovl{\sigma}(\tau'))$ and $\ell(\tld{z}'_j) > 1$ by \eqref{eq:sc}. 
The result follows now from Lemma \ref{lem:4:wgt}.

Observe that if $\ell(\tld{z}_j) \leq 1$, then $\lambda_{\pi^{-1}(j)} \in \tld{w}_h \cdot C_0$ so that $(\eps,1)$ is necessarily in $\Sigma^g_j$ since $F(\lambda) \in W^g(\rhobar) \cap \JH(V)$. 
The general case then follows verbatim as in the proof of the general case in \emph{loc.~cit}.~with the usual modifications. 
Moreover, references to \cite[Theorem 3.6.4, Table 3, Theorem 4.1.9]{LLLM2} are replaced by references to Theorem \ref{thm:local_model_cmpt}, Table \ref{Table:intsct}, and Proposition \ref{prop:lattice:10-gen} respectively; and references to \cite[Lemma 3.6.12, Lemma 3.6.16(3.19)]{LLLM2} are replaced by references to Lemma \ref{lem:ideal:relation:1} and \ref{lem:ideal:relation:2} after localization at $x$ (in fact \cite[Lemma 3.6.12]{LLLM2} is sufficient for the case of $\tld{z}_j = t_{\un{1}}$). 
\end{proof}

\begin{lemma} \label{lemma:6weight}
Suppose that $\ell(\widetilde{z}_j) > 1$ or 
\[\Sigma_{V,j} \subset \tld{w}_j^{-1} (\Sigma_0 \setminus \{(\nu_1,0),(\nu_2,1),(\nu_3,0)\})\]
where $(\nu_1,\nu_2,\nu_3)$ is $(\eps_1-\eps_2,\eps_1, \eps_1+\eps_2)$, $(\eps_2-\eps_1, \eps_2, \eps_1+\eps_2)$, or $(\eps_1-\eps_2,0,\eps_2-\eps_1)$.
Then $M_\infty(V)$ is a cyclic $R_\infty(\tau)$-module.
\end{lemma}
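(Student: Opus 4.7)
The plan is to mimic the inductive strategy of Lemma \ref{lemma:broom}, now with respect to the new shape of the forbidden triple. More precisely, I would induct on
\[
n\defeq \#\big\{j\in\cJ\ :\ \ell(\tld{z}_{j})\leq 1 \text{ and } \Sigma_{V,j}=\tld{w}_j^{-1}(\Sigma_0\setminus\{(\nu_1,0),(\nu_2,1),(\nu_3,0)\})\big\}.
\]
When $n=0$, every embedding $j$ with $\ell(\tld{z}_j)\leq 1$ has $\Sigma_{V,j}$ properly contained in $\tld{w}_j^{-1}(\Sigma_0\setminus\{(\nu_1,0),(\nu_2,1),(\nu_3,0)\})$, so a case analysis of the three possible triples $(\nu_1,\nu_2,\nu_3)$ shows that $\Sigma_{V,j}$ is contained in one of the ``broom-shaped'' configurations allowed by Lemma \ref{lemma:broom}. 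Thus the base case reduces to Lemma \ref{lemma:broom}.

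For the inductive step, I would pick an embedding $j_0$ realizing the condition above and, following the recipe of the proof of \cite[Lemma 5.1.5]{LLLM2} (adapted via the usual modifications $\tau_{\cS}\mapsto \tau$, $\rhobar_{\cS}\mapsto \rhobar$, $\tld{w}_i^*\mapsto\tld{w}_i$, and references to \cite[Theorem 3.6.4, Table 3, Theorem 4.1.9]{LLLM2} replaced by Theorem \ref{thm:local_model_cmpt}, Table \ref{Table:intsct}, and Proposition \ref{prop:lattice:10-gen}) construct an auxiliary $9$-generic tame inertial $L$-parameter $\tau'$ such that $\rhobar\in \cX^{\eta,\tau'}(\F)$, $\JH(\ovl{\sigma}(\tau))\cap \JH(\ovl{\sigma}(\tau'))$ contains the weight corresponding to $\sigma_{(\nu_2,1)}$ at $j_0$, and $\tld{z}'_{j_0}\defeq \tld{w}^*(\rhobar,\tau')_{j_0}$ has strictly larger length (ideally $\ell(\tld{z}'_{j_0})>1$), while at the other embeddings $j\neq j_0$ the inductive hypothesis applies. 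Then one uses the intertwining between $\sigma(\tau)^{\sigma}$ and $\sigma(\tau')^{\sigma'}$ afforded by the weights lying in both Jordan--H\"older sets, together with the exactness of $M_\infty(-)$ and the explicit local model computations, to express $M_\infty(V)$ as a quotient of a cyclic module of the form $M_\infty(V')$ (for a quotient $V'$ of $\ovl{\sigma}(\tau')^{\sigma'}$ that is already known to be cyclic by induction or by Lemma \ref{lemma:broom}). The cyclicity of $M_\infty(V)$ follows.

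The verification that the resulting inductive statement is consistent amounts to a combinatorial check: at the chosen embedding $j_0$, the image of $\Sigma_{V,j_0}$ under the transition map associated with the change of type $\tau \rightsquigarrow \tau'$ must land in a locus to which Lemma \ref{lemma:broom} or a lower value of $n$ applies, and the geometric cycles $\tld{\fP}_{\sigma,\tld{z}}$ predicted by Table \ref{Table:intsct} must intersect in the right way to guarantee that the kernel of the candidate surjection $M_\infty(V')\twoheadrightarrow M_\infty(V)$ is killed in $M_\infty(V)$. Concretely, one invokes Lemmas \ref{lem:ideal:relation:1} and \ref{lem:ideal:relation:2} after localizing at a chosen closed point $x\in \tld{U}(\tld{z},\eta,\nabla_{\tau,\infty})(\F)$, exactly as in the proof of Lemma \ref{lemma:broom}, to eliminate the remaining weights of $V'$ not in $V$.

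The main obstacle will be the combinatorial bookkeeping for the three cases of $(\nu_1,\nu_2,\nu_3)$ simultaneously: unlike the broom configuration in Lemma \ref{lemma:broom}, where the ``tip'' weight $(\eps,1)$ plays a distinguished role, here the geometry of the 6-weight set is less symmetric and one must be careful to choose $\tau'$ so that the weight $\sigma_{(\nu_2,1)}$ gets separated from the lower-alcove weights $\sigma_{(\nu_1,0)}$ and $\sigma_{(\nu_3,0)}$ simultaneously. This requires verifying, via the explicit admissibility relations in dimension $3$ encoded by \cite[Table 1]{LLLM}, that a single $\tau'$ of the required shape does exist; this is where the specific choice of three allowed triples in the statement becomes essential, since they correspond precisely to the three maximal rank-$6$ configurations compatible with the local model geometry of Theorem \ref{thm:local_model_cmpt}.
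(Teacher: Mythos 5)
Your proposal has the right high-level shape (reduce via auxiliary types, lean on Lemma \ref{lemma:broom}, use the local model geometry), and the paper's own proof is indeed a transposition of the proof of \cite[Lemma 5.1.5]{LLLM2} with the ``usual modifications.'' But there are two genuine gaps. First, your base case is false: if $\Sigma_{V,j}$ is a \emph{proper} subset of $\tld{w}_j^{-1}(\Sigma_0\setminus\{(\nu_1,0),(\nu_2,1),(\nu_3,0)\})$, it need not be contained in any of the configurations $\{(\eps,1),(0,0),(\eps_1,0),(\eps_2,0)\}$ allowed by Lemma \ref{lemma:broom}. For instance, in the case $(\nu_1,\nu_2,\nu_3)=(\eps_1-\eps_2,\eps_1,\eps_1+\eps_2)$ the six-element set contains $(\eps_2-\eps_1,0)$ as well as two upper-alcove weights, and any subset containing $(\eps_2-\eps_1,0)$ (or two upper-alcove weights) is not a broom. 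So the lemma cannot be reduced to Lemma \ref{lemma:broom} by merely shrinking $\Sigma_{V,j}$; this is precisely why a separate six-weight lemma is needed between the four-weight and eight-weight statements.

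Second, the inductive step as you describe it --- exhibiting $M_\infty(V)$ as a quotient of a single cyclic module $M_\infty(V')$ --- is not available and would trivialize the difficulty. The actual mechanism in \cite[Lemma 5.1.5]{LLLM2} is a \emph{gluing}: after a reduction step that replaces $\tau$ by an auxiliary type (and hence may change $\rhobar^{\speci}$), one covers $\JH(V)$ by two quotients $V^1,V^2$ each falling under Lemma \ref{lemma:broom}, and deduces cyclicity of $M_\infty(V)$ from cyclicity of $M_\infty(V^1)$ and $M_\infty(V^2)$ via \cite[Lemma 10.1.13]{EGS}; applying that gluing lemma requires control of the supports of the two pieces. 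The genuinely new verification in the present setting --- which your proposal does not address --- is that $V^2$ can be chosen so that every lower-alcove constituent $(\eps',0)$ of $V^2_{i'}$ actually lies in $\Sigma^g_{i'}$; this is established by counting irreducible components of $\tld{U}(\tld{z}_{i'},\eta_{i'},\nabla_{s_{i'}^{-1}(\mu_{i'}+\eta_{i'})})_{\F}$ (at least $5$ when $\ell(\tld{z}_{i'})\leq 1$, whence $\#\Sigma^g_{i'}\geq 6$ by Theorem \ref{thm:Wg}) together with Theorem \ref{thm:axiomaticSWC}. Finally, the ideal relations you invoke (Lemmas \ref{lem:ideal:relation:1} and \ref{lem:ideal:relation:2}) belong to the proof of Lemma \ref{lemma:broom}; they are not the input needed here.
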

\begin{proof}
This follows from the proof of \cite[Lemma 5.1.5]{LLLM2} with the usual modifications. 
(In the reduction step in the first paragraph of the proof, one possibly changes $\tau$ and so possibly changes $\rhobar^{\speci}$. This only affects this proof.) 
References to \cite[Theorem 4.1.9]{LLLM2} are replaced by Proposition \ref{prop:lattice:10-gen} above, and references to \cite[Lemma 5.1.4]{LLLM2} are replaced by references to Lemma \ref{lemma:broom}. 

Finally, we can and do choose $V^2$ in the final paragraph of the proof of \cite[Lemma 5.1.5]{LLLM2} so that if $(\eps',0) \in V^2_{i'}$ in the notation of \emph{loc.~cit}., then $(\eps',0) \in \Sigma^g_{i'}$. 
Indeed, $\ell(\tld{z}_{i'}) \leq 1$ and \cite[\S 8]{LLLM} ensure that $\tld{U}(\tld{z}_{i'}, \eta_{i'},\nabla_{s_{i'}^{-1}(\mu_{i'}+\eta_{i'})})_{\F}$ has at least $5$ components, where $\tau = \tau(s,\mu+\eta)$, so that $\# \Sigma^g_{i'} \geq 6$ and contains two of $\tld{w}_{i'}^{-1}((0,0),(\eps_1,0),(\eps_2,0))$ by Theorem \ref{thm:Wg}. 
Then by Theorem \ref{thm:axiomaticSWC}, we can apply \cite[Lemma 10.1.13]{EGS} as described in \emph{loc.~cit}. 
\end{proof}

\begin{rmk} There was a gap in the proof of \cite[Lemma 5.1.5]{LLLM2}: in the proof of \emph{loc.~cit}.~one needs to possibly change the type $\tau$ to an auxiliary type, which may cause a loss of $2$ in the genericity. Since we need to apply \cite[Theorem 4.1.9]{LLLM2} to this auxiliary type, one needs to increase the genericity assumption by $2$ in \cite[Theorem 5.1.1]{LLLM2}.
\end{rmk}
\begin{lemma}\label{lemma:8weight}
Suppose that $\ell(\widetilde{z}_j) > 1$ or $\Sigma_{V,j} \subset \tld{w}_j^{-1} (\Sigma_0 \setminus \{(\nu,0)\})$ where $\nu$ is $\eps_1-\eps_2,\eps_2-\eps_1,$ or $\eps_1+\eps_2$.
Then $M_\infty(V)$ is a cyclic $R_\infty(\tau)$-module.
\end{lemma}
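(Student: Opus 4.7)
The plan is to adapt the proof of \cite[Lemma 5.1.6]{LLLM2}, applying the same dictionary of modifications that has been used in the proofs of Lemmas \ref{lem:4:wgt}, \ref{lemma:broom}, and \ref{lemma:6weight} above: replace $\tau_{\cS}$, $\rhobar_{\cS}$, $W^?(\rhobar_{\cS})$ by $\tau$, $\rhobar$, $W^g(\rhobar)$; replace the reference to \cite[Theorem 4.1.9]{LLLM2} (structure of lattices in Deligne--Lusztig representations) by Proposition \ref{prop:lattice:10-gen}; and replace the computations in the tamely ramified deformation rings by computations in the local model charts $\tld{U}(\tld{z},\eta,\nabla_{\tau,\infty})$ (and their completions at closed $\F$-points) via Theorem \ref{thm:local_model_cmpt} and Table \ref{Table:intsct}. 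As usual, we fix a semisimple specialization $\rhobar^{\speci}$ with $\tld{w}^*(\rhobar^{\speci},\tau)=\tld{z}$, and use \eqref{eq:sc} to transfer shape data for auxiliary types.

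The proof proceeds by induction on the cardinality of the set $\{j\in\cJ \,:\, \ell(\tld{z}_j)\le 1 \text{ and } \Sigma_{V,j}\not\subset \tld{w}_j^{-1}(\Sigma_0\setminus\{(\nu_1,0),(\nu_2,1),(\nu_3,0)\}) \text{ for any triple as in Lemma \ref{lemma:6weight}}\}$, with base case provided by Lemma \ref{lemma:6weight}. For the inductive step, at some embedding $j_0$ we exhibit $V$ as fitting into a short exact sequence $0\to V'\to V\to V''\to 0$ of quotients of $\ovl{\sigma}(\tau)^{\sigma}$ (equivalently, combining two subquotients of appropriate lattices), where $V''$ has $\Sigma_{V'',j_0}$ strictly smaller (falling into the scope of Lemma \ref{lemma:6weight}) and $V'$ has $\Sigma_{V',j_0}$ strictly smaller in the opposite direction (again falling into Lemma \ref{lemma:6weight} or Lemma \ref{lemma:broom} after possibly changing $\tau$ to an auxiliary type, which remains sufficiently generic by our $11$-genericity on $\rhobar$). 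The existence of such filtrations is guaranteed by Proposition \ref{prop:lattice:10-gen}, which identifies the successive quotients of the radical filtration of $\ovl{\sigma}(\tau)^{\sigma}$ with explicit pieces of the extension graph centered at $\sigma$.

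Applying $M_\infty$ gives an exact sequence $M_\infty(V')\to M_\infty(V)\to M_\infty(V'')\to 0$, and by the inductive hypothesis both outer terms are cyclic. It then suffices to show that a generator of $M_\infty(V)$ can be chosen mapping to a generator of $M_\infty(V'')$, and that the image of $M_\infty(V')$ in $M_\infty(V)$ is already contained in the $R_\infty(\tau)$-submodule generated by this lift. This is proved exactly as in \cite[Lemma 5.1.6]{LLLM2} by translating the relevant comparison of annihilators into an ideal-theoretic identity in $\cO(\tld{U}(\tld{z},\eta,\nabla_{\tau,\infty}))^\wedge_x$: specifically, that the ideal cutting out $\cC_{\sigma_{(\nu,0)}}(\rhobar)$ is contained in the annihilator of the appropriate Hecke-operator quotient, which can be read off from Table \ref{Table:intsct} (here the assumption that the missing element has $a=0$ and $\nu\in\{\eps_1\pm\eps_2,\eps_1+\eps_2\}$ is essential---it determines the explicit monodromy equations involved).

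The main obstacle is the last point: verifying the required ideal containments/equalities in each of the local model charts with $\ell(\tld{z}_j)\le 1$. The available data are precisely the equations in Table \ref{Table:intsct}, which are more involved than in the tame case (where a single chart suffices), and must be verified embedding-by-embedding for the three possible choices of $\nu$. However, each such verification is an explicit computation of the same nature as those in the proof of Lemma \ref{lem:4:wgt}, and the hypotheses on $\nu$ (only three options rather than all of $\{0,\eps_1,\eps_2,\pm(\eps_1-\eps_2),\eps_1+\eps_2\}$) are precisely the ones compatible with the monodromy equations computed in \cite[\S 8]{LLLM} and summarized in Table \ref{Table:intsct}.
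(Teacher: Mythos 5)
Your proposal follows essentially the same route as the paper: the proof is the adaptation of \cite[Lemma 5.1.6]{LLLM2} with the usual dictionary of modifications, inducting down to Lemma \ref{lemma:6weight}, with the new input being ideal-theoretic identities in the local model charts for $\ell(\tld{z}_j)\le 1$. The only thing left implicit in your sketch is that the needed identities are precisely the algebraized relations of Lemmas \ref{lem:ideal:relation:3} and \ref{lem:ideal:relation:4} (i.e.\ \cite[Lemmas 3.6.14 and 3.6.16(3.17),(3.18)]{LLLM2}), which the paper verifies not by a fresh chart-by-chart computation but by using the contracting $T^\vee_\F$-action to reduce to the completed-at-$\tld{z}_j$ computations already done in \cite[\S 3.6.3]{LLLM2}.
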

\begin{proof}
This follows from the proof of \cite[Lemma 5.1.6]{LLLM2} with the usual modifications and using Lemma \ref{lemma:6weight} and Lemmas \ref{lem:ideal:relation:3} and \ref{lem:ideal:relation:4} below (completed at $x$). 
\end{proof}

\begin{lemma}\label{lemma:9weight}
With $V$ as described before Lemma \ref{lemma:broom}, $M_\infty(V)$ is a cyclic $R_\infty(\tau)$-module.
\end{lemma}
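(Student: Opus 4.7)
The plan is to prove the cyclicity of $M_\infty(V)$ by induction on the invariant
\[
n(V) \defeq \#\big\{j\in\cJ : \ell(\tld{z}_j)\leq 1 \text{ and } \tld{w}_j^{-1}\{(\eps_1-\eps_2,0),(\eps_2-\eps_1,0),(\eps_1+\eps_2,0)\}\subseteq \Sigma_{V,j}\big\}.
\]
When $n(V)=0$, for each embedding $j\in\cJ$ with $\ell(\tld{z}_j)\leq 1$ we may choose $\nu_j\in\{\eps_1-\eps_2,\eps_2-\eps_1,\eps_1+\eps_2\}$ such that $(\nu_j,0)\notin \tld{w}_j(\Sigma_{V,j})$, so the hypothesis of Lemma \ref{lemma:8weight} is met and we conclude directly.

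For the inductive step, fix an embedding $j_0$ achieving the condition defining $n(V)$. I would construct two quotients $V^1,V^2$ of $\ovl{\sigma}(\tau)^\sigma$ factoring through $V$ whose $\JH$-sets are indexed by $\Sigma_{V^i,j}$ agreeing with $\Sigma_{V,j}$ for $j\neq j_0$, and with $\Sigma_{V^1,j_0}=\Sigma_{V,j_0}\setminus\{\tld{w}_{j_0}^{-1}(\eps_1-\eps_2,0)\}$ and $\Sigma_{V^2,j_0}=\Sigma_{V,j_0}\setminus\{\tld{w}_{j_0}^{-1}(\eps_2-\eps_1,0)\}$. Such quotients exist by the structure of lattices in generic Deligne--Lusztig representations (Proposition \ref{prop:lattice:10-gen}): the relevant Serre weights appear as (successive) cosocle quotients because they lie at maximal saturation distance from $\sigma$ in the predicted radical filtration of $\ovl{\sigma}(\tau)^\sigma$. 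By construction $n(V^i)<n(V)$, so $M_\infty(V^i)$ is a cyclic $R_\infty(\tau)$-module by induction.

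The main obstacle will be to leverage the cyclicity of the two $M_\infty(V^i)$ into cyclicity of $M_\infty(V)$. I would pass to the completion $\big(\cO(\tld{U}(\tld{z},\eta,\nabla_{\tau,\infty}))\big)^\wedge_x$ at an appropriate closed point $x$ corresponding (via the local model diagram of Theorem \ref{thm:local_model_cmpt}) to the support of $M_\infty(V)$, where the scheme-theoretic support of $M_\infty(V^i)$ is cut out by the explicit ideal described in Table \ref{Table:intsct}. The critical input is an ideal relation, analogous to those supplied by Lemmas \ref{lem:ideal:relation:3}--\ref{lem:ideal:relation:4}, which couples the three components indexed by $\tld{w}_{j_0}^{-1}(\nu,0)$ for $\nu\in\{\eps_1-\eps_2,\eps_2-\eps_1,\eps_1+\eps_2\}$ at embedding $j_0$ when $\ell(\tld{z}_{j_0})\leq 1$.

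Combining this relation with the exactness of $M_\infty$, applied to the short exact sequence whose terms are the kernels of $V\onto V^i$ (both supported on the respective missing component), should yield a two-generator presentation of $M_\infty(V)$ in which the two generators differ, modulo the maximal ideal, by an element of $R_\infty(\tau)$ coming from the ideal relation. Nakayama's lemma then produces a single generator of $M_\infty(V)$ as a cyclic $R_\infty(\tau)$-module, completing the induction.
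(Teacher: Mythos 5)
Your overall architecture (induct on the number of embeddings where all three of $\tld{w}_j^{-1}(\eps_1-\eps_2,0)$, $\tld{w}_j^{-1}(\eps_2-\eps_1,0)$, $\tld{w}_j^{-1}(\eps_1+\eps_2,0)$ survive in $\Sigma_{V,j}$, split off two quotients each missing one of them, and glue) is not what the paper does, and as written it has a genuine gap at the gluing step. The "critical input" you invoke --- an ideal relation coupling the three components indexed by $(\eps_1-\eps_2,0)$, $(\eps_2-\eps_1,0)$, $(\eps_1+\eps_2,0)$ at an embedding $j_0$ with $\ell(\tld{z}_{j_0})\leq 1$ --- is not among Lemmas \ref{lem:ideal:relation:1}--\ref{lem:ideal:relation:4}, is not proved anywhere, and in fact cannot be formulated non-degenerately: inspection of Table \ref{Table:intsct} (and its continuation) shows that for every shape with $\ell(\tld{z}_{j_0})\leq 1$ \emph{at most one} of these three elements of $\Sigma_0$ indexes a component of $\tld{U}(\tld{z}_{j_0},\eta_{j_0},\nabla)_\F$ through the point corresponding to $\rhobar$; the other two give the unit ideal. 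Separately, your description of how cyclicity would follow ("two generators differing modulo the maximal ideal\dots Nakayama") is not a correct account of the gluing mechanism; what is actually needed is the fiber-product identification of \cite[Lemma 10.1.13]{EGS}, whose hypothesis is precisely the equality $\Ann M_\infty(V^1)+\Ann M_\infty(V^2)=\Ann M_\infty(V^{12})$.

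The observation above is also exactly what makes the paper's argument (that of \cite[Lemma 5.1.7]{LLLM2}, to which the proof defers) short: since at most one of the three corner weights at such a $j_0$ lies in $W^g(\rhobar,\tau)$, and since $W(\rhobar)=W^g(\rhobar)$ by Theorem \ref{thm:axiomaticSWC}, the kernel of the surjection from $V$ onto the quotient omitting the other two corners at $j_0$ has no Jordan--H\"older factor in $W(\rhobar)$, so $M_\infty$ kills it and $M_\infty(V)\cong M_\infty(V')$ for a quotient $V'$ already satisfying the hypotheses of Lemma \ref{lemma:8weight}. No induction, no new quotient pair, and no new ideal relation are required. Your argument could be repaired by inserting this same observation (which makes your gluing degenerate: one of the two kernels already has vanishing $M_\infty$), but that observation is the entire content of the step, and without it your proof does not close.
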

\begin{proof}
The argument in the proof of \cite[Lemma 5.1.7]{LLLM2} holds verbatim with the usual modifications and the reference to \cite[Lemma 5.1.6]{LLLM2} replaced by a reference to Lemma \ref{lemma:8weight}. 
\end{proof}

\begin{proof}[Proof of Theorem \ref{thm:cyclic}]
Theorem \ref{thm:cyclic} follows from the proof of \cite[Theorem 5.1.1]{LLLM2} using Lemma \ref{lemma:9weight} in place of \cite[Lemma 5.1.7]{LLLM2}.
\end{proof}

In the lemmas below we refer the reader to \S \ref{sec:Appendix} for unexplained notation.
These lemmas are algebraizations of \cite[Lemmas 3.6.12,~3.6.14,~3.6.16(3.19),~3.6.16~(3.17) and (3.18)]{LLLM2}. 
Their proof follows verbatim in our setting by replacing $\ovl{R}^{\mathrm{expl},\nabla}_{\ovl{\fM},\tld{z}}$ and the ideals $\mathfrak c_{(\omega,a)}$ of \emph{loc.~cit}.~with $\tld{U}(\tld{z}, \eta_j,\nabla_{s_j^{-1}(\mu_j+\eta_j)})_{\F}$ (with $\mu_j\in X^*(T)$ $4$-deep) and the ideals $\tld{\fP}_{(\omega,a),\alpha t_{\un{1}}}$ respectively.
(The second displayed equation in the statement of the Lemma \ref{lem:ideal:relation:4} is not covered by \cite[Lemma 3.6.16(3.17)]{LLLM2}, but the proof is analogous.)
Alternatively, one observes that all the ideal equalities we need to verify can be checked after projecting $\tld{U}(\tld{z}_j, \eta_j,\nabla_{s_j^{-1}(\mu_j+\eta_j)})_{\F}$ to $\Fl$, where there is a contracting $T^\vee_{\F}$-action with unique fixed point $\tld{z}_j$.
Since all the ideals involved are $T^\vee_{\F}$-equivariant one only needs to check the equalities after completion at $\tld{z}_j$, which are exactly the results of \cite[\S 3.6.3]{LLLM2}.

\begin{lemma}[Lemma 3.6.12\cite{LLLM2}]
\label{lem:ideal:relation:1}
In $\tld{U}(t_{\un{1}}, \eta_j,\nabla_{s_j^{-1}(\mu_j+\eta_j)})_{\F}$, we have the following ideal relation:
\begin{equation*}
\Big(\tld{\fP}_{(0,0),t_{\un{1}}}\cap \tld{\fP}_{(0,1),t_{\un{1}}}\cap \tld{\fP}_{(\eps_1,0),t_{\un{1}}}\Big)+\Big(\tld{\fP}_{(0,0),t_{\un{1}}}\cap \tld{\fP}_{(0,1),t_{\un{1}}}\cap \tld{\fP}_{(\eps_2,0),t_{\un{1}}}\Big)=\Big(\tld{\fP}_{(0,0),t_{\un{1}}}\cap \tld{\fP}_{(0,1),t_{\un{1}}}\Big)
\end{equation*}
\end{lemma}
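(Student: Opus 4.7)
The plan is to follow the strategy outlined in the paragraph immediately preceding Lemma \ref{lem:ideal:relation:1}, exploiting the $T^\vee_\F$-equivariance of all ideals involved to reduce to a statement that has already been established in \cite{LLLM2}. More precisely, I would argue as follows.

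First, I would record that in the shape $\tld{z}_j = t_{\un{1}}$ case (length zero), Table \ref{Table:intsct} together with Theorem \ref{thm:local_model_cmpt} give an explicit presentation of the affine coordinate ring of $\tld{U}(t_{\un{1}}, \eta_j,\nabla_{s_j^{-1}(\mu_j+\eta_j)})_{\F}$ as a $3f$-dimensional $\F$-algebra, and identify each of the ideals $\tld{\fP}_{(\omega,a),t_{\un{1}}}$ explicitly as a sum of the ideal cutting out a component of $\tld{U}(t_{\un{1}}, \eta_j,\nabla_{s_j^{-1}(\mu_j+\eta_j)})_{\F}$. In particular, these ideals are all homogeneous for the natural $T^\vee_\F$-action coming from the shifted-conjugation torus action on $\tld{U}(t_{\un{1}})$, which restricts to a $T^\vee_\F$-action on the monodromy-cut closed subscheme. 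There is a one-parameter subgroup of $T^\vee_\F$ which contracts $\tld{U}(t_{\un{1}}, \eta_j,\nabla_{s_j^{-1}(\mu_j+\eta_j)})_{\F}$ onto the unique fixed point $t_{\un{1}}$ (compare \cite[Lemma 3.4.7]{MLM}).

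Next, because the ideals $\tld{\fP}_{(\omega,a),t_{\un{1}}}$ and their finite sums and intersections are all $T^\vee_\F$-equivariant, the displayed ideal equality holds globally on $\tld{U}(t_{\un{1}}, \eta_j,\nabla_{s_j^{-1}(\mu_j+\eta_j)})_{\F}$ if and only if it holds after completion at $t_{\un{1}}$. This is the standard principle that a containment of graded ideals in a positively graded Noetherian ring can be detected after completion at the vertex of the cone. Thus the statement we must verify is one about the completed local ring at $t_{\un{1}}$ and the images there of the ideals $\tld{\fP}_{(\omega,a),t_{\un{1}}}$ for $(\omega,a) \in \{(0,0),(0,1),(\eps_1,0),(\eps_2,0)\}$.

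Finally, I would observe that this completed local statement is precisely the content of \cite[Lemma 3.6.12]{LLLM2}, via the dictionary recalled in \S \ref{sub:Mon:Cond}: the completed local ring $\cO(\tld{U}(t_{\un{1}}, \eta_j,\nabla_{s_j^{-1}(\mu_j+\eta_j)})_{\F})^\wedge_{t_{\un{1}}}$ is identified with $\ovl{R}^{\mathrm{expl},\nabla}_{\ovl{\fM},t_{\un{1}}}$ from \emph{loc.~cit.}, and the ideals $\tld{\fP}_{(\omega,a),t_{\un{1}}}$ correspond to the ideals $\mathfrak{c}_{(\omega,a)}$ there. Invoking \cite[Lemma 3.6.12]{LLLM2} then finishes the proof. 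The main (and only) nontrivial point is really the $T^\vee_\F$-equivariance reduction: once this is in hand, no further computation beyond what was already carried out in \cite{LLLM2} is needed.
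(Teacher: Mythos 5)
Your proposal is correct and is essentially the paper's own argument: the authors likewise reduce the ideal identity to \cite[Lemma 3.6.12]{LLLM2} by noting that all the ideals involved are $T^\vee_{\F}$-equivariant, so the equality can be checked after completion at the fixed point, where it becomes exactly the result of \cite[\S 3.6.3]{LLLM2}. The only cosmetic difference is that the paper performs the contraction after projecting $\tld{U}(\tld{z}_j,\eta_j,\nabla_{s_j^{-1}(\mu_j+\eta_j)})_{\F}$ to $\Fl$ (where $\tld{z}_j$ really is the unique fixed point, rather than a whole $T^\vee_{\F}$-orbit on the torsor), but since the ideals descend this does not affect the argument.
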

\begin{lemma}[Lemma 3.6.16(3.19)\cite{LLLM2}]
\label{lem:ideal:relation:2}
In $\tld{U}(\alpha t_{\un{1}}, \eta_j,\nabla_{s_j^{-1}(\mu_j+\eta_j)})_{\F}$, we have the following ideal relation:
\begin{equation*}
\Big(\tld{\fP}_{(0,1),\alpha t_{\un{1}}}\cap \tld{\fP}_{(0,0),\alpha t_{\un{1}}}\cap \tld{\fP}_{(\eps_2,0),\alpha t_{\un{1}}}\Big)+\Big(\tld{\fP}_{(0,1),\alpha t_{\un{1}}}\cap \tld{\fP}_{(0,0),\alpha t_{\un{1}}}\cap \tld{\fP}_{(\eps_2-\eps_1,0),\alpha t_{\un{1}}}\Big)=\Big(\tld{\fP}_{(0,1),\alpha t_{\un{1}}}\cap \tld{\fP}_{(0,0),\alpha t_{\un{1}}}\Big)
\end{equation*}
\end{lemma}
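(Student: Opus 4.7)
The plan is to deduce this ideal equality from its already-established analogue at the completed local ring at the unique $T^\vee_\F$-fixed point of $\tld{U}(\alpha t_{\un{1}}, \eta_j, \nabla_{s_j^{-1}(\mu_j+\eta_j)})_{\F}$, via a standard contraction-principle argument. Concretely, I would first observe that the natural map $\tld{U}(\alpha t_{\un{1}}, \eta_j, \nabla_{s_j^{-1}(\mu_j+\eta_j)})_{\F} \to \Fl$ factors through the locally closed subscheme $\tld{U}(\alpha t_{\un{1}})_{\F}$ of the affine flag variety. By \cite[Lemma 3.4.7]{MLM}, there is a contracting $T^\vee_\F$-cocharacter on (the translated Schubert cell containing) $\tld{U}(\alpha t_{\un{1}})_{\F}$ whose unique fixed point is $\alpha t_{\un{1}}$, and this action preserves the monodromy condition (hence descends to $\tld{U}(\alpha t_{\un{1}}, \eta_j, \nabla_{s_j^{-1}(\mu_j+\eta_j)})_{\F}$, which is a closed subscheme defined by equations which are themselves $T^\vee_\F$-invariant thanks to the explicit description in Table \ref{Table:intsct}).

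Next, I would verify that each of the ideals $\tld{\fP}_{(\omega,a),\alpha t_{\un{1}}}$ appearing in the statement is $T^\vee_\F$-stable. This follows from the construction in Theorem \ref{thm:local_model_cmpt} (item regarding $\tld{\fP}_{\sigma,\tld{z}}$ and Table \ref{Table:intsct}), since the corresponding components $\tld{C}^{\zeta}_\sigma$ of $(\tld{\Fl}^{\nabla_0})^{\cJ}$ are themselves $T^\vee_\F$-stable (by construction as closures of $T^\vee_\F$-orbits of translated open Schubert cells). Consequently both sides of the desired ideal equality are $T^\vee_\F$-equivariant ideals in a scheme equipped with a contracting $T^\vee_\F$-action having a unique fixed point. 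For such ideals, equality can be checked after completion at the fixed point: if $I \subseteq J$ are $T^\vee_\F$-stable ideals in a finitely generated $\F$-algebra $R$ carrying a contracting $T^\vee_\F$-action with unique fixed point $x$, and the induced map on completions at $x$ gives $\widehat{I} = \widehat{J}$, then $I = J$ (because the $T^\vee_\F$-weight spaces of $R$ are finite-dimensional and faithfully detected by the completion at $x$ under the contracting action).

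Finally, I would invoke \cite[Lemma 3.6.16(3.19)]{LLLM2}, which establishes precisely the desired ideal equality at the level of the completed local ring, expressed in terms of the ideals $\mathfrak{c}_{(\omega,a)}$ of \emph{loc.~cit}. The compatibility between the $\mathfrak{c}_{(\omega,a)}$ and the completions at the fixed point of $\tld{\fP}_{(\omega,a),\alpha t_{\un{1}}}$ is exactly the content of the explicit identifications in Table \ref{Table:intsct} combined with Theorem \ref{thm:local_model_cmpt} (which matches the local model charts in \cite{MLM} with the explicit deformation theory computations of \cite[\S 5.3]{LLLM}).

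The main technical point is verifying the compatibility between the ideals $\tld{\fP}_{(\omega,a),\alpha t_{\un{1}}}$ and the $\mathfrak{c}_{(\omega,a)}$ of \cite{LLLM2}; in principle this is automatic from the construction, but one needs to make sure the choices of presentations (i.e., which alcove $C_0$ or $C_1$ each weight lies in, and the identification of Serre weight parametrizations recalled in \S \ref{subsec:param:SW}) match up. Alternatively, since the explicit formulas in Table \ref{Table:intsct} for the case $\tld{z}_j = \alpha t_{\un{1}}$ are identical (up to the evident unit/variable identifications) to those used in \cite[\S 3.6.3]{LLLM2}, one could bypass the contraction principle and check the equality directly by the same algebraic manipulation performed there, now in the global ring rather than its completion, since the relevant polynomial identities hold universally.
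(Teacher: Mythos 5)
Your proposal is correct and matches the paper's own argument essentially exactly: the paper proves this lemma either by repeating the algebraic manipulation of \cite[Lemma 3.6.16(3.19)]{LLLM2} verbatim in the global ring $\tld{U}(\alpha t_{\un{1}}, \eta_j,\nabla_{s_j^{-1}(\mu_j+\eta_j)})_{\F}$, or (as you primarily propose) by noting that all ideals involved are $T^\vee_{\F}$-equivariant for a contracting action with unique fixed point $\tld{z}_j$, so the equality may be checked after completion at that point, where it is exactly the result of \cite[\S 3.6.3]{LLLM2}. Both of your routes are the two routes the paper itself offers.
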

\begin{lemma}[Lemma 3.6.14 \cite{LLLM2}]
\label{lem:ideal:relation:3}
 In $\tld{U}(t_{\un{1}}, \eta_j,\nabla_{s_j^{-1}(\mu_j+\eta_j)})_{\F}$, we have the following ideal relation:
\begin{align*}
&\Big(\tld{\fP}_{(0,0),t_{\un{1}}}\cap \tld{\fP}_{(0,1),t_{\un{1}}}\cap \tld{\fP}_{(\eps_1,0),t_{\un{1}}}\cap\tld{\fP}_{(\eps_1,1),t_{\un{1}}}\cap \tld{\fP}_{(\eps_2,0),t_{\un{1}}}\Big)+\\
&\qquad+\Big(\tld{\fP}_{(0,0),t_{\un{1}}}\cap \tld{\fP}_{(0,1),t_{\un{1}}}\cap \tld{\fP}_{(\eps_2,0),t_{\un{1}}}\cap\tld{\fP}_{(\eps_2,1),t_{\un{1}}}\cap \tld{\fP}_{(\eps_1,0),t_{\un{1}}}\Big)=\Big(\tld{\fP}_{(0,0),t_{\un{1}}}\cap \tld{\fP}_{(0,1),t_{\un{1}}}\cap\tld{\fP}_{(\eps_1,0),t_{\un{1}}}\cap \tld{\fP}_{(\eps_2,0),t_{\un{1}}}\Big)
\end{align*}
\end{lemma}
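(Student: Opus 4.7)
The plan is to follow the strategy indicated by the authors immediately after the statement, which offers two routes: a direct translation of the proof of \cite[Lemma 3.6.14]{LLLM2}, or a reduction via $T^\vee_{\F}$-equivariance to the completion at the point $\tld{z}_j = t_{\un{1}}$. I will pursue the second route, since it cleanly reduces the claim to a verification already made in \emph{loc.~cit}.

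First, I would recall from Table \ref{Table:intsct} (as invoked in Theorem \ref{thm:local_model_cmpt}) the explicit description of $\tld{U}(t_{\un{1}}, \eta_j,\nabla_{s_j^{-1}(\mu_j+\eta_j)})_{\F}$ together with the ideals $\tld{\fP}_{(\omega,a),t_{\un{1}}}$ cutting out the components $\tld{C}^\zeta_{\sigma_{(\omega,a)}}$ for $\sigma_{(\omega,a)} = F(\Trns_{\mu_j}(s_j \omega, a))$ appearing in the statement. Since every $\tld{\fP}_{(\omega,a),t_{\un{1}}}$ is visibly $T^\vee_{\F}$-stable for the left-multiplication action on $\tld{\Fl}$ (the components $\tld{C}^\zeta_{\sigma}$ are $T^\vee_{\F}$-torsors over their images in $\Fl^{\nabla_0}$), the ideals appearing on both sides of the claimed equation are all $T^\vee_{\F}$-equivariant.

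Next, I would invoke the contracting $T^\vee_{\F}$-action on the relevant chart: there is a one-parameter subgroup of $T^\vee_{\F}$ under which $\tld{U}(t_{\un{1}}, \leq\eta_j)_{\F}$ contracts to its unique fixed point $t_{\un{1}}$. Because all ideals in question are $T^\vee_{\F}$-equivariant, an inclusion of one side into the other can be tested after completion at this single fixed point $t_{\un{1}}$. The completion of $\tld{U}(t_{\un{1}}, \eta_j,\nabla_{s_j^{-1}(\mu_j+\eta_j)})_{\F}$ at $t_{\un{1}}$ together with the completions of the $\tld{\fP}_{(\omega,a),t_{\un{1}}}$ is precisely the local ring $\ovl{R}^{\mathrm{expl},\nabla}_{\ovl{\fM},t_{\un{1}}}$ with its ideals $\mathfrak{c}_{(\omega,a)}$ studied in \cite[\S 3.6.3]{LLLM2}. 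At this point the desired ideal relation becomes exactly \cite[Lemma 3.6.14]{LLLM2}, and both inclusions of the two sides follow.

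The only point requiring care will be verifying the genericity input. \cite[Lemma 3.6.14]{LLLM2} was established under the running genericity assumptions of \emph{loc.~cit.}~(on the tame inertial type), whereas here the parameter $\mu_j$ is only required to be $4$-deep. I would check (as in the discussion following Theorem \ref{prop:loc:mod:diag:2}, based on the refined computations in \cite[\S 5.3.1]{LLLM}) that the monodromy equations and explicit presentations of Table \ref{Table:intsct} remain valid at the relaxed genericity $\mu_j$ $4$-deep, and hence that the formal completion at $t_{\un{1}}$ agrees with the ring treated in \cite[\S 3.6.3]{LLLM2}. With this in hand the equivariance argument applies, and the lemma follows; this verification is the main (and essentially the only) obstacle, but it is routine given the table.
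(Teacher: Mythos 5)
Your proposal is correct and follows the same route the paper takes (specifically the second of the two alternatives the paper sketches): use $T^\vee_{\F}$-equivariance of all the ideals together with the contracting action with unique fixed point $t_{\un{1}}$ to reduce the ideal identity to its completion at the fixed point, where it is exactly \cite[Lemma 3.6.14]{LLLM2}. Your attention to the relaxed genericity hypothesis ($\mu_j$ only $4$-deep) matches the paper's treatment via the refined computations underlying Table \ref{Table:intsct}.
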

\begin{lemma}[Lemma 3.6.16(3.17),(3.18)\cite{LLLM2}]
\label{lem:ideal:relation:4}
 In $\tld{U}(\alpha t_{\un{1}}, \eta_j,\nabla_{s_j^{-1}(\mu_j+\eta_j)})_{\F}$, we have the following ideal relations:
\begin{align*}
&\Big(\tld{\fP}_{(0,1),\alpha t_{\un{1}}}\cap \tld{\fP}_{(0,0),\alpha t_{\un{1}}}\cap \tld{\fP}_{(\eps_2,0),\alpha t_{\un{1}}}\cap \tld{\fP}_{(\eps_2-\eps_1,0),\alpha t_{\un{1}}}\cap \tld{\fP}_{(\eps_2,1),\alpha t_{\un{1}}}\Big)+\\&\qquad+\Big(\tld{\fP}_{(0,1),\alpha t_{\un{1}}}\cap \tld{\fP}_{(0,0),\alpha t_{\un{1}}}\cap \tld{\fP}_{(\eps_2,0),\alpha t_{\un{1}}}\cap \tld{\fP}_{(\eps_1,1),\alpha t_{\un{1}}}\Big)=\Big(\tld{\fP}_{(0,1),\alpha t_{\un{1}}}\cap \tld{\fP}_{(0,0),\alpha t_{\un{1}}}\cap \tld{\fP}_{(\eps_2,0),\alpha t_{\un{1}}}
\Big)\\
\\
&\Big(\tld{\fP}_{(\eps_2,1),\alpha t_{\un{1}}}\cap \tld{\fP}_{(\eps_2,0),\alpha t_{\un{1}}}\cap \tld{\fP}_{(0,0),\alpha t_{\un{1}}}\cap \tld{\fP}_{(\eps_2-\eps_1,0),\alpha t_{\un{1}}}\cap \tld{\fP}_{(0,1),\alpha t_{\un{1}}}\Big)+\\&\qquad\Big(\tld{\fP}_{(\eps_2,1),\alpha t_{\un{1}}}\cap \tld{\fP}_{(\eps_2,0),\alpha t_{\un{1}}}\cap \tld{\fP}_{(0,0),\alpha t_{\un{1}}}\cap \tld{\fP}_{(\eps_1,1),\alpha t_{\un{1}}}\Big)=\Big(\tld{\fP}_{(\eps_2,1),\alpha t_{\un{1}}}\cap \tld{\fP}_{(\eps_2,0),\alpha t_{\un{1}}}\cap \tld{\fP}_{(0,0),\alpha t_{\un{1}}}
\Big)
\end{align*}
and
\begin{align*}
&\Big(\tld{\fP}_{(\eps_1,1),\alpha t_{\un{1}}}\cap \tld{\fP}_{(0,1),\alpha t_{\un{1}}}\cap \tld{\fP}_{(\eps_2,0),\alpha t_{\un{1}}}\cap \tld{\fP}_{(0,1),\alpha t_{\un{1}}}\Big)+\\&\qquad+\Big(\tld{\fP}_{(\eps_1,1),\alpha t_{\un{1}}}\cap \tld{\fP}_{(0,1),\alpha t_{\un{1}}}\cap \tld{\fP}_{(\eps_2,0),\alpha t_{\un{1}}}\cap \tld{\fP}_{(\eps_2,1),\alpha t_{\un{1}}}\Big)=\Big(\tld{\fP}_{(\eps_1,1),\alpha t_{\un{1}}}\cap \tld{\fP}_{(0,1),\alpha t_{\un{1}}}\cap \tld{\fP}_{(\eps_2,0),\alpha t_{\un{1}}}\Big)
\end{align*}
\end{lemma}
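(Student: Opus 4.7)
The plan is to adopt the second strategy suggested in the preamble to these lemmas, namely to use $T^{\vee}_{\F}$-equivariance to reduce the claimed ideal equalities to known computations in \cite[\S 3.6.3]{LLLM2}. First I would check that each ideal $\tld{\fP}_{(\omega,a),\alpha t_{\un{1}}}$ appearing in the statement is stable under a contracting $T^{\vee}_{\F}$-action on $\tld{U}(\alpha t_{\un{1}}, \eta_j,\nabla_{s_j^{-1}(\mu_j+\eta_j)})_{\F}$. This follows from the description in Table \ref{Table:intsct}: the closed subschemes $\tld{\fP}_{\sigma,\tld{z}}$ are intersections of $\tld{U}(\tld{z},\eta,\nabla_{\tau,\infty})_{\F}$ with $T^{\vee}_{\F}$-invariant subvarieties $\tld{C}^{\zeta}_\sigma$ of $\tld{\Fl}^{[0,2]}$, and the latter are $T^{\vee}_{\F}$-invariant by definition.

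Next I would use the fact that under the projection $\tld{U}(\alpha t_{\un{1}},\eta_j,\nabla_{s_j^{-1}(\mu_j+\eta_j)})_{\F}\to \Fl$, the induced $T^{\vee}_{\F}$-action contracts everything to the unique fixed point $\alpha t_{\un{1}}$. Consequently, any ideal $I\subseteq \cO(\tld{U}(\alpha t_{\un{1}},\eta_j,\nabla_{s_j^{-1}(\mu_j+\eta_j)})_{\F})$ that is $T^{\vee}_{\F}$-stable is determined by its image in the completion at $\alpha t_{\un{1}}$; equivalently, an equality $I_1=I_2$ of $T^{\vee}_{\F}$-stable ideals holds globally if and only if it holds after completing at the fixed point. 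This is the crucial reduction step.

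Finally I would identify this completion with the ring $\ovl{R}^{\mathrm{expl},\nabla}_{\ovl{\fM},\alpha t_{\un{1}}}$ of \cite{LLLM2}, and match each $\tld{\fP}_{(\omega,a),\alpha t_{\un{1}}}$ with the corresponding ideal $\mathfrak{c}_{(\omega,a)}$ used there. The first displayed ideal relation in Lemma \ref{lem:ideal:relation:4} is then precisely \cite[Lemma 3.6.16(3.17)]{LLLM2}, and the third is \cite[Lemma 3.6.16(3.18)]{LLLM2}. For the second displayed equation (which the authors note is not literally covered by \emph{loc.~cit.}), I would reproduce the argument of \cite[Lemma 3.6.16(3.17)]{LLLM2} verbatim in the completion: the relevant intersection of four primes is cut out by explicit monomial-type generators, and adding the elements producing $(\eps_2-\eps_1,0)$ and $(\eps_1,1)$ fills in the missing generator of the three-fold intersection by an elementary algebraic manipulation, entirely parallel to the argument of (3.17).

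The main obstacle I anticipate is the bookkeeping of matching notation: verifying that the dictionary $\mathfrak{c}_{(\omega,a)} \leftrightarrow \tld{\fP}_{(\omega,a),\alpha t_{\un{1}}}$ is compatible with the parameterizations of components established in Theorem \ref{thm:local_model_cmpt}, so that the precise labels of weights $(\omega,a)$ appearing on each side of every equation align. Once this dictionary is set up carefully, the actual ideal equalities transfer with no new algebraic content beyond what appears in \cite[\S 3.6.3]{LLLM2}.
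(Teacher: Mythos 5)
Your proposal is correct and coincides with the paper's own argument: the paper explicitly offers exactly this reduction, namely that all the ideals involved are $T^\vee_{\F}$-equivariant for the contracting action with unique fixed point $\tld{z}_j$, so the equalities may be checked after completion at $\tld{z}_j$, where they become the statements of \cite[\S 3.6.3]{LLLM2}. Your treatment of the second displayed equation (not literally covered by \cite[Lemma 3.6.16(3.17)]{LLLM2} but proved by the analogous manipulation) also matches the paper's parenthetical remark.
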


\subsubsection{Gauges for patching functors}
\label{sec:gauge}

Let $\tau$ be a tame inertial $L$-parameter $\tau$ and $\rhobar:G_{\Qp}\ra{}^L\un{G}(\F)$ be an $11$-generic $L$-homomorphism arising from an $\F$-point $\cX^{\eta,\tau}(\F)$. 
Let $M_\infty$ be a weak minimal patching functor coming from an arithmetic $R_\infty[\GL_3(F_p)]$-module. 

The scheme $X_\infty(\tau)$ is normal and Cohen--Macaulay by Theorem \ref{thm:local_model_main} and Remark \ref{rmk:CM}.
We let $Z \subset X_{\infty}(\tau)$ be the locus of points lying on two irreducible components of the special fiber of $X_{\infty}(\tau)$ (in particular $Z\subset X_{\infty}(\tau)$ has codimension at least two) and
\begin{equation*}
j: U \defeq X_{\infty}(\tau) \setminus Z \into X_{\infty}(\tau)
\end{equation*}
be the natural open immersion.

\begin{thm}\label{thm:gauge}
Let $\sigma,\,\kappa\in \JH(\ovl{\sigma}(\tau))$ and let $\iota: \sigma(\tau)^\kappa \into \sigma(\tau)^\sigma$ be a saturated injection.
For any $\theta \in W^g(\rhobar)$ let $m(\theta)$ be the multiplicity with which $\theta$ appears in the cokernel of $\iota$.
Suppose further that $\sigma$ is as in Theorem \ref{thm:cyclic}.
Then the induced injection $M_\infty(\iota):  M_\infty(\sigma(\tau)^\kappa) \into M_\infty(\sigma(\tau)^\sigma)$ has image 
\[j_*j^* \Big(\prod_{\theta\in W^g(\rhobar)} \mathfrak{p}(\theta)^{m(\theta)}R_\infty(\tau)\Big) M_\infty(\sigma(\tau)^\sigma).\]
where $\mathfrak{p}(\theta)$ is the minimal prime ideal of $(R_{\rhobar}^{\tau})_{\F}$ corresponding to $\theta$ via Theorem \ref{thm:local_model_cmpt} and localization at $\rhobar\in \cX_{{\F}}^{\eta,\tau}$.
\end{thm}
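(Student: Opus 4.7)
The plan is to convert the statement into an ideal-theoretic identity via Theorem \ref{thm:cyclic} and then verify it height-one-prime by height-one-prime using reflexivity. Since $\sigma$ satisfies the hypotheses of Theorem \ref{thm:cyclic}, $M_\infty(\sigma(\tau)^\sigma)$ is free of rank one over $R_\infty(\tau)$; fixing a generator identifies $M_\infty(\iota)$ with the inclusion of an ideal $I \subset R_\infty(\tau)$, so the theorem reduces to proving $I = j_*j^*\bigl(\prod_\theta \fp(\theta)^{m(\theta)}\bigr)$ as ideals. Note that $M_\infty(\iota)$ is injective by exactness of $M_\infty$, and $I[1/p] = R_\infty(\tau)[1/p]$ since $\iota$ becomes an isomorphism of $E$-representations after inverting $p$.

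Next I would establish reflexivity of $I$ and reduce to a valuation computation. Minimality of $M_\infty$ forces $R^p$ to be formally smooth, so $R_\infty(\tau) = R^{\eta,\tau}_\rhobar \widehat{\otimes}_\cO R^p$ is a Cohen--Macaulay normal domain by Theorem \ref{thm:local_model_main} and Remark \ref{rmk:CM}. The module $M_\infty(\sigma(\tau)^\kappa)$ is maximal Cohen--Macaulay on this ring by Definition \ref{minimalpatching}(\ref{it:minimalpatching:1}), hence $S_2$, hence reflexive; therefore $I$ is a reflexive ideal and equals $\bigcap_\fp I_\fp$ where $\fp$ runs over height-one primes of $R_\infty(\tau)$. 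It then suffices to match $v_\fp(I)$ with $v_\fp\bigl(\prod_\theta \fp(\theta)^{m(\theta)}\bigr)$ for every such $\fp$. Both valuations vanish at any $\fp$ not containing $\varpi$ (since each $\fp(\theta)$ contains $\varpi$); by Lemma \ref{lemma:prodcomp} combined with Theorem \ref{thm:local_model_cmpt}, the remaining height-one primes of $R_\infty(\tau)$ are exactly the pairwise distinct $\fp(\theta) R_\infty(\tau)$ for $\theta \in W^g(\rhobar,\tau)$.

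To compute $v_{\fp(\theta)}(I)$, I would lift a Jordan--Hölder filtration of $\sigma(\tau)^\sigma/\sigma(\tau)^\kappa$ to a chain of $\cO[\GL_3(\cO_p)]$-stable lattices $\sigma(\tau)^\kappa = L_0 \subset L_1 \subset \cdots \subset L_n = \sigma(\tau)^\sigma$ with $L_i/L_{i-1} \cong \theta_i$ a Serre weight, so that exactness of $M_\infty$ yields short exact sequences $0 \to M_\infty(L_{i-1}) \to M_\infty(L_i) \to M_\infty(\theta_i) \to 0$. After localizing at $\fp(\theta)$: if $\theta_i \neq \theta$ then $M_\infty(\theta_i)_{\fp(\theta)} = 0$ (either $M_\infty(\theta_i)=0$, or by Theorem \ref{thm:axiomaticSWC} its support is $V(\fp(\theta_i))$ with $\fp(\theta_i)$ and $\fp(\theta)$ being distinct, hence incomparable, height-one primes of a domain); if $\theta_i = \theta$, then minimality (second half of Theorem \ref{thm:axiomaticSWC}) gives $Z(M_\infty(\theta)) = \cC_\theta(\rhobar)$ with generic multiplicity one, so $M_\infty(\theta)_{\fp(\theta)}$ has length one over the DVR $R_\infty(\tau)_{\fp(\theta)}$. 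Summing contributions along the filtration yields $\operatorname{length}\bigl(R_\infty(\tau)_{\fp(\theta)}/I_{\fp(\theta)}\bigr) = m(\theta)$, which matches the right-hand side.

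The hardest ingredient will be Theorem \ref{thm:cyclic}: the free-of-rank-one identification is what collapses an otherwise subtle comparison of maximal Cohen--Macaulay modules into the clean reflexive-ideal identity used here, and cyclicity itself rests on the delicate lattice structure theorem (Proposition \ref{prop:lattice:10-gen}) together with the ideal relations of Lemmas \ref{lem:ideal:relation:1}--\ref{lem:ideal:relation:4}. Once these are in hand, the argument above is essentially formal, using only reflexivity of maximal Cohen--Macaulay modules over CM normal domains and the cycle identifications already encoded in Theorem \ref{thm:axiomaticSWC}.
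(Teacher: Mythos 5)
Your argument is correct and is essentially the same as the paper's proof, which simply invokes \cite[Lemmas 5.2.1, 5.2.2 and Theorem 5.2.3]{LLLM2} (with $W^?$ replaced by $W^g$): cyclicity identifies the image with a reflexive ideal, and one matches its localizations at the height-one primes $\fp(\theta)R_\infty(\tau)$ against those of $\prod_\theta\fp(\theta)^{m(\theta)}$ via a lattice filtration and the multiplicity-one cycle statement of Theorem \ref{thm:axiomaticSWC}. The only point you gloss over is that $j_*j^*\bigl(\prod_\theta\fp(\theta)^{m(\theta)}\bigr)M_\infty(\sigma(\tau)^\sigma)$ coincides with the intersection of its height-one localizations (the content of \cite[Lemma 5.2.2]{LLLM2}, using that away from $Z$ the product ideal is invertible), which is routine in this setting.
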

\begin{proof}
The proof follows verbatim the argument of \cite[Lemmas 5.2.1 and 5.2.2, Theorem 5.2.3]{LLLM2} after replacing occurrences of $W^{?}(\rhobar_{\cS})$ in \emph{loc.~cit.}~with $W^g(\rhobar)$.
\end{proof}

\subsection{Global applications}
\label{subsec:global}

In this section, we deduce global applications of Theorems \ref{thm:axiomaticSWC}, \ref{thm:cyclic}, and \ref{thm:gauge} generalizing results of \cite[\S 5.3]{LLLM2} in the tamely ramified setting. 
Let $F^+$ be a totally real field, $S_p$ the set of places of $F^+$ dividing $p$, and $F/F^+$ a CM extension. 
We assume that all places of $S_p$ are unramified over $\Q$ and split in $F$.
We start with the following modularity lifting result. 

\begin{thm} \label{thm:modularity_lifting}
Let $F/F^+$ be a CM extension, and let $r: G_F \ra \GL_3(E)$ be a continuous representation such that 
\begin{itemize}
\item $r$ is unramified at all but finitely many places;
\item $r$ is potentially crystalline at places dividing $p$ of type $(\eta,\tau)$ where $\tau$ is a tame inertial type that admits a lowest alcove presentation $(s,\mu)$ with $\mu$ $4$-deep in alcove $\un{C}_0$;
\item $r^c \cong r^\vee \varepsilon^{-2}$;
\item $\zeta_p\notin \ovl{F}^{\ker\ad\rbar}$ and $\rbar(G_{F(\zeta_p)}) \subset \GL_3(\F)$ is an adequate subgroup; and
\item $\rbar \cong \rbar_\iota(\pi)$ for some $\pi$ a regular algebraic conjugate self-dual cuspidal (RACSDC) automorphic representation of $\GL_3(\A_F)$ of weight $0$ so that $\sigma(\tau)$ is a $K$-type for $\pi$ at places dividing $p$.
\end{itemize}
Then $r$ is automorphic i.e.~$r \cong r_\iota(\pi')$ for some $\pi'$ a RACSDC automorphic representation of $\GL_3(\A_F)$.%
\end{thm}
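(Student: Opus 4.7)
The plan is to deduce this via the Kisin--Taylor--Wiles patching machinery in the setting of a definite unitary group $G$ in three variables over $F^+$ which becomes $\GL_3$ over $F$ and is compact at infinity. Using the hypothesis that $\bar r$ is automorphic and the Taylor--Wiles conditions (adequacy of $\bar r(G_{F(\zeta_p)})$ together with $\zeta_p \notin \overline{F}^{\ker \ad \bar r}$), I would follow the standard Kisin patching formalism (as in \cite{CHT}, \cite{EGS}, or in the form axiomatized in Section~\ref{subsec:WMPF}) to produce a patched module $M_\infty$ for $\bar r$, equipped with a compatible action of a patched global deformation ring $R_\infty = R_{\bar r} \, \widehat\otimes_\cO \, R^p$. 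Twisting with the inertial type gives $M_\infty(\sigma(\tau)^\circ)$, a maximal Cohen--Macaulay module over $R_\infty(\tau) = R_{\bar r}^{\eta,\tau} \, \widehat\otimes_\cO \, R^p$, with the auxiliary ring $R^p$ chosen so that its irreducible components are geometrically irreducible.

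The key new input, unavailable in the wildly ramified setting prior to this paper, is Corollary~\ref{thm:local_model_main}: since $\tau$ admits a $4$-deep lowest alcove presentation, the local potentially crystalline deformation ring $R_{\bar r_v}^{\eta, \tau_v}$ at each place $v \mid p$ is either zero or a normal domain. Hence $R_{\bar r}^{\eta,\tau} = \widehat{\bigotimes}_{v\mid p} R_{\bar r_v}^{\eta,\tau_v}$ is itself a normal domain, and the irreducible components of $\Spec R_\infty(\tau)$ are in bijection with those of $\Spec R^p$, all of equal (expected) dimension.

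Next I would show $M_\infty(\sigma(\tau)^\circ) \ne 0$: the assumption that $\bar r \cong \bar r_\iota(\pi)$ for a RACSDC $\pi$ such that $\sigma(\tau)$ is a $K$-type at places above $p$ exhibits $\pi$ as a classical automorphic contribution surviving into the patched module after twisting by $\sigma(\tau)^\circ$. Because $M_\infty(\sigma(\tau)^\circ)$ is a nonzero maximal Cohen--Macaulay module over the equidimensional $R_\infty(\tau)$, and any component hit must appear fully in its support, the structural results combine to show that $M_\infty(\sigma(\tau)^\circ)$ is supported on the union of exactly those components of $\Spec R_\infty(\tau)$ that correspond to components of $R^p$ hit by the automorphic lift; in particular the map $R_\infty(\tau) \to \operatorname{End}_{R_\infty(\tau)} M_\infty(\sigma(\tau)^\circ)$ kills only the ideal cutting out the non-supported components. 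Finally, the representation $r$ (viewed as a closed point of $\Spec R_\infty(\tau)[1/p]$ after a suitable choice of compatible patching data at ramified places away from $p$) lies on the same component as the automorphic lift used to produce $M_\infty \ne 0$; consequently $r$ factors through the patched Hecke algebra and is automorphic.

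The main obstacle is ensuring that the component of $R_\infty(\tau)$ on which $r$ defines a point is in the support of $M_\infty(\sigma(\tau)^\circ)$, i.e.\ matching the components of the auxiliary ramified deformation problem $R^p$ for $r$ and for the modular lift produced by $\pi$. In the tame $p$-adic setting treated in \cite{LLLM2} this reduced to the domain property of the $p$-adic factor; the entire point of Corollary~\ref{thm:local_model_main} is to restore this clean domain structure in the wild case, reducing the component-matching problem to the (standard) problem of choosing compatible Taylor--Wiles data away from $p$, which is handled by the usual techniques (potential diagonalizability at the places of bad reduction of $r$, or an application of the Khare--Wintenberger lifting method) together with the fact that $\bar r$ admits a potentially diagonalizable globalization by the assumption that $\pi$ exists of weight zero.
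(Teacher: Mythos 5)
Your proposal takes essentially the same route as the paper, whose proof of this theorem is a one-line reduction to ``standard base change and Taylor--Wiles patching arguments'' with Corollary \ref{thm:local_model_main} as the key new input: the potentially crystalline deformation rings at places above $p$ are zero or normal domains, so the components of $\Spec R_\infty(\tau)$ come only from $R^p$ and the nonzero maximal Cohen--Macaulay patched module $M_\infty(\sigma(\tau)^\circ)$ is supported on all of them, putting $r$ in the support. The only correction worth recording is that the ramification of $r$ away from $p$ is handled by solvable base change and Ihara avoidance --- the ``base change'' the paper explicitly invokes --- rather than by ``potential diagonalizability at the places of bad reduction,'' which is a condition on local representations at places dividing $p$, not away from it.
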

\begin{proof}
This follows from standard base change and Taylor--Wiles patching arguments using Corollary \ref{thm:local_model_main}, cf.~\cite[Theorem 9.2.1]{MLM}, \cite[Theorem 7.4]{LLLM}, and its addendum \cite[\S 6]{LLLM2}.
\end{proof}

We now suppose that $F^+\neq \Q$. 
Let $\cO_{F^+,p}\defeq \cO_{F^{+}}\otimes_{\Z}\Zp$ be the finite \'etale $\Zp$-algebra denoted $\cO_p$ in \S \ref{subsec:WMPF}, \ref{subsec:AM}.
We fix an outer form $G_{/F^+}$ of $\GL_n$ which splits over $F$ and is definite at all archimedean places of $F^+$.
There exists $N\in \N$, a reductive model $\cG$ of $G$ defined over $\cO_{F^+}[1/N]$, and an isomorphism
\begin{equation}
\label{iso integral}
\iota:\,\cG_{/\cO_{F}[1/N]} \stackrel{\iota}{\rightarrow}{\GL_3}_{/\cO_{F}[1/N]}
\end{equation}
(cf.~\cite[\S 7.1]{EGH}). 
Given a subset $\cP$ of finite complement in the set of finite places of $F^+$ which split in $F$ and are coprime to $pN$, let $\mathbb{T}_{\cP}$ be the universal Hecke algebra $\mathbb{T}_{\cP}$ for places in $\cP$ (see \cite[\S 9.1]{MLM}). 
Given a compact open $U=U^pU_p\leq G(\mathbb{A}_{F^+}^{\infty,p})\times \cG(\cO_{F^+,p})$ and a finite $\cO$-module $W$ with a continuous $U$-action, let $S(U,W)$ be the space of algebraic automorphic forms of level $U$ and coefficients $W$, as in \cite[(9.2)]{MLM}. 
If $U$ is unramified at places in $\cP$, then $S(U,W)$ has a natural $\mathbb{T}_{\cP}$-action. 
Let $\mathbb{T}_{\cP}(U,W)$ be the quotient of $\mathbb{T}_{\cP}$ acting faithfully on $S(U,W)$.

Let $\cG_3$ be the group scheme over $\mathbb Z$ defined in \cite[\S 2.1]{CHT}.
We consider a continuous Galois representation $\rbar: G_{F^+} \ra \cG_3(\F)$ which is \emph{automorphic} in the sense of \cite{MLM}, i.e.~for which there exists a maximal ideal $\fm\subseteq \bT_{\cP}(U,W)$, for some level $U$ and coefficients $W$ satisfying
\[
\det\left(1-\rbar(\mathrm{Frob}_w)X\right)=\sum_{j=0}^2 (-1)^j(\mathbf{N}_{F/\Q}(w))^{\binom{j}{2}}T_w^{(j)}X^j\mod{\fm}
\]
for all $w\in \cP$.
Note that the collection $(\rbar|_{G_{F^+_v}})_{v\in S_p}$ defines an $\F$-valued $L$-parameter, which will be denoted as $\rbar_p$ in what follows.
For such $\rbar$, we define as in \cite[Definition 9.1.1]{MLM} the set $W(\rbar)$ of \emph{modular Serre weights for $\rbar$}.

\begin{thm}\label{thm:SWC}
Let $\rbar: G_{F^+} \ra \cG_3(\F)$ be an automorphic Galois representation.
Assume further that 
\begin{itemize}
\item $\rbar|_{G_F}(G_{F(\zeta_p)})$ is adequate; and
\item $\rbar_p$ is $8$-generic.
\end{itemize}
Then 
\[
W(\rbar) = W^{g}(\rbar_p).
\]
\end{thm}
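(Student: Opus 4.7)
The plan is to reduce Theorem \ref{thm:SWC} to the axiomatic weight conjecture Theorem \ref{thm:axiomaticSWC} by constructing, via Taylor--Wiles patching, an arithmetic $R_\infty[\GL_3(F_p^+)]$-module $M_\infty$ whose patched modular weight set $\{\sigma : M_\infty(\sigma) \neq 0\}$ coincides with $W(\bar r)$.

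First, I would set up the standard Taylor--Wiles--Kisin patching construction applied to the completed cohomology of the definite unitary group $G$, following \cite[\S 6]{LLLM2} and \cite[\S 9]{MLM} (and ultimately the frameworks of Kisin, CEGGPS and EGS). The adequacy hypothesis on $\bar r|_{G_F}(G_{F(\zeta_p)})$ is exactly what is required to produce sequences of Taylor--Wiles primes and carry out the patching machinery. After choosing a sufficiently small tame level $U^p$ detecting $W(\bar r)$ and the maximal ideal $\fm$ corresponding to $\bar r$, this yields an $\cO$-module $M_\infty$ carrying commuting actions of $R_\infty = R_{\bar r_p}\,\widehat{\otimes}_\cO\, R^p$ (with $R^p$ formally smooth) and $\GL_3(F_p^+)$, together with a canonical identification
\begin{equation*}
M_\infty(\sigma)/\mathfrak a M_\infty(\sigma) \,\cong\, \Hom_{\cG(\cO_{F^+,p})}\!\bigl(\sigma^\vee,\, S(U^p U_p,\F)\bigr)_{\fm}
\end{equation*}
for each Serre weight $\sigma$, where $\mathfrak a \subset R_\infty$ is the kernel of the augmentation from the patching variables. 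In particular $M_\infty(\sigma) \neq 0$ if and only if $\sigma \in W(\bar r)$.

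Second, I would verify that $M_\infty$ is an arithmetic $R_\infty[\GL_3(F_p^+)]$-module in the sense of Definition \ref{defn:patch}. Axiom \eqref{it:patch:1} (finiteness over $\cO[\![\GL_3(\cO_{F^+,p})]\!]$) follows from the finiteness of each $S(U^pU_p,V)_{\fm}$. For axiom \eqref{it:patch:2} --- that $M_\infty(-)$ is a weak patching functor --- the results of Section \ref{sec:LM:EG} are crucial: the standard depth estimates in Taylor--Wiles patching show $M_\infty(\sigma(\tau)^\circ)$ is maximal Cohen--Macaulay over its schematic support, and Corollary \ref{thm:local_model_main} (which gives that $R_{\bar r_p}^{\eta,\tau}$ is a normal domain for $4$-generic $\tau$) combined with the modularity lifting Theorem \ref{thm:modularity_lifting} propagates the support to all of $X_\infty(\tau)$; the Cohen--Macaulay statement for the reductions modulo $p$ then follows by a standard depth-shifting argument. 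The third requirement in Definition \ref{minimalpatching} reflects the Hodge--Tate weights of classical automorphic Galois representations. Axiom \eqref{it:patch:3} is the classical local-global compatibility at $p$ in the potentially crystalline case (as in \cite{CEGGPS} and \cite[Theorem 4.1]{CEGGPS}).

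Finally, since $\bar r_p$ is $8$-generic by hypothesis, Theorem \ref{thm:axiomaticSWC} applies to $M_\infty$ and yields $\{\sigma : M_\infty(\sigma)\neq 0\} = W^g(\bar r_p)$. Combining this with the identification in the first step gives $W(\bar r) = W^g(\bar r_p)$. The main obstacle is the verification of the weak patching functor axiom: specifically, showing that $M_\infty(\sigma(\tau)^\circ)$ has support equal to the full $X_\infty(\tau)$ rather than only a proper union of components. This is where the normality statement from Corollary \ref{thm:local_model_main}, itself the payoff of the fine analysis of the Emerton--Gee stack via the local models of Section \ref{sec:LM:EG}, is indispensable --- without domain-ness of the deformation rings, the patched module could miss entire irreducible components and the axiomatic input of Theorem \ref{thm:axiomaticSWC} would not be applicable.
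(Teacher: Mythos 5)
Your proposal is correct and follows essentially the same route as the paper, which simply cites the patching construction of \cite[Theorem 5.3.3]{LLLM2} verbatim and substitutes Theorem \ref{thm:axiomaticSWC} for the earlier axiomatic input; your expanded verification of the arithmetic-module axioms (including the role of Corollary \ref{thm:local_model_main} in guaranteeing that the support of $M_\infty(\sigma(\tau)^\circ)$ fills out the irreducible $X_\infty(\tau)$) is exactly what that reference supplies.
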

\begin{proof}
The proof of \cite[Theorem 5.3.3]{LLLM2} applies verbatim after replacing Theorem 3.5.2 of \emph{loc.~cit}.~with Theorem \ref{thm:axiomaticSWC} above.
\end{proof}

\subsubsection{Mod $p$ multiplicity one}\label{subsec:multone}

We continue using the setup from \S \ref{subsec:global}.
We assume further that $F/F^+$ is unramified at all finite places.
We now let $S_0$ denote the set of finite places of $F^+$ away from $p$ where $\rbar$ ramifies and assume that every place of $S_0$ splits in $F$.
For each $v\in S_0$, with fixed lift $\tld{v}$ in $F$, we let $\tau_{\tld{v}}$ be the minimally ramified type in the sense of \cite[Definition 2.4.14]{CHT} corresponding to $\rbar|_{G_{F_{\tld{v}}}}:G_{F_{\tld{v}}}\ra\GL_3(\F)$ and $\sigma(\tau_v)\defeq \sigma(\tau_{\tld{v}})\circ\iota_{\tld{v}}$ be the $\cG(\cO_{F^+_{v}})$-representation attached to it (where $\iota_{\tld{v}}$ is the localization at ${\tld{v}}$ of the isomorphism (\ref{iso integral}); $\sigma(\tau_v)$ is independent of the choice of $\tld{v}|v$, cf.~\cite[\S 5.3]{LLLM2}).
Fix an $\cO$-lattice $W_{S_0}$ in ${\otimes}_{v\in S_0}\sigma(\tau_v)$.
We have the following mod $p$ multiplicity one result.
\begin{thm}\label{thm:modpmultone}
Let $\rbar: G_{F^{+}}\ra\cG_3(\F)$ be a continuous Galois representation such that $\rbar_p$ is $11$-generic.  Let $\tau$ and $F(\lambda)\in W^g(\rbar_p,\tau)$ be as in the statement of Theorem \ref{thm:cyclic}.
Assume moreover that:
\begin{itemize}
\item $\rbar: G_{F^{+}}\ra\cG_3(\F)$ is automorphic;
\item $\rbar|_{G_F}(G_{F(\zeta_p)})$ is adequate; and
\item the places at which $\rbar$ ramifies split in $F$.
\end{itemize}
Then 
\[
S\Big(U^p\cG(\cO_{{F^+,p}}),\big(\ovl{\sigma}(\tau)^{F(\lambda)} \circ \prod_{v\in {S_p}} \iota_{\tld{v}}\big)^\vee \otimes_{\cO}W_{S_0}\Big)[\mathfrak{m}]
\]
is one-dimensional over $\F$, where $\mathfrak{m}$ is the maximal ideal in the Hecke algebra $\mathbb{T}_{\cP}$ corresponding to $\rbar$.
\end{thm}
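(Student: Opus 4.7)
The plan is to deduce this theorem from the cyclicity result (Theorem \ref{thm:cyclic}) via the standard Taylor--Wiles--Kisin patching machinery, essentially following the template of the tamely ramified case in \cite[\S 5.3]{LLLM2}. The main point is that the hard local input—the freeness of the patched module of the lattice $\sigma(\tau)^{F(\lambda)}$ over the full local deformation ring—is already available in the wildly ramified generality needed, thanks to Theorem \ref{thm:cyclic}, which is the wildly ramified analogue of \cite[Theorem 5.1.1]{LLLM2}.

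First I would construct the patched module. Using the automorphy of $\rbar$, the adequacy hypothesis, and the fact that $\rbar$ only ramifies at split primes of $F$, one runs the usual Taylor--Wiles--Kisin patching argument on the spaces $S(U^pU_p, V^\vee \otimes W_{S_0})$ with varying auxiliary level, exactly as in \cite[\S 6]{LLLM2} or \cite[\S 9]{MLM}. The output is an arithmetic $R_\infty[\GL_3(F_p)]$-module $M_\infty$ for the $L$-parameter $\rbar_p$, in the sense of Definition \ref{defn:patch}. The choice of minimally ramified types at the auxiliary places $v \in S_0$ ensures that the local deformation rings away from $p$ (absorbed into the formally smooth factor $R^p$) are formally smooth at $\rbar$, so that $M_\infty$ is a \emph{minimal} arithmetic patching functor. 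The compatibility condition \eqref{it:patch:3} of Definition \ref{defn:patch} is the standard local-global compatibility result for $\ell = p$ built into the patching construction.

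Second, I would apply Theorem \ref{thm:cyclic}: since $\rbar_p$ is $11$-generic, $\tau$ is $9$-generic and the lattice hypothesis on $F(\lambda)$ is satisfied by assumption, so $M_\infty(\sigma(\tau)^{F(\lambda)})$ is a free $R_\infty(\tau)$-module of rank one. Reducing modulo the uniformizer gives that $M_\infty(\ovl{\sigma}(\tau)^{F(\lambda)})$ is a free $\ovl{R}_\infty(\tau)$-module of rank one. In particular its fiber at the maximal ideal $\fm_\infty$ of $R_\infty$ (which corresponds to the augmentation coming from $\rbar$) is one-dimensional over $\F$.

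Third, I would translate back to the automorphic side. The standard patching formalism gives a canonical isomorphism
\[
M_\infty(\ovl{\sigma}(\tau)^{F(\lambda)}) \otimes_{R_\infty} \F \;\cong\; S\Big(U^p\cG(\cO_{F^+,p}),\big(\ovl{\sigma}(\tau)^{F(\lambda)} \circ \prod_{v\in S_p} \iota_{\tld{v}}\big)^\vee \otimes_{\cO}W_{S_0}\Big)[\fm]^\vee,
\]
obtained by unwinding the patching construction and using that taking $\fm$-torsion commutes with the various limits (cf.\ the arguments in \cite[\S 5.3]{LLLM2}, specifically the proof of the tame analogue of this theorem). Combining with the previous step gives the claimed one-dimensionality. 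The main technical obstacle is not conceptual but logistical: one must verify that the patching construction in the wildly ramified generality produces a module satisfying all the axioms of Definition \ref{defn:patch} (especially condition \eqref{it:patch:3} on the Hecke action), and that the identification of the fiber with the $\fm$-torsion in automorphic forms is correctly normalized. Both are routine generalizations of the tame case once Theorem \ref{thm:cyclic} is in hand.
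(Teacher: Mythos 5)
Your proposal is correct and follows essentially the same route as the paper: the paper's proof simply states that the argument of \cite[Theorem 5.3.4]{LLLM2} applies verbatim once the tame cyclicity input is replaced by Theorem \ref{thm:cyclic}, which is exactly the reduction you carry out (patching to a minimal arithmetic $R_\infty[\GL_3(F_p)]$-module, invoking Theorem \ref{thm:cyclic} for rank-one freeness over $R_\infty(\tau)$, and identifying the fiber at $\fm_\infty$ with the $\fm$-torsion in the space of algebraic automorphic forms). The only cosmetic difference is that you spell out the three steps that the paper leaves implicit in its citation.
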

\begin{proof}
The proof of \cite[Theorem 5.3.4]{LLLM2} applies verbatim after replacing the reference to \cite[Theorem 5.2.1]{LLLM2} by a reference to Theorem \ref{thm:cyclic} above.
\end{proof}

\subsubsection{Breuil's lattice conjecture} 
We now consider an automorphic Galois representation $r:G_F \ra \GL_3(E)$ as in Theorem \ref{thm:modularity_lifting} which is \emph{minimally ramified}, i.e.~for any place $\tld{v}$ of $F$ lying above some $v \in S_0$, the Galois representation $r|_{G_{F_{\tld{v}}}}$ is minimally ramified in the sense of \cite[Definition 2.4.14]{CHT}. 
Let $\lambda$ be the kernel of the system of Hecke eigenvalues $\alpha:\bT_{\cP}\rightarrow \cO$ associated to $r$, i.e.~ $\alpha$ satisfies 
\[
\det\left(1-r^{\vee}(\mathrm{Frob}_w)X\right)=\sum_{j=0}^3 (-1)^j(\mathbf{N}_{F/\Q}(w))^{\binom{j}{2}}\alpha(T_w^{(j)})X^j
\]
for all $w\in\cP$. 
For $U^p\leq G(\mathbb{A}_{F^+}^{\infty,p})$ and a finite $\cO$-module $W$ with a continuous $U^p$-action, let 
\[S(U^p, W) \defeq \varinjlim_{U_{p}} S(U^pU_{p}, W) \textrm{ and } \tld{S}(U^p, W) \defeq \varprojlim_{s\in\N} S(U^p,W/\varpi^s)\]
where $U_{p}$ runs over the compact open subgroups of $\cG(\cO_{F^+,p})$. 
By Theorem \ref{thm:modularity_lifting}, $\tld{S}(U^p, W_{S_0})[\lambda]$ is nonzero. 

\begin{thm}\label{thm:lattice}
Let $r:G_F \ra \GL_3(E)$ and $\tau$ be as in Theorem \ref{thm:modularity_lifting}. 
Assume furthermore that $r$ is minimally ramified and that the places at which $\rbar$ ramifies are split in $F$. 
Finally, assume that $\rbar_p$ is $11$-generic.
Then the lattice $\sigma(\tau) \cap \tld{S}(U^p, W_{S_0})[\lambda] \subset \sigma(\tau) \cap \tld{S}(U^p,W_{S_0})[\lambda]\otimes_{\cO} E $ depends only on $r_p$.
\end{thm}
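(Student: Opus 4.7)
The plan is to apply the Taylor--Wiles patching construction to $\tld{S}(U^p,W_{S_0})$ and translate the statement into a question about certain patched modules that has already been settled, at the level of patched modules, by Theorems \ref{thm:cyclic} and \ref{thm:gauge}. First I would patch $\tld{S}(U^p,W_{S_0})$ as in \cite[\S 6]{LLLM2} (and the proof of Theorem \ref{thm:modularity_lifting} here), using the normality and Cohen--Macaulayness of the local potentially crystalline deformation rings from Corollary \ref{thm:local_model_main} to guarantee that the resulting $M_\infty$ is an arithmetic minimal $R_\infty[\GL_3(F_p)]$-module in the sense of Definition \ref{defn:patch}. Under this construction, $M_\infty(\sigma(\tau))/\lambda$ recovers (a direct summand of) the $\lambda$-isotypic part of the completed cohomology, and the lattice $\sigma(\tau)\cap \tld{S}(U^p,W_{S_0})[\lambda]$ is recovered up to homothety from the images of the natural maps $M_\infty(\sigma(\tau)^{\kappa})\to M_\infty(\sigma(\tau))\to M_\infty(\sigma(\tau))[1/p]$ as $\kappa$ ranges over $\JH(\ovl{\sigma}(\tau))$.

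Next I would fix a distinguished Serre weight $\sigma_0\in W^g(\rbar_p,\tau)$ satisfying the upper--alcove condition \eqref{up:each:emb} of Theorem \ref{thm:cyclic} at every embedding $j$ with $\ell(\tld{z}_j)\le 1$. The existence of such a $\sigma_0$ follows from the explicit classification in Theorem \ref{thm:Wg} (in the relevant cases the set $W^g(\rhobar_v)$ always contains a weight that is upper alcove at each embedding where $\ell(\tld{z}_j)\le 1$), so Theorem \ref{thm:cyclic} applies and gives that $M_\infty(\sigma(\tau)^{\sigma_0})$ is free of rank one over $R_\infty(\tau)$. Having trivialized $M_\infty(\sigma(\tau)^{\sigma_0})$, Theorem \ref{thm:gauge} identifies, for every other $\kappa\in\JH(\ovl{\sigma}(\tau))$ and every saturated injection $\iota:\sigma(\tau)^{\kappa}\into\sigma(\tau)^{\sigma_0}$, the image of $M_\infty(\iota)$ inside $M_\infty(\sigma(\tau)^{\sigma_0})\cong R_\infty(\tau)$ as an explicit product of local prime ideals $\fp(\theta)^{m(\theta)}$ (normalised by $j_*j^*$, which is immaterial after inverting $p$). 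Both the primes $\fp(\theta)$ and the multiplicities $m(\theta)$ depend only on $\rbar_p$ and $\tau$.

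From this I would conclude by unpatching. Writing the natural $\GL_3(F_p^+)$-equivariant comparison $M_\infty(\sigma(\tau))[1/p]/\fa\cong \sigma(\tau)\otimes_\cO E^{\oplus m}$ (where $\fa$ is the kernel of the map from $R_\infty(\tau)$ to the patched Hecke algebra acting on $\tld{S}(U^p,W_{S_0})[\lambda]$), the relative positions of the sublattices $\sigma(\tau)^\kappa\cap \tld{S}(U^p,W_{S_0})[\lambda]$ are computed by reducing the ideal--theoretic formulas of Theorem \ref{thm:gauge} modulo $\fa$; because every $\fp(\theta)$ lies inside $R_{\rhobar}$ and its image modulo $\fa$ depends only on $r_p$ (via the local deformation ring $R^{\eta,\tau}_{\rhobar_p}$ and its quotient corresponding to $r_p$), the resulting collection of sublattices is determined by $r_p$. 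Since a lattice in $\sigma(\tau)$ is determined by its intersections with all sublattices $\sigma(\tau)^\kappa$, this proves the theorem.

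The principal obstacle is twofold. First, one must ensure that a weight $\sigma_0$ satisfying the strong hypothesis of Theorem \ref{thm:cyclic} always exists in $W^g(\rbar_p,\tau)$: this requires a case analysis using Theorem \ref{thm:Wg} and the explicit shape $\tld{w}^*(\rbar_p,\tau)$, and is where the $11$-genericity hypothesis enters critically. Second, one must verify that the \emph{ideal}--theoretic output of Theorem \ref{thm:gauge} survives the unpatching step, which reduces to checking that the relevant formation of $\fp(\theta)$ and intersections is compatible with base change along $R_\infty(\tau)\to R^{\eta,\tau}_{\rhobar_p}/(\textrm{global obstructions})$; this in turn relies on the flatness statements in Theorem \ref{prop:loc:mod:diag:2} and the reducedness of $\cX^{\eta,\tau}_\F$ from Corollary \ref{thm:local_model_main}. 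Once these are in place, the argument parallels \cite[Theorem 5.3.5]{LLLM2}, replacing the tame input with the wildly ramified results obtained in \S\ref{sec:LM:EG}--\S\ref{sec:cyc}.
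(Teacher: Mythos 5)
Your proposal is correct and follows essentially the same route as the paper: the paper's proof of Theorem \ref{thm:lattice} simply imports the argument of \cite[Theorem 5.3.5]{LLLM2} verbatim, with $W^?(\rbar_{\cS})$ replaced by $W^g(\rbar_p)$ and the tame gauge result replaced by Theorem \ref{thm:gauge}, which is exactly the patch--trivialize-via-Theorem \ref{thm:cyclic}--compute-gauges--unpatch strategy you describe. Your two flagged obstacles (existence of a cosocle weight satisfying \eqref{up:each:emb}, and compatibility of the ideal-theoretic output with unpatching) are precisely the points handled in that imported argument.
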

\begin{proof}
The proof of \cite[Theorem 5.3.5]{LLLM2} applies verbatim after replacing occurrences of $W^?(\rbar_{\cS})$ in \emph{loc.~cit.}~with $W^g(\rbar_p)$ and the reference to \cite[Theorem 5.2.3]{LLLM2} with a reference to Theorem \ref{thm:gauge}.
\end{proof}

\section{Appendix: tables}
\label{sec:Appendix}

In the following tables we write $\alpha$, $\beta$ and $\gamma$ for the elements of $\tld{W}$ corresponding to $(12)$, $(23)$ and $w_0t_{(1,0,-1)}$ respectively.
Moreover, the image of $\un{1}\defeq (1,1,1)\in X^*(T)$ in $\tld{W}$ is denoted as $t_{\un{1}}$.
We identify the elements above with matrices in $\GL_3(\Z(\!(v)\!))$ via the embedding $\tld{W}\into \GL_3(\Z(\!(v)\!))$ defined by 
$\alpha\mapsto \text{\tiny{$\begin{pmatrix}0&1&0\\1&0&0\\0&0&1\end{pmatrix}$}}$, $\beta\mapsto \text{\tiny{$\begin{pmatrix}1&0&0\\0&0&1\\0&1&0\end{pmatrix}$}}$, $\gamma\mapsto \text{\tiny{$\begin{pmatrix}0&0&v^{-1}\\0&1&0\\v&0&0\end{pmatrix}$}}$ and $t_{\un{1}}\mapsto \text{\tiny{$\begin{pmatrix}v&0&0\\0&v&0\\0&0&v\end{pmatrix}$}}$.
\begin{table}[H]
\caption{\textbf{The relevant $\cO$-algebras for Proposition \ref{prop:loc:mod:diag:1}}}
\label{table:coord1}
\centering
\adjustbox{max width=\textwidth}{
\begin{tabular}{| c | c | c |}
\hline
& &\\
$\widetilde{z}_jt_{-\un{1}}$ & $A^{(j)}$ &
$\cO(\tld{U}(\tld{z}_j, \leq\!\eta_j))$
\\ 
& &\\
\hline
& &\\
$\alpha \beta \alpha \gamma$ & $\begin{pmatrix} (v + p)^2 c_{11}^* & 0 & 0 \\ v(v+p) c_{21} & (v + p) c_{22}^* & 0 \\ v (c_{31}+ (v+p) c'_{31}) & v c_{32} & c_{33}^*\end{pmatrix}$ &$
\cO[c_{11}^*,c_{21},c_{31},c'_{31},c_{22}^*,c_{32},c_{33}^*]$\\
& &\\
\hline
& &\\
$\beta \gamma \alpha \gamma$ &  $\begin{pmatrix} (v + p) c_{11}^* & (v + p)c_{12} & 0 \\ 0 & (v+p)^2 c_{22}^* & 0 \\ v c_{31} & v (c_{32}+(v+p) c'_{32})& c_{33}^*\end{pmatrix}$&$
\cO[c_{11}^*,c_{12},c_{22}^*, c_{31}, c_{32},c'_{32}, c_{33}^*]$
\\& &\\
\hline\hline
& &\\
$\beta \alpha \gamma$ & $\begin{pmatrix} (v + p) c_{11} & (v + p) c_{12}^* & 0\\ v(v + p) c_{21}^*& (v + p) c_{22} &0\\v (c_{31}+(v + p)  c_{31}')&v c_{32}& c_{33}^* \end{pmatrix} $&  $\begin{matrix}
\cO[c_{11},c_{12}^*,c_{21}^*,c_{22},c_{31},c_{31}',c_{32},c_{33}^* ]/I_{\tld{z}_j}
\\
\\
I_{\tld{z}_j}=(c_{11} c_{22} +p c_{12}^* c_{21}^*)
 \end{matrix}$
\\& &\\
\hline& &\\
$\alpha \beta \gamma$ 
&  $\begin{pmatrix} (v + p)^2 c_{11}^* &0&0\\v(c_{21} + (v+p) c_{21}') & c_{22}& c_{23}^*\\v(c_{21} c_{33} (c_{23}^*)^{-1}+ (v+p)c_{31}')&vc_{32}^*&c_{33} \end{pmatrix}$&
$\begin{matrix} \cO[c_{11}^*,c_{21} ,c'_{21}, c_{22}, c_{23}^*,c_{31}',c_{32}^*,c_{33}]/I_{\tld{z}_j} 
 \\ \\I_{\tld{z}_j}=(c_{22} c_{33} + p c_{32}^* c_{23}^*) \end{matrix} $
\\& &\\
\hline
\hline
& &\\
$\alpha \beta \alpha$ & $\begin{pmatrix} c_{11} & c_{11} c_{32} (c_{31}^*)^{-1}& c_{13} + (v + p) c_{13}^* \\ 0  & (v + p)c_{22}^*& (v + p)c_{23}\\v c_{31}^* & vc_{32}&c_{33} + (v+p)c_{33}' \end{pmatrix}$& $\begin{matrix} \cO[ c_{11},c_{12},c_{13},c_{13}^*,c_{22}^*,c_{23}, c_{31}^*,c_{32},c_{33},c_{33}']/I_{\tld{z}_j} \\ \\I_{\tld{z}_j}=
\left(\begin{matrix}
c_{11}c_{32}-c_{12}c_{31}^*,\,
c_{11} c_{33}+ p c_{13} c_{31}^*\
\\
c_{11} c_{33}' - c_{13} c_{31}^* + p c_{13}^*c_{31}^*
\end{matrix}\right)
\end{matrix} $
\\
& &\\
\hline
\hline
& &\\
$\alpha \beta$ &  $\begin{pmatrix}
c_{31}c_{12}(c_{32}^*)^{-1}&c_{12}&c_{13}+(v+p) c_{13}^*\\
vc_{21}^*&c_{22}&c_{23}+(v+p) c_{23}'\\
vc_{31}&vc_{32}^*&\left(c_{31}c_{23}(c_{21}^*)^{-1}+(v+p)c_{33}'\right)
\end{pmatrix}$&
$\begin{matrix}
\cO[
c_{12},c_{13},c_{13}^*,c_{21}^*,c_{22},c_{23},c_{23}',c_{31},c_{32}^*,c_{33}']/I_{\tld{z}_j}
\\ \\
I_{\tld{z}_j}=\left(\begin{matrix}c_{22}c_{31}+pc_{21}^*c_{32}^*,\ 
c_{12}c_{23}-c_{22} c_{13}\\
c_{21}^*c_{32}^*c_{13}-pc_{21}^*c_{32}^*c_{13}^*-c'_{33}c_{21}^*c_{12}=0\end{matrix}\right)
\end{matrix}$
\\
& &\\
\hline
& &\\
$\beta \alpha$ &  $\begin{pmatrix}
c_{11}&\left((c_{31}^*)^{-1}c_{11}c_{32}+ (v+p)c_{12}^*\right)&c_{13}\\
0& (v+p) c_{22}'&  (v+p)c_{23}^*\\
c_{31}^*v&c_{32}v&c_{33}+ (v+p)c_{33}'
\end{pmatrix}$&
$\begin{matrix}
\cO[
c_{11},c_{12}^*,c_{13},c_{22}',c_{23}^*,
c_{31}^*,c_{32},c_{33},c_{33}']/I_{\tld{z}_j}
\\ \\
I_{\tld{z}_j}=\left(\begin{matrix}c_{11}c_{33}+pc_{31}^*c_{13}\\
c_{22}'(c_{11} c_{33}' - c_{13} c^*_{31})-pc_{23}^*c_{12}^*c_{31}^*\end{matrix}\right)
\end{matrix}$
\\
& &\\
\hline
\hline
& &\\
$\alpha$ &  $\begin{pmatrix}
c_{11}&c_{12}+ (v+p)c_{12}^*&c_{13}\\
c_{21}^*v&c_{22}+ (v+p)c_{22}' &c_{23}\\
c_{31}v&c_{32}v&\left(c_{33}+(v+p) c_{33}^*\right)
\end{pmatrix}$&$\begin{matrix}
\cO[
c_{11},c_{12},c_{12}^*,c_{13},
c_{21}^*,c_{22},c_{22}',c_{23},
c_{31},c_{32},c_{33},c_{33}^*]/I_{\tld{z}_j}
\\ \\
I_{\tld{z}_j}=\left(\begin{matrix}
c_{11}c_{22}+pc_{12}c^*_{21},\  
c_{11}c_{23}+pc_{13}c^*_{21},\ 
c_{12}c_{23}-c_{13}c_{22},\ 
c_{11}c_{32}-c_{31}c_{12},\ 
c_{11}c_{33}+pc_{31}c_{13}, 
\\
c_{12}c_{33}+pc_{32}c_{13},\ 
pc_{21}^*c_{32}+c_{22}c_{31},\ 
c^*_{21}c_{33}-c_{23}c_{31},\ 
c_{22}c_{33}+pc_{32}c_{23},
\\
c_{11}c_{22}'c_{33}^*+c_{13}c_{21}^*c_{32}-c_{13}c_{22}'c_{31}-c_{12}c_{21}^*c_{33}^*+pc_{21}^*c_{12}^*c_{33}^*\end{matrix}\right)
\end{matrix}$
\\& &\\
\hline
\hline
& &\\
id &  $\begin{pmatrix}
c_{11}+c_{11}^*(v+p)&c_{12}&c_{13}\\
vc_{21}&c_{22}+ c_{22}^*(v+p) &c_{23}\\
vc_{31}&vc_{32}&c_{33}+c_{33}^*(v+p)
\end{pmatrix}$&
$\begin{matrix}
\cO[c_{11},c_{11}^*,c_{12},c_{13},c_{21},c_{22}, c_{22}^*,c_{23},c_{31},c_{32},c_{33},c_{33}^*]/I_{\tld{z}_j}
\\ \\
I_{\tld{z}_j}=\left(\begin{matrix}%
c_{11}c_{22}+pc_{12}c_{21},\  
c_{11}c_{23}+pc_{13}c_{21},\ 
c_{12}c_{23}-c_{13}c_{22},\ 
c_{11}c_{32}-c_{31}c_{12},\ 
c_{11}c_{33}+pc_{31}c_{13}, 
\\
c_{12}c_{33}+pc_{32}c_{13},\ 
pc_{21}c_{32}+c_{22}c_{31},\ 
c_{21}c_{33}-c_{23}c_{31},\ 
c_{22}c_{33}+pc_{32}c_{23},
\\
c_{11}c^*_{22}c^*_{33}+c_{22}c^*_{33}c^*_{11}+c_{33}c^*_{11}c^*_{22}-c_{11}^*c_{23}c_{32}-c_{22}^*c_{13}c_{31}-c_{33}^*c_{12}c_{21}+c_{21}c_{13}c_{32}\end{matrix}\right)
\end{matrix}$
\\
& &\\
\hline
\hline 
\end{tabular}}
\caption*{
We list the $\cO$-algebras $\cO(\tld{U}(\tld{z}_j, \leq\!\eta_j))$ appearing in Proposition \ref{prop:loc:mod:diag:1}. 
Note that $\cO(\tld{U}(\tld{z}_j, \leq\!\eta_j))\cong \cO(\tld{U}(\delta\tld{z}_j\delta^{-1}, \leq\!\eta_j))$ using the following change of coordinates in terms of universal matrices $A^{(j)}$: for $?\in\{\emptyset,\ast,\prime\}$ we have $c^{?}_{ik}\mapsto c^{?}_{(i+1)(k+1)}$, where, for $1\leq i,k\leq k$, the integers $(i+1), (k+1)\in\{1,2,3\}$ are taken modulo $3$.
}
\end{table}

\begin{table}[H]
\caption{\textbf{The relevant $\cO$-algebras for Proposition \ref{prop:loc:mod:diag:2}}}
\label{Table:integraleqs}
\centering
\adjustbox{max width=\textwidth}{
\begin{tabular}{| c | c |}
\hline
& \\
$\widetilde{z}_jt_{-\un{1}}$ &  $\cO(\tld{U}(\tld{z}_j,\eta,\nabla_{(s,\mu)}))$ \\ 
&  \\
\hline
& \\
$\alpha \beta \alpha \gamma$ &    $\cO[c_{11}^*,c_{21},c'_{31},c_{22}^*,c_{32},c_{33}^*]$ \\
&\\
\hline
&  \\
$\beta \gamma \alpha \gamma$ &  $\cO[c_{11}^*,c_{12},c_{22}^*, c_{31}, c'_{32}, c_{33}^*]$\\
&  \\
\hline
\hline
&  \\
$\beta \alpha \gamma$ &  

$\begin{matrix}
\cO[c_{11},c_{12}^*,c_{21}^*,c_{22},c_{31}',c_{32},c_{33}^* ]/I_{\tld{z}_j,\nabla_{(s,\mu)}}
\\
\\
I_{\tld{z}_j,\nabla_{(s,\mu)}}=(c_{11} c_{22} +p c_{12}^* c_{21}^*)
 \end{matrix}$
 \\
&  \\
\hline
&  \\
$\alpha \beta \gamma$ &   $\begin{matrix} \cO[c_{11}^*,c'_{21}, c_{22}, c_{23}^*,c_{31}',c_{32}^*,c_{33}]/I_{\tld{z}_j,\nabla_{s_j(\mu_j+\eta_j)}} 
 \\ \\I_{\tld{z}_j,\nabla_{(s,\mu)}}=(c_{22} c_{33} + p c_{32}^* c_{23}^*) \end{matrix} $
 \\
&  \\
\hline \hline
&  \\
$\alpha \beta \alpha$ &$\begin{matrix}\cO[c_{32}, c_{23}, c_{33}', c_{31}^*, c_{22}^*, c_{13}^*] /I_{\tld{z}_j,\nabla_{(s,\mu)}} 
 \\ \\I_{\tld{z}_j,\nabla_{s_j(\mu_j+\eta_j)}}=\Big(c_{11}\Big((\mathbf{a}-\mathbf{b})c_{23}c_{32}-(\mathbf{a}-\mathbf{c})c_{22}^*c_{33}'\Big)+p(e'-\mathbf{a}+\mathbf{c})c_{31}^*c_{22}^*c_{13}^*\Big)
\end{matrix} 
$
\\
& \\
\hline \hline
& \\
$\alpha \beta$&$\begin{matrix}\cO[\![c_{31}, c_{22}, c_{12}, c_{23}', c_{33}', c_{21}^*, c_{13}^*, c_{32}^*]\!] /I_{\tld{z}_j,\nabla_{s_j(\mu_j+\eta_j)}} 
 \\ \\I_{\tld{z}_j,\nabla_{(s,\mu)}}=\left(\begin{array}{l}c_{12}\Big((\mathbf{a}-\mathbf{b})c_{31}c'_{23}+(\mathbf{b}-\mathbf{c})c^*_{21}c'_{33}\Big)-p(e'-\mathbf{a}+\mathbf{c})c^*_{21}c^*_{32}c^*_{13}\\
 c_{22}c_{31}+pc_{21}^*c_{32}^*
 %\Stefano{\text{typo (July 19th 2022):  should read$c_{22}c_{31}+pc_{21}^*c_{32}^*$}}
 \end{array}\right)
\end{matrix} $\\
&\\
\hline
\hline
& \\

$\beta \alpha$ &$\begin{matrix}\cO[\![c_{11},c_{22}', c_{13}, c_{32},  c_{33}', c_{31}^*, c_{12}^*, c_{23}^*]\!]/I_{\tld{z}_j,\nabla_{s_j(\mu_j+\eta_j)}} 
 \\ \\I_{\tld{z}_j,\nabla_{(s,\mu)}}=\left(\begin{array}{l}c_{11}\Big((\mathbf{a}-\mathbf{b})c_{32}c^*_{23}-(\mathbf{a}-\mathbf{c})c'_{22}c'_{33}\Big)-p(e'-\mathbf{a}+\mathbf{c})c^*_{12}c^*_{23}c^*_{31}
 \\c_{22}'\Big(c_{11} c_{33}' - c_{13} c^*_{31}\Big) - pc_{23}^*c_{12}^*c_{31}^*
 \end{array}\right)
\end{matrix} $\\
&\\
\hline
\end{tabular}}
\caption*{
We list the $\cO$-algebras $\cO(\tld{U}(\tld{z}_j,\eta_j,\nabla_{(s,\mu)}))$ appearing in Proposition \ref{prop:loc:mod:diag:2}. 
The triple $(\mathbf{a},\mathbf{b},\mathbf{c})$ is $(s'_{\mathrm{or}, j'})^{-1}(\mathbf{a}^{\prime \, (j')})$.
Note that $\cO(\tld{U}(\tld{z}_j,\eta,\nabla_{(s,\mu)}))\cong \cO(\tld{U}(\delta\tld{z}_j\delta^{-1},\eta_j,\nabla_{(s,\mu)}))$ using the following change of coordinates: for $?\in\{\emptyset,\ast,\prime\}$ we have $c^{?}_{ik}\mapsto c^{?}_{(i+1)(k+1)}$ (where, for $1\leq i,k\leq k$, the integers $(i+1), (k+1)\in\{1,2,3\}$ are taken modulo $3$) and moreover $\mathbf{a}\mapsto\mathbf{b}$, $\mathbf{b}\mapsto\mathbf{c}$, 
$\mathbf{c}\mapsto \mathbf{a}-e'$.
}
\end{table}

\begin{table}[H]
\caption{}
\label{Table:intsct}
\centering
\adjustbox{max width=\textwidth}{
\begin{tabular}{| c | c | c | c |}
\hline
&&&\\
$\tld{z}_{j}t_{-\un{1}}$&$\tld{U}(\tld{z}_j, \eta_j,\nabla_{s_j^{-1}(\mu_j+\eta_j)})_{\F}$&$(\eps_j,a_j)\in \Sigma_0$&$\widetilde{\fP}_{(\eps_j,a_j),\tld{z}_j}$\\
&&&\\
\hline
&&&\\
$\alpha \beta \alpha \gamma$ &   $\begin{pmatrix} v^2 c_{11}^{*} & 0 & 0 \\ v^2 c_{21} & v c_{22}^{*} & 0 \\ v^2 d_{31} & v c_{32} & c_{33}^{*}\end{pmatrix}$&$(\eps_1+\eps_2,0)$&$(0)$ 
\\
&&&\\
\hline
\hline
&&&\\
$\beta \gamma \alpha \gamma$ & $\begin{matrix}  \begin{pmatrix} v  c_{11}^{*} & v c_{12} & 0 \\ 0 & v^2 c_{22}^{*} & 0 \\ v c_{31} & v c_{32}+v^2 d_{32}& c_{33}^{*}\end{pmatrix}\\
\\
(-1-b+c)c_{32} c_{11}^{*}-(-1-a+c)c_{12}c_{31}=0
\end{matrix}$&$(\eps_2,1)$&$(0)$ 
\\
&&&\\
\hline
\hline
&&&\\
\multirow{2}{*}{$\beta\alpha\gamma$}&\multirow{2}{*}{$\begin{matrix}\begin{pmatrix}vc_{11}&v c_{12}^*&0\\v^2c_{21}^*&v c_{22}&0\\v(c_{31}+v d_{31})&vc_{32}&c_{33}^*\end{pmatrix}\\
\\
I_{\tld{z}_j,\F}=0;\\
\\
(-1-a+c)c_{12}^*c_{31}-(-1-b+c)c_{32}c_{11}=0
\end{matrix}$}&$(\eps_1+\eps_2,0)$&$(c_{11})$\\
&&&\\
&&&\\
\cline{3-4}&&&\\
&&$(\eps_2,1)$&$(c_{22})$ 
\\
&&&\\
&&&\\
&&&\\
\hline
\hline
&&&\\
\multirow{2}{*}{$\alpha\beta\gamma$}&\multirow{2}{*}{$\begin{matrix}\begin{pmatrix}v^2c_{11}&0&0\\v(c_{21}+vd_{21})&c_{22}&c_{23}^*\\v(c_{21}c_{33}(c_{23}^*)^{-1}+v d_{31})&vc^*_{32}&c_{33}\end{pmatrix}\\
\\
I_{\tld{z}_j,\F}=0;\\
\\
(-1-a+c)c_{21}c^*_{32}+(b-c)d_{31}c_{22}=0
\end{matrix}$}&$(\eps_1+\eps_2,0)$&$(c_{22})$ 
\\
&&&\\
&&&\\
\cline{3-4}&&&\\
&&
$(\eps_1,1)$&$(c_{33})$ \\
&&&\\
&&&\\
&&&\\
\hline
\hline
&&&\\
\multirow{2}{*}{$\alpha\beta\alpha$}&\multirow{2}{*}{$
\begin{matrix}
\begin{pmatrix}
c_{11}&c_{11}c_{32}(c^{*}_{31})^{-1}&d_{33}c_{11}(c^{*}_{31})^{-1}+vc_{13}^{*}\\
0&v c^{*}_{22}& vc_{23} \\
v c^{*}_{31}&v c_{32}&v d_{33}
\end{pmatrix}\\
\\
I_{\tld{z}_j,\F}=0;
\\
\\
c_{11}((a-b)c_{23}c_{32}-(a-c)c^{*}_{33}d_{33})=0
\end{matrix}
$}&$(0,0)$&$(c_{11})$
\\
&&&\\
&&&\\
\cline{3-4}&&&\\
&&
$(0,1)$&$((a-b)c_{23}c_{32}-(a-c)c^{*}_{33}d_{33})$ \\
&&&\\
&&&\\
&&&\\
\hline
\end{tabular}}
\caption*{The table records data relevant to Theorem \ref{thm:local_model_cmpt}. %
The first column records the components of $\tld{z}$. 
The second column records the coordinates of $(\tld{U}(\tld{z}, \eta,\nabla_{\tau,\infty})_{\F})$ in terms of the universal matrix $A^{(j)}$ and the relations between its coefficients.
Recall that in the statement of Theorem \ref{thm:local_model_cmpt} the Serre weight $\sigma$ is parametrized by $(\mu+\eta-\lambda+s(\eps),\un{a})\in \un{\La}_R^{\lambda}\times\cA$.
The ideal corresponding to the closed immersion  $\tld{\fP}_{\sigma,\tld{z}}\into \tld{U}(\tld{z},\eta,\nabla_{\tau,\infty})_{\F}$ is of the form $\sum_{j=0}^{f-1}\tld{\fP}_{(\eps_j,a_j),\tld{z}_j}$, where each $\tld{\fP}_{(\eps_j,a_j),\tld{z}_j}$ is a minimal prime ideal of $\cO(\tld{U}(\tld{z}_j, \eta_j,\nabla_{s_j^{-1}(\mu_j+\eta_j)})_{\F})$.
The elements $(\eps_j,a_j)\in \Sigma_0$ are specified in the third column and the ideal $\tld{\fP}_{(\eps_j,a_j),\tld{z}_j}$ specified in the fourth column records.
The structure constants that feature in the presentation are given by $(a,b,c)\in\Fp^3$ with $(a,b,c)\equiv s_j^{-1}(\mu_j+\eta_j)\mod p$.
}
\end{table}

\begin{table}[H]
\label{Table:intsct:1}
\centering
\adjustbox{max width=\textwidth}{
\begin{tabular}{| c | c | c | c |}
\hline
&&&\\
$\tld{z}_{j}t_{-\un{1}}$&$\tld{U}(\tld{z}_j, \eta_j,\nabla_{s_j^{-1}(\mu_j+\eta_j)})_{\F}$&$(\eps_j,a_j)\in \Sigma_0$&$\widetilde{\fP}_{(\eps_j, a_i),\tld{z}_j}$\\
&&&\\
\hline
&&&\\
\multirow{4}{*}{$\alpha\beta$}&\multirow{4}{*}{$\begin{matrix}\begin{pmatrix}c_{13}c_{12}(c_{32}^*)^{-1}&c_{12}&c_{13}+v c_{13}^*\\v c_{21}^*&c_{22}&c_{23}+vd_{23}\\v c_{31}&vc^*_{32}&c_{31}c_{23}(c_{21}^*)^{-1}+v d_{33}\end{pmatrix}\\
\\
I_{\tld{z}_j,\F}=0;\\
\\
c_{12}((a-b)c_{31}d_{23}-(b-c)d_{33}c_{21}^*)=0;\\
(-1-a+c)c_{23}c_{32}^*=(-1-a+b)c_{22}d_{33}=0
\end{matrix}$}&$(\eps_1,1)$&$(c_{12},c_{31})$\\&&&\\
\cline{3-4}&&&
\\
&&$(\eps_1-\eps_2,0)$&$(c_{31},d_{33})$\\
&&&\\
\cline{3-4}&&&
\\
&&$(0,0)$&$(c_{12},c_{22})$\\&&&\\
\cline{3-4}&&&
\\
&&$(0,1)$&$(c_{22},(b-c)d_{33}c_{21}^*+(a-b)c_{31}d_{23})$\\
&&&\\
\hline
\hline
&&&\\
\multirow{4}{*}{$\beta\alpha$}&\multirow{4}{*}{$\begin{matrix}\begin{pmatrix}c_{11}&(c_{31}^*)^{-1}c_{11}c_{32}+v c_{12}^*&c_{13}\\0&vd_{22}&vc_{23}^*\\v c^*_{31}&vc_{32}&c_{33}+v d_{33}\end{pmatrix}\\
\\
I_{\tld{z}_j,\F}=0;\\
\\
c_{11}((a-b)c_{32}c^*_{23}-(a-c)d_{22}d_{33})=0;\\
(1+a-c)c_{33}c_{23}^*c_{12}^*=c_{13}((a-b)c_{32}c^*_{23}-(a-c)d_{22}d_{33})
\end{matrix}$}&$(\eps_2,1)$&$(d_{22},c_{11})$\\
&&&\\
\cline{3-4}&&&
\\
&&$(\eps_2-\eps_1,0)$&$(d_{22},c_{32})$\\
&&&\\
\cline{3-4}&&&
\\
&&$(0,0)$&$(c_{11},c_{13})$\\
&&&\\
\cline{3-4}&&&
\\
&&$(0,1)$&$((a-b)c_{32}c^*_{23}-(a-c)d_{22}d_{33},c_{13}c_{31}^*-c_{11}d_{33})=0$\\
&&&\\
\hline
\hline
&&&\\
\multirow{6}{*}{$\alpha$}&\multirow{6}{*}{$\begin{matrix}\begin{pmatrix}
c_{11}&c_{12}+vc_{12}^*& c_{13}\\
vc_{21}^*&c_{22}+vd_{22}&c_{23}\\
vc_{31}&vc_{32}&(c_{21}^*)^{-1}c_{31}c_{23}+vc_{33}^*=0
\end{pmatrix}\\
\\
I_{\tld{z}_j,\F}=0;\\
\\
(a-b) c_{12} c^*_{33}  - (a-c)c_{13}\widetilde{c}_{32}=0;
\\
(e-a + c)  c_{23}\widetilde{c}_{32}  - (e - a +b) c_{22} c^*_{33}  =0;
\\
\\
\hspace{-1cm}(c-1-a)c_{31}c_{23}c^*_{12}-(c-1-a)c_{31}c_{13}d_{22}+\\
\hspace{1cm}+(c-1-b)c_{32}c_{13}c^*_{21}+c_{12}c^*_{33}c^*_{21}-c_{11}d_{22}c^*_{33}=0
\end{matrix}$}&$(\eps_1,1)$&$(c_{11},c_{13},c_{31})$\\
&&&\\
\cline{3-4}&&&\\
&&$(\eps_2,0)$&$(c_{11},c_{31},c_{32}c_{21}^*-d_{22}c_{31})$\\
&&&\\
\cline{3-4}&&&\\&&$(\eps_2,1)$&$ (c_{11},c_{32}c_{21}^*-d_{22}c_{31}, (a-b)c_{13}d_{22}+(-1-a+c)c_{23}c^*_{12})$\\&&&
\\
\cline{3-4}&&&\\&&$(\eps_2-\eps_1,0)$&$(c_{23},d_{22},c_{32}c_{21}^*-d_{22}c_{31})$
\\
&&&\\
\cline{3-4}&&&\\&&$(0,0)$&$(c_{11},c_{13},c_{23})$\\&&&
\\
\cline{3-4}&&&\\&&$(0,1)$&$ (c_{11}c_{33}^*-c_{13}c_{31},c_{23}, (a-b)c_{31}d_{22}+(c-b)(c_{32}c_{21}^*-d_{22}c_{31}))$\\&&&
\\
\hline
\hline
&&&\\
\multirow{6}{*}{$\id$}&\multirow{6}{*}{$
\begin{matrix}
\begin{pmatrix}
c_{11}+vc_{11}^*&c_{12}& c_{13}\\
vc_{21}&c_{22}+vc_{22}^*&c_{23}\\
vc_{31}&vc_{32}&c_{33}+vc_{33}^*
\end{pmatrix}
\\
\\
I_{\tld{z}_j,\F}=0;\\
\\
(c-1-a)c^*_{22}c_{33}+(b-1-a)c_{22}c^*_{33}-(c-1-a)c_{23}c_{32}=0
\\
(a-b)c^*_{33}c_{11}+(c-1-b)c_{33}c^*_{11}-(a-b)c_{13}c_{31}=0
\\
(b-c)c^*_{11}c_{22}+(a-c)c_{11}c^*_{22}-(b-c)c_{12}c_{21}=0
\end{matrix}
$}&$(\eps_1,0)$&$(c_{ii}, \text{\tiny{$i=1,2,3$}}, \ c_{21},c_{31},c_{23})$\\&&&\\
\cline{3-4}&&&\\&&$(\eps_1,1)$&$
(c_{31}, c_{33}, c_{11}, (-1-a+c)c_{32}c_{13} - (-1-a+b)c_{12}c^*_{33}, c_{21}c_{13} - c_{23}c^*_{11})
$\\&&&\\
\cline{3-4}&&&\\&&$(\eps_2,0)$&$(c_{ii}, \text{\tiny{$i=1,2,3$}},\ c_{12},c_{31},c_{32})$\\&&&\\
\cline{3-4}&&&\\&&$(\eps_2,1)$&$(c_{12}, c_{22}, c_{11}, (a-b)c_{21}c_{13} -(-1-b+c)c_{23}c^*_{11}, c_{21}c_{32} - c_{31}c^*_{22})
$\\&&&\\
\cline{3-4}&&&\\&&$(0,0)$&$(c_{ii}, \text{\tiny{$i=1,2,3$}},\ c_{13},c_{23},c_{12})$\\&&&\\
\cline{3-4}&&&\\&&$(0,1)$&$
(c_{23}, c_{33}, c_{22}, (a-b)c_{21}c_{32} -(a-c)c_{31}c^*_{22}, c_{32}c_{13} - c_{12}c^*_{33})$\\&&&\\
\hline
\end{tabular}}
\caption*{The table records further data relevant to Theorem \ref{thm:local_model_cmpt}.
The meaning of the columns is the same as in Table \ref{Table:intsct}.}
\end{table}

\newpage
\bibliography{Biblio}
\bibliographystyle{amsalpha}

\end{document}